\theoremstyle{plain}
\newtheorem{theorem}{Theorem}[section]
\newtheorem{lemma}[theorem]{Lemma}
\newtheorem{proposition}[theorem]{Proposition}
\newtheorem{corollary}[theorem]{Corollary}
\theoremstyle{definition}
\newtheorem{definition}[theorem]{Definition}
\newtheorem{remark}[theorem]{Remark}
\newtheorem{example}[theorem]{Example}
\newcommand{\ZZ}{\mathbb{Z}} 
\newcommand{\QQ}{\mathbb{Q}} 
\newcommand{\RR}{\mathbb{R}} 
\newcommand{\iso}{\cong}     
\newcommand{\PP}{\mathbb{P}} 
\renewcommand{\AA}{\mathbb{A}} 
\newcommand{\kk}{\Bbbk}
\newcommand{\sheaf}[1]{\mathscr{#1}} 
\newcommand{\Gm}{\mathbb{G}_m} 
\newcommand{\tensor}{\otimes}
\newcommand{\cL}{\mathscr{L}}
\newcommand{\cM}{\mathscr{M}}
\newcommand{\SymXC}{\mathrm{Sym}^n(X[n]/C[n])}
\newcommand{\HilbXC}{\mathrm{Hilb}^n(X[n]/C[n])}
\newcommand{\PrXC}{X[n] \times_{C[n]} \cdots \times_{C[n]} X[n]}
\newcommand{\vv}{{\bm{\mathrm v}}}
\newcommand{\lra}{\longrightarrow}
\DeclareMathOperator{\codim}{codim}
\DeclareMathOperator{\Hilb}{Hilb}
\DeclareMathOperator{\src}{src}
\DeclareMathOperator{\tgt}{tgt}
\title[Geometry of degenerations of Hilbert schemes]{The geometry of degenerations of Hilbert schemes of points}
\begin{document}

\author[M.~G.~Gulbrandsen]{Martin G. Gulbrandsen}
\address{University of Stavanger\\
Department of Mathematics and Natural Sciences\\
4036 Stavanger\\
Norway}
\email{martin.gulbrandsen@uis.no}

\author[L.~H.~Halle]{Lars H. Halle}
\address{University of Copenhagen\\ 
Department of Mathematical Sciences\\
Universitetsparken 5\\ 
2100 Copenhagen\\ 
Denmark}
\email{larshhal@math.ku.dk}

\author[K.~Hulek]{Klaus Hulek}
\address{Leibniz Universit\"at Hannover\\
Institut f\"ur algebraische Geometrie\\
Welfengarten 1\\
30167 Hannover\\
Germany}
\email{hulek@math.uni-hannover.de}

\author[Z.~Zhang]{Ziyu Zhang}
\address{Leibniz Universit\"at Hannover\\
Institut f\"ur algebraische Geometrie\\
Welfengarten 1\\
30167 Hannover\\
Germany}
\curraddr{ShanghaiTech University\\
Institute of Mathematical Sciences\\
393 Middle Huaxia Road\\
Shanghai 201210\\
P.R.China}
\email{zhangziyu@shanghaitech.edu.cn}

\keywords{Geometric Invariant Theory, degeneration, Hilbert scheme, dual complex}

\subjclass[2010]{Primary: 14D06; Secondary: 14C05, 14L24, 14D23}

\begin{abstract}
Given a strict simple degeneration $f \colon X\to C$ the first three authors previously constructed a degeneration $I^n_{X/C} \to C$ of the relative degree $n$ Hilbert scheme of $0$-dimensional subschemes.
In this paper we investigate the geometry of this degeneration, in particular when the fibre dimension of $f$ is at most $2$. In this case we show that $I^n_{X/C} \to C$ is a dlt model.
This is even a good minimal dlt model if $f \colon X \to C$ has this property. We compute the dual complex of the central fibre $(I^n_{X/C})_0$  and relate this to the essential skeleton of the generic fibre.  
For a type II degeneration of $K3$ surfaces we show that the stack ${\mathcal I}^n_{X/C} \to C$ carries a nowhere degenerate relative  logarithmic $2$-form. Finally we discuss the relationship
of our degeneration with the constructions of Nagai.   
\end{abstract}

\maketitle


\section{Introduction}\label{sec:intro}

The Hilbert scheme parameterizing zero dimensional subschemes of length $n$ on a variety $V$ is a much studied object, and appears in a wide range of contexts such as enumerative geometry, representation theory and mathematical physics, to name a few. In certain cases, the geometry of $V$ is reflected in the Hilbert scheme; for instance, if $V$ is smooth, irreducible and of dimension at most two, then it is well known that $ \mathrm{Hilb}^n(V) $ is again smooth and irreducible and of dimension $ n \cdot \mathrm{dim}(V)$. On the other hand, if $\mathrm{dim}(V) > 2$ the geometry of the Hilbert scheme can become arbitrarily bad as $n$ grows, and, to our best knowledge, little is known in general if $V$ is singular and $\mathrm{dim}(V) > 1 $.

In previous work \cite{GHH-2019}, the first named three authors studied the question of how $ \mathrm{Hilb}^n(V) $ degenerates along with the underlying variety $V$. As formulated, this question is obviously too broad; in order to obtain a useful answer, one needs to impose strong conditions on the degenerations being used. In \cite{GHH-2019}, so-called \emph{strict simple degenerations} were considered. This roughly means a flat morphism $ f \colon X \to C $ where $ 0 \in C $ is a smooth pointed curve, with smooth connected fibres except for $ X_0 = f^{-1}(0) $ which forms a strict normal crossings divisor without triple intersections. A canonical degeneration is given in this case by the relative Hilbert scheme $ \mathrm{Hilb}^n(X/C) \to C $. However, due to the presence of singularities in $X_0$, it seems hopeless to control the geometry of this family if either $n$ is large or if the relative dimension of $f$ is larger than $1$ (with the exception that $n$ and the fibre dimension are equal to $2$). 

The main construction in \cite{GHH-2019} yields a different degeneration $I^n_{X/C} \to C$, which coincides with the relative Hilbert scheme over $ C \setminus \{0\} $, but which seems to have far better geometric properties in general. A key ingredient in this construction is Li's $G[n] (\cong (\mathbb{G}_m)^n) $-equivariant \emph{expanded degeneration} $ X[n] \to C[n] $ of the strict simple degeneration $X \to C $; in fact, $I^n_{X/C}$ is obtained as a certain GIT quotient of $\mathrm{Hilb}^n(X[n]/C[n])$ by the torus $G[n]$. The crucial advantage of this approach is that GIT (semi-)stable subschemes can only be supported on the \emph{smooth} locus of the fibres of Li's expansion and that (semi-)stability can be determined by a simple combinatorial consideration. We moreover proved that the stable and semi-stable loci coincide. 
 
It should be remarked that the results in \cite{GHH-2019} are quite general, and make no assumption on the fibre dimension of $ X \to C $. The purpose of this paper is to study the degeneration $ I^n_{X/C} $ in detail in the case where the fibres $ X_c $ have dimension at most $2$, in which case the Hilbert scheme $ \mathrm{Hilb}^n(X_c) $ of a general fibre is a smooth variety. We shall focus especially on the (birational) geometry of $ I^n_{X/C} $, and on the geometry and the combinatorial structure of the degenerate fibre $ (I^n_{X/C})_0 $.

\subsection{The main results}\label{sec:intro-mainresults}

Our first main result concerns the birational geometry of the degeneration $ I^n_{X/C} \to C $. Even though $ X \to C $ is a semi-stable degeneration, it is too much to hope for in general that semi-stability passes on to the Hilbert scheme level via our construction. However, $ I^n_{X/C} \to C $ still turns out to have mild, controllable singularities from the viewpoint of the Minimal Model Program.

To explain this, we need to recall some terminology for pairs $(Y,D)$ consisting of a normal $\mathbb{Q}$-factorial variety $Y$ and (for simplicity) a reduced divisor $ D $. Then $(Y,D)$ is called log canonical (lc) if the discrepancy $ a(E,Y,D) \geqslant -1$ for every exceptional divisor $E$ over $Y$. An lc pair $(Y,D)$ is moreover called divisorial log terminal (dlt) if equality occurs if and only if $\mathrm{centre}_Y(E) $ intersects nontrivially the (open) locus where $(Y,D)$ is snc. If $Y$ admits a morphism $ f \colon Y \to C $ to a smooth pointed curve $ 0 \in C $, and $ D = f^{-1}(0)_{\mathrm{red}} $ (with otherwise smooth fibres), one calls $Y \to C $ a dlt model if $ (Y,D)$ is dlt.

\begin{theorem}
Assume that the relative dimension of $ X \to C $ is at most $2$. Then $ I^n_{X/C} $ is normal and $\mathbb{Q}$-factorial, the special fibre $ (I^n_{X/C})_0 $ is a reduced divisor, and $ I^n_{X/C} \to C $ is a dlt model. 
\end{theorem}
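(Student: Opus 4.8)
The strategy is to reduce everything to a local analysis of the GIT quotient $I^n_{X/C} = \Hilb^n(X[n]/C[n])^{ss}/G[n]$ near a point of the central fibre, using the combinatorial description of (semi-)stable subschemes from \cite{GHH-2016}. I will proceed in the following steps.

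First, I will establish that the total space $\Hilb^n(X[n]/C[n])$ is smooth along the semistable locus, at least when the relative dimension of $f$ is at most $2$. The key point is that semistable subschemes are supported on the smooth locus of the fibres of Li's expansion $X[n]\to C[n]$; each such fibre is a chain (or for a point of $C\setminus\{0\}$, a single copy) of the components of $X_0$ and copies of the $\mathbb{P}^1$- (or ruled-) bundle bubbles, glued along smooth divisors. A subscheme $Z$ supported on the smooth locus sees $X[n]$ as a family of smooth surfaces degenerating to a normal crossing surface with smooth double locus, and its Hilbert scheme is smooth of the expected dimension by the classical Fogarty-type argument applied relatively, together with the fact that $X[n]\to C[n]$ is itself smooth away from the relative double locus. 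From this one also reads off that the ``boundary'' $\Delta\subset\Hilb^n(X[n]/C[n])^{ss}$, i.e.\ the locus of subschemes meeting the singular locus of the fibres of $X[n]\to C[n]$, is a reduced simple normal crossings divisor: it is cut out, component by component, by the conditions that a length-$1$ piece of $Z$ lies on one of the double divisors, and these conditions are transverse.

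Second, I will transfer this picture through the GIT quotient. Since we proved in \cite{GHH-2016} that stable $=$ semistable, the quotient map $q\colon \Hilb^n(X[n]/C[n])^{ss}\to I^n_{X/C}$ is a good quotient which is geometric, so $I^n_{X/C}$ has at worst finite quotient singularities coming from the stabilizers of $G[n]=(\Gm)^n$; in particular it is normal and $\mathbb{Q}$-factorial, and the central fibre, being the quotient of the reduced Cartier divisor $(\Hilb^n(X[n]/C[n])^{ss})_0$, is a reduced divisor. The more delicate point is the behaviour of discrepancies under the quotient. Here I will use the standard fact that for a pair $(Y,D)$ with $Y$ smooth and $D$ snc and a finite (or more generally good geometric torus) quotient $\pi\colon Y\to Y/G$ with $\pi^{-1}(D/G)=D$ as sets, the pair $(Y/G, (D/G)_{\mathrm{red}})$ is dlt — this is essentially because a torus acting on a smooth variety with snc invariant divisor is locally toric, and toric pairs with reduced boundary are log canonical, with the dlt/snc locus being exactly the image of the free locus. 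Concretely, I will show the $G[n]$-action on $\Hilb^n(X[n]/C[n])^{ss}$ is, étale-locally near any semistable point, equivariantly isomorphic to a linear torus action on affine space preserving the coordinate hyperplanes defining $\Delta$; this is where Li's expansion pays off, since the torus acts by rescaling the gluing parameters, i.e.\ diagonally in suitable coordinates.

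Third, I will combine these to conclude: on the smooth snc locus upstairs the pair is visibly snc, and it maps to the snc locus of $(I^n_{X/C}, (I^n_{X/C})_0)$; the complement has codimension $\geqslant 2$ in each boundary component, and the toric-quotient computation shows every exceptional divisor over $I^n_{X/C}$ has discrepancy $\geqslant -1$, with equality forcing the centre to meet the snc locus. I expect the main obstacle to be the second step — making the ``étale-locally toric'' claim precise, i.e.\ producing $G[n]$-equivariant local coordinates on $\Hilb^n(X[n]/C[n])^{ss}$ in which the action is diagonal and the boundary $\Delta$ is coordinate hyperplanes. This requires understanding the deformation theory of a length-$n$ subscheme spread over several components of an expanded fibre, keeping track of how the individual $\Gm$ factors act on the smoothing parameters of each node and on the normal directions to the exceptional bubbles; the restriction to relative dimension $\leqslant 2$ is what keeps the Hilbert scheme smooth and hence makes this local model genuinely toric rather than merely a quotient of something singular.
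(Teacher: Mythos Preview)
Your overall architecture is right --- prove the upstairs pair $(\mathcal{H},\mathcal{H}_0)$ is snc, then descend through the geometric quotient --- and your argument for normality, $\QQ$-factoriality, and reducedness of the central fibre is correct. But there is a genuine error in Step~1 and a methodological divergence in Step~2 that you should be aware of.

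\textbf{The boundary is misidentified.} You define $\Delta\subset\Hilb^n(X[n]/C[n])^{ss}$ as the locus of subschemes meeting the singular locus of the fibres of $X[n]\to C[n]$, and claim this is the snc boundary divisor. But by the stability criterion, semistable subschemes have \emph{smooth support}, so your $\Delta$ is empty. The correct boundary is $\mathcal{H}_0=\phi_h^{-1}(0)$, where $\phi_h\colon\mathcal{H}\to C[n]\to C$ is the composition; this is the preimage under the smooth map $\varphi_h\colon\mathcal{H}\to C[n]$ of the union of coordinate hyperplanes $\{t_i=0\}$, and is snc for that reason alone. The components of $\mathcal{H}_0$ are not cut out by ``a length-$1$ piece lying on a double divisor'' but by the vanishing of the $t_i$, i.e.\ by which coordinate strata of $C[n]$ the base point lies over. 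This misidentification would derail your local coordinate description in Step~2.

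\textbf{The dlt argument differs from the paper.} You propose to show the $G[n]$-action is \'etale-locally a linear torus action with the boundary given by coordinate hyperplanes, and then invoke that toric pairs with reduced boundary are lc/dlt. The paper does not attempt this. Instead it takes a Luna slice $W_P$ at each point, so that $W_P/G_P\to I^n_{X/C}$ is \'etale with $G_P$ finite; since $(W_P,(W_P)_0)$ is snc, the finite quotient $(V_P,(V_P)_0)$ is lc by the standard ramified-cover result (Koll\'ar, \emph{Singularities of the MMP}, Cor.~2.43), hence so is $I^n_{X/C}$. For dlt, the paper argues by contradiction: if an lc centre $Z$ avoided the snc locus and met exactly $s$ components $D_1,\dots,D_s$, then $\codim Z>s$ (since every stratum $\bigcap D_i$ meets the snc locus nontrivially --- this is proved by exhibiting a stable subscheme with trivial stabilizer on each stratum); but lifting $Z$ through the \'etale map and the finite cover to the snc pair $(W_P,(W_P)_0)$ gives an lc centre of codimension $\leqslant s$, a contradiction. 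Your toric route may well be workable once the boundary is correctly identified (the stabilizer $G_P$ is finite abelian, hence its action on the slice linearizes, and the $F_i=E_i\cap W_P$ are $G_P$-invariant), but you would still need to check that the invariant snc divisor becomes coordinate hyperplanes in the linearized chart; the paper's codimension argument bypasses this.
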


In order to establish this result (Theorem \ref{thm:dltmodel} in the text), we first observe that the GIT semi-stable locus $ \mathrm{Hilb}^n(X[n]/C[n])^{ss} $ forms a semi-stable degeneration when viewed as a scheme over the base curve $C$, thus $ \mathrm{Hilb}^n(X[n]/C[n])^{ss} \to C $ is, in particular, a dlt model. We then perform a careful analysis of the $G[n]$-action to ensure that the dlt property persists when passing to the GIT quotient $I^n_{X/C}$ (this is far from obvious, due to the presence of (finite) non-trivial stabilizer groups).

A natural, closely related, question we consider is when $ I^n_{X/C} \to C $ is minimal; a projective dlt model $Y \to C$ is called a good minimal model if $Y$ is $\mathbb{Q}$-factorial and if the divisor $ K_Y + (Y_0)_{\mathrm{red}} $ is semi-ample over $C$. We show in Corollary \ref{cor:goodminimal} that our Hilbert scheme degeneration has the desirable property of preserving minimality: 

\begin{corollary}
Assume that the strict simple degeneration $ X \to C $ is a good minimal dlt model. Then also $ I^n_{X/C} \to C $ is a good minimal dlt model. 
\end{corollary}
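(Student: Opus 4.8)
The plan is to deduce the corollary directly from the preceding theorem, using the fact that ``good minimal'' is a statement about the semi-ampleness of the log canonical divisor $K_Y + (Y_0)_{\mathrm{red}}$ over $C$, together with the explicit understanding of the canonical sheaf of $I^n_{X/C}$ that the GIT construction provides. By Theorem~\ref{thm:dltmodel} (the first theorem above), we already know that $I^n_{X/C} \to C$ is a dlt model, so $I^n_{X/C}$ is normal, $\mathbb{Q}$-factorial, and $(I^n_{X/C})_0$ is a reduced divisor; it remains only to show that $K_{I^n_{X/C}} + ((I^n_{X/C})_0)_{\mathrm{red}}$ is semi-ample over $C$ under the hypothesis that $K_X + (X_0)_{\mathrm{red}}$ is semi-ample over $C$. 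Note that one also needs projectivity of $I^n_{X/C}$ over $C$, which should already be recorded (the GIT quotient of a projective-over-$C$ scheme is projective over $C$).

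First I would reduce to a statement about Li's expanded degeneration. The point is that the canonical bundle formula for the relative Hilbert scheme of points on a family of surfaces expresses $K_{\mathrm{Hilb}^n}$ (or rather the relative dualizing sheaf, or its reflexive powers) in terms of the pullback of the relative canonical sheaf of the family of surfaces, via the Hilbert--Chow morphism to the symmetric product and the relation between $\mathrm{Hilb}^n$ and $\Sym^n$. Concretely, for a smooth surface $S$ one has $K_{\mathrm{Hilb}^n(S)}$ pulled back (up to the exceptional boundary) from $K_S^{\boxtimes n}$ on $\Sym^n(S)$; in the relative logarithmic setting one expects $K_{\mathrm{Hilb}^n(X[n]/C[n])} + (\text{boundary})$ to be the pullback of a log canonical divisor built from $K_{X[n]/C[n]} + (\text{boundary of }X[n])$. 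Since Li's expansion $X[n] \to C[n]$ is a composition of blow-ups along loci lying over $0$ and is itself a semistable-type degeneration, and since log smoothness of $X \to C$ (inherent in the snc/strict simple degeneration hypothesis) is preserved under these modifications, the log canonical divisor of $X[n]$ over $C[n]$ is the pullback of that of $X$ over $C$ up to components supported on $X[n]_0$; semi-ampleness over $C$ of $K_X + (X_0)_{\mathrm{red}}$ then propagates to $K_{X[n]/C[n]} + (X[n]_0)_{\mathrm{red}}$ being semi-ample over $C[n]$, hence over $C$ after restricting to the relevant slice.

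Next I would push this through the GIT quotient. The construction realises $I^n_{X/C}$ as $\mathrm{Hilb}^n(X[n]/C[n])^{ss}/\!\!/G[n]$ with $G[n]$ a torus, and the semistable locus equals the stable locus, so the quotient map $\pi\colon \mathrm{Hilb}^n(X[n]/C[n])^{ss} \to I^n_{X/C}$ is a geometric quotient with at worst finite stabilisers. Because the torus is special and reductive, a $G[n]$-linearised divisor class that is semi-ample over $C$ upstairs descends: more precisely, on the stable locus the canonical class (suitably interpreted as a $\mathbb{Q}$-divisor class, using $\mathbb{Q}$-factoriality and the finite-stabiliser descent already exploited in the proof of Theorem~\ref{thm:dltmodel}) of $\mathrm{Hilb}^n(X[n]/C[n])^{ss}$ plus its reduced central fibre is the $G[n]$-linearised pullback of $K_{I^n_{X/C}} + ((I^n_{X/C})_0)_{\mathrm{red}}$, and a power of a $G[n]$-invariant section generating semi-ampleness upstairs descends to a section downstairs. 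Semi-ampleness relative to $C$ is preserved because the quotient morphism commutes with the structure maps to $C$ and is affine-ish enough (a good quotient) that global generation over $C$ is inherited. This yields that $K_{I^n_{X/C}} + ((I^n_{X/C})_0)_{\mathrm{red}}$ is semi-ample over $C$, which is exactly the good minimal model condition.

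The main obstacle I anticipate is the bookkeeping of the boundary and exceptional divisors when comparing log canonical divisors along the Hilbert--Chow morphism and along Li's iterated blow-ups, i.e. verifying that no ``extra'' non-semi-ample contributions appear on the central fibre: one has to check that all the divisors introduced on $X[n]_0$ (and correspondingly on $\mathrm{Hilb}^n(X[n]/C[n])_0$, and after quotienting on $(I^n_{X/C})_0$) enter the log canonical divisor with coefficient making $K + D_{\mathrm{red}}$ pull back cleanly — this is essentially the semistable-in-codimension-one analysis already carried out for Theorem~\ref{thm:dltmodel}, but now one needs it compatibly with the semi-ampleness datum rather than just discrepancies. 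A secondary subtlety is the passage from $\mathbb{Q}$-Cartier semi-ampleness on the singular quotient $I^n_{X/C}$ to an honest statement: one must ensure the descended $\mathbb{Q}$-divisor is the log canonical divisor and not merely $\mathbb{Q}$-linearly equivalent to it over the smooth locus, which again uses $\mathbb{Q}$-factoriality plus the fact that both sides agree in codimension one. Once these compatibilities are in place, the corollary follows formally.
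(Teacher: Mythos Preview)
Your overall strategy is in the right neighbourhood, but it diverges from the paper's argument in a way that leaves a real gap.

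The paper's route is the following. Since both $X_0$ and $(I^n_{X/C})_0$ are reduced and principal, one may replace the log canonical divisor by the canonical divisor itself and the task becomes: $K_X$ semi-ample over $C$ implies $K_{I^n_{X/C}}$ semi-ample over $C$ (Proposition~\ref{prop:semi-ample}). The paper does \emph{not} try to compare canonical bundles across the quotient map $\pi\colon \mathcal{H}\to I^n_{X/C}$. Instead it uses the open inclusion $\Hilb^n(X^{sm}/C)\subset I^n_{X/C}$ with codimension-$2$ complement (Proposition~\ref{prop:GIT-open}). On this open set, Beauville's diagram relates $K_{W^{[n]}}$ to $K_{W^n}$ explicitly, and sections of powers of $K_{W^n}$ are manufactured as tensor products of sections pulled back from $X$ along the $G[n]$-invariant projection $X[n]\to X$ (Lemmas~\ref{lemma:non-vanish0}--\ref{lemma:non-vanish2}); being pulled back from $X$ is what makes them $G[n]$-invariant, and being symmetric tensor products is what makes them $\mathfrak{S}_n$-invariant. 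These descend to sections on the open set, which then extend over the codimension-$2$ boundary by normality. Nowhere does one need to identify $K_{\mathcal{H}}$ with $\pi^*K_{I^n_{X/C}}$.

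Your proposal instead asserts that $K_{\mathcal{H}}+(\mathcal{H}_0)_{\mathrm{red}}$ is the $G[n]$-linearised pullback of $K_{I^n_{X/C}}+((I^n_{X/C})_0)_{\mathrm{red}}$, and that semi-ampleness then descends. The first claim is exactly where the difficulty lies: $\pi$ is not a torsor, it has non-trivial finite stabilisers, so the comparison of canonical bundles across $\pi$ involves ramification-type contributions that you have not computed. You flag this as ``bookkeeping'', but it is not addressed by the discrepancy analysis in Theorem~\ref{thm:dltmodel}; that theorem controls log canonical centres, not the precise identity of $\pi^*K_{I^n_{X/C}}$ versus $K_{\mathcal{H}}$. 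The paper's device of restricting to the free locus (where $\pi$ \emph{is} a torsor) and then extending across codimension $2$ is what makes this issue disappear, and it is the main idea you are missing. Your approach could likely be salvaged by inserting exactly this step, at which point it would collapse into the paper's argument.
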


The combinatorial structure of the special fibre $ (I^n_{X/C})_0 $ is recorded in its \emph{dual complex} $\mathcal{D}((I^n_{X/C})_0)$. Generalizing the well known construction for an snc divisor, the dual complex $\mathcal{D}(E)$ of a dlt divisor $E = \sum_{i \in I} E_i$ is built up from gluing $d$-dimensional simplices corresponding to the connected components of the various $(d+1)$-fold intersections of the $E_i$-s, where $ 1 \leqslant d < \vert I \vert$ (see Subsection \ref{subsec:dualcomplexdef} for a precise definition). By our assumption that $ X \to C $ is a strict simple degeneration, its dual complex is a graph $\Gamma$ (which we refer to as the \emph{dual graph}). Our next main result, Theorem \ref{thm:dualcomplex-hilb}, yields the following description of the dual complex attached to $ I^n_{X/C} \to C $:  

\begin{theorem}
The dual complex $\mathcal{D}((I^n_{X/C})_0) $ is isomorphic, as a $\Delta$-complex, to the $n$-th symmetric product $ \mathrm{Sym}^n(\Gamma)$ of $\Gamma$. 
\end{theorem}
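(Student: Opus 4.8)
The plan is to compute $\mathcal{D}((I^n_{X/C})_0)$ directly from the explicit GIT model of $I^n_{X/C}$ of \cite{GHH-2016}, by describing the stratification of the special fibre and then recognising the resulting combinatorics as a standard $\Delta$-complex structure on $\Sym^n(\Gamma)$. Over $0 \in C$, a fibre of Li's expansion $X[n] \to C[n]$ is $X_0$ with, over each node $p_e$ ($e$ an edge of $\Gamma$), a chain of $\ell_e \geqslant 0$ inserted ruled components $\Delta^e_1, \dots, \Delta^e_{\ell_e}$. A GIT-semistable length-$n$ subscheme lies in the smooth locus of such a fibre, and the numerical semistability criterion of \cite{GHH-2016} forces its shape: a multiplicity $a_w \in \ZZ_{\geqslant 0}$ on each original component $V_w$ ($w$ a vertex of $\Gamma$) and a positive multiplicity on each $\Delta^e_j$, with total $n$. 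I would then show, passing to the GIT quotient, that this data labels the locally closed strata of $(I^n_{X/C})_0$ --- the torus $G[n]$ collapses each chain direction, so a stratum is remembered by $a$ together with, for each edge $e$, the ordered tuple of multiplicities along its chain (a finite, possibly empty, sequence of positive integers) --- and that the specialisation relations among strata are generated by two elementary moves: absorbing an extremal chain component $\Delta^e_1$ or $\Delta^e_{\ell_e}$ into its neighbouring $V_w$, and splitting a chain component in two. Ruling out further incidences requires the delicate analysis of the $G[n]$-action and its finite stabilisers already needed for Theorem~\ref{thm:dltmodel}. In particular the irreducible components of $(I^n_{X/C})_0$ are indexed by the functions $a \colon W(\Gamma) \to \ZZ_{\geqslant 0}$ with $\sum_w a_w = n$, i.e.\ by the vertices of $\Sym^n(\Gamma)$.

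Next I would match this combinatorics with $\Sym^n(\Gamma)$. Equip $\Sym^n(\Gamma) = \Gamma^{\times n}/S_n$ with the $\Delta$-complex structure descended from the $S_n$-equivariant shuffle triangulation of $\Gamma^{\times n}$; unwinding, a $d$-simplex is a multiplicity function on the vertices together with $d$ positive-size clusters placed in edge-interiors and linearly ordered, with total $n$, and its faces push an extremal cluster onto an adjacent vertex or merge neighbouring clusters. This matches the previous step on the level of sets of simplices; it then remains to check that (i) the linear order that upgrades the "one chain per edge" data to a genuine simplex is precisely the order in which the corresponding components $E_i$ are met --- equivalently, that the way $(I^n_{X/C})_0$ fails to be snc triangulates the would-be prodsimplicial cells into exactly the shuffle simplices --- and (ii) the face maps agree. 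The outcome is an isomorphism of $\Delta$-complexes $\mathcal{D}((I^n_{X/C})_0) \xrightarrow{\sim} \Sym^n(\Gamma)$.

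The main obstacle is the first step together with point (i): extracting the precise incidence structure of $(I^n_{X/C})_0$ through the GIT quotient, with the nontrivial finite stabilisers accounted for --- exactly what makes this richer than the classical snc computation --- and seeing that it reproduces the finer shuffle $\Delta$-structure on $\Sym^n(\Gamma)$ rather than merely the prodsimplicial one. Once the incidences are understood, identifying the face posets is combinatorial bookkeeping. When in addition $X \to C$ is a good minimal dlt model, Corollary~\ref{cor:goodminimal} provides an independent check: $\mathcal{D}((I^n_{X/C})_0)$ is then the essential skeleton of $\Hilb^n(X_\eta)$, which one expects to be $\Sym^n$ of the essential skeleton of $X_\eta$, namely $\Sym^n(\Gamma)$.
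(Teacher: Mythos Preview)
Your overall strategy---label the strata of $(I^n_{X/C})_0$ by combinatorial data coming from the expanded fibres and then match this with the shuffle $\Delta$-structure on $\Sym^n(\Gamma)$---is sound, and is in spirit what the paper does. But your description of the expanded fibres, and hence of the strata, is wrong in a way that makes the matching fail.

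In Li's expansion $X[n]\to C[n]$, a fibre over a point with vanishing set $I\subset[n+1]$ of size $r$ has \emph{the same} number $r-1$ of inserted ruled components along \emph{every} edge of $\Gamma$; there is a single global ``level'' index $1\le i\le r-1$. You allow the chain length $\ell_e$ to depend on $e$, and you record for each edge a sequence of \emph{positive} multiplicities. The correct data is a pair $(a,r)$ with $a=(a_v)_{v\in V}\in\ZZ_{\ge 0}^{V}$ and $r=(r_{\gamma,i})$ indexed by edges $\gamma$ and global levels $1\le i\le k$, subject to $\sum_v a_v+\sum_{\gamma,i}r_{\gamma,i}=n$ and, crucially, $\sum_\gamma r_{\gamma,i}>0$ for each $i$ (individual $r_{\gamma,i}$ may vanish). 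Likewise your face moves ``absorb $\Delta^e_1$ into $V_w$'' and ``split a chain component'' act edge by edge; the actual face maps act on all edges at once: $d_0$ absorbs level $1$ into the source endpoints for every $\gamma$ simultaneously, $d_i$ adds levels $i$ and $i{+}1$ for every $\gamma$, and $d_k$ absorbs level $k$ into the targets. With your per-edge model you cannot distinguish, say for two edges $e,e'$ and $n=2$, the $2$-cell with $(r_{e,\bullet},r_{e',\bullet})=((1,0),(0,1))$ from the one with $((0,1),(1,0))$, yet these are different simplices in $\Sym^2(\Gamma)$.

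Once you correct this, your direct computation becomes exactly the combinatorial Lemma~\ref{lemma:graphsym}. The paper chooses a more conceptual route: it first computes the dual complex for the $n$-fold product quotient $P^n_{X/C}$, where the strata are transparently $n$-tuples of vertices in $\Gamma_I$ and the cube/shuffle picture is immediate, obtaining $\mathcal{D}((P^n_{X/C})_0)\cong\Gamma^n$; then it passes to $J^n_{X/C}=P^n_{X/C}/\mathfrak{S}_n$ to get $\Sym^n(\Gamma)$; finally it shows the Hilbert--Chow map $I^n_{X/C}\to J^n_{X/C}$ induces an isomorphism of dual complexes. This detour buys exactly the global-level bookkeeping that your per-edge description loses, and it makes point~(i) of your proposal automatic rather than something to be checked by hand.
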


Our proof of this result is conceptual, and it might be useful to briefly explain the strategy here. First of all, we observe that the GIT analysis and results in \cite{GHH-2019} can be easily extended to the $n$-fold product $ X[n] \times_{C[n]} \ldots \times_{C[n]} X[n] $ and the symmetric product $\mathrm{Sym}^n(X[n]/C[n]) $, giving GIT quotients, denoted $P^n_{X/C}$ and $J^n_{X/C}$, respectively, which are related by a $\mathfrak{S}_n $-quotient map $P^n_{X/C} \to J^n_{X/C}$. By a direct computation, we show that $\mathcal{D}((P^n_{X/C})_0) \cong (\Gamma)^n $, and that the formation of the dual complex commutes with the $\mathfrak{S}_n $-action so that $\mathcal{D}((J^n_{X/C})_0) \cong \mathrm{Sym}^n(\Gamma) $. On the other hand, we also show that the Hilbert-Chow morphism on the smooth fibres extends to a map $ I^n_{X/C} \to J^n_{X/C} $, which, in particular, induces an isomorphism of dual complexes, from which the theorem follows.

\vspace{0.2 cm}

Note that most of the results explained so far do not make strong assumptions on $X \to C$ apart from the requirement that the fibre dimension is at most $2$. However, in the case where $ X \to C $ is, say, a (projective) type II degeneration of K3 surfaces, it is natural to wonder how the symplectic structure of $ \mathrm{Hilb}^n(X_c) $, with $X_c$ a general smooth fibre, degenerates as $c$ tends to $ 0 $ in $ C $. In order to address this question, we work with the stack quotient $\mathcal{I}^n_{X/C}$ rather than the GIT quotient $I^n_{X/C}$. The reason for this is that $ \mathcal{I}^n_{X/C} \to C $ is a semi-stable degeneration of DM-stacks, hence comes equipped with sheaves of relative logarithmic $d$-forms which are locally free for all $d \geqslant 0$. In this setting, we can prove that the symplectic $2$-form in the generic fibre extends to a log $2$-form in the family. We remark that the presence of quotient singularities prevents $ I^{n}_{X/C} \to C $ to be semistable, or, more generally, log smooth.

\begin{proposition}
The stack $\mathcal{I}^n_{X/C}$ is proper and semi-stable over $C$, and carries, if $K_{X/C}$ is trivial, an everywhere non-degenerate relative logarithmic $2$-form.
\end{proposition}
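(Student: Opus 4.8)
The plan is to build the log $2$-form in three stages: first on Li's expanded degeneration $X[n] \to C[n]$, then on the Hilbert scheme $\mathrm{Hilb}^n(X[n]/C[n])$ (equivalently, on the expanded product $\PrXC$ and its symmetric analogue), and finally descend to the stack quotient $\mathcal{I}^n_{X/C}$. Since $K_{X/C}$ is trivial and $X \to C$ is a type II degeneration of $K3$ surfaces, the relative logarithmic canonical sheaf $\omega_{X/C}(\log X_0) = \OO_X$ is trivial; concretely, a relative holomorphic symplectic form $\sigma$ on the generic fibre $X_c$ extends to a nowhere-degenerate section of $\Omega^2_{X/C}(\log X_0)$, which on the double curve $D = (X_0)_{\mathrm{sing}}$ restricts compatibly on the two components. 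The first step is to check that this form pulls back to a relative log $2$-form on each expansion $X[n]$: Li's expansions are obtained by iterated semistable blow-ups, which in the log-smooth category are log étale, so $\Omega^2_{X[n]/C[n]}(\log)$ is (the pullback of) $\Omega^2_{X/C}(\log)$ up to the combinatorial base change $C[n] \to C$, and non-degeneracy is preserved because it is an open condition checked fibrewise on the smooth locus of each component of an expanded fibre.

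Next I would pass from surfaces to their Hilbert schemes of points. Here the key input is the classical fact (Beauville, Fujiki) that for a smooth surface $S$ with a symplectic form, $\mathrm{Hilb}^n(S)$ carries an induced symplectic form, obtained by descending the exterior box-product form $\mathrm{pr}_1^*\sigma + \cdots + \mathrm{pr}_n^*\sigma$ on $S^n$ through the $\mathfrak{S}_n$-quotient $S^n \dashrightarrow \mathrm{Sym}^n(S)$ and across the Hilbert–Chow resolution. I would carry out the relative, logarithmic version of this over $C[n]$: the box-sum of pullbacks of the log $2$-form on $X[n]$ gives an $\mathfrak{S}_n$-invariant relative log $2$-form on $\PrXC$; it descends to the symmetric product, hence (after the GIT/stack analysis of \cite{GHH-2016}, extended to products as indicated in the paper) to $\mathcal{J}^n_{X/C}$; and via the Hilbert–Chow morphism $\mathcal{I}^n_{X/C} \to \mathcal{J}^n_{X/C}$ — which is an isomorphism over the open locus of reduced subschemes and a crepant-type resolution elsewhere — one pulls it back to a relative log $2$-form on $\mathcal{I}^n_{X/C}$. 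Because $\mathcal{I}^n_{X/C} \to C$ is semi-stable as a family of DM stacks (this is asserted in the proposition and follows from the GIT stability analysis: stable subschemes sit on the smooth locus of expanded fibres, and the stack quotient removes the finite stabilizers that obstruct semistability of $I^n_{X/C}$), the sheaf $\Omega^2_{\mathcal{I}^n_{X/C}/C}(\log (\mathcal{I}^n_{X/C})_0)$ is locally free of the expected rank, so a $2$-form defined on the complement of a codimension-$\geqslant 2$ locus extends uniquely.

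For non-degeneracy, I would argue fibrewise. Over $C \setminus \{0\}$ the form restricts to the Beauville symplectic form on $\mathrm{Hilb}^n(X_c)$, which is non-degenerate by construction. Over $0$, the central fibre of $\mathcal{I}^n_{X/C}$ is built (étale-locally) out of pieces that are Hilbert schemes of points on the smooth components of the expanded $K3$ fibre, glued along boundary strata; the relative log $2$-form restricts on each such piece to the Beauville form associated to the log-symplectic form on that smooth surface-with-boundary, and one checks that the log-symplectic (rather than merely symplectic) nature exactly accounts for the contributions along the boundary divisors where components meet. So non-degeneracy is again the statement that the induced pairing $\Omega^1 \to \Omega^{2n-1}$ (wedging with $\sigma^{n-1}$, or the top exterior power of $\sigma$) is an isomorphism, which holds on the smooth locus of each component and propagates by flatness/local-freeness.

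The main obstacle I anticipate is the compatibility of the descent with the GIT quotient at points with nontrivial (finite) stabilizers — precisely the issue the paper already flags for the dlt property. One must verify that the $\mathfrak{S}_n$-invariant, $G[n]$-invariant relative log $2$-form on $\mathrm{Hilb}^n(X[n]/C[n])^{ss}$ (or on the product/symmetric models) genuinely descends to a \emph{locally free} log $2$-form on the stack $\mathcal{I}^n_{X/C}$, and that the logarithmic poles match up correctly under the Hilbert–Chow morphism, which contracts divisors over the big diagonal. The cleanest route is probably to stay on the stack throughout: $\mathcal{I}^n_{X/C}$ is a quotient stack $[\mathrm{Hilb}^n(X[n]/C[n])^{ss}/G[n]]$, and invariant log forms on a smooth semistable family descend to log forms on the quotient stack with no loss, so the whole argument can be phrased as: (i) the box-sum construction on the smooth semistable family $\PrXC$ (restricted to the stable locus) is $G[n] \times \mathfrak{S}_n$-invariant; (ii) it descends to the stack; (iii) the Hilbert–Chow map of stacks pulls it back; (iv) non-degeneracy is checked on the dense open smooth-fibre locus and extended by local freeness. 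Verifying (ii) and the local freeness in (iv) rigorously — i.e. that taking invariants commutes with forming $\Omega^2(\log)$ in this setting — is where the real work lies.
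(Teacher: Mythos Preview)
Your strategy is workable in principle but takes a substantially longer route than the paper, and one of your steps is shakier than you acknowledge.

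The paper bypasses the expanded degeneration entirely when constructing the form. Its argument runs as follows: by Proposition~\ref{prop:GIT-open}, $\Hilb^n(X^{sm}/C)$ sits inside $\mathcal{I}^n_{X/C}$ as an open (representable) substack whose complement has codimension~$2$. On this open piece one applies Beauville's construction \emph{relatively over $C$ to the original family} $X^{sm}\to C$, obtaining a $2$-form $\theta$ with $\theta^n$ nowhere vanishing. Since $\mathcal{I}^n_{X/C}\to C$ is semi-stable (hence log smooth), the sheaf of relative log $2$-forms is locally free on any \'etale chart $E$; as $E$ is regular and $E\setminus E^\circ$ has codimension~$2$, the pullback $\theta_{E^\circ}$ extends uniquely to a section $\theta_E$ of $\Omega^2_{E^+/C^+}$. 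For non-degeneracy one observes that $\theta_E^n$ is a section of the \emph{line bundle} $\Omega^{2n}_{E^+/C^+}$ which is nowhere zero on $E^\circ$; its vanishing locus would be a divisor, hence empty. No fibrewise analysis of the central fibre is needed.

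By contrast, you carry the form through $X[n]$, the $n$-fold product, the $\mathfrak{S}_n$-quotient, and the Hilbert--Chow map. This can be made to work, but your step ``descend to $\mathcal{J}^n_{X/C}$, then pull back along Hilbert--Chow'' is the weak point: the symmetric-product side is singular along the diagonal, so there is no locally free sheaf of log differentials there to pull back from, and one ends up having to run a codimension-$2$ extension argument anyway (essentially the Beauville diagram with the blow-up $\widetilde{W^n_*}$, as in \S\ref{subsec:minimalmodels}). Once you accept that extension step, the entire detour through $X[n]$, $G[n]$-invariance, and the product/symmetric comparison becomes unnecessary: the paper's point is precisely that the codimension-$2$ inclusion of $\Hilb^n(X^{sm}/C)$ together with log smoothness of the stack already suffices. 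Your approach buys nothing extra and costs the bookkeeping you flag in your final paragraph.
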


\subsection{Related work}
We would like to take the opportunity in this paragraph to comment on related results, by several groups of authors, that have appeared during the writing up of this paper.

First of all, Nagai presents in his recent papers \cite{nagai-2018} and \cite{nagai-2018} an entirely  different approach to the construction of our Hilbert scheme degeneration $ I^n_{X/C} \to C $, with $ X \to C $ a strict simple degeneration as usual. Using toric methods, he describes the local structure of the singularities in $ \mathrm{Sym}^n(X/C) $, building on the analysis (in \cite{nagai-2018}) in the case of the local model $ \mathbb{A}^3 \to \mathbb{A}^1; (x,y,z) \mapsto xy $. Based on this, Nagai goes on to produce an explicit $\mathbb{Q}$-factorial terminalization $ Y^{(n)} \to \mathrm{Sym}^n(X/C) $, and, in his main result \cite[Theorem 4.3.1]{nagai-2018}, exhibits an isomorphism $ I^n_{X/C} \to Y^{(n)} $ over $C$. 

In \cite{KLSV-2018}, Koll\'ar \emph{et.~al.} study (among other things) minimal dlt models of $2n$-dimensional Hyperk\"ahler manifolds. Most relevant to our results, is that they establish a strong result linking properties of the dual complex $\mathcal{D}(\mathcal{Z}_0)$ of the special fibre $ \mathcal{Z}_0 $ of a minimal dlt model $ \mathcal{Z}/\Delta $ and the monodromy action on the second cohomology $H^2$ of a general fibre of $ \mathcal{Z} $. More precisely, in \cite[Theorem 0.10]{KLSV-2018} they prove that the dimension of the geometric realization of $\mathcal{D}(\mathcal{Z}_0)$ is $0$, $n$ and $2n$, if and only if the index of nilpotency of the log monodromy operator on $H^2$ is $1$, $2$ or $3$ respectively. Starting with a type II degeneration $ X \to C $ of K3 surfaces, our degeneration $ I^n_{X/C} \to C $ yields a dual complex of dimension $n$, and it is indeed well known that the nilpotency index is $2$ in this case.

If $ Y \to C $ is a minimal dlt model, the dual complex $\mathcal{D}(Y_0)$ can also be identified with the so-called \emph{essential skeleton} associated with the Berkovich analytification of the generic fibre $Y_{\eta}$. The essential skeleton is, in fact, intrinsic to $Y_{\eta}$, and can be studied without reference to an explicit minimal dlt model. In \cite{BM17}, Brown and Mazzon study the formation of the essential skeleton under products and group quotients of varieties, by means of tools from Berkovich geometry and log geometry. Most relevant to this paper is their result \cite[Corollary 6.2.3]{BM17}, which states that for a K3 surface $ X $ over a non-Archimedean field $K$, and with semi-stable reduction over the ring of integers $\mathcal{O}_K$, the essential skeleton of $\mathrm{Hilb}^n(X) $ is PL homeomorphic to the $n$-th symmetric product of the essential skeleton of $X$. (By \cite[Corollary 6.1.4]{HN-2018}, the latter object is PL isomorphic to either a point, a closed interval, or to the standard $2$-sphere.) The reader might want to compare their result with Theorem \ref{thm:Hilb-skeleton-main} in this paper, which provides an alternative proof based on our construction of explicit dlt models in Section \ref{sec:dlt}, in the case of a type II degeneration of K3 surfaces.

\subsection{Organization of the paper}
We end this introduction with a short overview of the paper. First of all, in order to make the paper self-contained, we provide in Section \ref{sec:GIT} a brief overview of the GIT construction in \cite{GHH-2019}. In particular, we recall Li's expansions $X[n] \to C[n] $ of a strict simple degeneration $ X \to C $, as well as the description of the GIT (semi-)stable locus in $ \mathrm{Hilb}^n(X[n]/C[n]) $. In Section \ref{sec:symnew}, we extend the GIT analysis in \cite{GHH-2019} to the $n$-fold product, resp.~the $n$-th symmetric product, of $ X[n] \to C[n]$, and obtain GIT quotients $P^n_{X/C}$, resp.~$J^n_{X/C}$.  These objects are related by a Hilbert-Chow type morphism $I^n_{X/C} \to J^n_{X/C}$ and a $\mathfrak{S}_n $-quotient map $P^n_{X/C} \to J^n_{X/C}$, respectively. Section \ref{sec:strata} is devoted to a careful investigation of the degenerate fibres of the GIT quotients $I^n_{X/C}$, $J^n_{X/C}$ and $P^n_{X/C}$. We construct a stratification of each of these fibres, and we describe how their irreducible components intersect. While being somewhat tedious to derive, these results are essential for all subsequent work in this paper. In Section \ref{sec:dlt} we establish the key result, Theorem \ref{thm:dltmodel}, saying that $I^n_{X/C} \to C$ forms a dlt model. We moreover prove in Corollary \ref{cor:goodminimal} that minimality of $ X \to C $ passes on to $I^n_{X/C} \to C$. In Section \ref{sec:dual-computation}, we prove that the three degenerations $I^n_{X/C}$, $J^n_{X/C}$ and $P^n_{X/C}$ all carry dual complexes. We show by explicit computation in Proposition \ref{prop:dualp} that $\mathcal{D}(P^n_{X/C}) \cong (\Gamma)^n $ (with $\Gamma$ the dual graph of $ X \to C $) and derive from this in Proposition \ref{prop:duals} that $\mathcal{D}(J^n_{X/C}) \cong \mathrm{Sym}^n(\Gamma)$. Lastly, we show in Theorem \ref{thm:dualcomplex-hilb} that the Hilbert-Chow morphism induces an isomorphism of dual complexes. We moreover give some applications to the Berkovich essential skeleton of the $n$-th Hilbert scheme attached to the generic fibre of a type II degeneration of K3 surfaces. We address in Section \ref{sec:symplectic} the question of how the symplectic structure degenerates along with our Hilbert scheme degenerations. Working on the stack quotient (rather than the GIT quotient) we exhibit a nowhere degenerate relative logarithmic $2$-form. Finally, in Section \ref{sec:nagai} we compare, for $n=2$, our construction with that in \cite{nagai-2008}.

\subsection{Convention}

Throughout this paper, we work over an algebraically closed field $\kk$ of characteristic $0$. Unless otherwise specified, a \emph{point} of a $\kk$-scheme of finite type always means a closed point. In Section \ref{sec:GIT} (except Proposition \ref{prop:GIT-open}) and Section \ref{sec:symnew}, we allow the degeneration family $X \to C$ to have arbitrary relative dimension; from Section \ref{sec:strata} until Section \ref{sec:dual-computation}, we assume that the relative dimension is at most $2$; in Section \ref{sec:symplectic} and Section \ref{sec:nagai}, we restrict further to the case of relative dimension equal to $2$.

\subsection{Acknowledgements}
LHH would like to thank E.~Mazzon and J.~Nicaise for useful discussions
and their interest in this project. MGG thanks the Research Council of
Norway for partial support under grant 230986. KH is grateful to DFG for
partial support under grant Hu 337/7-1. Finally we thank the referee for
their careful reading and useful suggestions.


\section{Review of the GIT construction}\label{sec:GIT}

We start with the basic objects which we will study in this paper.

\begin{definition}\label{def:simple}
	A \emph{strict simple degeneration} over a smooth curve $C$ is a flat morphism $f \colon X\to C$
	from a smooth algebraic space $X$ to $C$, such that
	\begin{enumerate}
		\item[\rm{(i)}] $f$ is smooth outside the central fibre $X_0 = f^{-1}(0)$,
		\item[\rm{(ii)}]  the central fibre $X_0$ has normal crossing singularities and its
		      singular locus $D\subset X_0$ is smooth,
		\item[\rm{(iii)}]  all components of $X_0$ are smooth.
	\end{enumerate}
\end{definition}
     
The last condition in this definition is equivalent to the assumption that
there are no  self-intersections of components of $X_0$. 
Let $\Gamma(X_0)$ be the dual graph  of the central fibre: the vertices of this graph are given by the components of $X_0$ and 
the edges correspond to the irreducible components of the singular locus of $X_0$.
The assumption that $X_0$ has no self-intersections is then equivalent to saying that the graph $\Gamma(X_0)$ has no loops.
Our assumptions imply that $X_0$ has no triple intersections. In terms of degenerations of $K3$ surfaces, one of the main motivations of our paper, this
is saying that we consider type II degenerations, but not type III degenerations.     

For what we want to do, we will further need to choose an {\em orientation}  on the dual graph $\Gamma(X_0)$.
Given such an orientation, we can associate to  any degeneration $ f \colon X \to C$ and any non-negative integer $n$  the {\em expanded degenerations} $ f[n] \colon X[n]\to C[n]$, which were introduced by Li \cite{li-2001} and
extensively studied by Li and Wu \cite{LW-2011}.  
For a discussion of this construction in the context we are concerned with, we refer the reader to \cite[Section 1]{GHH-2019}.  
The assumption that $\Gamma(X_0)$ has no loops implies that $X[n]$ is a scheme provided $X$ is a scheme \cite[Proposition 1.9]{GHH-2019}.
Moreover, if we start with a projective degeneration $X \to C$, then $X[n]\to C[n]$ is projective if and only if the dual graph $\Gamma(X_0)$ has no directed cycles
\cite[Proposition 1.10]{GHH-2019}. 
We also recall that $X[n]\to C[n]$ admits a natural action
of  the $n$-dimensional split torus $G[n]$.

In what follows, we will always assume that the morphism $f:X \to C$ is projective. We must also make the additional assumption that the associated 
dual graph $\Gamma(X_0)$ is {\em bipartite}, i.e., the set of vertices admits a partition into two disjoint subsets such that none of these subsets contains two adjacent vertices. 
A bipartite partition is equivalent to the choice of an
orientation on $\Gamma(X_0)$ such that no vertex is at the same time a source and a target, and exists if and only if $\Gamma(X_0)$ contains 
no cycles of odd length. In this case there are two possible orientations which define a bipartite partition and the two choices differ by reverting all arrows. 

We will from now on assume that $\Gamma(X_0)$ is a bipartite graph and that we have chosen one of the two possible bipartite orientations. 
Although our restriction to degenerations with bipartite graphs imposes a condition on the degenerations which we consider, this is not essential; by \cite[Remark 1.17]{GHH-2019}
one can always perform a quadratic base change on $X \to C$ in order to fulfill this condition. Also, it is irrelevant which of the  two possible orientations one chooses, as they produce isomorphic expanded degenerations by \cite[Proposition 1.11]{GHH-2019}.   
   
The crucial point of \cite{GHH-2019} is the construction of a relatively ample line bundle $\sheaf{L}$ on $X[n] \to C[n]$ together with a $G[n]$-linearization.
The next step is to consider the relative Hilbert scheme $\Hilb^n(X[n]/C[n])$. 
We denote by $\mathcal{Z} \subset
\Hilb^n(X[n]/C[n]) \times_{C[n]} X[n]$ the universal family and by $p$ and
$q$ the first and second projection from $\mathcal Z$ onto the first and second factor  respectively. Then the line bundle
\begin{equation*}
 \sheaf{M}_{\ell} :=
 \mathrm{det}~p_* \left( q^*\sheaf{L}^{\tensor \ell} \right) 
\end{equation*}
is relatively ample when $\ell \gg 0$.  We choose one such $\ell$ and remark that the final construction will not depend on this choice.
The line bundle $ \sheaf{M}_{\ell}$ inherits a $G[n]$-linearization from $\sheaf{L}$.
The degenerations constructed in \cite{GHH-2019} are obtained by a GIT construction. 
For this let $\Hilb^n(X[n]/C[n])^s$ and $\Hilb^n(X[n]/C[n])^{ss}$ be the sets of stable and semi-stable points respectively. It was shown in \cite[Theorem 2.10]{GHH-2019}
that these two sets coincide, see Theorem \ref{thm:GIT-main} below. This theorem also shows that they do not depend on the choice of  $\ell \gg 0$.
We then set
\begin{equation*}
	I^n_{X/C} = \Hilb^n(X[n]/C[n])^{ss}/G[n].
\end{equation*} 
By construction, and the fact that $C[n]/G[n]=C$, we have a morphism $I^n_{X/C} \to C$. We denote by  $X^* \to C^*$  the family away from the origin and similarly for 
$(I^n_{X/C})^*\to C^*$. Then $\Hilb^n(X^*/C^*) \cong (I^n_{X/C})^*$ and in this way $I^n_{X/C} \to C$ can be viewed as a degeneration of the 
Hilbert schemes of the smooth fibres of $X\to C$.
 
A crucial point of this construction is that one has a good understanding of the stability condition: one can formulate an explicit criterion, see \cite[Theorem 2.10]{GHH-2019},
which allows one to determine the (semi-)stable locus explicitly.  In order to recapitulate this we first have to recall the geometry of the 
map $X[n] \to C[n]$. Starting with a local  \'etale coordinate $t$ on $C$ one obtains coordinates $t_i, i \in \{1, \ldots, n+1 \}$ on $C[n]$.
For notational convenience we shall from now on write $[n+1]= \{1, \ldots, n+1 \}$.  
For any subset $I=\{i_1, \ldots, i_r\} \subset[n+1]$ we denote by $C[n]_I$ the locus in $C[n]$ where the coordinates $t_i, i\in I$ vanish and by $X[n]_I$ the preimage 
of $C[n]_I$ in $X[n]$. The structure of the fibre over a point $q\in C[n]$ is  determined by the number of 
coordinate functions which vanish on $q$. To describe this, we recall from \cite[Section 1]{GHH-2019} the following construction. Given the graph $\Gamma=\Gamma(X_0)$ and any subset $I \subset [n+1]$, we 
construct a new graph $\Gamma_I$ as follows:
if $I = \varnothing$, then $\Gamma_I$ contains a single vertex without any arrow; otherwise, we replace each arrow
\begin{equation*}
	\stackrel{v}{\bullet} \xrightarrow{\gamma} \stackrel{v'}{\bullet}
\end{equation*}
in $\Gamma$ with $|I|$ arrows labelled by $I$ in ascending order in the
direction of the arrow:
\begin{equation*}
	\stackrel{v_I}{\bullet} \xrightarrow{i_1} \circ \xrightarrow{i_2} \circ \to \cdots
	\xrightarrow{i_r} \stackrel{v_I'}{\bullet}.
\end{equation*}
(So here $r=|I|$ and $i_1<i_2<\cdots < i_r$ are the elements in $I$.) 
Note that we no longer demand  the new graph  $\Gamma_I$ to be bipartite.
We colour the old nodes black and the new ones white. 
The valence
of a white node is $2$, and  that of a  black node is unchanged from $\Gamma$.
We label the black nodes $v_I$, where $v$ is the corresponding node
in $\Gamma$ and label the white nodes $(I,\gamma,i_{\ell})$, $\ell =1, \ldots r-1$. 
Finally, we extend
the notation by letting
\begin{equation*}
	(I,\gamma,0) = v_I, \quad (I,\gamma,\max I) = (I,\gamma,i_r) = v'_I.
\end{equation*}
In this notation \cite[Proposition 1.12]{GHH-2019} can be rephrased as follows:
\begin{proposition}
The following holds:
\begin{enumerate}
\item[{\rm (i)}] As an algebraic space over $\kk$, $X[n]_I$ is a union of nonsingular components with normal crossings.
\item[{\rm (ii)}] The dual graph of $X[n]_I$ is canonically isomorphic to the graph   $\Gamma_I$.
\end{enumerate}
\end{proposition}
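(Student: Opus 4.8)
The plan is to reduce both assertions to an \'etale-local computation and then reassemble the local data along the graph $\Gamma$. Recall from \cite[Section 1]{GHH-2016} (following Li \cite{li-2001} and Li--Wu \cite{LW-2011}) that the expansion is trivial away from the singular locus $D \subset X_0$, so that there $X[n] \iso X \times_C C[n]$ locally, whereas near a point of $D$ lying on an edge $\gamma$ of $\Gamma$ the morphism $X[n] \to C[n]$ is \'etale-locally the product of a smooth factor $\AA^{d}$, $d = \dim D$, with the iterated toric blow-up of $\{xy = t_1 \cdots t_{n+1}\} \to \AA^{n+1}$ performed as dictated by the chosen orientation. Since being a union of nonsingular components with normal crossings is an \'etale-local condition, and since the dual graph is assembled from local incidence data together with the global shape of $\Gamma$, it is enough to describe the preimage of $C[n]_I = \{t_i = 0 : i \in I\}$ in these two models and to record how the pieces glue.

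First I would treat the smooth part. For $I \neq \emptyset$ the product $t = t_1 \cdots t_{n+1}$ vanishes on $C[n]_I$, so locally $X[n] \times_{C[n]} C[n]_I \iso X_0 \times C[n]_I$. By Definition \ref{def:simple}(iii) each component of $X_0$ is smooth, and these pull back to the nonsingular components of $X[n]_I$ indexed by the black vertices $v_I$ of $\Gamma_I$, with mutual crossings inherited from the normal crossings of $X_0$ along $D$.

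Next I would carry out the toric computation at a node. Writing out the fan of the iterated blow-up exhibits a chain of $\PP^1$-bundles over $D$ sitting between the strict transforms of the two sheets $\{x = 0\}$ and $\{y = 0\}$ of $X_0$; intersecting with $\{t_i = 0 : i \in I\}$ then cuts out, again \'etale-locally and up to the smooth factor, a chain of $|I| + 1$ smooth toric components. The two end components are the pullbacks of the sheets $\{x = 0\}$ and $\{y = 0\}$ --- the black vertices $v_I$, $v_I'$ --- while the $|I| - 1$ interior members are $\PP^1$-bundles over $D$, consecutive members meeting transversally along a smooth divisor; the order in which the interior components occur is the ascending order of the elements of $I$, which is exactly the order of the corresponding blow-ups. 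Hence the dual graph of this local piece is precisely the subdivided edge
\[
  \stackrel{v_I}{\bullet} \xrightarrow{i_1} \circ \xrightarrow{i_2} \circ \to \cdots \xrightarrow{i_r} \stackrel{v_I'}{\bullet}, \qquad r = |I|,
\]
with new white vertices $(I,\gamma,i_\ell)$, $\ell = 1, \ldots, r-1$, and in particular this local piece is a nonsingular normal crossings scheme.

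Finally I would glue: the black components $v_I$ are global --- they are the components of $X_0$ pulled back to $C[n]_I$ --- and each edge $\gamma$ of $\Gamma$ contributes the chain above, attached to $v_I$ and $v_I'$, so that the dual graph of $X[n]_I$ is obtained from $\Gamma$ by subdividing every edge into $|I|$ edges. This is exactly $\Gamma_I$ as constructed above, which proves (ii), while (i) has been verified in every chart. The main obstacle is the explicit toric bookkeeping in the node chart --- writing down the fan of the $n$-fold iterated blow-up and checking that the chosen orientation yields precisely the ascending labelling of the chain --- but this is routine, and indeed the whole statement is just the translation of \cite[Proposition 1.12]{GHH-2016} into the present notation.
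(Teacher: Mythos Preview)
Your proposal is correct and, as you yourself note at the end, the statement is just a rephrasing of \cite[Proposition 1.12]{GHH-2016}; the paper gives no proof at all beyond that citation. Your sketch of the \'etale-local toric computation is the standard argument and matches what is carried out in the cited reference, so there is nothing to compare.
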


In view of this description we can write $X[n]_I$ as a union of
irreducible components indexed by the vertices of $\Gamma_I$. We
next introduce notation for these components.

Let $Y_v$ denote the component of $X_0$ indexed by the vertex $v$ in
$\Gamma(X_0)$. We denote by $(Y_v)_I$ the component of $X[n]_I$
corresponding to the (black) vertex  $v_I$ in $\Gamma_I$. This is in
fact the component which is mapped birationally onto $Y_v \times_C
C[n]_I$ under the birational map $X[n]\to X\times_C C[n]$. For the
remaining (white) vertices in $\Gamma_I$, let $i\in I$ be the label of
the arrow pointing into that vertex (so $i<\max I$). Then write
$\Delta^{\gamma,i}_I$ for the corresponding component in $X[n]_I$.

For non-empty $I$ it is useful to introduce the synonyms
\begin{equation*}
	\Delta^{\gamma,0}_I = (Y_v)_I, \quad\Delta^{\gamma,\max I}_I= (Y_{v'})_I
\end{equation*}
for the black vertex components. We also define $\Delta^i_I$ as the union of all $\Delta^{\gamma,i}_I$ where $\gamma$ runs through all arrows in $\Gamma$.

We now return to the stability criterion.
Let $[Z] \in \Hilb^n(X[n]/C[n])$ be represented by a subscheme $Z \subset X[n]_q$ for some point
$q \in C[n]$. Then we define the set  
\begin{equation*}
	I_{[Z]} = \{ i \mid t_i(Z) = 0 \} \subset [n+1].
\end{equation*} 
We write $I_{[Z]}=\{a_1, \ldots, a_r\}$ and for notational convenience we also set $a_0=1$ and
$a_{r+1}=n+1$.  Thus we obtain a vector ${\mathbf a}=(a_0, \ldots, a_{r+1}) \in
\ZZ^{r+2}$, which in turn determines a vector $\mathbf{v}_{\mathbf{a}} \in
\ZZ^{r+1}$ whose $i$-th component is $ a_i - a_{i-1} $. The vector $\mathbf{v}_{\mathbf{a}}$ is called the \emph{combinatorial support} of $Z$.

We say that $Z$ has {\em smooth support} if $Z$ is supported on the smooth part of $X[n]_{I_{[Z]} }$.    
When $Z$ has smooth support, then there exists for each $P \in
\mathrm{Supp}(Z)$ a unique integer $0 \leqslant i(P) \leqslant r$ such that $P \in
\Delta^{a_{i(P)}}_{I_{[Z]}}$.
The \emph{numerical support} of $Z$ is then defined as the tuple  
	\begin{equation*}
		\mathbf{v}(Z) = \sum_P n_P  \mathbf{e}_{i(P)} \in \mathbb{Z}^{r+1}, 
	\end{equation*}
where $n_P$ is the multiplicity of the point $P$ in $Z$, and $\mathbf{e}_{i(P)}$ denotes the $i(P)$-th standard basis vector of $
	\mathbb{Z}^{r+1}$.
In this way the numerical support keeps track of the distribution of
the underlying cycle of $Z$ on the strata $\Delta^{a_i}_{I_{[Z]}}$, for $0 \leqslant i \leqslant r$. 

We can now rephrase \cite[Theorem 2.10]{GHH-2019} as follows

\begin{theorem}\label{thm:GIT-main}
A point $[Z] \in \Hilb^n(X[n]/C[n])  $ is stable if and only if it has smooth support and its combinatorial and numerical support coincide: $\mathbf{v}(Z)=\mathbf{v}_{\mathbf{a}}$.
All semi-stable points are stable.
\end{theorem}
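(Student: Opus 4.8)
The natural approach is via the Hilbert--Mumford numerical criterion applied to the $G[n]$-linearized relatively ample bundle $\sheaf{M}_{\ell}$. The plan is first to reduce the question to a weight computation: a point $[Z]\in\Hilb^n(X[n]/C[n])$, lying over $q\in C[n]$ and represented by $Z\subset X[n]_q$, is semi-stable (resp.\ stable) for $\sheaf{M}_{\ell}$ if and only if the weight $\mu^{\sheaf{M}_{\ell}}(\lambda,[Z])$ is non-negative (resp.\ positive) for every nontrivial one-parameter subgroup $\lambda$ of $G[n]$. Since $G[n]\cong\Gm^n$ acts on $C[n]=\AA^{n+1}$ by scaling coordinates and $\Hilb^n(X[n]/C[n])$ is proper over $C[n]$, the limit $\lim_{s\to0}\lambda(s)\cdot[Z]$ exists exactly when $\lim_{s\to0}\lambda(s)\cdot q$ does; the admissible cocharacters thus form a rational polyhedral cone $\sigma_I$ depending only on $I=I_{[Z]}$, and one need only test those. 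Among them, the subgroups that fix $q$ form a sublattice $N_I$, and the remaining ones push $q$ into a deeper stratum $C[n]_{I'}$ with $I'\supsetneq I$.

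The second step is to evaluate $\mu^{\sheaf{M}_{\ell}}(\lambda,[Z])$ explicitly. Because $\sheaf{M}_{\ell}=\det p_*(q^*\sheaf{L}^{\otimes\ell})$, the weight of $\lambda$ on the fibre of $\sheaf{M}_{\ell}$ at the limit point $[Z_0]=\lim_{s\to0}\lambda(s)\cdot[Z]$ is a sum of local contributions, one for each point of $\mathrm{Supp}(Z_0)$ counted with multiplicity, each equal to $\ell$ times the $\lambda$-weight of $\sheaf{L}$ at the limit of that point plus a term bounded independently of $\ell$. The $G[n]$-linearization of $\sheaf{L}$ from \cite{GHH-2016} is arranged so that this $\lambda$-weight is an affine-linear function of the index $i$ of the stratum $\Delta^{a_i}_I$ carrying the point; summing, the leading term of $\mu^{\sheaf{M}_{\ell}}(\lambda,[Z])$ in $\ell$ becomes $\ell\,\langle\lambda,\mathbf{v}(Z)-\mathbf{v}_{\mathbf{a}}\rangle$, under the natural pairing between cocharacters and the rank-$r$ lattice $\{\text{coordinate sum }0\}\subset\mathbb{Z}^{r+1}$ (with $r=|I|$). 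In particular the sign of the weight is independent of $\ell\gg0$, so the (semi-)stable locus does not depend on $\ell$; and, provided $Z_0$ has smooth support, this leading term is the whole weight.

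The third step combines necessity of smooth support, the coincidence of the two supports, and the equality of the stable and semi-stable loci. If $Z$ does not have smooth support, some point of $\mathrm{Supp}(Z)$ lies on a double locus of $X[n]_I$; a suitably chosen $\lambda\in\sigma_I$ resolving this point into one of the two adjacent components drives $[Z]$ to a limit with $\mu^{\sheaf{M}_{\ell}}(\lambda,[Z])<0$, so $[Z]$ is not semi-stable. If instead $Z$ has smooth support, then testing the subgroups in $N_I$ — on which the pairing with $\mathbf{v}(Z)-\mathbf{v}_{\mathbf{a}}$ is perfect — shows that $\mu^{\sheaf{M}_{\ell}}(\lambda,[Z])=\ell\langle\lambda,\mathbf{v}(Z)-\mathbf{v}_{\mathbf{a}}\rangle>0$ for all nonzero such $\lambda$ precisely when $\mathbf{v}(Z)=\mathbf{v}_{\mathbf{a}}$. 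Finally, once $\mathbf{v}(Z)=\mathbf{v}_{\mathbf{a}}$ the cycle is rigidly balanced, so for any $\lambda$ moving $q$ into a deeper stratum the limit cycle has strictly imbalanced supports and hence strictly positive weight; thus semi-stability of such a $[Z]$ already forces stability, and no strictly semi-stable points occur.

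The main obstacle is the bookkeeping in the second step: one must verify that the local $\lambda$-weights of the chosen linearization of $\sheaf{L}$ are exactly the affine-linear functions of the stratum index needed for the contributions to assemble into the symmetric expression $\mathbf{v}(Z)-\mathbf{v}_{\mathbf{a}}$ rather than some lopsided variant, and that the $\ell$-bounded error terms genuinely never affect the sign. This is precisely where the careful construction of the relatively ample bundle $\sheaf{L}$ and of its $G[n]$-linearization in \cite{GHH-2016} does the work, and for that reason we simply invoke \cite[Theorem 2.10]{GHH-2016}, of which the statement above is a reformulation.
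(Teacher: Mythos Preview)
Your proposal is correct and matches the paper's own treatment: the paper does not prove this theorem at all but simply states it as a rephrasing of \cite[Theorem 2.10]{GHH-2016}, which is exactly what you invoke at the end. Your added sketch of the Hilbert--Mumford weight computation accurately reflects the argument carried out in that reference.
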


Finally, assume that the dimension $d$ of the fibres of $f \colon X \to C $ is at most $2$. We write $ X^{sm} $ for the smooth locus of $ f $. Below we give a comparison of the scheme $I^n_{X/C}$ to $\Hilb^n(X^{sm}/C)$; this result is fundamental to many applications in this paper. Before giving the precise statement, we first recall that both $\Hilb^n(X^{sm}/C) \to C $ and $\Hilb^n(X[n]^{sm}/C[n]) \to C[n]$, where $ X[n]^{sm} $ denotes the smooth locus of $f[n]$, are smooth of relative dimension $ d n $.

\begin{proposition}\label{prop:GIT-open}
Assume that the dimension of the fibres of $ X \to C $ is at most two. Then there is an open inclusion $\Hilb^n(X^{sm}/C) \subset I^n_{X/C}$ whose complement has codimension $2$.
\end{proposition}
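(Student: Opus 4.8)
The plan is to exhibit the open inclusion $\Hilb^n(X^{sm}/C) \subset I^n_{X/C}$ explicitly and then estimate the codimension of the complement by analyzing which strata of $(I^n_{X/C})_0$ are missed. First I would recall that away from the origin $I^n_{X/C}$ agrees with $\Hilb^n(X^*/C^*) = \Hilb^n(X^{sm,*}/C^*)$, so the inclusion already holds over $C^* = C \setminus \{0\}$ and only needs to be constructed over $0 \in C$. Over the origin, a point of $\Hilb^n(X^{sm}/C)$ is a length-$n$ subscheme $Z$ supported on the smooth locus $X_0^{sm} = X_0 \setminus D$. I would show such a $Z$ corresponds to a GIT-stable point of $\Hilb^n(X[n]/C[n])$ lying over the point $q \in C[n]$ with $I_{[Z]} = \{1\}$ (i.e.\ $t_1 = 0$, all other $t_i \neq 0$): indeed $X[n]_{\{1\}} \cong X_0$, its smooth locus is $X_0^{sm}$, so $Z$ has smooth support, and since $I = \{1\}$ gives $r = 0$, the combinatorial support $\vv_{\mathbf a} \in \ZZ^1$ and the numerical support $\vv(Z) \in \ZZ^1$ both equal $(n)$, so the stability criterion of Theorem~\ref{thm:GIT-main} is satisfied automatically. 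The $G[n]$-orbit of such a point is free (there is no expansion), so the quotient map is a local isomorphism there, giving the open immersion $\Hilb^n(X^{sm}/C) \into I^n_{X/C}$.

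Next I would identify the complement. A point of $(I^n_{X/C})_0$ not in the image corresponds to a stable $[Z]$ with $I_{[Z]} \supsetneq \{1\}$, equivalently $r = |I_{[Z]}| - 1 \geqslant 1$, so $Z$ lives on an expansion $X[n]_I$ with at least one extra $\PP^1$-bundle-type component inserted, or on $X_0$ but meeting $D$. I expect the complement to be a union of locally closed strata indexed by the nontrivial combinatorial types, and I would bound each stratum's dimension using the stratification results of Section~\ref{sec:strata} (which the excerpt says describe the degenerate fibre and how its components meet) together with the relative dimension count: both $\Hilb^n(X^{sm}/C) \to C$ and $\Hilb^n(X[n]^{sm}/C[n]) \to C[n]$ are smooth of relative dimension $dn$, and $I^n_{X/C} \to C$ has $dn$-dimensional fibres, so $I^n_{X/C}$ has dimension $dn + 1$. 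A stratum where $t_i = 0$ for $i \in I$ with $|I| = r+1$ lives over a codimension-$r$ locus of $C[n]$, but the torus $G[n]$ cuts the dimension back down; after quotienting, the constraint that forces $Z$ onto the inserted bridge components (the numerical support must match the combinatorial support $\vv_{\mathbf a}$, which has $r+1$ strictly positive entries summing to $n$) costs enough dimensions that each such stratum has codimension $\geqslant 2$ in $I^n_{X/C}$. The worst case $r = 1$ (a single extra bridge component, with $Z$ having at least one point on that component) should already be codimension exactly $2$: the inserted component is a $\PP^1$-bundle over $D$ of dimension $d$, losing one dimension versus a generic fibre component, and the requirement of having a point on it versus near $D$ in $X_0^{sm}$ accounts for the second.

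The main obstacle will be the careful bookkeeping of the $G[n]$-quotient: a priori, landing over a codimension-$r$ stratum $C[n]_I$ of the base looks like it should raise codimension by $r$, but the $r$-dimensional subtorus acting on the fibre direction and the shift in which components $Z$ is allowed to occupy (Theorem~\ref{thm:GIT-main} pins $\vv(Z)$ to $\vv_{\mathbf a}$) conspire to bring the net codimension down — I need to show it never drops below $2$, and is exactly $2$ somewhere. Concretely I would stratify $\Hilb^n(X[n]^{sm}/C[n])^{ss}$ by combinatorial type $\vv_{\mathbf a}$, compute the dimension of each piece and of its $G[n]$-quotient image in $I^n_{X/C}$ using the explicit component description from Section~\ref{sec:strata}, and check the minimum codimension is $2$. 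An alternative, perhaps cleaner, route that avoids the most delicate count: the Hilbert--Chow morphism $I^n_{X/C} \to J^n_{X/C}$ and the description of $J^n_{X/C}$ as a GIT quotient of $\Sym^n(X[n]/C[n])$ reduce the statement to the (symmetric product of the) analogous statement for $X[n]/C[n]$ itself, i.e.\ to the observation that $X^{sm}/C \subset X[n]^{ss}/G[n]$ has complement of codimension one in a $(d+1)$-fold, which upon passing to $\Sym^n$ and comparing with $\Hilb^n$ — smooth of the same dimension over the smooth fibres — yields codimension $2$ after the degree-$n$ gain; I would use whichever of these is shorter once the Section~\ref{sec:strata} stratification is in hand.
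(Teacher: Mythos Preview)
There is a genuine gap in your construction of the open inclusion. For $|I|=1$, say $I=\{i\}$, the paper's convention has $r=|I|=1$ (not $0$), so the combinatorial support $\vv_{\mathbf a}$ lies in $\ZZ^{2}$ and equals $(i-1,\,n+1-i)$. Stability over this stratum is therefore \emph{not} automatic: it forces exactly $i-1$ points of $Z$ on source components of $X_0$ and $n+1-i$ on target components. Lifting only to $I=\{1\}$ captures just the subschemes supported entirely on target components, not all of $\Hilb^n(X_0^{sm})$; to hit an arbitrary $Z$ with $a$ points on source components you must lift to $I=\{a+1\}$. (Relatedly, your description of the complement as ``$I_{[Z]}\supsetneq\{1\}$, or on $X_0$ but meeting $D$'' is off: stable points never meet $D$, and the correct condition is $|I_{[Z]}|\geqslant 2$, which need not involve the index~$1$.)

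The paper avoids this fibrewise patching by working globally with $X[n]^{tr}$, the preimage of $X^{sm}$ under $X[n]\to X$. Since the modifications building $X[n]$ lie over the singular locus of $X_0$, one has $X[n]^{tr}\cong X^{sm}\times_C C[n]$ with $G[n]$ acting through the $C[n]$ factor; thus $\Hilb^n(X[n]^{tr}/C[n])$ is a $G[n]$-invariant open set whose quotient is $\Hilb^n(X^{sm}/C)$, and the open inclusion into $I^n_{X/C}$ drops out of GIT. The different singletons $I=\{i\}$ are handled uniformly and automatically.

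Your codimension plan is correct in spirit but unnecessarily elaborate. The paper simply observes that the complement in $\mathcal{H}^{ss}$ consists of stable subschemes having at least one support point on an inserted component, and inserted components only occur over the locus in $C[n]$ where at least two coordinates $t_i$ vanish, which has codimension~$2$. Since $\mathcal{H}^{ss}\to C[n]$ is smooth, the complement has codimension~$2$ upstairs, and a Luna slice transports this to $I^n_{X/C}$. No stratification of the degenerate fibre, no bookkeeping of the torus action on individual strata, and no detour through the Hilbert--Chow morphism are needed.
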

\begin{proof}
We denote the pullback of $X^{sm}$ under the map $X[n]\to X$ by $X[n]^{{tr}}$. 
Then  $X[n]^{{tr}}$ is an open subset of $X[n]^{sm}$, the relative Hilbert scheme $\Hilb^n(X[n]^{{tr}}/C[n])$ is $G[n]$-invariant, and the 
stabilizers of all points are trivial. By the stability criterion in Theorem \ref{thm:GIT-main} we moreover obtain an open inclusion 
$$\Hilb^n(X[n]^{{tr}}/C[n]) \subset \Hilb^n(X[n]/C[n])^{ss}. $$
Using \cite[Proposition 0.2]{GIT}, one finds that the quotient
$$\Hilb^n(X[n]^{{tr}}/C[n])/G[n] \subset \Hilb^n(X[n]/C[n])^{ss}/G[n]=I^n_{X/C}$$
is an open subscheme which is naturally isomorphic to $\Hilb^n(X^{sm}/C)$.
The complement $ \mathcal{C} $ of 
$\Hilb^n(X[n]^{{tr}}/C[n])$ in $\Hilb^n(X[n]/C[n])^{ss}$ consists of subschemes of length $n$ in fibres of $ \Hilb^n(X[n]/C[n])^{ss} \to C[n] $ with at least one point in its support belonging to an 
inserted component $ \Delta^{\gamma,i_{\ell}}_I$.  As the inserted components lie over points in $C[n]$ where at least two coordinate functions $t_i$ vanish, it follows that $\mathcal{C}$ has codimension $2$ in $\Hilb^n(X[n]/C[n])^{ss}$.  
By taking a Luna slice we then see that the same holds for 
$$\Hilb^n(X[n]^{{tr}}/C[n])/G[n]  \cong \Hilb^n(X^{sm}/C)$$ 
in $I^n_{X/C}$.
\end{proof}


\section{Comparison to the symmetric product}\label{sec:symnew}

In this section we look at similar GIT constructions on the $n$-fold product and the symmetric product of the family $f[n]: X[n] \to C[n]$, and establish the relation among the semi-stable loci of the relative Hilbert scheme and the above two families.

\subsection{Semi-stable loci of the symmetric product}

We write 
$$f: X \to C$$
and 
$$ f[n]: X[n] \to C[n] $$
for the corresponding expanded degeneration.

As constructed in \cite[Lemma 1.18]{GHH-2019}, we have a $G[n]$-linearized line bundle $\cL$ on $X[n]$, relatively ample over $C[n]$. The symmetric group $\mathfrak{S}_n$ acts on the $n$-fold product $\PrXC$ by permuting its factors, which leads to the following quotient map
\begin{equation}
\label{eqn:tau}
\tau: \PrXC \to \SymXC.
\end{equation}
The following functorial construction gives us a relatively ample line bundle on $\SymXC$.

\begin{lemma}
\label{lem:ampcon}
$\cL$ induces a $G[n]$-linearized line bundle $\cM$ on
$$ \SymXC, $$
relatively ample over $C[n]$.
\end{lemma}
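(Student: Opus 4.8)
The plan is to obtain $\cM$ by descending an exterior tensor power of $\cL$ along the quotient map $\tau$ of \eqref{eqn:tau}, and then to deduce relative ampleness from the finiteness of $\tau$. Write $P = \PrXC$, so that $\tau \colon P \to \SymXC$ is the quotient by the factor-permuting $\mathfrak{S}_n$-action; recall $\tau$ is finite and surjective, and $\mathcal{O}_{\SymXC} = (\tau_*\mathcal{O}_P)^{\mathfrak{S}_n}$. On $P$ I would consider the exterior tensor power $\cL^{\boxtimes n} := \bigotimes_{i=1}^n \pr_i^*\cL$, where $\pr_i \colon P \to X[n]$ is the $i$-th projection. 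Since each $\pr_i$ is equivariant for the diagonal $G[n]$-action on $P$, the bundle $\pr_i^*\cL$ inherits a $G[n]$-linearization, and tensoring over $i$ equips $\cL^{\boxtimes n}$ with a $G[n]$-linearization. It also carries the tautological $\mathfrak{S}_n$-linearization coming from permutation of the factors (combining the canonical isomorphisms $\sigma^*\pr_i^*\cL \cong \pr_{\sigma^{-1}(i)}^*\cL$ with the reordering of tensor factors). These two linearizations commute, the $G[n]$-linearization being ``diagonal'' in the factors, so $\cL^{\boxtimes n}$ is $(G[n] \times \mathfrak{S}_n)$-linearized.

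The crucial step — and the only one that is not bookkeeping — is that the $\mathfrak{S}_n$-linearization of $\cL^{\boxtimes n}$ is trivial on the stabilizer of every geometric point of $P$. Indeed, if $p = (x_1,\dots,x_n)$ and $\sigma \in \mathfrak{S}_n$ fixes $p$, then $\sigma$ merely permutes indices $i$ sharing a common value $x_i$, and on the fibre $\bigotimes_{i} \cL_{x_i}$ it acts by permuting tensor factors that are literally equal one-dimensional vector spaces — an operation which is the identity, since the transposition of two copies of a line is the identity on their tensor square. By the standard descent criterion for quotients by finite groups (an equivariant sheaf descends once the stabilizers act trivially on fibres), $\cL^{\boxtimes n}$ descends to a line bundle $\cM := (\tau_*\cL^{\boxtimes n})^{\mathfrak{S}_n}$ on $\SymXC$, characterized uniquely by an $\mathfrak{S}_n$-equivariant isomorphism $\tau^*\cM \cong \cL^{\boxtimes n}$. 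Since the $G[n]$-linearization on $\cL^{\boxtimes n}$ commutes with the $\mathfrak{S}_n$-action, it descends along $\tau$ to a $G[n]$-linearization of $\cM$, covering the induced $G[n]$-action on $\SymXC$.

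It remains to check that $\cM$ is relatively ample over $C[n]$. First, $\cL^{\boxtimes n}$ is relatively ample on $P \to C[n]$, since $\cL$ is relatively ample on $X[n] \to C[n]$ and the exterior tensor product of relatively ample line bundles on a fibre product over $C[n]$ is again relatively ample (pass to relatively very ample powers and use the relative Segre embedding, inducting on the number of factors). As $\tau$ is finite and surjective and $\tau^*\cM \cong \cL^{\boxtimes n}$ is relatively ample over $C[n]$, it follows that $\cM$ is relatively ample over $C[n]$, because relative ampleness descends along finite surjective morphisms. I expect the main obstacle to be pinning down the triviality of the stabilizer action cleanly enough to invoke descent; the remaining ingredients are routine permanence properties of equivariant structures and of ampleness.
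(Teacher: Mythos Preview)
Your proof is correct and follows essentially the same approach as the paper: form $\cL^{\boxtimes n}$, verify that stabilizers act trivially on fibres, descend via Kempf's lemma, and then argue relative ampleness. The only minor difference is that the paper deduces ampleness of $\cM$ by restricting to affine opens of $C[n]$ and invoking \cite[Theorem 1.10(ii)]{GIT}, whereas you use descent of ampleness along the finite surjection $\tau$; both are standard and equivalent here.
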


\begin{proof}
	We define the $G[n]$-linearized line bundle
	$$ \cL^{\boxtimes n} = \cL \boxtimes \cL \boxtimes \cdots \boxtimes \cL $$
	on $ X[n] \times_{C[n]} X[n] \times_{C[n]} \cdots \times_{C[n]} X[n] $. Since $\cL$ is relatively ample, $\cL^{\boxtimes n}$ is also relatively ample. The symmetric group $\mathfrak{S}_n$ acts on $\PrXC$ and $\cL^{\boxtimes n}$ by permuting factors. It is clear that the stabilizer of any closed point acts trivially on the fibre of $\cL^{\boxtimes n}$ at that point. It follows from Kempf's descent lemma (see e.g. \cite[Theorem 2.3]{DN89}) that $\cL^{\boxtimes n}$ descends to a line bundle $\cM$ on $\SymXC$; namely, $\tau^*\cM \cong \cL^{\boxtimes n}$ is an $\mathfrak{S}_n$-equivariant isomorphism. 
	Indeed, such an $\cM$ is uniquely determined (see e.g. \cite[Section 3]{Tel00}). To show that $\cM$ is relatively ample, we cover $C[n]$ by affine open subschemes $U_i$'s. The ampleness of $\cM$ on each $\tau^{-1}(U_i)$ follows from \cite[Theorem 1.10(ii)]{GIT}. 
\end{proof}

\begin{remark}
We write any closed point $[Z] \in \SymXC$ as a positive linear combination of closed points in $X[n]$ in the form
$$ [Z]=\sum_P n_P[P].$$
Then the fibre of $\cM$ at $[Z]$ is canonically given by
$$ \cM([Z]) = \bigotimes_P \cL(P)^{n_P}. $$
\end{remark}

Now we consider the GIT quotient of $\SymXC$ by the group $G[n]$. For any closed point $[Z] \in \SymXC$, we say $Z$ has \emph{smooth support} if the condition in \cite[Definition 2.5]{GHH-2019} is satisfied. In such a case, we can define its \emph{numerical support} $\vv(Z)$ as in \cite[Definition 2.6]{GHH-2019} and its \emph{combinatorial support} $\vv_{\mathbf{a}}$ as in \cite[Section 2.3.3]{GHH-2019}.

The GIT analysis in \cite[Section 2]{GHH-2019} applies literally here. Parallel to \cite[Theorem 2.10]{GHH-2019}, we can conclude the following description of the GIT (semi-)stable locus:

\begin{proposition}
\label{prop:semi}
With respect to the $G[n]$-linearized line bundle $\cM$, we have
\begin{align*}
&\quad\ \SymXC^{ss}(\cM) = \SymXC^{s}(\cM) \\
&= \left\{ [Z] \in \SymXC \ \middle| \begin{array}{c} Z \text{ has smooth support} \\ \text{and } \vv(Z) = \vv_{\mathbf{a}} \end{array} \right\}.
\end{align*}
\end{proposition}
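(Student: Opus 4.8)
The plan is to rerun the Geometric Invariant Theory analysis of \cite[Section~2]{GHH-2016} that leads to Theorem~\ref{thm:GIT-main}, exploiting the fact that on the symmetric product the relevant linearized bundle $\cM$ is already relatively ample (Lemma~\ref{lem:ampcon}) and has the completely explicit fibrewise description recorded in the remark above. Since $G[n] \cong (\Gm)^n$ is a torus, the Hilbert--Mumford criterion reduces (semi)stability of a closed point $[Z] \in \SymXC$ to the sign of the Mumford weight $\mu^{\cM}([Z],\lambda)$ of the induced $\Gm$-action on the fibre of $\cM$ over the limit $\lim_{t\to 0}\lambda(t)\cdot[Z]$, as $\lambda$ ranges over the one-parameter subgroups of $G[n]$: with the conventions of \cite{GIT}, $[Z]$ is semistable if and only if $\mu^{\cM}([Z],\lambda) \geqslant 0$ for all $\lambda$, and stable if and only if in addition $\mu^{\cM}([Z],\lambda) > 0$ whenever the image of $\lambda$ is not contained in the stabilizer of $[Z]$. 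Everything therefore reduces to computing these weights, which I would do in three steps: reduce the computation to the $n$-fold product and then to a single factor; import from \cite{GHH-2016} the weight of $\cL$ at a single point of $X[n]$; and read off the stability criterion.

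\emph{Step 1 (reduction).} By Lemma~\ref{lem:ampcon} we have $\tau^*\cM \cong \cL^{\boxtimes n}$ as $\mathfrak{S}_n$-equivariant, $G[n]$-linearized bundles, with $\tau$ finite and $G[n]$-equivariant, and $G[n]$ commutes with the $\mathfrak{S}_n$-action. As pullback of a linearization along a finite equivariant morphism preserves the semistable and stable loci, $[Z]$ is (semi)stable for $(\cM, G[n])$ if and only if some, hence every, preimage $(P_1,\dots,P_n) \in \tau^{-1}([Z])$ is (semi)stable for $(\cL^{\boxtimes n}, G[n])$. Since the Mumford weight is additive over external tensor products, $\mu^{\cL^{\boxtimes n}}((P_1,\dots,P_n),\lambda) = \sum_{i=1}^n \mu^{\cL}(P_i,\lambda)$; equivalently, with $[Z] = \sum_P n_P[P]$ as in the remark above, $\mu^{\cM}([Z],\lambda) = \sum_P n_P\,\mu^{\cL}(P,\lambda)$, where $\mu^{\cL}(P,\lambda)$ is the weight of $\lambda$ on the fibre of $\cL$ over the $\lambda$-limit of $P$ in the relevant fibre of $X[n]\to C[n]$. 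Note that, in contrast with the Hilbert scheme case, here we use $\cL$ and its linearization from \cite[Lemma~1.18]{GHH-2016} directly, not a determinant bundle $\det p_*(q^*\cL^{\otimes\ell})$, so there is no auxiliary passage to $\ell \gg 0$.

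\emph{Steps 2 and 3 (single-point weight and the criterion).} The weight $\mu^{\cL}(P,\lambda)$ for $P \in X[n]$ lying over $q \in C[n]$ is precisely what is computed in \cite[Section~2]{GHH-2016}: combining the description of the $G[n]$-action on $X[n]$ recalled above with the chosen linearization of $\cL$, it is expressed through the combinatorial data of $I_{[Z]}$, of the component of $X[n]_{I_{[Z]}}$ through $P$, and of the index $i(P)$. From this point the argument follows \cite[Theorem~2.10]{GHH-2016} step for step: if some $P \in \operatorname{Supp}(Z)$ lies on the singular locus of $X[n]_{I_{[Z]}}$, a suitable coordinate one-parameter subgroup makes $\sum_P n_P\mu^{\cL}(P,\lambda)$ negative, so smooth support is necessary; and once $Z$ has smooth support, one checks that $\mu^{\cM}([Z],\lambda)\geqslant 0$ for all $\lambda$ exactly when $\vv(Z) = \vv_{\mathbf{a}}$, with strict inequality for every $\lambda$ whose image avoids the stabilizer of $q$, so that semistability forces stability. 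This last, purely combinatorial, part only ever uses the cycle-level weight $\sum_P n_P\mu^{\cL}(P,\lambda)$, common to the Hilbert scheme and the symmetric product, and so transcribes with only notational changes.

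The combinatorics being inherited verbatim, the only genuine points to check are the two reductions in Step~1: that descending $\cL^{\boxtimes n}$ along the finite quotient $\tau$ leaves the semistable, stable and strictly semistable loci unchanged, and that the fibrewise formula $\cM([Z]) = \bigotimes_P \cL(P)^{n_P}$ is compatible with taking $\lambda$-limits, in particular when several of the points $P$ degenerate to a common limit. Both are routine, but they are what make rigorous the claim that the analysis of \cite[Section~2]{GHH-2016} carries over; a full proof should spell them out and then invoke \cite{GHH-2016} for the rest.
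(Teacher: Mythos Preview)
Your proposal is correct and is essentially the paper's approach: the paper's one-line proof (``The same as \cite[Theorem~2.10]{GHH-2016}'') means exactly what you spell out, namely that the fibre formula $\cM([Z]) = \bigotimes_P \cL(P)^{n_P}$ gives $\mu^{\cM}([Z],\lambda) = \sum_P n_P\,\mu^{\cL}(P,\lambda)$, after which the Hilbert--Mumford computation and the combinatorics are identical to those in \cite{GHH-2016}. The one organizational difference is that your Step~1 detour through the $n$-fold product is unnecessary here and in fact reverses the paper's logic: the paper proves Proposition~\ref{prop:semi} directly on $\SymXC$ and only afterwards introduces the finite-quotient comparison (Lemma~\ref{lem:quot}) to \emph{derive} the product case (Corollary~\ref{cor:prod}) from it, so invoking that comparison in the proof of Proposition~\ref{prop:semi} would be mildly circular relative to the paper's ordering, though not mathematically so.
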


\begin{proof}
The same as \cite[Theorem 2.10]{GHH-2019}.
\end{proof}

Now we consider the following relative Hilbert-Chow morphism; see e.g. \cite[Paper III, Section 4.3]{Rydh}:
\begin{equation}
\label{eqn:HC}
\pi: \HilbXC \to \SymXC.
\end{equation}
It is clear that $\pi$ respects the $G[n]$-actions. Combining Proposition \ref{prop:semi} and \cite[Theorem 2.10]{GHH-2019} we have

\begin{corollary}
\label{cor:redo}
For any sufficiently large $\ell$, we have
\begin{align*}
&\quad\ \HilbXC^{ss}(\cM_\ell) \\
&= \HilbXC^{s}(\cM_\ell) \\
&= \pi^{-1}(\SymXC^{ss}(\cM)) \\
&= \pi^{-1}(\SymXC^{s}(\cM)) \\
&= \left\{ [Z] \in \HilbXC \ \middle| \begin{array}{c} Z \text{ has smooth support} \\ \text{and } \vv(Z) = \vv_{\mathrm{a}} \end{array} \right\}. \qed
\end{align*}
\end{corollary}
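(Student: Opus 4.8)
The plan is to assemble the five equalities from three inputs: Proposition \ref{prop:semi}, the original GIT criterion \cite[Theorem 2.10]{GHH-2016} (recalled as Theorem \ref{thm:GIT-main}), and the compatibility of the Hilbert--Chow morphism $\pi$ with the $G[n]$-action and with the chosen linearizations. The first equality, $\HilbXC^{ss}(\cM_\ell) = \HilbXC^{s}(\cM_\ell)$, is precisely the content of Theorem \ref{thm:GIT-main} once we know $\cM_\ell$ is the relevant polarization; and the equality with the last set (subschemes with smooth support and $\vv(Z)=\vv_{\mathbf a}$) is again Theorem \ref{thm:GIT-main}, after observing that the notions ``smooth support'', ``numerical support'' and ``combinatorial support'' of a length-$n$ subscheme $Z \subset X[n]_q$ depend only on the underlying $0$-cycle $\sum_P n_P[P]$, hence agree with the corresponding notions for $\pi([Z]) \in \SymXC$. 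So the genuine work is to show the two middle equalities, i.e.\ that $\HilbXC^{ss}(\cM_\ell) = \pi^{-1}(\SymXC^{ss}(\cM))$ and likewise for the stable loci.

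For this I would first pin down the relationship between the linearizations. By construction $\cM_\ell = \det p_*(q^* \cL^{\otimes \ell})$ on $\HilbXC$, while $\cM$ on $\SymXC$ is the descent of $\cL^{\boxtimes n}$ along $\tau$. The key point is that for $\ell \gg 0$ there is a $G[n]$-equivariant isomorphism (or at least an equality of $\QQ$-linearizations up to positive scaling) $\cM_\ell \cong \pi^*\cM \otimes (\text{something nef/trivial on }\pi\text{-fibres})$; more precisely, restricting over the smooth locus where Hilbert--Chow is an isomorphism they agree, and the difference is supported on the punctual locus. One clean way to phrase this: a one-parameter subgroup $\lambda$ of $G[n]$ and a point $[Z] \in \HilbXC$ have Hilbert--Mumford weight (with respect to $\cM_\ell$, for $\ell \gg 0$) that is a positive multiple of the weight of $\pi([Z])$ with respect to $\cM$ plus lower-order terms, because the combinatorial/numerical support data governing stability only see the cycle $\pi([Z])$. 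Since Theorem \ref{thm:GIT-main} and Proposition \ref{prop:semi} tell us that in both cases (semi-)stability is detected purely by this cycle-level data (smooth support and $\vv = \vv_{\mathbf a}$), $[Z]$ is $\cM_\ell$-(semi)stable if and only if $\pi([Z])$ is $\cM$-(semi)stable, which is exactly $\HilbXC^{ss}(\cM_\ell) = \pi^{-1}(\SymXC^{ss}(\cM))$.

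Alternatively — and this is probably the cleanest route, avoiding any delicate weight computation — I would bypass the linearization comparison entirely: simply observe that Corollary \ref{cor:redo} asserts equality of \emph{subsets} of $\HilbXC$, and all five sets can be computed directly. The outer two are equal by Theorem \ref{thm:GIT-main} (semistable $=$ stable $=$ \{smooth support, $\vv = \vv_{\mathbf a}$\}). The two sets in the middle are $\pi^{-1}$ of the corresponding sets in Proposition \ref{prop:semi}, and $\pi^{-1}$ of \{smooth support, $\vv = \vv_{\mathbf a}$\} $\subset \SymXC$ equals \{smooth support, $\vv = \vv_{\mathbf a}$\} $\subset \HilbXC$ because, as noted, these conditions are properties of the cycle $\pi([Z])$ alone; a length-$n$ subscheme has smooth support exactly when its cycle does, and its numerical support equals that of its cycle by definition. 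Hence all five sets coincide. The main obstacle, should one want the ``honest'' GIT statement that the right-hand sides are the true semistable loci for \emph{specified} polarizations (rather than just a set-theoretic identity), is exactly the linearization comparison $\cM_\ell$ versus $\pi^*\cM$ for $\ell \gg 0$; but since \cite[Theorem 2.10]{GHH-2016} already established that the (semi-)stable locus in $\HilbXC$ is independent of $\ell \gg 0$ and is cut out by the cycle-level conditions, this comparison reduces to the elementary observation above, and I expect the write-up to be very short — essentially ``the GIT analysis of \cite[Section 2]{GHH-2016} applies verbatim, and $\pi$ identifies the resulting loci.''
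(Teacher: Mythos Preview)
Your proposal is correct, and your ``alternative'' route is exactly the paper's approach: the corollary is stated with a terminal \qed{} and is introduced by ``Combining Proposition \ref{prop:semi} and \cite[Theorem 2.10]{GHH-2016} we have'', i.e.\ one simply notes that both Theorem \ref{thm:GIT-main} and Proposition \ref{prop:semi} characterize the (semi-)stable loci by the same cycle-level condition (smooth support and $\vv(Z)=\vv_{\mathbf a}$), and since $\pi$ preserves this condition the five sets coincide. Your first approach via a direct comparison of the linearizations $\cM_\ell$ and $\pi^*\cM$ is unnecessary here and would require more care than the argument actually needs.
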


\subsection{Semi-stable loci of the $n$-fold product}

To determine the semi\--stable locus of the $n$-fold product $\PrXC$, we utilize the following general result which states that the semi-stable locus is functorial with respect to finite group quotient.

\begin{lemma}\label{lem:stablequotient}
\label{lem:quot}
Let $W$ be a quasi-projective scheme over an algebraically closed field $\Bbbk$ of characteristic zero. Let $H$ be a finite group acting on $W$, and $f: W \to V$ the quotient morphism. Let $G$ be a reductive group acting on $W$, which commutes with the $H$-action on $W$ and induces a $G$-action on $V$. Let $L$ be a $G$-linearized ample line bundle on $V$. Then we have
\begin{align*}
W^{ss}(f^*L) &= f^{-1}(V^{ss}(L)); \\
W^s(f^*L) &= f^{-1}(V^s(L)).
\end{align*}
\end{lemma}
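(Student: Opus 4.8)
The plan is to reduce the statement to the Hilbert–Mumford numerical criterion by passing to affine cones, using that the quotient map $f \colon W \to V$ is finite and that $f^*L$ is the pullback of an ample line bundle. The key observation is that for a point $w \in W$ with image $v = f(w)$, a one-parameter subgroup $\lambda \colon \Gm \to G$ acts on $w$ and on $v$ compatibly (since the $G$-action on $V$ is induced from that on $W$), and the relevant numerical invariants $\mu^{f^*L}(w,\lambda)$ and $\mu^{L}(v,\lambda)$ agree. This is because the $G$-linearization on $f^*L$ is pulled back from the one on $L$: the limit $\lim_{t\to 0}\lambda(t)\cdot w$ maps under $f$ to $\lim_{t\to 0}\lambda(t)\cdot v$ (if either exists, by finiteness and properness considerations), and the weights of the $\Gm$-action on the fibres coincide because $f$ is a map of the underlying schemes over which the linearizations correspond. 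Hence semistability and stability, which by the Hilbert–Mumford criterion (valid over an algebraically closed field of characteristic zero) are detected by these numerical functions, transfer directly: $w$ is (semi)stable for $f^*L$ if and only if $v$ is (semi)stable for $L$, which is exactly the claimed set-theoretic equalities.

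First I would set up notation: choose a $G$-equivariant projective embedding $V \into \PP(E)$ via some power $L^{\otimes m}$, so that $W \into \PP(f^*E)$ via $(f^*L)^{\otimes m}$, and recall that the finite morphism $f$ lifts to a finite (indeed, the relevant morphism on sections is the obvious one) morphism of affine cones $\hat{f} \colon \hat{W} \to \hat{V}$ compatible with the $G$-actions. Then I would invoke the standard reformulation (e.g. Mumford–Fogarty–Kirwan, \cite{GIT}): a point is semistable iff the closure of its orbit in the affine cone does not meet the origin, and stable iff additionally its orbit is closed in the semistable locus of the cone with finite stabilizer. Since $\hat{f}$ is finite, it is proper, so $\hat{f}$ takes closed orbits to closed sets and, crucially, a $G$-orbit closure in $\hat{W}$ meets the origin if and only if its image orbit closure in $\hat{V}$ meets the origin (here one uses that $\hat f^{-1}(0) = \{0\}$, again by finiteness, together with $\hat f$ being closed). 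This yields the semistable equality. For the stable locus one additionally notes that finite stabilizers are preserved: the stabilizer of $w$ in $G$ maps with finite kernel (the $H$-part) — actually since $G$ commutes with $H$ and acts on $V$, $\mathrm{Stab}_G(w) \subseteq \mathrm{Stab}_G(v)$ and the latter is finite when the former is, because $f$ is quasi-finite so an orbit $G\cdot v$ being positive-dimensional would force $G\cdot w$ positive-dimensional; conversely closedness of $G\cdot v$ in $V^{ss}$ pulls back to closedness of $G\cdot w$ in $W^{ss}$ since $f$ is a closed map and $f^{-1}(V^{ss}) = W^{ss}$ is already established.

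The main obstacle I anticipate is the careful bookkeeping of the two commuting actions and the linearizations: one must check that the $H$-action genuinely does not interfere — i.e. that $f^*L$ carries a canonical $G$-linearization (which it does, by functoriality of pullback, with no choice involved since $L$ is already $G$-linearized on $V$) and that $V^{ss}(L)$, computed on $V$, pulls back correctly despite $V = W/H$ possibly being singular. The singularity of $V$ is not actually a problem, since GIT semistability is insensitive to it (the affine cone criterion makes sense for any quasi-projective scheme), but one should be slightly careful that the Hilbert–Mumford criterion is still available; this is where the hypotheses "quasi-projective over an algebraically closed field of characteristic zero" and "$G$ reductive" are used. A cleaner alternative, which I might prefer, is to avoid cones entirely and argue purely via Hilbert–Mumford: for each $w \in W$ and each $\lambda$, establish $\mu^{f^*L}(w,\lambda) = \mu^{L}(f(w),\lambda)$ directly from the definition of the numerical invariant and the compatibility of limits under the finite ($G$-equivariant) map $f$, then quote that (semi)stability is equivalent to positivity (resp. nonnegativity) of $\mu$ for all $\lambda$. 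This makes the semistable statement essentially immediate; the stable statement then requires the separate remark, as above, that stabilizers are finite on one side iff on the other, for which I would use that $f$ is quasi-finite together with the fact that $G\cdot w$ surjects onto $G\cdot f(w)$.
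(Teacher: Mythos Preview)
Your proposal is correct but takes a genuinely different route from the paper. The paper argues directly from the definition of semistability via invariant sections: pulling back a $G$-invariant section from $V$ to $W$ gives one inclusion immediately, while for the reverse inclusion the paper takes a $G$-invariant section $t$ on $W$ with $t(w)\neq 0$ and forms the $H$-symmetrized power $\sum_{h\in H}(h^*t)^m$, which for suitable $m$ is nonvanishing at $w$ and, being $H$-invariant, descends to a $G$-invariant section on $V$ not vanishing at $v=f(w)$. For the stable locus the paper then compares stabilizers (observing that $G_v/G_w$ injects into a set of size at most $|H|$) and closed orbits directly, using that $f$ restricted to the semistable loci is finite.

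Your Hilbert--Mumford route is more conceptual: the equality $\mu^{f^*L}(w,\lambda)=\mu^{L}(f(w),\lambda)$ follows cleanly from properness of $f$ (so limits under $\lambda$ correspond) together with the linearization on $f^*L$ being pulled back (so the weights on the limit fibres agree), and then both the semistable and stable equalities drop out at once from the numerical criterion. Your separate remark about stabilizers is in fact unnecessary in this version, since the strict inequality $\mu>0$ for all nontrivial $\lambda$ already encodes stability. The trade-off is that invoking Hilbert--Mumford in the quasi-projective setting needs a word of justification (limits need not exist in $W$ or $V$; one either passes to an equivariant projective closure or adopts the convention that a non-existent limit counts as $\mu=+\infty$), which you flag but do not spell out. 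The paper's averaging trick is more elementary and entirely self-contained, avoiding this technical overhead; your argument is shorter once the criterion is granted and makes the role of finiteness of $f$ more transparent. Your affine-cone variant would also go through, but carries more bookkeeping (e.g.\ making precise the finite map of cones and why the origin pulls back to the origin) than the straight numerical argument.
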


\begin{proof}
We first prove the statement for semi-stable loci. For any closed point $w \in W$, let $v=f(w) \in V$. We need to show that $w$ is semi-stable if and only if $v$ is semi-stable.

Assume $v$ is semi-stable. Then there exists $s \in \Gamma(V, L^{\otimes n})^G$ for some positive integer $n$, such that $s(v) \neq 0$. Then we have $f^*s \in \Gamma(W, f^*L^{\otimes n})^G$ and $(f^*s)(w) \neq 0$, hence $w$ is semi-stable.

Assume $w$ is semi-stable. Then there exists $t \in \Gamma(W, f^*L^{\otimes n})^G$, such that $t(w) \neq 0$. Then for every positive integer $m$, the section
$$ t_m = \sum_{h \in H}(h^*(t))^m \in \Gamma(W, f^*L^{\otimes mn})^G $$
is also $H$-invariant, and descends to a section $s_m \in \Gamma(V, L^{\otimes mn})^G$. Since $t(w) \neq 0$, we have $t_{m_0}(w) \neq 0$ for some positive integer $m_0$. It follows that $s_{m_0}(v) \neq 0$, hence $v$ is semi-stable.

We next show the statement holds for stable loci. Notice that $G_w$ is a subgroup of $G_v$, and that the quotient $G_v/G_w$ is bijective to the set 
$$ \{ w' \in W \mid w' \in G \cdot w \text{ and } f(w')=v \}. $$
It follows that $|G_v/G_w| < |H|$ is finite, hence $G_v$ is finite if and only if $G_w$ is finite.

Moreover, let $f^{ss}$ be the restriction of $f$ to $W^{ss}(f^*L)$. Then $f^{ss}$ is also given by a finite group quotient, hence a finite map. If the orbit $G \cdot w$ is closed in $W^{ss}(f^*L)$, then its image $G \cdot v$ is also closed in $V^{ss}(L)$. If the orbit $G \cdot w$ is not closed, then there is another semi-stable orbit $G \cdot w'$ in the closure of $G \cdot w$. Let $G \cdot v'$ be the image of $G \cdot w'$ under $f^{ss}$, then $G \cdot v'$ is also in the closure of $G \cdot v$. If $G \cdot v$ were closed, we would have $G \cdot v = G \cdot v'$ which is a contradiction, as the dimension of $G \cdot v'$ is strictly smaller than the dimension of $G \cdot v$. We conclude that $G \cdot w$ is closed if and only if $G \cdot v$ is closed, hence $w$ is stable if and only if $v$ is stable.
\end{proof}

To apply Lemma \ref{lem:quot}, recall from Lemma \ref{lem:ampcon} that we have a $G[n]$-linearized relatively ample line bundle $\cL^{\boxtimes n}$ on $\PrXC$, which descends to a $G[n]$-linearized 
relatively ample line bundle $\cM$ on $\SymXC$.

\begin{corollary}\label{cor:stablequotient}
\label{cor:prod}
We have
\begin{align*}
&\quad\ (\PrXC)^{ss}(\cL^{\boxtimes n}) \\
&= (\PrXC)^{s}(\cL^{\boxtimes n}) \\
&= \tau^{-1}(\SymXC^{ss}(\cM)) \\
&= \tau^{-1}(\SymXC^{s}(\cM)) \\
&= \left\{ [Z] \in \PrXC \ \middle| \begin{array}{c} Z \text{ has smooth support} \\ \text{and } \vv(\tau(Z)) = \vv_{\mathrm{a}} \end{array} \right\}.
\end{align*}
\end{corollary}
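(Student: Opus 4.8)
The plan is to deduce Corollary \ref{cor:prod} directly from Lemma \ref{lem:quot} together with Proposition \ref{prop:semi}. The key observation is that the quotient map $\tau$ of \eqref{eqn:tau} is precisely the quotient of $\PrXC$ by the finite group $H = \mathfrak{S}_n$ acting by permutation of factors, and that this action commutes with the $G[n]$-action (since $G[n]$ acts diagonally, i.e.\ in the same way on each factor). Moreover, Lemma \ref{lem:ampcon} gives a $G[n]$-linearized relatively ample line bundle $\cM$ on $\SymXC$ with $\tau^*\cM \cong \cL^{\boxtimes n}$ as $G[n]$-linearized sheaves. Thus all the hypotheses of Lemma \ref{lem:quot} are satisfied, with $W = \PrXC$, $V = \SymXC$, $f = \tau$, $G = G[n]$ and $L = \cM$.

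Concretely, I would proceed as follows. First, invoke Lemma \ref{lem:quot} with the data above to obtain the equalities
\[
(\PrXC)^{ss}(\tau^*\cM) = \tau^{-1}\bigl(\SymXC^{ss}(\cM)\bigr), \qquad
(\PrXC)^{s}(\tau^*\cM) = \tau^{-1}\bigl(\SymXC^{s}(\cM)\bigr).
\]
Since $\tau^*\cM \cong \cL^{\boxtimes n}$ compatibly with the $G[n]$-linearizations, the left-hand sides are the semi-stable, resp.\ stable, loci with respect to $\cL^{\boxtimes n}$. Next, apply Proposition \ref{prop:semi}, which says $\SymXC^{ss}(\cM) = \SymXC^{s}(\cM)$; taking $\tau$-preimages this immediately gives the coincidence of the semi-stable and stable loci on the product, which is the first two lines of the corollary. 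Finally, to obtain the explicit combinatorial description in the last line, I would substitute into $\tau^{-1}$ the characterization of $\SymXC^{ss}(\cM)$ from Proposition \ref{prop:semi}: a point $[Z]$ of the product lies in the semi-stable locus if and only if $\tau(Z)$ has smooth support and $\vv(\tau(Z)) = \vv_{\mathbf a}$. Here one should note that ``$Z$ has smooth support'' on the product is by definition the same as ``$\tau(Z)$ has smooth support'' on the symmetric product, since $\tau$ is a bijection on underlying cycles; this matches the phrasing in the statement.

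There is essentially no serious obstacle here: the result is a formal consequence of the preceding lemmas, and the only point requiring a moment's care is the verification that the $G[n]$-action and the $\mathfrak{S}_n$-action genuinely commute and that the linearization of $\cL^{\boxtimes n}$ used in Lemma \ref{lem:ampcon} is the one induced by the diagonal $G[n]$-action and the permutation $\mathfrak{S}_n$-action simultaneously — both of which are already built into the constructions recalled above. One might also remark, for completeness, that the relative ampleness of $\cM$ over $C[n]$ (hence ampleness over each affine open of $C[n]$, as in the proof of Lemma \ref{lem:ampcon}) is what allows Lemma \ref{lem:quot}, stated for an ample line bundle, to be applied fibrewise/locally over the base $C[n]$; but this is exactly the same reduction already performed in the proofs of Proposition \ref{prop:semi} and Corollary \ref{cor:redo}, so it requires no new argument.
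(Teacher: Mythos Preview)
Your proposal is correct and follows exactly the paper's approach: the paper's proof consists of the single sentence ``This follows immediately from Lemma~\ref{lem:quot} and Proposition~\ref{prop:semi},'' and you have simply spelled out the details of that deduction.
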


\begin{proof}
This follows immediately from Lemma \ref{lem:quot} and Proposition \ref{prop:semi}.
\end{proof}

\begin{remark}
	Indeed, we could also perform a GIT analysis that is similar to \cite[Section 2]{GHH-2019} to obtain the (semi-)stable locus on the $n$-fold product $\PrXC$, which would give a different proof of Corollary \ref{cor:stablequotient} without using Lemma \ref{lem:quot}.
\end{remark}

\begin{remark}
	We point out that Nagai has made the same observation in \cite[Remark 4.4.1]{nagai-2017} that the GIT analysis in \cite[Theorem 2.10]{GHH-2019} also works for the families $\SymXC$ and $\PrXC$ over $C[n]$.
\end{remark}


\section{The strata of the degenerate fibre}\label{sec:strata}

\subsection{GIT quotients of three families}

Let $ f \colon X \to C $ be a strict simple degeneration and fix an integer $ n > 0 $. This gives us an expanded degeneration $ f[n] \colon X[n] \to C[n] $,  which in turn gives rise to the following three families 
related by the quotient morphism $\tau$, see \eqref{eqn:tau} and the Hilbert-Chow morphism $\pi$, see \eqref{eqn:HC}
\begin{equation}
\label{eqn:noss}
\xymatrix{
\PrXC \ar[rrd] \ar[d]_{\tau} && \\
\SymXC \ar[rr] && C[n] \\
\HilbXC \ar[rru] \ar[u]^{\pi}. && 
}
\end{equation}

All three families come equipped with $G[n]$-actions, which can be lifted to some natural line bundles to determine the corresponding (semi-)stable loci. For simplicity we omit the references to the line bundles in the notation of (semi-)stable loci from now on. By Corollary \ref{cor:redo} and Corollary \ref{cor:prod}, we know that the Hilbert-Chow morphism $\pi$ and the quotient morphism $\tau$ respect GIT stability. Therefore we have the
following commutative diagram of  semi-stable loci
\begin{equation}
\label{eqn:bigcomm}
\xymatrix{
(\PrXC)^{ss} \ar[rrd]^-{\varphi_p} \ar[d]_{\tau^{ss}} && \\
\SymXC^{ss} \ar[rr]^-{\varphi_s} && C[n] \\
\HilbXC^{ss} \ar[rru]_-{\varphi_h} \ar[u]^{\pi^{ss}}. && 
}
\end{equation}

By Proposition \ref{prop:semi} or Corollary \ref{cor:redo}, we see that a semi-stable point in $\HilbXC$ or $\SymXC$ always has smooth support. We shall assume, unless explicitly stated otherwise, that the original family $f:X \to C$ has relative dimension at most $2$. Then we have
\begin{itemize}
\item $\HilbXC^{ss}$ is a smooth variety and $\varphi_h$ is a smooth morphism;
\item $\SymXC^{ss}$ is singular along the diagonal (when $n_P \geqslant 2$ for some $P$), and $\pi^{ss}$ is a divisorial resolution.
\end{itemize}

All arrows in  diagram \eqref{eqn:bigcomm} are equivariant with respect to $G[n]$-actions. Passing to the $G[n]$-quotients, we obtain another commutative diagramme of the quotients related by natural maps
\begin{equation}
\label{eqn:smcomm}
\xymatrix{
P^n_{X/C} := (\PrXC)^{ss}/G[n] \ar[rrd]^-{\psi_p} \ar[d]_{\xi} && \\
J^n_{X/C} := \SymXC^{ss}/G[n] \ar[rr]^<<<<<<{\psi_s} && C \\
I^n_{X/C} := \HilbXC^{ss}/G[n] \ar[rru]_-{\psi_h} \ar[u]^{\eta}. && 
}
\end{equation}

\begin{remark}
\label{rmk:PtoJ}
We notice that the $G[n]$-action and the $\mathfrak{S}_n$-action on $\PrXC$ commute, from which we conclude immediately that $J^n_{X/C}$ is the $\mathfrak{S}_n$-quotient of $P^n_{X/C}$ by Corollary \ref{cor:prod}.
\end{remark}

Our primary goal in the next  sections is to study the finer structure of the three families $P^n_{X/C}$, $J^n_{X/C}$ and $I^n_{X/C}$. We will focus on the birational geometry of the three families, the geometry and combinatorics of the degenerate fibres, as well as understand how these structures are related via the natural maps $\xi$ and $\eta$. After recalling the definition of the dual complex of the degenerate fibre, we will pursue the first key step in this section, namely producing the stratification and studying the restriction relations among components.

\subsection{Dual complexes and other preliminaries}\label{subsec:dualcomplexdef}

Let $ E $ be a smooth $\kk$-variety, and let $E_0 = \cup_{a \in A} E_a $ be a (reduced) strict normal crossing divisor on $E$ with  components $E_a$. We recall the well-known definition of the \emph{dual complex} $\Delta(E_0)$ attached to $E_0$. This is a combinatorial object which encodes how the components $E_a$ intersect.

\begin{definition}
\label{def:1st-dual}
The dual complex $\Delta(E_0)$ of a strict normal crossings divisor $E_0$ is the unique $\Delta$-complex with the following properties:
\begin{itemize}
\item[\rm{(i)}]  The $d$-dimensional simplices correspond bijectively to the connected components $E_B^i$ of $ E_B = \cap_{b \in B} E_b $, as $B$ runs through the 
subsets $ \emptyset \neq B \subset A $ with $ \vert B \vert = d+1 $.

\item[\rm{(ii)}]  Let $ B$ and $B'$ be two non-empty subsets of $A$. Then $\mathrm{Simp}(E_B^i)$ is a face of $\mathrm{Simp}(E_{B'}^{i'})$ if and only if $ E_{B'}^{i'} \subset  E_B^i $.

\end{itemize}
\end{definition}

In fact, the notion of the dual complex can be defined in a much more general set-up. We recall \cite[Definition 8]{dFKX-2017}, which will be used in our later discussion.

\begin{definition}[{\cite[Definition 8]{dFKX-2017}}]
\label{def:2nd-dual}
Let $E_0 = \cup_{a \in A} E_a$ be a pure dimensional scheme with irreducible components $E_a$. Assume that
\begin{itemize}
\item[\rm{(i)}]  each $E_a$ is normal, and
\item[\rm{(ii)}]  for each $B \subseteq A$, if $\cap_{b \in B}E_a$ is non-empty, then every connected component of $\cap_{b \in B}E_a$ is irreducible and has codimension $|B|-1$ in $E_0$.
\end{itemize}
Then the dual complex $\Delta(E_0)$ can be defined as in Definition \ref{def:1st-dual}.
\end{definition}

In the next few sections, we will be particularly interested in the dual complexes of the degenerate fibres of the three families $P^n_{X/C}$, $J^n_{X/C}$ and $I^n_{X/C}$. We start with some foundational analysis of the intersection relation of various strata in the degenerate fibre of the third family.

\subsection{The Hilbert scheme}\label{subsec:Hilbertscheme}

Consider the morphism
$$ \varphi_h \colon \HilbXC^{ss} \to C[n]. $$
As noted above, this map is smooth by our assumption that $ X \to C $ has relative dimension at most $2$. Composition with the natural map $ C[n] \to C$ yields a morphism
$$ \phi_h \colon \HilbXC^{ss} \to C $$
where the special fibre $ \phi_h^{-1}(0) $ forms a strict normal crossings divisor on the smooth variety $\HilbXC^{ss}$. This is immediate from the fact that $ C[n] \to C $ is obtained as a pullback of $ \mathbb{A}^{n+1} \to \mathbb{A}^1 $ along the \'etale map $ C \to \mathbb{A}^1$.

To ease notation in the following discussion, we shall write $ \mathcal{H} $ for $ \HilbXC^{ss} $. Our aim is now to study the geometry of the components of $ \mathcal{H}_0 = \phi_h^{-1}(0) $, and how they intersect. This analysis forms the foundation of many of the results later on in this paper. At this stage, we also remark that, as describing $ \mathcal{H}_0 $ only involves computations over $ 0 \in C $, and since $C[n] \to \mathbb{A}^{n+1}$ restricts to an isomorphism over the special fibre $ (\mathbb{A}^{n+1})_0 \to 0 \in \mathbb{A}^1 $, we may assume that $ C = \mathbb{A}^1 $ from the beginning. Consequently, we shall replace $C[n]$ by $\mathbb{A}^{n+1}$ in our notation.

\subsubsection{} 
\label{subsubsec:notation}
We first introduce some notation. 

\begin{itemize}
\item If $n$ is understood from the context, we shall often write $ \mathbb{A} $ instead of $ \mathbb{A}^{n+1} $, to simplify notation. 

\item For any subset $ \varnothing \neq I  \subset [n+1] $, we put $\mathbb{A}_I = V(t_i \mid i \in I)$ and 
$$ U_I = \{ (t_1, \ldots, t_{n+1}) \mid t_i = 0 \text{ for all } i \in I,\ t_j \neq 0 \text{ for all } j \notin I \}. $$
We remark that $U_I$ is open in $\mathbb{A}_I$.  

\item Let $ I^c $ denote the complement of $I$ in $[n+1]$. Let moreover $ j \in I^c $, and put $ I_j = I \cup \{j\} $. Then we define 
$$U_{I,j} =  \{ (t_1, \ldots, t_{n+1}) \mid t_i = 0 \text{ for all } i \in I,\ t_{i'} \neq 0 \text{ for all } i' \notin I_j \}. $$ 
The set $ U_{I,j} $ forms a partial compactification of $ U_I $ inside $\mathbb{A}_I$, where we allow also the $j$-th coordinate to be zero. Observe that $ U_{I,j} \setminus U_I = U_{I_j} $.

\item Let $ W $ be a scheme over $\mathbb{A}_I$, with irreducible components $ W_{\alpha}$, $ \alpha \in \mathcal{A} $. Then we denote by $ W^{\circ} $ the pullback of $W$ to $U_I$. For each $\alpha$, we moreover put 
$$ W_{\alpha}^* = W_{\alpha}^{\circ} \setminus \cup_{\beta \neq \alpha} W_{\beta}^{\circ}. $$
\end{itemize}

\subsubsection{}
By smoothness of $\varphi_h \colon \mathcal{H} \to \mathbb{A}^{n+1} $ it follows for each $ I $, that   
$$\mathcal{H}_I = \varphi_h^{-1}(\mathbb{A}_I)$$ 
is a disjoint union of smooth irreducible components (recall that $\mathbb{A}_I$ is smooth and irreducible). As it turns out, the components of $\mathcal{H}_I$ are somewhat difficult to describe directly. However, by flatness of the restriction 
$$ \varphi_h \vert_{\mathbb{A}_I} \colon \mathcal{H}_I \to \mathbb{A}_I, $$ 
every component $\mathcal{C}$ of $\mathcal{H}_I$ has dense image in $\mathbb{A}_I$. Thus, each $\mathcal{C}$ equals the closure in $\mathcal{H}_I$ of a unique component $\mathcal{C}^{\circ}$ of $\mathcal{H}_I^{\circ}$. Below, we shall give a useful description of the components of $\mathcal{H}_I^{\circ}$, which turn out to be quite easy to describe.

\subsubsection{}\label{subsubsec:hilbind}
To index the components of $\mathcal{H}_I^{\circ}$, where $ I = \{i_1, \ldots, i_r\} \subset [n+1] $, we will use the the dual graph $ \Gamma = (V,E)$ attached to the degeneration $ X \to C $. Having fixed a bipartite oriention of $\Gamma$, we write $V^+$, resp.~$V^-$, for the set of vertices with only outgoing and incoming arrows respectively. For each $v \in V$, let moreover $E(v)^+$ denote the set of edges directed towards $v$, and let $E(v)^-$ denote the set of edges directed away from $v$.

We introduce the following additional notation. 

\begin{itemize}
\item $ \mathbf{b} = \{b_v\}_{v \in V} $ denotes a collection of non-negative integers.  
\item $ \mathbf{s} = \{\mathbf{s}_{\gamma}\}_{\gamma \in E} $ denotes a collection of (ordered) tuples 
$$ \mathbf{s}_{\gamma} = (s_{\gamma,1}, \ldots, s_{\gamma, r-1}) $$ 
where $ s_{\gamma,l} \in \mathbb{Z}_{\geqslant 0} $. 
\end{itemize}

\begin{definition}
\label{def:num-stable}
We say that the pair $ (\mathbf{b}, \mathbf{s}) $ is \emph{stable} with respect to $I$ if the vector 
$$ \mathbf{v}(\mathbf{b}, \mathbf{s}) = (\sum_{v \in V^+} b_v, \sum_{\gamma \in E} s_{\gamma,1}, \ldots, \sum_{\gamma \in E} s_{\gamma,r-1}, \sum_{v \in V^-} b_v ) \in \mathbb{Z}^{r+1} $$
is stable in the sense of Theorem \ref{thm:GIT-main}, i.e., if $\mathbf{v}(\mathbf{b}, \mathbf{s}) = \mathbf{v}_{\mathbf{a}}$ where $\mathbf{a}$ is determined by $I$.
\end{definition}

\begin{remark}
If $ (\mathbf{b}, \mathbf{s}) $ is stable with respect to $I$, it is a consequence of Definition \ref{def:num-stable} that $ \sum_v b_v + \sum_{\gamma} \sum_l s_{\gamma, l} = n$.
\end{remark}

\begin{proposition}\label{prop:strat-comp}
The irreducible components of $ \mathcal{H}_I^{\circ}$ are indexed precisely by the pairs $(\mathbf{b},\mathbf{s})$ that are stable with respect to $I$. The component corresponding to $(\mathbf{b},\mathbf{s})$ is
$$ (\mathcal{H}_I)_{(\mathbf{b},\mathbf{s})}^{\circ} = \prod_{v \in V} \mathrm{Hilb}^{b_v}((Y_v)_I^{*}/U_I) \times \prod_{\gamma \in E} \prod_{l=1}^{r-1} \mathrm{Hilb}^{s_{\gamma,l}}((\Delta_I^{\gamma, i_l})^{*}/U_I), $$
where the products are fibred products over $U_I$.
\end{proposition}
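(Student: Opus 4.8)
The plan is to identify the irreducible components of $\mathcal{H}_I^{\circ}$ with the (smooth) fibres of the relative Hilbert scheme over the locus $U_I$, and then decompose the latter according to how a length-$n$ subscheme distributes among the smooth strata of $X[n]_I$. Recall from the stratification (see the Proposition following \cite[Proposition 1.12]{GHH-2016}) that over $U_I$, the expanded fibre $X[n]_I^{\circ}$ is a union of the smooth pieces $(Y_v)_I^{*}$ for $v \in V$ together with the inserted components $(\Delta_I^{\gamma,i_l})^{*}$ for $\gamma \in E$ and $1 \leqslant l \leqslant r-1$; these strata are pairwise disjoint (this is exactly what the $(-)^{*}$ notation enforces), so they are the connected-component decomposition of the smooth locus of $X[n]_I^{\circ}$. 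First I would observe that by Corollary \ref{cor:redo}, a point $[Z] \in \mathcal{H}_I^{\circ}$ has smooth support, so $Z$ decomposes uniquely as a disjoint union $Z = \coprod_v Z_v \amalg \coprod_{\gamma,l} Z_{\gamma,l}$ with each piece supported on one stratum; setting $b_v = \mathrm{length}(Z_v)$ and $s_{\gamma,l} = \mathrm{length}(Z_{\gamma,l})$ gives a locally constant function on $\mathcal{H}_I^{\circ}$, hence constant on each connected component. This produces the decomposition
$$ \mathcal{H}_I^{\circ} = \coprod_{(\mathbf{b},\mathbf{s})} \left( \prod_{v \in V} \mathrm{Hilb}^{b_v}((Y_v)_I^{*}/U_I) \times \prod_{\gamma \in E} \prod_{l=1}^{r-1} \mathrm{Hilb}^{s_{\gamma,l}}((\Delta_I^{\gamma,i_l})^{*}/U_I) \right) $$
as schemes over $U_I$, where the fibred products are over $U_I$ and the disjoint union runs over all $(\mathbf{b},\mathbf{s})$ satisfying $\sum_v b_v + \sum_{\gamma,l} s_{\gamma,l} = n$; this uses only that the relative Hilbert scheme of a disjoint union is the disjoint union, over all length partitions, of products of the relative Hilbert schemes of the pieces.

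Next I would pin down which $(\mathbf{b},\mathbf{s})$ actually contribute a \emph{nonempty} factor, and among those, which give \emph{irreducible} factors that survive the semistability cut. Nonemptiness: each $(Y_v)_I^{*}$ has relative dimension $d \leqslant 2$ over $U_I$ and is nonempty for every $v$, and likewise each $(\Delta_I^{\gamma,i_l})^{*}$ is a nonempty (open subset of a) $\PP^{d}$- or $\PP^{d}$-bundle-type stratum over $U_I$, so $\mathrm{Hilb}^{k}$ of any of these is nonempty for all $k \geqslant 0$. Irreducibility: here I invoke the classical fact that for a smooth family of surfaces (or curves, or points) with irreducible fibres, the relative Hilbert scheme of points is again smooth with irreducible fibres, hence irreducible over the irreducible base $U_I$; a product over $U_I$ of such irreducible smooth $U_I$-schemes is again irreducible. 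So every term in the above disjoint union is already irreducible, and the decomposition is precisely into connected = irreducible components. Finally, the semistability restriction: by Corollary \ref{cor:redo} a point of $\mathcal{H}^{ss}$ must satisfy $\vv(Z) = \vv_{\mathbf{a}}$, and unwinding the definition of the numerical support (Section \ref{sec:GIT}) the numerical support of a $Z$ lying in the $(\mathbf{b},\mathbf{s})$-stratum is exactly the vector $\mathbf{v}(\mathbf{b},\mathbf{s})$ of Definition \ref{def:num-stable} — the white node $(\Delta_I^{\gamma,i_l})$ sits on the stratum $\Delta^{a_l}_I$, and the black endpoints $(Y_v)_I$ for $v \in V^{+}$ (resp.\ $v \in V^{-}$) sit on $\Delta^{a_0}_I$ (resp.\ $\Delta^{a_r}_I$). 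Hence the $(\mathbf{b},\mathbf{s})$-stratum meets $\mathcal{H}_I^{ss}$ if and only if $\mathbf{v}(\mathbf{b},\mathbf{s}) = \vv_{\mathbf{a}}$, i.e.\ if and only if $(\mathbf{b},\mathbf{s})$ is stable with respect to $I$ in the sense of Definition \ref{def:num-stable}; and in that case the whole stratum lies in $\mathcal{H}^{ss}$. Intersecting the displayed disjoint union with $\mathcal{H}^{ss}$ and discarding the empty pieces yields exactly the claimed list of components.

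The main obstacle I anticipate is the bookkeeping in the last step: one must check carefully that the locally constant assignment $[Z] \mapsto (\mathbf{b},\mathbf{s})$ is genuinely constant on each \emph{irreducible} component (not just each connected component) — this is where irreducibility of the relative Hilbert schemes of the strata is used — and, more delicately, that the combinatorial vector $\mathbf{v}(\mathbf{b},\mathbf{s})$ from Definition \ref{def:num-stable} really does coincide, coordinate by coordinate, with the numerical support $\vv(Z)$ of Section \ref{sec:GIT} for $Z$ in that stratum. The latter amounts to correctly matching the indexing of the inserted components $\Delta_I^{\gamma,i_l}$ (which are indexed by $I_0 = I \cup \{0\}$, with $\Delta^{\gamma,0}_I = (Y_v)_I$ and $\Delta^{\gamma,\max I}_I = (Y_{v'})_I$ the two black ends) against the standard basis vectors $\mathbf{e}_0,\dots,\mathbf{e}_r$ appearing in the definition of $\vv(Z)$, keeping track of the fact that all sources $V^{+}$ contribute to the $\mathbf{e}_0$-coordinate and all targets $V^{-}$ to the $\mathbf{e}_r$-coordinate. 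Once this identification is in place, everything else is formal. One should also remark that each factor $\mathrm{Hilb}^{0}(\,\cdot\,/U_I) = U_I$ contributes trivially, so the empty-product conventions cause no trouble.
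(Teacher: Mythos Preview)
Your proposal is correct and follows essentially the same approach as the paper's proof: identify the smooth locus of $X[n]_I^{\circ}\to U_I$ as the disjoint union of the strata $(Y_v)_I^{*}$ and $(\Delta_I^{\gamma,i_l})^{*}$, decompose the relative Hilbert scheme of this disjoint union into a coproduct of fibred products indexed by length partitions, observe that each such product is irreducible (since fibre dimension is at most two), and then cut down to the stable pairs $(\mathbf{b},\mathbf{s})$ via the numerical stability criterion. The paper's version is terser---it leaves the matching of $\mathbf{v}(\mathbf{b},\mathbf{s})$ with the numerical support $\vv(Z)$ and the irreducibility verification as ``straightforward''---whereas you have spelled out exactly the bookkeeping the paper elides, but there is no substantive difference in strategy.
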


\begin{proof}
For each $ i_l \in I \cup \{0\} $, $(\Delta_I^{\gamma, i_l})^{\circ}$ is \emph{smooth} over $U_I$, with irreducible fibres of dimension $\leqslant 2$. It follows that the non-smooth locus of $X[n]_I^{\circ}$ is located precisely where the components intersect. In other words, the smooth locus $ (X[n]_I^{\circ})^{sm} $ of $X[n]_I^{\circ} \to U_I  $ is a disjoint union of the components $(\Delta_I^{\gamma, i_l})^*$. 

Thus $\mathrm{Hilb}^n((X[n]_I^{\circ})^{sm}/U_I) \cap \mathcal{H}$ is a disjoint union of schemes of the form $ (\mathcal{H}_I)_{(\mathbf{b},\mathbf{s})}^{\circ} $, where $(\mathbf{b},\mathbf{s})$ runs over pairs that are stable with respect to $I$. It is straightforward to verify that each $(\mathcal{H}_I)_{(\mathbf{b},\mathbf{s})}^{\circ}$ is irreducible. 
\end{proof}

\subsection{Stratification of the components}
For each $(\mathbf{b},\mathbf{s})$, we denote by $ (\mathcal{H}_I)_{(\mathbf{b},\mathbf{s})} $ the closure of $ (\mathcal{H}_I)_{(\mathbf{b},\mathbf{s})}^{\circ} $ in $\mathcal{H}_I$. We will next study the boundary 
$$ (\mathcal{H}_I)_{(\mathbf{b},\mathbf{s})} \setminus (\mathcal{H}_I)_{(\mathbf{b},\mathbf{s})}^{\circ}, $$ 
by carefully investigating the strata of $ (\mathcal{H}_I)_{(\mathbf{b},\mathbf{s})} $ over $ U_{I,j} \setminus U_I $, where $ j \in I^c $. In light of our inductive set-up, these strata are again of the form $ (\mathcal{H}_{I_j})_{(\mathbf{b}',\mathbf{s}')}^{\circ} $, for certain pairs $(\mathbf{b}',\mathbf{s}')$ stable with respect to $I_j$.  

We still need to compute the pairs $(\mathbf{b}',\mathbf{s}')$ that can occur as the index of a boundary stratum. For this purpose, the following easy lemma will be quite useful; it tells us that it suffices to produce a point in the intersection of $ (\mathcal{H}_I)_{(\mathbf{b},\mathbf{s})} $ and $ (\mathcal{H}_{I_j})_{(\mathbf{b}',\mathbf{s}')}^{\circ} $, in order to show containment.

\begin{lemma}\label{lemma:containment}
A component $ (\mathcal{H}_{I_j}^{\circ})_{(\mathbf{b}',\mathbf{s}')} $ of $\mathcal{H}_{I_j}^{\circ}$ is a subscheme of $ (\mathcal{H}_I)_{(\mathbf{b},\mathbf{s})} $ if and only if
$$ (\mathcal{H}_{I_j}^{\circ})_{(\mathbf{b}',\mathbf{s}')} \cap  (\mathcal{H}_I)_{(\mathbf{b},\mathbf{s})}  \neq \emptyset. $$
\end{lemma}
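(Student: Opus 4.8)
The plan is to prove the non-trivial ("if") direction by exhibiting, for any point lying in the intersection, enough of a neighbourhood inside $(\mathcal{H}_I)_{(\mathbf{b},\mathbf{s})}$ to conclude that the whole component $(\mathcal{H}_{I_j}^{\circ})_{(\mathbf{b}',\mathbf{s}')}$ is contained in it. The "only if" direction is of course immediate, since if $(\mathcal{H}_{I_j}^{\circ})_{(\mathbf{b}',\mathbf{s}')} \subset (\mathcal{H}_I)_{(\mathbf{b},\mathbf{s})}$ the intersection is just $(\mathcal{H}_{I_j}^{\circ})_{(\mathbf{b}',\mathbf{s}')}$ itself, which is non-empty by construction.

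For the substantive direction, I would argue as follows. Recall that $(\mathcal{H}_I)_{(\mathbf{b},\mathbf{s})}$ is by definition the closure in $\mathcal{H}_I$ of the irreducible variety $(\mathcal{H}_I)_{(\mathbf{b},\mathbf{s})}^{\circ}$, which is itself a connected component of $\mathrm{Hilb}^n((X[n]_I^{\circ})^{sm}/U_I) \cap \mathcal{H}$, hence an \emph{open} subscheme of the smooth variety $\mathcal{H}_I$ (smoothness coming from $\varphi_h$ smooth and $\mathbb{A}_I$ smooth). Therefore $(\mathcal{H}_I)_{(\mathbf{b},\mathbf{s})}$ is an irreducible component of the closed subscheme $\mathcal{H}_I$; but $\mathcal{H}_I$ is smooth, so its irreducible components are also its connected components, and in particular $(\mathcal{H}_I)_{(\mathbf{b},\mathbf{s})}$ is \emph{open and closed} in $\mathcal{H}_I$. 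Now $(\mathcal{H}_{I_j}^{\circ})_{(\mathbf{b}',\mathbf{s}')}$ is an irreducible (hence connected) subscheme of $\mathcal{H}_{I_j} \subset \mathcal{H}_I$; if it meets the open-and-closed set $(\mathcal{H}_I)_{(\mathbf{b},\mathbf{s})}$ at all, then by connectedness it is entirely contained in it. This gives the containment with no further work.

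The main point to get right — and the place where one must be slightly careful — is the claim that each $(\mathcal{H}_I)_{(\mathbf{b},\mathbf{s})}$ is a connected component of $\mathcal{H}_I$, not merely an irreducible component of some subscheme. This follows from Proposition~\ref{prop:strat-comp} together with the discussion preceding it: the components of $\mathcal{H}_I^{\circ}$ are exactly the $(\mathcal{H}_I)_{(\mathbf{b},\mathbf{s})}^{\circ}$, these are pairwise disjoint (they are supported on the pairwise disjoint strata $(\Delta_I^{\gamma,i_l})^*$ and $(Y_v)_I^*$ of $(X[n]_I^{\circ})^{sm}$), every component $\mathcal{C}$ of $\mathcal{H}_I$ is the closure of a unique such $\mathcal{C}^{\circ}$ by flatness of $\varphi_h|_{\mathbb{A}_I}$, and $\mathcal{H}_I$ is smooth so disjoint irreducible pieces cannot be glued. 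Once this is in place the lemma is a one-line topological statement. I do not expect any real obstacle here; the only thing to double-check is that no two distinct pairs $(\mathbf{b},\mathbf{s})$ give rise to the same component of $\mathcal{H}_I$, which is precisely the indexing assertion of Proposition~\ref{prop:strat-comp}.
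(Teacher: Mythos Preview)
Your argument is correct and follows essentially the same route as the paper's proof: both rely on the fact that $\mathcal{H}_I$, being smooth, is the disjoint union of its irreducible components, so that each $(\mathcal{H}_I)_{(\mathbf{b},\mathbf{s})}$ is open and closed in $\mathcal{H}_I$ and any connected subscheme meeting it must lie inside it. The paper states this more tersely, but the content is the same.
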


\begin{proof}
Note that $\mathcal{H}_{I_j}^{\circ}$ is a subscheme of $ \mathcal{H}_I $. Moreover, they are both disjoint unions of their irreducible components. Hence, $ (\mathcal{H}_{I_j})_{(\mathbf{b}',\mathbf{s}')}^{\circ} $ is a subscheme of a unique component $ (\mathcal{H}_I)_{(\mathbf{b},\mathbf{s})} $, and this happens if and only if they have non-empty intersection.
\end{proof}

\subsubsection{}
We are now ready to formulate the main result of this paragraph, but first we need to introduce some additional notation.  We fix a subset $ J = \{j_1, \ldots, j_{r+1}\} $ in $ [n+1]$. It has $r+1$ subsets of cardinality $r$, denoted 
$$ I(k) = J \setminus \{j_k\}. $$
We also fix $ \mathbf{b}' = \{b'_v\}_{v \in V} $ and $ \mathbf{s}' = \{s'_{\gamma}\}_{\gamma \in E} $, where $ s'_{\gamma} = (s'_{\gamma, 1}, \ldots, s'_{\gamma, r}) $. We assume that $(\mathbf{b}', \mathbf{s}')$ is stable with respect to $J$.

\begin{theorem}
\label{thm:limit-scheme}
For each $k$, $ (\mathcal{H}_J^{\circ})_{(\mathbf{b}', \mathbf{s}')} $ is contained in the closure of a unique component over $U_{I(k)} $, denoted $ (\mathcal{H}_{I(k)}^{\circ})_{(\mathbf{b}, \mathbf{s})} $. Both $ \mathbf{b} $ and $ \mathbf{s} $ depend explicitly on $k$, and can be described as follows:

\begin{align*}
k=1\colon 
&\begin{cases}
b_v = b'_v + \sum_{\gamma\in E(v)^-} s'_{\gamma,1}\\
s_{\gamma} = (s'_{\gamma,2},\dots,s'_{\gamma,r})
\end{cases}\\
1<k<r+1\colon
&\begin{cases}
b_v = b_v'\\
s_\gamma = (s'_{\gamma,1},\dots, s'_{\gamma,k-1}+s'_{\gamma,k}\dots, s'_{\gamma,r})
\end{cases}\\
k=r+1\colon
&\begin{cases}
b_v = b'_v + \sum_{\gamma\in E(v)^+} s'_{\gamma,r}\\
s_\gamma = (s'_{\gamma,1},\dots,s'_{\gamma,r-1}).
\end{cases}
\end{align*}

\end{theorem}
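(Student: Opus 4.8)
The plan is to study the degeneration of the component $(\mathcal{H}_{I(k)})_{(\mathbf{b}, \mathbf{s})}$ as one imposes the additional vanishing $t_{j_k} = 0$, i.e.\ to pass from the open stratum over $U_{I(k)}$ to the smaller stratum over $U_{J} = U_{I(k)_{j_k}} \setminus U_{I(k)}$. By the inductive description in Proposition \ref{prop:strat-comp}, the component over $U_{I(k)}$ is a fibre product of relative Hilbert schemes of the smooth strata $(\Delta_{I(k)}^{\gamma, i_l})^*$ and $(Y_v)_{I(k)}^*$ over $U_{I(k)}$; so it suffices to understand, for each such factor, what its limiting configuration looks like when $t_{j_k}$ degenerates to zero. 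Geometrically this is exactly Li's expansion viewed combinatorially: the graph $\Gamma_{I(k)}$ is refined to $\Gamma_J$ by inserting one extra white node into each chain $v_{I(k)} \to \cdots \to v'_{I(k)}$, and the position of the newly inserted node depends on $k$: for $k=1$ it is inserted at the \emph{source} end (adjacent to $V^+$-vertices), for $k=r+1$ at the \emph{target} end (adjacent to $V^-$-vertices), and for $1 < k < r+1$ in the interior, splitting the existing $l=k-1$ slot.

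First I would set up the local picture: by the reduction already made in Subsection \ref{subsec:Hilbertscheme} we may work with $C = \mathbb{A}^1$ and $C[n] = \mathbb{A}^{n+1}$, and over $U_{I(k)}$ the family $X[n]_{I(k)}^{\circ} \to U_{I(k)}$ has its smooth locus a disjoint union of the strata $(\Delta_{I(k)}^{\gamma,i_l})^*$ and $(Y_v)_{I(k)}^*$, as recorded in the proof of Proposition \ref{prop:strat-comp}. The heart of the argument is a local analysis near a node of $X[n]_{I(k)}$: in the étale-local model the expanded degeneration near an edge $\gamma$ looks like the standard chain of $\mathbb{P}^1$- (or $\mathbb{P}^2$-bundle) components glued along sections, and imposing $t_{j_k}=0$ inserts exactly one new rational component. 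A point of $\mathrm{Hilb}^{b_v}((Y_v)_{I(k)}^*/U_{I(k)})$ whose support approaches the node must, in the limit, either stay on the old component $(Y_v)_J$ or spill onto the newly inserted component; the stability criterion (Theorem \ref{thm:GIT-main}), i.e.\ the requirement $\mathbf{v}(Z) = \mathbf{v}_{\mathbf{a}}$ for the refined index set $J$, forces precisely how much mass lands where. Concretely: when $k=1$ the new node sits between $V^+$ and the first white node, so the subschemes previously living on $(Y_v)$ for $v \in V^+$ contribute their length $b'_v$ together with the length $s'_{\gamma,1}$ that was sitting on what is now the adjacent inserted component, giving $b_v = b'_v + \sum_{\gamma \in E(v)^-} s'_{\gamma,1}$; symmetrically for $k=r+1$ with $E(v)^+$ and $s'_{\gamma,r}$; and for $1<k<r+1$ the insertion merely subdivides the $l=k-1$ link, so the two adjacent white-node lengths $s'_{\gamma,k-1}$ and $s'_{\gamma,k}$ merge into a single slot $s'_{\gamma,k-1} + s'_{\gamma,k}$ and the $b_v$ are unchanged.

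To actually prove the stated formulas I would argue in two directions. For \emph{uniqueness} of the component over $U_{I(k)}$ containing $(\mathcal{H}_J^{\circ})_{(\mathbf{b}',\mathbf{s}')}$: since $\mathcal{H}_{I(k)} \to \mathbb{A}_{I(k)}$ is flat with dense image of each component and $\mathcal{H}_J^{\circ} \subset \mathcal{H}_{I(k)}$ is a disjoint union of its own components, Lemma \ref{lemma:containment} reduces everything to exhibiting one point in the intersection; a configuration of $n$ reduced points in the appropriate smooth strata can always be constructed, so containment in \emph{some} component is automatic, and since the ambient $\mathcal{H}_{I(k)}$ is a disjoint union of its components the containing component is unique. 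For the \emph{identification} of $(\mathbf{b},\mathbf{s})$: take a general closed point of $(\mathcal{H}_J^{\circ})_{(\mathbf{b}',\mathbf{s}')}$, namely $n$ distinct points distributed on the strata of $X[n]_J$ according to $(\mathbf{b}',\mathbf{s}')$, and trace which strata of $X[n]_{I(k)}$ they lie on under the contraction $X[n]_J \to X[n]_{I(k)}$ (the map that forgets the insertion corresponding to $j_k$). This contraction sends the newly inserted component isomorphically onto a piece of the adjacent old component, and reading off the resulting numerical support gives exactly $\mathbf{v}(\mathbf{b},\mathbf{s})$ with $(\mathbf{b},\mathbf{s})$ as in the three cases; by Proposition \ref{prop:strat-comp} this numerical data pins down the component.

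\textbf{Main obstacle.} The genuinely delicate point is the local geometry of Li's expansion at a node when a single coordinate $t_{j_k}$ is set to zero: one must check that inserting the $j_k$-th coordinate subdivides the chain of inserted $\mathbb{P}^1$'s (or $\mathbb{P}^2$-bundles, in relative dimension $2$) in exactly the position dictated by the rank $k$ of $j_k$ within $J$ — at the source end for $k=1$, at the target end for $k = r+1$, and interior otherwise — and that this is compatible with the black/white node labelling and the bipartite orientation fixed on $\Gamma$. Equivalently, one needs that the specialization map on Hilbert schemes respects the stratification in the predicted way, so that the length on a limiting component is the sum of the lengths on the components of $X[n]_J$ that contract onto it. Once this local model is in hand (it is essentially \cite[Proposition 1.12]{GHH-2016} together with a direct inspection of the coordinate description of $X[n] \to C[n]$), the three displayed formulas drop out by a straightforward bookkeeping over the edges $\gamma$ and vertices $v$ of $\Gamma$, using only the stability condition to confirm that the resulting $(\mathbf{b},\mathbf{s})$ is again stable with respect to $I(k)$.
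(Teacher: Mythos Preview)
Your proposal is essentially correct and follows the same strategy as the paper: reduce uniqueness to Lemma~\ref{lemma:containment}, and identify the target component $(\mathbf{b},\mathbf{s})$ by tracking how the strata of $X[n]_J$ relate to those of $X[n]_{I(k)}$, with the local input coming from \cite[Proposition~1.12]{GHH-2016}. Your diagnosis of the ``main obstacle'' is accurate.

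The one place where your write-up is imprecise is the phrase ``the contraction $X[n]_J \to X[n]_{I(k)}$ (the map that forgets the insertion corresponding to $j_k$)''. No such morphism of schemes exists: $X[n]_J$ is the closed subscheme of $X[n]_{I(k)}$ lying over $\mathbb{A}_J \subset \mathbb{A}_{I(k)}$, not a space mapping \emph{to} $X[n]_{I(k)}$ by collapsing a component. What you have in mind is the combinatorial contraction of dual graphs $\Gamma_J \to \Gamma_{I(k)}$, but to use it you must realize it geometrically. The paper does this by choosing, for a closed point $q \in U_J$, a formal arc $S = \mathrm{Spec}\,\kk[[\pi]] \to U_{I(k),j_k}$ with closed point mapping to $q$ and generic point landing in $U_{I(k)}$; then each point of a reduced configuration $T_s \subset (X[n]_J)_q$ lying in the smooth locus lifts (by Hensel-type smoothness, \cite[Cor.~6.2.13]{liu-2002}) to a section of $\varphi^* X[n]_{I(k)} \to S$, producing a flat family $T$ whose generic fibre $T_\eta$ lives over $U_{I(k)}$. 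Reading off the numerical support of $T_\eta$ gives exactly your formulas for $(\mathbf{b},\mathbf{s})$, and then Lemma~\ref{lemma:containment} finishes. This arc-lifting is what rigorously replaces your ``contraction'', and once you phrase it this way your bookkeeping in the three cases $k=1$, $1<k<r+1$, $k=r+1$ goes through unchanged.
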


\begin{proof}
We fix $ k $ and consider $ I = I(k) = J \setminus \{j_k\} $. For simplicity, we assume $ 1 < k < r + 1 $; the cases $ k = 1, r + 1 $ are entirely similar, but require slight modifications in notation. Recall moreover that $ U_J = U_{I,j_k} \setminus U_I $.   

Pick a closed point $ q \in U_J $. Then we can find a morphism 
$$ S = \mathrm{Spec}~k[[\pi]] \to U_{I,j_k}, $$
with $\pi $ a formal variable, sending the closed point $s \in S$ to $q$, and the generic point $\eta \in S$ to $ U_I $. We define $ \varphi $ as the composition
$$ \varphi \colon S \to U_{I,j_k} \subset \mathbb{A}_I. $$
If $W$ is a scheme over $ \mathbb{A}_I $, we write $ \varphi^* W  $ for its pullback along $\varphi$.

We also fix an edge $ \gamma \in E $ (this $\gamma$ will be suppressed in the notation). For any $ j_l \in I $, we denote by $ \tilde{\Delta}_I^{j_l} \subset \Delta_I^{j_l} $ the open subscheme obtained by removing from $ \Delta_I^{j_l} $ its intersection with \emph{all} neighbouring components in $X[n]_I$. By \cite[Prop.~1.12]{GHH-2019} (which in particular describes the degeneration of $\Delta_I^{j_l}$ as $t_{j_k}$ tends to zero), we find that $ \varphi^*\tilde{\Delta}^{j_l} \to S$ is smooth if $ l \neq k-1 $, and if $ l = k-1 $, the generic fibre is smooth, and degenerates into a normal crossing union $ (\Delta_{J}^{j_{k-1}})_s \cup (\Delta_{J}^{j_k})_s $ over $ s \in S$.

In either case, let $T_s$ be a point in the smooth locus of $ (\varphi^*\tilde{\Delta}_I^{j_l})_s $. By \cite[Cor.~6.2.13]{liu-2002}, $T_{s}$ can be lifted to an $S$-section $T$ of $\varphi^*\tilde{\Delta}_I^{i_l}$. Clearly, $T$ can also be viewed as a section of $ \varphi^* X[n]_I \to S $. By construction, it is contained in the smooth locus.

Now let $ T_s $ be a disjoint union of $n$ points contained in the smooth locus of $ (\varphi^*X[n]_I)_s $. By the above procedure, we obtain $n$ pairwise disjoint sections $ T_i $ by lifting each point in the support of $ T_s $. Then we define $T = \cup_i T_i $. Being a disjoint union of sections, it is a closed subscheme of $ \varphi^* X[n]_I $, and flat over $S$.

Assume that $ T_s $ is GIT stable, belonging to a component $ (\mathcal{H}_J^{\circ})_{(\mathbf{b}', \mathbf{s}')} $ over $U_J$. Using the explicit construction of $T$, it is straightforward to verify that the generic fibre $ T_{\eta} $ is contained in the component $ (\mathcal{H}_{I}^{\circ})_{(\mathbf{b}, \mathbf{s})} $, where $(\mathbf{b}, \mathbf{s})$ is stable with respect to $I$, and depends on $k$ as specified above. Hence $ T_s $ belongs to the closure $ (\mathcal{H}_{I})_{(\mathbf{b}, \mathbf{s})} $, as it is a specialization of $ T_{\eta} $. Then Lemma \ref{lemma:containment} asserts that $ (\mathcal{H}_J^{\circ})_{(\mathbf{b}', \mathbf{s}')} $ is a subscheme of $ (\mathcal{H}_{I})_{(\mathbf{b}, \mathbf{s})} $.
\end{proof}

\subsubsection{}\label{subsubsec:prod-comp} We also need analogues of Proposition \ref{prop:strat-comp} and Theorem \ref{thm:limit-scheme} for the $n$-fold product and the symmetric product. With a few modifications, this follows along the lines of the proof in the Hilbert scheme case, hence we will be particularly brief on details in this paragraph.

We keep $ n \geqslant 1 $ fixed, and denote the stable loci by $ \mathcal{P} = (X[n] \times_{C[n]} \cdots \times_{C[n]} X[n])^{ss} $ and $ \mathcal{S} = \mathrm{Sym}^n(X[n]/C[n])^{ss} $; both $ \mathcal{P} $ and $ \mathcal{S} $ are flat over $ C[n] $. By Corollary \ref{cor:prod}, $ \mathcal{P} $ is also \emph{smooth} over $C[n]$, and there is an $\mathfrak{S}_n$-quotient map
$$ \tau^{ss} \colon \mathcal{P} \to \mathcal{S}. $$
For each non-empty subset $ I = \{i_1, \ldots, i_r \} \subset [n+1] $, $ \mathcal{P}_I $ is a disjoint union of its (smooth) irreducible components. Moreover, $ \mathcal{S}_I $ is an $\mathfrak{S}_n$-quotient of the smooth quasi-projective $\kk$-variety $ \mathcal{P}_I $, hence it is normal. This implies that $ \mathcal{S}_I $ is a disjoint union of its (normal) irreducible components, as well.

With these preliminaries at hand, it is entirely straightforward to treat the symmetric product. One finds that the irreducible components of $ \mathcal{S}_I^{\circ} $ are indexed precisely by pairs $(\mathbf{b},\mathbf{s})$ (as introduced in Paragraph \ref{subsubsec:hilbind}) that are stable with respect to $I$. Each component $ (\mathcal{S}_I)^{\circ}_{(\mathbf{b},\mathbf{s})} $ can be computed in an analogous way as in Proposition \ref{prop:strat-comp} (the proof is, word for word, the same). Also Theorem \ref{thm:limit-scheme}, both the statement and the proof, transfers immediately to the symmetric product case; we leave the details to the reader. 

For the $n$-fold product, we need to be more precise about the notation. Recall from Section \ref{sec:GIT} that the choice of $ I \subset [n+1] $ yields an expansion $ \Gamma_I $ of $\Gamma$. We denote the \emph{black} vertices by $ v_I $, where $ v \in V $, and the \emph{white} vertices by $ (I, \gamma, i_l) $, where $ \gamma \in E $, and where $ 1 \leqslant l \leqslant r-1 $. Then the components of $ \mathcal{P}_I^{\circ} $ are indexed by \emph{stable} tuples, denoted $ \mathbf{z} $, in the $n$-fold product
$$ V(\Gamma_I) \times \cdots \times V(\Gamma_I), $$
where $ V(\Gamma_I) $ is the set of \emph{all} vertices of $\Gamma_I$. Here stability is formulated as follows: 
\begin{itemize}
\item For each $ v \in V $, define $ b_v $ as the number of times $v_I $ occurs as an entry in $ \mathbf{z} $.
\item For each $ \gamma \in E$ and $l$, define $ s_{\gamma, l} $ as the number of times $ (I, \gamma, i_l) $ occurs as an entry in $ \mathbf{z} $. 
\item Define the numerical data $(\mathbf{b},\mathbf{s})$ as in Paragraph \ref{subsubsec:hilbind}. We say that $ \mathbf{z} $ is stable with respect to $I$ if and only if $\textbf{v}(\mathbf{b},\mathbf{s})$ is stable with respect to $I$, in the sense of Definition \ref{def:num-stable}
\end{itemize}
The analogue of Proposition \ref{prop:strat-comp} can now be easily formulated (left to the reader). One finds that each component $ (\mathcal{P}_I)^{\circ}_{\textbf{z}} $ is a fibred product, over $U_I$, of schemes of the form $(Y_v)_I^{*}$, resp.~$(\Delta_I^{\gamma, i_l})^{*}$, dictated by $\textbf{z}$. 

For the applications in Section \ref{sec:dual-computation}, it is however necessary to give a detailed analogue of Theorem \ref{thm:limit-scheme}. Let $ J = \{j_1, \ldots, j_{r+1} \} \subset  [n+1] $, let $ I(k) = J \setminus \{j_k\} $, and let $ \mathbf{z}' \in V(\Gamma_J) \times \cdots \times V(\Gamma_J) $ be a stable $n$-tuple (with respect to $J$). Then, by the same method of proof as in Theorem \ref{thm:limit-scheme}, one finds: 

\begin{theorem}\label{thm:limit-scheme-prod}
For each $k$, $ (\mathcal{P}_J)^{\circ}_{\textbf{z}'} $ is contained in the closure of a unique component over $U_{I(k)}$, denoted $ (\mathcal{P}_{I(k)})^{\circ}_{\textbf{z}} $. The tuple $\textbf{z}$ depends explicitly on $k$, and is obtained from $\textbf{z}'$ as follows: 
\begin{itemize}
\item[\rm{(i)}]  Every vertex of the form $v_J$ appearing as an entry in $\textbf{z}'$ is replaced by $v_{I(k)}$ in the same entry. For the remaining entries, we use the rules listed below. 

\item[\rm{(ii)}]  $k=1$: $(J, \gamma, j_1)$ is replaced by $v_{I(k)}$, where $ \gamma \in E(v)^-$; $ (J,\gamma,j_l) $ is replaced by $(I(k),\gamma,j_l)$ otherwise.

\item[\rm{(iii)}]  $1 < k < r+1$: $ (J,\gamma,j_l) $ is replaced by $(I(k),\gamma,j_l)$ if $ l \neq k$; $ (J,\gamma,j_k) $ is replaced by $(I(k),\gamma,j_{k-1})$.

\item[\rm{(iv)}]  $k=r+1$: $ (J,\gamma,j_l) $ is replaced by $(I(k),\gamma,j_l)$ if $ l \neq r$; $ (J,\gamma,j_{r}) $ is replaced by $v_{I(k)}$, where $ \gamma \in E(v)^+$.
\end{itemize}

\end{theorem}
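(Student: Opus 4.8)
The plan is to mimic the proof of Theorem \ref{thm:limit-scheme} very closely, using the degeneration-via-$S$-sections technique adapted to the $n$-fold product, and then translate the combinatorial bookkeeping into the language of $n$-tuples $\textbf{z} \in V(\Gamma_J) \times \cdots \times V(\Gamma_J)$. First I would fix $k$ and set $I = I(k) = J \setminus \{j_k\}$, pick a closed point $q \in U_J$ and choose a morphism $S = \Spec \kk[[\pi]] \to U_{I,j_k} \subset \mathbb{A}_I$ sending the closed point to $q$ and the generic point into $U_I$, exactly as in the Hilbert scheme case. The key geometric input, \cite[Prop.~1.12]{GHH-2016}, describes how each inserted component $\Delta_I^{\gamma, j_l}$ behaves as $t_{j_k} \to 0$: for $l \neq k-1$ the pullback $\varphi^*\tilde\Delta_I^{j_l} \to S$ stays smooth, while for $l = k-1$ the fibre degenerates into the normal crossing union $(\Delta_J^{\gamma,j_{k-1}})_s \cup (\Delta_J^{\gamma,j_k})_s$; symmetrically, for $k=1$ the black component $(Y_v)_I$ absorbs the first inserted component, and for $k=r+1$ it absorbs the last one. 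These are precisely the three cases listed in the statement.

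Next I would realize a stable $n$-tuple of points: given the stable tuple $\textbf{z}'$ over $U_J$, take a closed point of $(\mathcal{P}_J)^\circ_{\textbf{z}'}$, i.e.\ an ordered $n$-tuple $(T^{(1)}_s,\dots,T^{(n)}_s)$ of points in the smooth locus of the fibre $(\varphi^* X[n]_I)_s$, each lying on a component dictated by the corresponding entry of $\textbf{z}'$. Using \cite[Cor.~6.2.13]{liu-2002} (as in Theorem \ref{thm:limit-scheme}) I lift each $T^{(i)}_s$ to an $S$-section $T^{(i)}$ of $\varphi^* X[n]_I \to S$ landing in the smooth locus. Because $\mathcal{P}$ is the $n$-fold fibred product, the ordered tuple $(T^{(1)},\dots,T^{(n)})$ is an $S$-point of $\varphi^* \mathcal{P}$, flat over $S$, whose closed fibre is the chosen point of $(\mathcal{P}_J)^\circ_{\textbf{z}'}$. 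Now I inspect the generic fibre: each section $T^{(i)}_\eta$ lies on a component of $X[n]_I^\circ$ over $U_I$, and which component is determined by tracking the entry of $\textbf{z}'$ through the degeneration described above. A black vertex $v_J$ in $\textbf{z}'$ stays on $(Y_v)_I^*$, hence becomes $v_{I(k)}$; a white vertex $(J,\gamma,j_l)$ with $l$ not equal to the index being contracted stays on an inserted component and becomes $(I(k),\gamma,j_l)$; the white vertex corresponding to the contracted slot either re-indexes as $(I(k),\gamma,j_{k-1})$ (middle case) or merges into the adjacent black vertex $v_{I(k)}$ (cases $k=1$ with $\gamma \in E(v)^-$ and $k=r+1$ with $\gamma \in E(v)^+$). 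This is exactly rules (i)–(iv). So $T^{(i)}_\eta$ lies in $(\mathcal{P}_{I(k)})^\circ_{\textbf{z}}$ with $\textbf{z}$ as prescribed, which is moreover stable with respect to $I(k)$ since its numerical data $(\mathbf{b},\mathbf{s})$ is the image of the stable $(\mathbf{b}',\mathbf{s}')$ under exactly the combinatorial operations of Theorem \ref{thm:limit-scheme}. Hence the closed fibre $(T^{(1)}_s,\dots,T^{(n)}_s)$ is a specialization of a point of $(\mathcal{P}_{I(k)})^\circ_{\textbf{z}}$, so it lies in its closure.

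Finally, to upgrade "the given point lies in the closure" to "the whole component $(\mathcal{P}_J)^\circ_{\textbf{z}'}$ lies in the closure of $(\mathcal{P}_{I(k)})^\circ_{\textbf{z}}$", I invoke the $n$-fold product analogue of Lemma \ref{lemma:containment}: since $\mathcal{P}_{I_j}^\circ \subset \mathcal{P}_I$ and both are disjoint unions of their irreducible components, any component of the smaller is contained in a unique component of the larger, and containment is detected by non-empty intersection. Uniqueness of the target component $\textbf{z}$ is then immediate. The main obstacle, as in Theorem \ref{thm:limit-scheme}, is the bookkeeping: one must carefully match the degeneration behaviour of each of the $r$ pieces of $X[n]_I$ (as recorded in \cite[Prop.~1.12]{GHH-2016}) with the replacement rules (ii)–(iv), and check that the edge-orientation constraints ($\gamma \in E(v)^-$ for $k=1$, $\gamma \in E(v)^+$ for $k=r+1$) come out correctly from the bipartite orientation of $\Gamma$ — this is why the two extreme cases look different from the generic one. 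Everything else is a routine transcription of the Hilbert scheme argument, and since the $\mathfrak{S}_n$-action merely permutes the ordered tuples, no new subtlety enters from passing between $\mathcal{P}$ and $\mathcal{S}$.
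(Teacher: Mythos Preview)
Your proposal is correct and follows exactly the approach the paper indicates: the paper itself does not give a separate proof but simply states that the result is obtained ``by the same method of proof as in Theorem \ref{thm:limit-scheme}'', and what you have written is precisely that transcription, including the use of $S$-sections via \cite[Cor.~6.2.13]{liu-2002}, the degeneration input from \cite[Prop.~1.12]{GHH-2016}, and the product analogue of Lemma \ref{lemma:containment}.
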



\section{The DLT structure}\label{sec:dlt}

\subsection{DLT models}\label{sec:dlt models}

Our aim here is to show that $I^n_{X/C}\to C$ is a \emph{dlt model}. This
terminology was explained in Section \ref{sec:intro-mainresults}.

\subsubsection{}

Let $X\to C$ be a projective strict simple degeneration of relative dimension  at most
two such that the dual graph has no odd cycles. The reason why we restrict to
fibre dimension at most $2$ is that otherwise the Hilbert schemes are in general
neither irreducible nor equi-dimensional. 

By construction $I^n_{X/C}\to C$ is a good quotient of the family $\mathcal{H}
\to C[n]$ by the $n$-dimensional torus $G[n]$. Here $\mathcal{H}\subset
\Hilb^n(X[n]/C[n])$ is the GIT stable locus with respect to a certain
linearization and $C[n]$ is the fibre product $C\times_{\AA^1}\AA^{n+1}$ with
respect to a chosen \'etale coordinate $t\colon C\to \AA^1$ and the $(n+1)$-fold
multiplication $\AA^{n+1}\to \AA^1$. By a \emph{coordinate hyperplane} in $C[n]$
we mean the inverse image of a coordinate hyperplane in $\AA^{n+1}$.

\begin{proposition}\label{prop:Qfactorial}
$I^n_{X/C}$ is normal, has finite quotient singularities only, and is
$\QQ$-factorial.
\end{proposition}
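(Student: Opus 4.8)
The plan is to deduce all three assertions from the fact that $I^n_{X/C}$ is, by construction, the good GIT quotient $\mathcal{H}/G[n]$, where $\mathcal{H} = \Hilb^n(X[n]/C[n])^{ss}$ is a \emph{smooth} variety (by Corollary \ref{cor:redo} together with the relative dimension $\leqslant 2$ hypothesis, the semistable points have smooth support, so $\varphi_h\colon\mathcal{H}\to C[n]$ is smooth) and $G[n] \cong (\Gm)^n$ is a torus. Since stable $=$ semistable here (Theorem \ref{thm:GIT-main}), every point of $\mathcal{H}$ is stable, so the quotient map $\mathcal{H}\to I^n_{X/C}$ is geometric and every point has finite stabilizer. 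The standard structure theory of quotients of smooth varieties by reductive groups acting with finite stabilizers — via Luna's étale slice theorem — then shows that $I^n_{X/C}$ is étale-locally of the form $(\text{smooth})/(\text{finite group})$, i.e.\ has finite quotient singularities only. Normality is then automatic (a quotient of a smooth, hence normal, variety by a finite group is normal), and $\QQ$-factoriality follows because varieties with finite quotient singularities are $\QQ$-factorial (every Weil divisor becomes Cartier after pulling back to the smooth slice and taking an appropriate multiple to descend).

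First I would set up the Luna slice: fix a closed point $[Z]\in\mathcal{H}$ with (finite) stabilizer $\Gamma_{[Z]}\subset G[n]$. Since the orbit $G[n]\cdot[Z]$ is closed in $\mathcal{H}$ (all points being stable) and $\mathcal{H}$ is smooth and quasi-projective, Luna's étale slice theorem provides a locally closed $\Gamma_{[Z]}$-invariant smooth slice $S\ni[Z]$ such that the induced map $S/\Gamma_{[Z]}\to I^n_{X/C}$ is étale onto a neighbourhood of the image of $[Z]$. (One needs the $G[n]$-action to be on a quasi-projective scheme with a linearized ample bundle, which we have, namely $\mathcal{M}_\ell$.) This exhibits $I^n_{X/C}$ étale-locally as $S/\Gamma_{[Z]}$ with $S$ smooth and $\Gamma_{[Z]}$ finite — exactly the statement that it has finite quotient singularities.

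Then normality and $\QQ$-factoriality are formal consequences. For normality: $S$ is smooth, hence normal, hence the invariant ring $\mathcal{O}(S)^{\Gamma_{[Z]}}$ is normal (integral closure commutes with taking invariants under a finite group), so $S/\Gamma_{[Z]}$ is normal, and normality is étale-local. For $\QQ$-factoriality: given a Weil divisor $D$ on a neighbourhood $U$ of the image of $[Z]$, pull it back to a Weil (hence Cartier, by smoothness) divisor on $S$; its class is $\Gamma_{[Z]}$-invariant, so some multiple $mD$ (with $m$ dividing $|\Gamma_{[Z]}|$, say) descends to a Cartier divisor on $S/\Gamma_{[Z]}$, which pulls back to $mD$ on $U$ up to the étale map; since Cartierness is étale-local this shows $mD$ is Cartier near the image of $[Z]$, and as $[Z]$ was arbitrary we get $\QQ$-factoriality of $I^n_{X/C}$.

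The main obstacle — and really the only nontrivial point — is verifying that Luna's slice theorem applies and that the orbits are indeed closed in the \emph{semistable} locus. This is where the input "stable $=$ semistable" from Theorem \ref{thm:GIT-main} (equivalently Corollary \ref{cor:redo}) is essential: it guarantees all stabilizers are finite and all orbits closed in $\mathcal{H}$, so there is no distinction between the GIT quotient and the orbit space, and the slice theorem gives a genuine étale-local model. A secondary, more technical, point is descending the line bundle / Cartier data through the étale slice map to conclude $\QQ$-factoriality globally on $I^n_{X/C}$ rather than merely formal-locally; but this is routine once the étale-local quotient presentation is in hand. I would also remark that the same argument applies verbatim to $J^n_{X/C}$ and $P^n_{X/C}$, the latter being in fact smooth.
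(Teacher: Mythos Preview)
Your proposal is correct and follows essentially the same route as the paper: smoothness of $\mathcal{H}$, finiteness of stabilizers (from stable $=$ semistable), and Luna's slice theorem to get the \'etale-local finite quotient description, from which normality and $\QQ$-factoriality follow. The only cosmetic difference is that the paper deduces normality directly from the general fact that a good quotient of a normal variety is normal, rather than via the slice, and simply cites \cite[Prop.~5.15]{KM-1998} for $\QQ$-factoriality instead of sketching the descent argument you give.
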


\begin{proof}
A good quotient of a normal variety is normal, and our $I^n_{X/C}$ is even a
geometric quotient of the nonsingular variety $\mathcal{H}$. Moreover all
stabilizer groups are finite, so by Luna's \'etale slice theorem, $I^n_{X/C}$ is
\'etale locally a finite quotient. This in turn implies $\QQ$-factoriality
\cite[Prop.\ 5.15]{KM-1998} (the reference uses the language of  euclidean
topology over $\mathbb{C}$; the same argument works in  \'etale topology).
\end{proof}

\subsubsection{}

Our starting point for showing that $I^n_{X/C}\to C$ is a dlt model is that
$\mathcal{H}\to C$ is even an snc model. Here we view $\mathcal{H}$ as a family
over $C$ via the natural map $C[n]\to C$, thus the special fibre $\mathcal{H}_0$
is the inverse image of the union of all coordinate hyperplanes in $C[n]$.

\begin{lemma}\label{lem:snc}
$(\mathcal{H}, \mathcal{H}_0)$ is an snc pair.
\end{lemma}

\begin{proof}
$\mathcal{H}\to C[n]$ is smooth and $\mathcal{H}_0$ is the inverse
image of the strict normal crossing divisor which is the union of the coordinate
hyperplanes in $C[n]$. The claim follows from this.
\end{proof}

\subsubsection{}

Next we observe that there is a natural correspondence between the components of
the special fibres of $\mathcal{H}$ and $I^n_{X/C}$. This will be strengthened
below in Corollary \ref{cor:same-complex} to show that the associated dual complexes
can be identified.

\begin{lemma}\label{lem:component-bijection}
Let $\pi\colon \mathcal{H}\to I^n_{X/C}$ denote the geometric quotient map.
The rules $D = \pi(E)$ and $E = \pi^{-1}(D)$ define a bijection between
the components of $\mathcal{H}_0$ and of $(I^n_{X/C})_0$.
\end{lemma}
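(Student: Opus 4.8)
The statement asserts that $\pi\colon\mathcal{H}\to I^n_{X/C}$ induces a bijection between the irreducible components of $\mathcal{H}_0$ and those of $(I^n_{X/C})_0$, with the inverse correspondences given by $D=\pi(E)$ and $E=\pi^{-1}(D)$. The plan is to exploit three facts: (1) $\pi$ is a geometric quotient by the connected group $G[n]$, so it is surjective, open, and has connected (indeed irreducible, torus) fibres; (2) $\pi$ is $G[n]$-invariant and the special fibres are $G[n]$-invariant, since $\mathcal{H}_0=\pi^{-1}((I^n_{X/C})_0)$ (both being the preimage of $0\in C$); (3) a morphism that is open, surjective, closed on the relevant loci, with irreducible fibres sends irreducible components to irreducible components.

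\emph{Step 1: the special fibre is $G[n]$-saturated.} First I would note that $\mathcal{H}_0=\phi_h^{-1}(0)$ with $\phi_h$ the composition $\mathcal{H}\to C[n]\to C$, and that $\phi_h$ factors through $\pi$ via the map $I^n_{X/C}\to C$; hence $\mathcal{H}_0=\pi^{-1}((I^n_{X/C})_0)$. In particular every component $E$ of $\mathcal{H}_0$ is a union of $G[n]$-orbits, and $\pi(E)$ is a closed (since $\pi$ is a geometric, in particular good, quotient, it is submersive and maps closed saturated sets to closed sets — and $E$, being a component of a $G[n]$-invariant closed set, is $G[n]$-invariant hence saturated) irreducible subset of $(I^n_{X/C})_0$.

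\emph{Step 2: $\pi$ takes components to components.} Since $\pi$ is open and surjective with irreducible fibres, I would argue as follows. If $Z\subseteq I^n_{X/C}$ is irreducible and closed, then $\pi^{-1}(Z)$ is closed and, because $\pi$ is open with irreducible fibres, $\pi^{-1}(Z)$ is irreducible (this is the standard fact: openness gives that the generic fibre over the generic point of $Z$ meets the closure of any dense open, and irreducibility of fibres upgrades this to irreducibility of the total preimage). Apply this to $Z=D$ a component of $(I^n_{X/C})_0$: then $\pi^{-1}(D)$ is an irreducible closed subset of $\mathcal{H}_0$ of the same dimension as $D$ plus $\dim G[n]=n$, hence (by dimension count, since all components of $\mathcal{H}_0$ are divisors in the smooth variety $\mathcal{H}$, which is equidimensional, and $(I^n_{X/C})_0$ is likewise equidimensional of one less dimension than $I^n_{X/C}$) $\pi^{-1}(D)$ is a component of $\mathcal{H}_0$. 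Conversely, for a component $E$ of $\mathcal{H}_0$, which is $G[n]$-invariant, $\pi(E)$ is irreducible closed; since $E$ is saturated, $E=\pi^{-1}(\pi(E))$, so $\pi(E)$ cannot be properly contained in any component $D$ of $(I^n_{X/C})_0$ (else $E=\pi^{-1}(\pi(E))\subsetneq\pi^{-1}(D)$, contradicting that $E$ is already a component and $\pi^{-1}(D)$ is irreducible of the same dimension). Hence $\pi(E)$ is a component. These two constructions are mutually inverse by the saturation identity $E=\pi^{-1}(\pi(E))$ and surjectivity $D=\pi(\pi^{-1}(D))$.

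\emph{Main obstacle.} The one genuinely delicate point is verifying that each component $E$ of $\mathcal{H}_0$ is $G[n]$-invariant, equivalently that $G[n]$ does not permute the components of $\mathcal{H}_0$ nontrivially. This follows because $G[n]\cong(\Gm)^n$ is connected, so it must fix each irreducible component of the invariant closed set $\mathcal{H}_0$ setwise; this is what makes the preimage/image correspondence clean and makes $\pi(E)$ automatically closed and saturated. Everything else — openness, surjectivity, irreducibility of fibres of a geometric quotient by a torus, and the dimension bookkeeping using smoothness of $\mathcal{H}$ and Lemma~\ref{lem:snc} — is routine. I would close by recording that, since $(\mathcal{H},\mathcal{H}_0)$ is snc (Lemma~\ref{lem:snc}) and $\pi$ is finite-quotient étale-locally (Proposition~\ref{prop:Qfactorial}), the correspondence $E\leftrightarrow D$ is the expected first step toward identifying the dual complexes, as promised in the text preceding Corollary~\ref{cor:same-complex}.
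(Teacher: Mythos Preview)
Your proof is correct and follows essentially the same approach as the paper: both arguments hinge on the connectedness of $G[n]$ (so that each component $E$ of $\mathcal{H}_0$ is $G[n]$-invariant, hence saturated) together with the fact that $\pi$ is a geometric quotient (so fibres are orbits, saturated closed sets map to closed sets, and codimension is preserved). The only cosmetic difference is that you deduce irreducibility of $\pi^{-1}(D)$ from the general ``open surjection with irreducible fibres'' principle, whereas the paper writes $\pi^{-1}(D)$ as a union of invariant components $E_i$ and observes that, since fibres are single orbits and each $E_i$ is a union of fibres, there is room for only one $E_i$; these are two phrasings of the same observation.
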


\begin{proof}
Since $\pi$ is a geometric quotient and each component $E\subset \mathcal{H}_0$
is invariant, the image $\pi(E)\subset I^n_{X/C}$ is closed, irreducible and has
codimension one, hence it is a component of $(I^n_{X/C})_0$. Conversely
$\pi^{-1}(D)$ is closed and has codimension one, so it is a union of components
$E_i\subset \mathcal{H}_0$. But the fibres of $\pi$ are exactly orbits, and each
$E_i$ is invariant, so there is room  for only one component $E=\pi^{-1}(D)$.
\end{proof}

\subsubsection{}

To deduce snc properties of $I^n_{X/C}$ from $\mathcal{H}$ we will apply Luna's
\'etale slice theorem. Let $G=G[n]$.

\begin{lemma}\label{lemma:luna}
Let $P\in \mathcal H$ be a point with stabilizer group $G_P\subset G$.
There exists a locally closed nonsingular $G_P$-invariant subvariety
$W_P\subset \mathcal H$ containing $P$, such that
\begin{enumerate}
\item[\rm{(i)}] the induced morphism $W_P/G_P \to I^n_{X/C}$ is \'etale,
\item[\rm{(ii)}]  the special fibre $t=0$ in $W_P$ is an snc divisor, more precisely $W_P \cap \mathcal{H}_0 =(W_P)_0=\cup_i F_i$, where $F_i=E_i\cap W_P$, and
\item[\rm{(iii)}]  $G_P$ acts set-theoretically freely outside the singular locus of $(W_P)_0$.
\end{enumerate}
\end{lemma}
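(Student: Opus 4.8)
The plan is to apply Luna's \'etale slice theorem at the point $P \in \mathcal{H}$, carefully arranging that the slice $W_P$ interacts well with the special fibre $\mathcal{H}_0$. Recall that all stabilizers $G_P$ are finite (Proposition \ref{prop:Qfactorial}), hence linearly reductive in characteristic zero, so the slice theorem applies. The theorem furnishes a locally closed $G_P$-invariant affine subvariety $W_P \subset \mathcal{H}$ through $P$, transverse to the orbit $G\cdot P$, such that $W_P \to I^n_{X/C}$ induces an \'etale morphism $W_P/G_P \to I^n_{X/C}$ onto a saturated open neighbourhood of $[P]$; this immediately gives (i). The point to be careful about is that we want $W_P$ also nonsingular and compatible with the stratification of $\mathcal{H}_0$.

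For (ii), the key observation is that the special fibre $\mathcal{H}_0 = \phi_h^{-1}(0)$ is pulled back from $C[n]$, and more precisely from the union of coordinate hyperplanes in $\mathbb{A}^{n+1}$ via the composite $\mathcal{H} \to C[n] \to C$; since $\mathcal{H} \to C[n]$ is smooth and the coordinate hyperplane arrangement is snc (Lemma \ref{lem:snc}), the divisor $\mathcal{H}_0$ is snc with components the $E_i$ indexed as in Lemma \ref{lem:component-bijection}. The slice $W_P$ can be chosen to meet the strata of this snc divisor transversally: concretely, intersect the affine slice produced by the slice theorem with the smooth locus of $\mathcal{H}$ (which is everything here) and shrink so that, after possibly further cutting down, $W_P$ is smooth and $W_P \cap \mathcal{H}_0$ remains snc with components $F_i = E_i \cap W_P$. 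Because the $E_i$ are $G$-invariant, the $F_i$ are automatically $G_P$-invariant, and transversality of a general slice to an snc divisor preserves the snc property on the slice. That $(W_P)_0 = W_P \cap \mathcal{H}_0$ and that its components are exactly the nonempty $E_i \cap W_P$ is then formal.

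For (iii), one uses the standard fact from GIT that, since $\mathcal{H} = \Hilb^n(X[n]/C[n])^{ss}$ consists only of GIT-stable points with finite stabilizers (Theorem \ref{thm:GIT-main}), the stabilizer is generically trivial; more precisely the locus where $G_P$ acts non-freely is where the $G[n]$-orbit degenerates, and by the explicit description of (semi-)stable subschemes — a subscheme has non-trivial stabilizer precisely when it is supported on strata inserted by the expansion, i.e.\ where at least two of the coordinates $t_i$ vanish — the non-free locus is contained in the locus where $\mathcal{H}_0$ fails to be smooth, i.e.\ in the singular locus of $(W_P)_0$. Pulling this back to the slice, $G_P$ acts set-theoretically freely on $W_P \setminus \mathrm{Sing}((W_P)_0)$. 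The main obstacle I expect is (iii): making rigorous the claim that non-trivial stabilizers force the subscheme to lie over the deeper strata of $C[n]$, hence into $\mathrm{Sing}(\mathcal{H}_0)$. This requires unwinding the $G[n] \cong (\mathbb{G}_m)^n$-action on Li's expansion $X[n] \to C[n]$ and the linearization of $\mathcal{L}$, tracking which fibres carry points with finite but non-trivial isotropy; the combinatorial stability criterion of Theorem \ref{thm:GIT-main} is the essential tool, and the argument is a careful bookkeeping over the strata $\mathcal{H}_I$ rather than a deep new idea.
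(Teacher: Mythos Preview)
Your overall strategy matches the paper's: apply Luna's slice theorem for (i), then verify (ii) and (iii) by analyzing how the slice meets $\mathcal{H}_0$. However, your argument for (ii) has a genuine gap, and your argument for (iii) is more complicated than necessary.

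For (ii), you assert that ``the slice $W_P$ can be chosen to meet the strata of this snc divisor transversally'' and invoke ``transversality of a general slice''. But a Luna slice is not a \emph{general} transversal; it is only required to be complementary to the orbit $G\cdot P$ at $P$. What you need, and what the paper supplies, is the observation that since each component $E_i$ is $G$-invariant and contains $P$, the orbit $G\cdot P$ lies in every $E_i$, so $T_P(G\cdot P)\subset T_P(E_i)$. Combined with $T_P(\mathcal{H}) = T_P(W_P)\oplus T_P(G\cdot P)$, this forces $T_P(E_i) = T_P(F_i)\oplus T_P(G\cdot P)$ where $F_i = E_i\cap W_P$, and hence the hyperplanes $T_P(F_i)\subset T_P(W_P)$ inherit general position from the $T_P(E_i)$. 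This is what gives snc at $P$; then shrink. You mention that the $E_i$ are $G$-invariant but do not use this where it matters.

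For (iii), the paper's argument is shorter than yours: rather than analyzing supports of subschemes, one simply observes that $G=G[n]$ already acts freely on $\AA^{n+1}$ outside the singular locus of the union of coordinate hyperplanes (i.e.\ where at most one $t_i$ vanishes), and since $\mathcal{H}\to C[n]$ is smooth this pulls back to $\mathcal{H}$. The final step, that $\mathrm{Sing}(\mathcal{H}_0)\cap W_P = \mathrm{Sing}((W_P)_0)$, follows from (ii). Your route via the combinatorial stability criterion would also work, but the reduction to the base is cleaner and avoids the bookkeeping you flag as the ``main obstacle''.
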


\begin{proof}
This is an application of Luna's étale slice theorem (see \cite{SK-1989}
and \cite{GIT}), which works under the assumption that the orbit of $P$ is
closed. This holds for all orbits in our $\mathcal H$, as all points are GIT stable. 
Luna's theorem ensures the existence of a locally closed
$G_P$-invariant subvariety $W_P$, such that the induced morphism in (1) is
étale. Moreover, when $\mathcal H$ is nonsingular, the slice $W_P$ can be taken to
be nonsingular as well. The theorem says slightly more than this, but for
our purposes it suffices to note that, in the nonsingular case, the slice is
constructed by assuring that inside the tangent space $T_P(\mathcal H)$,
the tangent spaces to the slice $W_P$ and to the orbit $G\cdot P$ are
complementary.

Let $E_i\subset \mathcal H$ run through the set of components of the special fibre
$\mathcal{H}_0$ which pass through $P$. These are invariant under the $G$-action, so
the orbit $G\cdot P$ is contained in every $E_i$. In terms of tangent spaces, this
says that inside $T_P(\mathcal H)=T_P(W_P)\oplus T_P(G\cdot P)$ the
hyperplanes $T_P(E_i)$ are in general position and contain $T_P(W_P)$. It follows
that $T_P(E_i)=T_P(F_i)\oplus T_P(G\cdot P)$ where $F_i = E_i\cap W_P$ and thus
the tangent hyperplanes $T_P(F_i)$ inside $T_P(W_P)$ are in general position as
well. This says that $\cup_i F_i$ is snc at $P\in W_P$. As snc is an open
condition we obtain (2) after shrinking $W_P$ if necessary.

For (3) we note that $G$ acts freely on $\mathcal H$ outside the
singular locus of $\mathcal{H}_0$. This is so as $G$ acts freely outside the
singular locus of the coordinate hyperplanes in $\AA^{n+1}$, and $\mathcal{H}$
is smooth over $\AA^{n+1}$.
As the restriction of $\mathcal{H}_0$ to $W_P$ remains snc, with components $F_i
= E_i \cap W_P$, it follows that the singular locus of $\mathcal{H}_0$ restricts
to the singular locus of $(W_P)_0$.
\end{proof}

\subsubsection{}

The Luna slice Lemma above, combined with the observation that $(\mathcal{H},
\mathcal{H}_0)$ is snc, yields that only nontrivial stabilizer groups prevent
$(I^n_{X/C}, (I^n_{X/C})_0)$ from being snc:

\begin{lemma}\label{lem:snc-generically}
The pair $(I^n_{X/C}, (I^n_{X/C})_0)$ is snc around the image of every point $P\in
\mathcal{H}$ with trivial stabilizer.
\end{lemma}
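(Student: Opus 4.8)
The statement to prove is Lemma~\ref{lem:snc-generically}: the pair $(I^n_{X/C}, (I^n_{X/C})_0)$ is snc near the image of any point $P \in \mathcal{H}$ with trivial stabilizer. The plan is to deduce this directly from the Luna slice constructed in Lemma~\ref{lemma:luna}, using that a trivial stabilizer makes the slice-quotient an honest étale neighbourhood rather than merely an étale quotient presentation.

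\textbf{Step 1 (reduce to the slice).} Fix $P \in \mathcal{H}$ with $G_P = \{e\}$. Apply Lemma~\ref{lemma:luna} with this $P$: we obtain a locally closed nonsingular $G_P$-invariant — hence simply a locally closed nonsingular — subvariety $W_P \subset \mathcal{H}$ containing $P$, such that $W_P/G_P = W_P \to I^n_{X/C}$ is étale, and such that $W_P \cap \mathcal{H}_0 = (W_P)_0 = \bigcup_i F_i$ with $F_i = E_i \cap W_P$ is an snc divisor on $W_P$. Because the map $W_P \to I^n_{X/C}$ is étale, it identifies a neighbourhood of $P$ in $W_P$ with an étale neighbourhood of the image $\bar P = \pi(P)$ in $I^n_{X/C}$.

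\textbf{Step 2 (match up the boundary divisors).} It remains to check that under this étale map the divisor $(W_P)_0$ corresponds to $(I^n_{X/C})_0$ restricted to the neighbourhood. By Lemma~\ref{lem:component-bijection}, the components of $(I^n_{X/C})_0$ are exactly the $D_i = \pi(E_i)$, and $E_i = \pi^{-1}(D_i)$; restricting the quotient map $\pi\colon \mathcal{H} \to I^n_{X/C}$ along the étale inclusion $W_P \hookrightarrow \mathcal{H}$ (composed with $\pi$) one sees that the preimage of $(I^n_{X/C})_0$ in $W_P$ is precisely $\bigcup_i (E_i \cap W_P) = \bigcup_i F_i = (W_P)_0$. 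Since $\phi_h\colon \mathcal{H} \to C$ factors through $C[n]$ and $(W_P)_0$ is cut out by $t = 0$, the divisor $(I^n_{X/C})_0$ near $\bar P$ pulls back under the étale map to the snc divisor $(W_P)_0$. As snc-ness of a pair is an étale-local property (it can be checked on an étale cover, since étale morphisms are smooth of relative dimension zero and preserve normal crossings and smoothness of the ambient space), we conclude that $(I^n_{X/C}, (I^n_{X/C})_0)$ is snc at $\bar P$, and hence in a Zariski-open neighbourhood of $\bar P$ by openness of the snc locus.

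\textbf{Main obstacle.} The only subtlety is bookkeeping: one must be careful that the components $F_i$ of $(W_P)_0$ really are the full traces of the $E_i$ and that no component of $(I^n_{X/C})_0$ through $\bar P$ is missed or split when passing through the étale map — this is exactly what the component bijection of Lemma~\ref{lem:component-bijection} guarantees, together with the fact (used in the proof of Lemma~\ref{lemma:luna}(3)) that $G$ acts freely near $P$, so that no identifications among sheets of $(W_P)_0$ occur over $\bar P$. Once that is in place, the argument is formal: trivial stabilizer turns Luna's étale quotient into a plain étale chart, and snc descends along étale charts.
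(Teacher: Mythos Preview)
Your proof is correct and follows essentially the same strategy as the paper: exploit that a trivial stabilizer turns the Luna slice $W_P \to I^n_{X/C}$ into an honest \'etale chart, and then descend the snc property of $(W_P,(W_P)_0)$ along it.

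One point deserves sharpening. Your assertion that ``snc-ness of a pair is an \'etale-local property'' is not literally true in general: normal crossings is \'etale-local, but \emph{strictness} (smoothness of each global irreducible component) is not --- an irreducible nodal curve in a smooth surface looks snc \'etale-locally at the node but is not snc. You implicitly acknowledge this in your ``main obstacle'' paragraph, and the rescue is exactly the component bijection of Lemma~\ref{lem:component-bijection}: since distinct $F_i = E_i \cap W_P$ map to distinct components $D_i$ downstairs, no single $D_i$ acquires multiple branches through $\bar P$, and smoothness of $F_i$ transfers to smoothness of $D_i$ at $\bar P$ via the \'etale map. The paper organizes this more cleanly by separating the two ingredients: first it deduces \emph{normal crossing} at $\bar P$ from the \'etale slice, and then, for \emph{strictness}, it argues directly that each component $D$ is smooth at $\bar P$ using that $E \to D$ is itself a geometric quotient and the slice is \'etale over $D$. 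Both routes are valid; the paper's phrasing just avoids the potentially misleading slogan.
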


\begin{proof}
Firstly $(\mathcal{H}, \mathcal{H}_0)$ is snc by Lemma \ref{lem:snc}.
When $P$ has trivial stabilizer the Luna slice $W_P\subset
\mathcal{H}$ in Lemma \ref{lemma:luna} has the property that $(W_P,
(W_P)_0)$ is normal crossing and $W_P \to I^n_{X/C}$ is \'etale, hence
$(I^n_{X/C}, (I^n_{X/C})_0)$ is normal crossing around the image of $P$.

For strictness, let $D\subset I^n_{X/C}$ be a component and
$E\subset\mathcal{H}_0$ the corresponding component as in Lemma
\ref{lem:component-bijection}. As $\mathcal{H}\to I^n_{X/C}$ is a universally geometric
quotient, $E\to D$ is a geometric quotient. For each point $P\in E$ with
trivial stabilizer there is a Luna slice through $P$ which is smooth over $\kk$
and \'etale over $D$. Thus $D$ is also smooth at the image of $P$.
\end{proof}

\begin{remark}
One can check that the nonsingular locus in $I^n_{X/C}$ coincides with the image
of points in $\mathcal{H}$ with trivial stabilizer. Thus by the lemma, the snc
locus is exactly the nonsingular locus. We do not need this fact, so we omit 
the details.
\end{remark}

\subsubsection{}

The next result, an application of Lemma \ref{lem:snc-generically}, is the main observation
assuring that $(I^n_{X/C}, (I^n_{X/C})_0)$ is ``sufficiently snc'' to have a
meaningful dual complex.

\begin{proposition}\label{prop:snc-almost-everywhere}
Let $Y\subset \bigcap_i D_i$ be a component of the intersection of a
given collection $\{D_i\}$ of the
components of the divisor $(I^n_{X/C})_0$. Then $Y$ has nontrivial intersection
with the snc locus of $(I^n_{X/C}, (I^n_{X/C})_0)$.
\end{proposition}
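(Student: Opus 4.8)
The plan is to reduce the statement to the Hilbert scheme side, where we have explicit control, via the component bijection of Lemma~\ref{lem:component-bijection} and the stratification results of Section~\ref{sec:strata}. Let $D_1,\dots,D_m$ be the given components of $(I^n_{X/C})_0$ and let $E_1,\dots,E_m\subset\mathcal H_0$ be the corresponding components under $\pi\colon\mathcal H\to I^n_{X/C}$, so that $E_i=\pi^{-1}(D_i)$. Since $\pi$ is a geometric quotient (hence surjective and closed), a component $Y$ of $\bigcap_i D_i$ is dominated by some irreducible component $Z$ of $\pi^{-1}(Y)\subseteq\bigcap_i E_i$; it suffices to exhibit a point $P\in Z$ with trivial stabilizer, for then Lemma~\ref{lem:snc-generically} gives that $(I^n_{X/C},(I^n_{X/C})_0)$ is snc around $\pi(P)\in Y$, which is what we want.

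First I would identify which stratum $Z$ lives over. By Lemma~\ref{lem:snc} and the smoothness of $\mathcal H\to\mathbb A^{n+1}$, the components $E_i$ are (after restricting to the relevant coordinate stratum) precisely the preimages of coordinate hyperplanes, and an intersection $\bigcap_i E_i$ maps into some $\mathbb A_I$; passing to the open stratum, $Z^\circ$ is (a component of) one of the $(\mathcal H_I)^\circ_{(\mathbf b,\mathbf s)}$ described in Proposition~\ref{prop:strat-comp}. Here the key structural input is that this component is a fibred product over $U_I$ of Hilbert schemes $\mathrm{Hilb}^{b_v}((Y_v)_I^*/U_I)$ and $\mathrm{Hilb}^{s_{\gamma,l}}((\Delta_I^{\gamma,i_l})^*/U_I)$, all supported on the \emph{smooth} loci of the strata, and that the stability condition $\mathbf v(\mathbf b,\mathbf s)=\mathbf v_{\mathbf a}$ holds by construction. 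The point of recording the numerical and combinatorial support so carefully in Section~\ref{sec:GIT} is exactly that a subscheme $[Z]\in\mathcal H$ has trivial $G[n]$-stabilizer precisely when its combinatorial support is "as spread out as possible" — concretely, when the numerical support is strictly positive in every slot, forcing the $G[n]$-action on the $C[n]$-coordinates to have no fixed directions. So I would argue: on the general point of $Z^\circ$ one can choose the length-$n$ subscheme to be reduced, with exactly the prescribed number of points on each open stratum $(\Delta_I^{\gamma,i_l})^*$ and $(Y_v)_I^*$, and then — using that $U_I$ is where only the coordinates $t_i$, $i\in I$, vanish while the others are units — the stabilizer in $G[n]$ of such a configuration is trivial. (This is essentially the same freeness statement used in the proof of Lemma~\ref{lemma:luna}(3), that $G$ acts freely over the nonsingular locus of the coordinate hyperplanes in $\mathbb A^{n+1}$; here we are on a \emph{deeper} stratum $\mathbb A_I$, but the numerical support being positive in each of the $r+1$ slots dictated by $I$ is exactly what compensates.)

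The remaining step is to make sure the point we produce actually lies on $Y$ and not merely on its closure-relations: since $Z$ is irreducible and the locus of trivial stabilizer is open and $G[n]$-invariant in $\mathcal H$, it is enough to know this locus is \emph{nonempty} on $Z$, and then its image in $Y$ is a nonempty open subset on which $\pi$ is a geometric quotient and $(I^n_{X/C},(I^n_{X/C})_0)$ is snc. I expect the main obstacle to be the bookkeeping in the stabilizer computation: one must check that for a component indexed by a pair $(\mathbf b,\mathbf s)$ stable with respect to $I$, the generic configuration of $n$ points really does have full "spread" so that no nontrivial element of $G[n]\cong(\mathbb G_m)^n$ — acting on the $n+1$ coordinates $t_1,\dots,t_{n+1}$ with the standard weights, preserving the product $t_1\cdots t_{n+1}$ — can fix it. The stability inequality $\mathbf v(\mathbf b,\mathbf s)=\mathbf v_{\mathbf a}$ from Definition~\ref{def:num-stable} is precisely the combinatorial fact that makes this work, so the argument is: stability $\Rightarrow$ numerical support hits every slot $\Rightarrow$ generic member has trivial stabilizer $\Rightarrow$ snc near $Y$ by Lemma~\ref{lem:snc-generically}. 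I would also double-check the degenerate cases $k=1$ and $k=r+1$ of Theorem~\ref{thm:limit-scheme}, where black and white vertices merge, cause no trouble — there the relevant slot is a sum of $b_v$-terms and $s_{\gamma,\bullet}$-terms, still forced positive by stability.
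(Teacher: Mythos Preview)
Your overall strategy matches the paper's: reduce via Lemma~\ref{lem:snc-generically} to finding a point $P \in \mathcal{H}$ with trivial stabilizer lying over $Y$, then use the stratification of Proposition~\ref{prop:strat-comp} to exhibit such a $P$ as a generic configuration in the relevant stratum $(\mathcal{H}_I)^\circ_{(\mathbf{b},\mathbf{s})}$. The paper does exactly this.

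Where your argument goes astray is the justification for why such a generic configuration has trivial stabilizer. You claim this follows from ``numerical support positive in each of the $r+1$ slots'' and draw an analogy with the freeness in Lemma~\ref{lemma:luna}(3). Both of these miss the mark. First, the outer entries of $\mathbf{v}_{\mathbf{a}}$ (corresponding to the black vertices $V^\pm$) can vanish --- e.g.\ if $1 \in I$ then $a_1 - a_0 = 0$; only the middle $r-1$ entries $\sum_\gamma s_{\gamma,l}$ are guaranteed positive by stability. Second, and more importantly, Lemma~\ref{lemma:luna}(3) only gives freeness outside the singular locus of the coordinate hyperplanes, i.e.\ over points where at most one $t_i$ vanishes. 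Over $U_I$ with $|I|=r \ge 2$ the stabilizer $G_t \subset G[n]$ of the base point $t$ is \emph{always} an $(r-1)$-dimensional torus, regardless of how the points of $Z$ are distributed. Positivity of slots does not force the action on the $C[n]$-coordinates to have ``no fixed directions''; the base stabilizer is simply there.

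The correct mechanism, supplied by the paper, is to analyse how $G_t$ acts on the \emph{fibre} $X[n]_t$. The key building block is that $G_t$ acts on each inserted ruled component $\Delta_{\gamma,l}$ by scaling the $\mathbb{P}^1$-fibres, and when $\mathbb{G}_m$ acts on a ruled variety this way, one can choose finite subschemes with trivial stabilizer --- concretely, take distinct points such that none lies in the $\mu_m$-orbit of another for any $m \le n$. (This last clause is essential: the stabilizer of a subscheme may permute its points rather than fix them individually, so merely having one point per level is not enough.) Since stability guarantees $\sum_\gamma s_{\gamma,l} > 0$ for each $l$, there are points on every level of inserted components, and a generic configuration in this precise sense kills the whole torus $G_t$. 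Your proposal is missing this fibrewise analysis; once you insert it, the rest of your outline goes through.
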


\begin{proof}
It suffices to find a point $P\in \mathcal{H}$ with trivial stabilizer and
mapping to $Y$. 
We will use the following easy observation as building block: when
$\Gm$ acts on a ruled variety $\Delta$ by scaling the fibres, there are finite
subschemes $Z\in \Hilb^n(\Delta)$ with trivial stabilizer group. If $n=1$ we can take any point, otherwise choose  
$n$ distinct points such that none is in the $\mu_m$-orbit of any other for all
$m\le n$.

By Proposition \ref{prop:strat-comp}, $Y$ is the image
of the closure of one of the loci $(\mathcal{H}_I)^\circ_{\mathbf{b}, \mathbf{s}}$, 
given explictly as a product of Hilbert schemes.
Concretely, this says that $P$ is given by a subscheme $Z$ in the nonsingular
locus of some fibre $X[n]_t$ over $t\in C[n]$, with the distribution of
$Z$ among the components of $X[n]_t$ dictated by $(\mathbf{b}, \mathbf{s})$:
$X[n]_t$ has a number of components $Y_v$ mapping isomorphically onto a
component of $X_0$ by the canonical map $X[n]\to X$, and $Z\cap Y_v$ is required
to have length
$b_v$. The remaining components of $X[n]_t$ are ruled varieties
$\Delta_{\gamma,l}$ and if $s_{\gamma,l}$ is the length of $Z\cap \Delta_{\gamma,\ell}$,
then stability requires that $\sum_{\gamma} s_{\gamma,l}>0$. 
The action by $G[n]$ is such that the stabilizer group of
the base point $t\in C[n]$ is a subtorus which can be written as a product of
$\Gm$'s indexed by $\gamma$, and for each $\gamma$ the corresponding $\Gm$ acts on
$X[n]_t$ by simultaneously scaling the fibres of the ruled varieties
$\Delta_{\gamma,l}$ for all $l$. By our initial observation a general such subscheme $Z$ has
trivial stabilizer.
\end{proof}

\begin{corollary}\label{cor:codimension}
For any collection $\{D_i\}$ of $k$ components of $(I^n_{X/C})_0$,
every component of the intersection $\bigcap_i D_i$ has codimension $k$ in
$I^n_{X/C}$.
\end{corollary}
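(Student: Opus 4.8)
The plan is to deduce this purely formally from Proposition~\ref{prop:snc-almost-everywhere} together with the fact that $I^n_{X/C}$ is $\QQ$-factorial (Proposition~\ref{prop:Qfactorial}). The point is that on a $\QQ$-factorial variety the components of an intersection of $k$ divisors can never have codimension larger than $k$ in a neighbourhood of any snc point, and — by the semicontinuity of fibre dimension of the intersection — the generic codimension equals $k$ once we know it somewhere; Proposition~\ref{prop:snc-almost-everywhere} guarantees that ``somewhere'' is available on \emph{every} component.

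First I would fix a collection $\{D_i\}_{i=1}^k$ of components of $(I^n_{X/C})_0$ and a component $Y$ of $\bigcap_i D_i$. By Proposition~\ref{prop:snc-almost-everywhere}, $Y$ meets the snc locus $U$ of the pair $(I^n_{X/C},(I^n_{X/C})_0)$. Pick a point $y\in Y\cap U$. In a neighbourhood of $y$ the divisor $(I^n_{X/C})_0$ is snc, so the components $D_1,\dots,D_k$ are cut out near $y$ by $k$ distinct members of a regular system of parameters, and hence the local component of $\bigcap_i D_i$ through $y$ has codimension exactly $k$. Since $Y$ is irreducible and $Y\cap U$ is a dense open subset of it (it is nonempty open in $Y$), the codimension of $Y$ in $I^n_{X/C}$ is also $k$: codimension can only drop on the closure, but on a $\QQ$-factorial (in particular, the local rings are catenary and every divisor is $\QQ$-Cartier) variety an intersection of $k$ divisors has every component of codimension at most $k$ by Krull's height theorem applied to the $\QQ$-Cartier equations, so the two inequalities force equality.

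The main — though still minor — obstacle is bookkeeping the direction ``$\operatorname{codim} Y\le k$''. One must be slightly careful because $I^n_{X/C}$ is only $\QQ$-factorial, not factorial, so the $D_i$ need not individually be Cartier; however each $mD_i$ is Cartier for suitable $m$, and Krull's principal ideal theorem (height theorem) applied to the $k$ local equations of $mD_1,\dots,mD_k$ still bounds the codimension of every component of $\bigcap_i V(f_i)=\bigcap_i D_i$ by $k$. Combined with the reverse inequality $\operatorname{codim}Y\ge k$, which holds because near the snc point $y$ the intersection is literally a coordinate subspace of the right codimension and codimension is lower semicontinuous, we conclude $\operatorname{codim}_{I^n_{X/C}} Y = k$, as claimed. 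No new geometric input beyond Propositions~\ref{prop:Qfactorial} and~\ref{prop:snc-almost-everywhere} is needed.
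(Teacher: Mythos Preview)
Your proof is correct and follows the same approach as the paper: use Proposition~\ref{prop:snc-almost-everywhere} to produce an snc point on each component $Y$, where the codimension is visibly $k$, and conclude by irreducibility of $Y$. The paper's own proof is the one-line observation that the claim ``certainly holds for snc pairs''; your extra invocation of $\QQ$-factoriality and Krull's height theorem to secure $\operatorname{codim} Y \le k$ is unnecessary, since the dimension of the irreducible $Y$ already equals that of its nonempty open subset $Y\cap U$.
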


\begin{proof}
By the proposition, every component of $\bigcap_i D_i$ intersects the snc locus of
$(I^n_{X/C}, (I^n_{X/C})_0)$ and the claim certainly holds for snc pairs.
\end{proof}

\subsubsection{}

We are now in the position to prove the result we have been aiming for.

\begin{theorem}\label{thm:dltmodel}
$I^n_{X/C}\to C$ is a dlt model.
\end{theorem}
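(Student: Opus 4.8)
The plan is to verify the definition of a dlt model directly, using the Luna slice machinery already assembled in Lemmas \ref{lemma:luna} and \ref{lem:snc-generically} together with Proposition \ref{prop:snc-almost-everywhere}. By Proposition \ref{prop:Qfactorial} we already know $I^n_{X/C}$ is normal and $\QQ$-factorial, and by Lemma \ref{lem:component-bijection} the special fibre $(I^n_{X/C})_0$ is the reduced divisor $\sum_i D_i$ with $D_i = \pi(E_i)$. So what remains is: (a) the pair $\bigl(I^n_{X/C},\,(I^n_{X/C})_0\bigr)$ is log canonical, and (b) the locus where discrepancy $-1$ is attained by some exceptional divisor is exactly the open snc locus — more precisely, that $\mathrm{centre}(E)$ meets the snc locus whenever $a(E,\cdot,\cdot) = -1$.

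**Key steps.** First I would recall that for a quotient $\pi\colon \mathcal H \to I^n_{X/C}$ by the torus $G[n]$ with finite stabilizers, étale-locally (via a Luna slice $W_P$) $I^n_{X/C}$ looks like $W_P/G_P$ with $W_P$ smooth and $(W_P)_0 = \cup_i F_i$ an snc divisor on which $G_P$ acts; this is Lemma \ref{lemma:luna}. Discrepancies can be computed étale-locally, so it suffices to show that the quotient pair $(W_P/G_P,\,(W_P)_0/G_P)$ is lc, and dlt in the appropriate sense. The essential input is that the log pullback of $K_{W_P/G_P} + (W_P)_0/G_P$ to the smooth snc pair $(W_P,(W_P)_0)$ is exactly $K_{W_P} + (W_P)_0$ with no correction term — this holds because $G_P$ acts set-theoretically freely outside the singular locus of $(W_P)_0$ by part (iii) of Lemma \ref{lemma:luna}, so the ramification of $W_P \to W_P/G_P$ is supported on (and dominated by, divisor-by-divisor) the components $F_i$, and the standard formula for the different / the Hurwitz formula along a reduced snc boundary shows the boundary coefficients stay equal to $1$. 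Since $(W_P,(W_P)_0)$ is lc (indeed snc) and the quotient of an lc pair by a finite group acting suitably is again lc, part (a) follows. For part (b): an exceptional divisor $E$ over $I^n_{X/C}$ with $a(E) = -1$ pulls back to an exceptional divisor over $W_P$ with the same discrepancy, and for the snc pair $(W_P,(W_P)_0)$ the only such divisors have centre inside the boundary $(W_P)_0$; pushing forward, $\mathrm{centre}(E) \subset (I^n_{X/C})_0$. To see it meets the snc locus, invoke Proposition \ref{prop:snc-almost-everywhere}: $\mathrm{centre}(E)$ is contained in some intersection $\bigcap_i D_i$ of boundary components, and every such intersection component meets the snc locus of $\bigl(I^n_{X/C},(I^n_{X/C})_0\bigr)$; conversely any exceptional $E$ extracted over a point of the snc locus has discrepancy $\geqslant 0 > -1$ unless it is ``toroidal'', and the snc structure forces the dlt equality condition precisely there. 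Combining (a) and (b) gives that $\bigl(I^n_{X/C},(I^n_{X/C})_0\bigr)$ is dlt, and since $(I^n_{X/C})_0 = \phi^{-1}(0)_{\mathrm{red}}$ with smooth general fibres, $I^n_{X/C} \to C$ is a dlt model by definition.

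**Main obstacle.** The delicate point is controlling the behaviour of discrepancies under the finite quotient $W_P \to W_P/G_P$ at points where $G_P$ is nontrivial — i.e., confirming that no boundary coefficient is pushed above $1$ and that no new divisor of discrepancy $< -1$ is introduced. This is exactly where part (iii) of Lemma \ref{lemma:luna} (free action off the singular locus of $(W_P)_0$) does the work: it guarantees that the quotient map is étale in codimension one away from the boundary, and along each $F_i$ the quotient introduces at worst a cyclic quotient which, being a quotient of a smooth variety along a reduced snc boundary, keeps the pair lc with reduced boundary (this is a standard but slightly technical computation with the different, cf.\ \cite{KM-1998}). Once this local statement is in place, globalizing via the étale charts $\{W_P/G_P\}$ and invoking Corollary \ref{cor:codimension} and Proposition \ref{prop:snc-almost-everywhere} to pin down the non-snc locus is routine.
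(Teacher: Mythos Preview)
Your overall strategy matches the paper's: establish lc via the Luna slice (the snc pair $(W_P,(W_P)_0)$ upstairs, then a finite quotient, then the \'etale map $V_P\to I^n_{X/C}$), and then use Proposition~\ref{prop:snc-almost-everywhere} to control the non-snc locus. Part~(a) is fine; your quasi-\'etale argument is a valid specialization of the general fact the paper cites from \cite{kollar-2013}.

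There is, however, a genuine gap in part~(b). You write that $\mathrm{centre}(E)$ is contained in some intersection $\bigcap_i D_i$, and that every component of such an intersection meets the snc locus. But this does not yet show that $\mathrm{centre}(E)$ itself meets the snc locus: a priori the centre could be a \emph{proper} closed subvariety of a stratum, sitting entirely inside the non-snc locus. What is missing is the observation that the centre must in fact be a \emph{full} irreducible component of some $\bigcap_{i\in S} D_i$. The paper extracts this via a codimension comparison: if a log canonical centre $Z$ lies in the non-snc locus and is contained in exactly $s$ boundary components $D_1,\dots,D_s$, then Proposition~\ref{prop:snc-almost-everywhere} forces $\codim Z > s$; on the other hand, lifting $Z$ through $V_P$ to a log canonical centre $Z''$ of the snc pair $(W_P,(W_P)_0)$ (using that log canonical centres lift along \'etale and finite maps, \cite[\S2.13, \S2.42]{kollar-2013}), one finds $Z''$ is a stratum of the snc boundary with only $s$ components, so $\codim Z'' \leqslant s$. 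Since the maps preserve dimension, this is a contradiction. Your sketch stops just short of this step; once you insert the codimension comparison, the argument is complete and coincides with the paper's.
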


\begin{proof}
For each $P\in \mathcal{H}$, let $W_P\subset \mathcal{H}$ be a Luna \'etale
slice as in Lemma \ref{lemma:luna}, so that there are morphisms
\begin{equation}\label{eq:I-ramified}
W_P \to W_P/G_P=V_P \to I^n_{X/C}
\end{equation}
where the left-hand morphism is finite and the right-hand morphism is \'etale.
The pair $(W_P, (W_P)_0)$ is snc and in particular log canonical.
Finite ramified covers preserve this property \cite[Corollary 2.43]{kollar-2013},
so $V_P$ is log canonical. As $V_P \to I^n_{X/C}$ is \'etale,
its image is also log canonical \cite[Proposition 2.15]{kollar-2013}.

Next let $Z\subset I^n_{X/C}$ be a log canonical centre and suppose for
contradiction that $Z$ is contained in the non snc locus $\mathcal{S}\subset
I^n_{X/C}$. Let $D_1,\dots,D_s$ be the components of the special fibre
$(I^n_{X/C})_0$ which contain $Z$. Thus $Z$ is contained in a component
$Y$ of $\cap_{i=1}^s D_i$. Then $Z\subset Y\cap\mathcal{S}$
so
\begin{equation}\label{eq:big-codim}
\codim_{I^n_{X/C}} Z > s
\end{equation}
by Proposition \ref{prop:snc-almost-everywhere}.

Let $P\in \mathcal{H}$ be a point mapping to $Z$ and again
consider the composition \eqref{eq:I-ramified}.
Shrink $W_P$ if necessary so that its image in $I^n_{X/C}$ hits no components of
$(I^n_{X/C})_0$ beyond $D_1,\dots,D_s$.
As $V_P \to I^n_{X/C}$ is \'etale there exists
a log canonical centre $Z'\subset V_P$ mapping dominantly to $Z$
\cite[§2.13]{kollar-2013}.
As $W_P \to V_P$ is finite there is a log canonical centre $Z''\subset
W_P$ mapping dominantly to $Z'$ \cite[§2.42]{kollar-2013}.
For an snc pair such as $(W_P, (W_P)_0)$ (and even for
dlt pairs, see e.g.\ \cite[Theorem 4.16]{kollar-2013}) the log canonical centres are
precisely the irreducible components of the intersections of any number of
components of the boundary divisor. But the divisor $(W_P)_0$ in $W_P$ has precisely
$s$ components, so
\begin{equation}\label{eq:small-codim}
\codim_{W_P} Z'' \leqslant  s.
\end{equation}
Since $Z$, $Z'$ and $Z''$ all have the same dimension
equations \eqref{eq:big-codim} and \eqref{eq:small-codim} contradict each
other.
\end{proof}

\subsubsection{}

We end this section by establishing an identification between the dual complexes
of $\mathcal{H}_0$ and of $(I^n_{X/C})_0$, extending as promised the bijection
between components in Lemma \ref{lem:component-bijection}. Note that Corollary
\ref{cor:codimension} ensures that the dual complex of $(I^n_{X/C})_0$ is well
defined \cite[Definition 8]{dFKX-2017}.

\begin{proposition}\label{prop:bijection}
With notation as in Lemma \ref{lem:component-bijection}, let $\{E_i\}$
be any collection of components of $\mathcal{H}_0$ and let
$\{D_i\}$ be the corresponding collection of components of $(I^n_{X/C})_0$.
Then the rules $Y = \pi(Z)$ and $Z = \pi^{-1}(Y)$ define a bijection between the
components of $\bigcap_i D_i$ and the components of $\bigcap_i E_i$.
\end{proposition}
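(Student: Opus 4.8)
The plan is to deduce the bijection from the fact that $\pi\colon\mathcal{H}\to I^n_{X/C}$ is a geometric (indeed universally geometric) quotient by $G=G[n]$, together with the correspondence $D=\pi(E)$, $E=\pi^{-1}(D)$ of Lemma \ref{lem:component-bijection}. First I would observe that since each $E_i$ is $G$-invariant, so is $\bigcap_i E_i$, and hence $\pi(\bigcap_i E_i)$ is a closed $G$-invariant subset of $I^n_{X/C}$; moreover $\pi^{-1}(\bigcap_i D_i)=\bigcap_i\pi^{-1}(D_i)=\bigcap_i E_i$. So $\pi$ restricts to a surjection $\bigcap_i E_i\to\bigcap_i D_i$ which is again a geometric quotient by $G$ (geometric quotients behave well under passing to saturated closed subsets). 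The key point is then that a geometric quotient map is open onto its image on each orbit-closed piece and, being a quotient, induces a bijection on the sets of irreducible components: if $W\to W/G$ is a geometric quotient of a (possibly reducible) scheme, then $G$ permutes the irreducible components of $W$, and the components of $W/G$ are exactly the images of the $G$-orbits of components of $W$. Here each $E_i$ is already $G$-invariant, so $\bigcap_i E_i$ is $G$-invariant, and one needs that $G$ fixes \emph{each} irreducible component of $\bigcap_i E_i$ (not merely permutes them); granting that, the images of distinct components are distinct components of $\bigcap_i D_i$, and every component of $\bigcap_i D_i$ arises this way, giving the desired bijection with inverse $Z=\pi^{-1}(Y)$.

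The main obstacle is exactly showing that $G$ does not permute the irreducible components of $\bigcap_i E_i$ nontrivially, i.e.\ that each component $Z$ of $\bigcap_i E_i$ is $G$-invariant so that $\pi^{-1}(\pi(Z))=Z$ rather than a union of several $G$-conjugate components. I would argue this using Proposition \ref{prop:snc-almost-everywhere} (and its proof via Proposition \ref{prop:strat-comp}): every component $Z$ of $\bigcap_i E_i$ meets the locus where $G$ acts with trivial stabilizer, and more importantly the components of $\bigcap_i E_i$ are precisely the closures of the strata $(\mathcal{H}_I)^\circ_{(\mathbf{b},\mathbf{s})}$ described combinatorially in Section \ref{sec:strata}; since the $G[n]$-action is fibrewise over $C[n]$ and $C[n]$ is connected, a component cannot be moved off itself by the connected group $G$. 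Concretely, $G=G[n]$ is connected (a torus), so it cannot permute irreducible components of any scheme nontrivially — it must fix each of them setwise. This is the cleanest way to close the gap: \emph{connectedness of $G[n]$} immediately forces $G$ to preserve every irreducible component of the $G$-invariant scheme $\bigcap_i E_i$.

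With that in hand I would finish as follows. Given a collection $\{E_i\}$ with corresponding $\{D_i\}$, set $E=\bigcap_i E_i$ and $D=\bigcap_i D_i$; we have seen $\pi^{-1}(D)=E$ and that $\pi|_E\colon E\to D$ is a geometric quotient by the connected group $G$. For any irreducible component $Z\subset E$, invariance gives $\pi^{-1}(\pi(Z))=Z$, so $Y:=\pi(Z)$ is closed, irreducible, and $\dim Y=\dim Z-\dim G$; it is a component of $D$ because any strictly larger irreducible closed subset of $D$ would pull back to a strictly larger $G$-invariant irreducible closed subset of $E$ (using again that $\pi^{-1}$ of an irreducible $G$-invariant set is irreducible, as the generic fibre is a single orbit which is irreducible since $G$ is connected), contradicting maximality of $Z$. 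Conversely, for a component $Y\subset D$, the preimage $\pi^{-1}(Y)$ is $G$-invariant and irreducible (generic fibre a single orbit), closed in $E$, hence contained in a component $Z$ of $E$; then $\pi(Z)$ is a component of $D$ containing $Y$, so $\pi(Z)=Y$ and $Z=\pi^{-1}(\pi(Z))=\pi^{-1}(Y)$. The two assignments $Y\mapsto\pi^{-1}(Y)$ and $Z\mapsto\pi(Z)$ are therefore mutually inverse bijections between the components of $\bigcap_i D_i$ and the components of $\bigcap_i E_i$, which is the assertion. (The case of the empty collection, giving the bijection of all components $\mathcal{H}_0\leftrightarrow(I^n_{X/C})_0$, is Lemma \ref{lem:component-bijection}.)
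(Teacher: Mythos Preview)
Your proof is correct and the core idea---that $G[n]$ is a connected torus, hence preserves every irreducible component of any $G[n]$-invariant subscheme---is exactly what the paper uses as well. Where you diverge from the paper is in how you conclude that $\pi(Z)$ is a \emph{maximal} irreducible closed subset of $\bigcap_i D_i$. The paper invokes Corollary~\ref{cor:codimension}: since every component of $\bigcap_i D_i$ has the expected codimension, and $\pi(Z)$ automatically has that codimension (as $\pi$ drops dimension by $\dim G$ and the stabilizers are finite), $\pi(Z)$ must be a full component; the converse direction likewise uses the expected codimension to see that $\pi^{-1}(Y)$ is a union of components of $\bigcap_i E_i$, which is then forced to be a single component by invariance. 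You instead argue directly by maximality, using the observation that $\pi^{-1}$ of an irreducible subset of the quotient is again irreducible (because every fibre is a single orbit of the connected group $G$); this lets you run the inclusion argument in both directions without ever appealing to Corollary~\ref{cor:codimension}. Your route is therefore slightly more self-contained: it does not rely on Proposition~\ref{prop:snc-almost-everywhere}, which is where the real work in the paper's codimension statement lies. The trade-off is that the paper's dimension argument is shorter once Corollary~\ref{cor:codimension} is already in hand, whereas your proof spells out the quotient-theoretic mechanism (irreducibility of saturations) explicitly.
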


\begin{proof}
Repeat the argument in Lemma \ref{lem:component-bijection} using that all
components of $\bigcap_i D_i$ have the expected dimension by Corollary
\ref{cor:codimension}, and also noting that each component of $\bigcap_i E_i$ is
$G[n]$-invariant.
\end{proof}

\begin{corollary}\label{cor:same-complex}
There is a natural identification between the dual complexes of $\mathcal{H}_0$
and $(I^n_{X/C})_0$.
\end{corollary}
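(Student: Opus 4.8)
The plan is to promote the bijection from Proposition~\ref{prop:bijection} to an isomorphism of $\Delta$-complexes by checking that it is compatible with the face relations, and then to invoke the uniqueness clause in the definition of the dual complex (Definition~\ref{def:1st-dual}, extended as in Definition~\ref{def:2nd-dual}). First I would recall that, by Lemma~\ref{lem:snc}, the pair $(\mathcal{H},\mathcal{H}_0)$ is snc, so the dual complex $\Delta(\mathcal{H}_0)$ is the classical one: its $d$-simplices are the connected components of $(d{+}1)$-fold intersections of components of $\mathcal{H}_0$, with faces given by inclusions of such intersections. On the other side, Corollary~\ref{cor:codimension} guarantees that each component of a $k$-fold intersection $\bigcap_i D_i$ in $(I^n_{X/C})_0$ has codimension exactly $k$, and Proposition~\ref{prop:snc-almost-everywhere} shows each such component meets the snc locus, so in particular each of these components is irreducible and the hypotheses of Definition~\ref{def:2nd-dual} are satisfied; hence $\Delta((I^n_{X/C})_0)$ is well defined.

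Next I would spell out the map. By Lemma~\ref{lem:component-bijection} the assignment $E\mapsto D=\pi(E)$ is a bijection on components of the special fibres, and by Proposition~\ref{prop:bijection} for any subset $\{E_i\}$ the rule $Z\mapsto \pi(Z)$ is a bijection between the connected components of $\bigcap_i E_i$ and those of $\bigcap_i D_i$. Taken together over all subsets, these bijections define a bijection on the simplex sets of $\Delta(\mathcal{H}_0)$ and $\Delta((I^n_{X/C})_0)$ that preserves dimension: a $d$-simplex of $\Delta(\mathcal{H}_0)$ coming from a component of a $(d{+}1)$-fold intersection $\bigcap_{i\in B} E_i$ is sent to the component $\pi(Z)$ of $\bigcap_{i\in B} D_i$, which is again a $d$-simplex since $\pi^{-1}(\pi(Z))=Z$ has codimension $|B|$ by Corollary~\ref{cor:codimension}.

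It then remains to check that this bijection is a morphism of $\Delta$-complexes, i.e.\ that it respects the face relations of Definition~\ref{def:1st-dual}(ii). Concretely, for subsets $B\subseteq B'$ and components $Z\subseteq \bigcap_{B}E_i$, $Z'\subseteq\bigcap_{B'}E_i$, one has $Z'\subseteq Z$ if and only if $\pi(Z')\subseteq\pi(Z)$: the ``only if'' is obvious since $\pi$ is a morphism, and the ``if'' follows because $\pi^{-1}(\pi(Z))=Z$ and $\pi^{-1}(\pi(Z'))=Z'$ by $G[n]$-invariance, so $\pi(Z')\subseteq\pi(Z)$ pulls back to $Z'\subseteq Z$. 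Thus inclusions of intersection-components are preserved in both directions, which is exactly the face-compatibility condition, and the map is an isomorphism of $\Delta$-complexes. I expect the only mildly delicate point to be bookkeeping: making sure that the various per-subset bijections of Proposition~\ref{prop:bijection} are mutually compatible (i.e.\ glue to a single simplicial map) rather than any hard geometry — the real content has already been extracted in Corollary~\ref{cor:codimension} and Proposition~\ref{prop:bijection}. I would phrase the proof as: ``Apply Proposition~\ref{prop:bijection} for every subset of components; the resulting bijection on simplices preserves dimension by Corollary~\ref{cor:codimension} and preserves face relations since $\pi^{-1}\pi(Z)=Z$ for every $G[n]$-invariant subvariety $Z$. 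By the uniqueness in Definition~\ref{def:1st-dual} this is an isomorphism $\Delta(\mathcal{H}_0)\xrightarrow{\ \sim\}\Delta((I^n_{X/C})_0)$.''
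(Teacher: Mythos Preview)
Your proposal is correct and is precisely the natural unpacking of what the paper leaves implicit: the paper states the corollary without proof, treating it as immediate from Proposition~\ref{prop:bijection} (together with Corollary~\ref{cor:codimension}, which it cites just before the corollary to justify that the dual complex of $(I^n_{X/C})_0$ is well defined). Your argument---bijection on simplices from Proposition~\ref{prop:bijection}, dimension preservation from Corollary~\ref{cor:codimension}, and face-compatibility from $\pi^{-1}(\pi(Z))=Z$ for $G[n]$-invariant $Z$---is exactly the intended content; there is nothing to add or correct beyond fixing the stray brace in your final displayed isomorphism.
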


\subsection{Minimal models}\label{subsec:minimalmodels}

Let $ f \colon X \to C $ be a (projective) strict simple degeneration of relative dimension at most two such that $ \Gamma = \Gamma(X_0) $ allows a bipartite orientation. We moreover assume that $ K_{X/C} $ is relatively semi-ample. Recall that this means, for $ m \gg 0 $, that $(K_{X/C})^{\otimes m} $ is relatively basepoint-free, i.e., the canonical map
$$ f^* f_* ((K_{X/C})^{\otimes m}) \to (K_{X/C})^{\otimes m} $$
is surjective. 

Our first goal in this subsection is to prove that, in this situation, also the relative canonical sheaf of $ I^n_{X/C} $ is relatively semi-ample. This has the interesting consequence that the property of being a minimal model is transfered from $ X \to C $ to the Hilbert scheme degeneration $ I^n_{X/C}$.

\subsubsection{}

To ease notation, we will frequently drop the subscript $ X/C $ and write $ I^n $, $P^n $ and $J^n $ rather than $ I^n_{X/C} $, $P^n_{X/C}$ and $J^n_{X/C}$. A few additional obvious simplifications will be made: 

\begin{itemize}

\item After shrinking $C$ around $0$ (if necessary), we can assume that $C$ is a smooth affine curve, with trivial canonical sheaf. This implies that, for all schemes $ Z $ over $C$ we shall encounter below, we can identify $ K_{Z/C} $ with $ K_Z$. The analogous statement holds for schemes over $C[n]$.

\item It is understood that $m$ is also chosen large enough so that $ (K_{I^n})^{\otimes m} $ is Cartier (recall that $I^n$ is $\mathbb{Q}$-factorial). 

\item Lastly, our assumption on $K_X$ implies that we can find finitely many global sections that generate $(K_X)^{\otimes m} $. Using these sections, we shall produce finitely many global sections that generate $ (K_{I^n})^{\otimes m} $ (which implies that $ (K_{I^n})^{\otimes m} $ is relatively basepoint-free).

\end{itemize}

\subsubsection{}

We first need some preliminary results on the canonical bundle of the expanded degeneration $ X[n] \to C[n] $.

\begin{lemma}\label{lemma:non-vanish0}
The canonical $G[n]$-equivariant projection $g \colon X[n] \to X$ induces an isomorphism $ K_{X[n]} \cong g^* K_{X} $.
\end{lemma}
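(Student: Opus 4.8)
We need to show that the canonical projection $g\colon X[n] \to X$ (the map that contracts all inserted ruled components back to their images) induces an isomorphism $K_{X[n]} \cong g^*K_X$. Recall that $X[n] \to C[n]$ is Li's expanded degeneration, $X[n] \to X \times_C C[n]$ is birational, and $g$ is the composition $X[n] \to X \times_C C[n] \to X$.

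**Approach.** The plan is to reduce to a purely local toric computation. The map $C[n] \to C$ is (after the étale base change reduction already made in the paper) pulled back from the multiplication map $\AA^{n+1} \to \AA^1$, so $C[n]$ is smooth; likewise $X \times_C C[n] \to C[n]$ is again a simple degeneration, and its total space $X \times_C C[n]$ has at worst the normal-crossing-type singularities of $X_0 \times \AA^n$, which are canonical (Gorenstein, toroidal). The expansion $X[n] \to X \times_C C[n]$ is a sequence of blow-ups in smooth centres lying in the singular locus — concretely, étale-locally $X \to C$ looks like $\AA^{d+1} \to \AA^1$, $(x,y,z_1,\dots,z_{d-1}) \mapsto xy$, and the expanded degeneration is the standard toric small-disc/semistable-reduction construction. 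So the cleanest route is: (1) reduce via étale-localness of $K$ and of the construction of $X[n]$ to the local model; (2) in the local model, write everything torically and compute the relative canonical divisor of $X[n] \to X\times_C C[n]$, checking it is trivial; (3) observe $K_{X\times_C C[n]} = \mathrm{pr}_X^* K_X$ because $C[n] \to C$ is a base change with $C$, $C[n]$ both having (relatively) trivial canonical sheaf — indeed after shrinking $C$ we arranged $K_C \cong \OO_C$, and $\AA^{n+1} \to \AA^1$ has trivial relative canonical sheaf, giving $K_{C[n]} \cong \OO_{C[n]}$, hence $K_{X \times_C C[n]} \cong \mathrm{pr}_X^* K_X$; combining gives $K_{X[n]} \cong g^* K_X$.

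**Key steps in order.** First, reduce to the étale-local model $f\colon \AA^{d+1}\to\AA^1$ with $d\le 2$ — but in fact $d$ plays no role, so I would just take the general simple-degeneration local model $\AA^{d+1} \to \AA^1$, $(x,y,\underline z)\mapsto xy$. Second, recall the explicit description of $X[n]$ in this local model as an iterated blow-up (equivalently, the toric variety associated to a subdivision of the relevant cone by the standard ``staircase'' subdivision), as in \cite{li-2001,LW-2011} and \cite[Section 1]{GHH-2016}. Third, for each blow-up $\mathrm{Bl}_Z W \to W$ in a smooth irreducible centre $Z$ of codimension $c$, use $K_{\mathrm{Bl}_Z W} = \sigma^* K_W + (c-1) E$; here every blown-up centre is a codimension-$2$ stratum (the singular locus of a fibre, which has codimension $1$ in the fibre hence codimension $2$ in the total space — note it is \emph{not} codimension $2$ in the ambient that would make the discrepancy positive; one must be careful that it is the intersection of two components of a fibre, hence the blow-up is along a Cartier-locally-complete-intersection of codimension $2$ but the total space is singular there). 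The correct statement is that the expansion is a \emph{crepant} resolution-type modification: in toric terms, all new rays of the subdivision pass through lattice points on the boundary of the Newton polytope / at height matching the canonical, so the relative canonical divisor vanishes. Fourth, assemble: $K_{X[n]/X\times_C C[n]} = 0$, then pull back the identification $K_{X\times_C C[n]} = \mathrm{pr}_X^* K_X$. Fifth, note all these isomorphisms are canonical and $G[n]$-equivariant because the whole toric construction is, so the $G[n]$-equivariance claim is automatic.

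**Main obstacle.** The delicate point is Step 3 — verifying crepancy of $X[n] \to X\times_C C[n]$. One has to be careful because $X\times_C C[n]$ is itself singular along the relative singular locus, so "discrepancy" must be computed relative to a $\QQ$-Gorenstein structure, and the naive blow-up formula for smooth ambient spaces does not apply directly. The clean way around this is to do the computation entirely torically: present the local model $X\times_C C[n]$ as an affine toric variety $U_\sigma$ with $\sigma$ an explicit simplicial-but-not-smooth cone, present $X[n]$ as $U_{\Sigma}$ for the explicit subdivision $\Sigma$ of $\sigma$ used by Li, and check that every primitive ray generator of $\Sigma$ lies on the hyperplane $\{\langle m, -\rangle = 1\}$ where $m$ is the character cutting out $K^{-1}$ — equivalently that $\Sigma$ is a crepant subdivision. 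This is a finite, explicit check (the rays are of the form $(1,0,\dots),(0,1,\dots),$ and the "staircase" points), and once it is done the rest is formal. The alternative, more bare-hands route — directly computing a dualizing-sheaf generator on $X[n]$ as a rational differential form with prescribed poles/zeros, e.g. $\frac{dx\wedge dy\wedge d\underline z}{x}$ pulled back and tracked through each chart of the expansion — also works and may be what the authors do; the obstacle there is purely bookkeeping across the coordinate charts of the iterated blow-up, which is routine but tedious.
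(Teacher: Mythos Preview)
Your approach is correct but takes a different route from the paper. The paper gives a very short inductive argument: the projection $g$ factors as a composition of maps $g_i \colon X[i] \to X[i-1]$, and each $g_i$ in turn factors as
\[
X[i] \;\longrightarrow\; X[i-1] \times_{\AA^1} \AA^2 \;\longrightarrow\; X[i-1],
\]
where the first arrow is a \emph{small} resolution and the second is projection onto the first factor. A small resolution of a Gorenstein variety is automatically crepant (the exceptional locus has codimension $\geqslant 2$, so there is no exceptional divisor to carry a discrepancy), and the projection has trivial relative canonical sheaf since it is base change along the smooth morphism $\AA^2 \to \AA^1$; hence $g_i^* K_{X[i-1]} \cong K_{X[i]}$, and the claim follows by induction on $i$.

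Your toric-subdivision route --- checking that every new ray of Li's subdivision lies at height one for the character computing the canonical --- is a valid and self-contained alternative, with the merit of making crepancy completely explicit. The paper's argument is shorter and more conceptual precisely because it exploits the inductive factorization of the expansion (already established in \cite[Section 1]{GHH-2016}): once each step is known to be ``small resolution followed by flat projection with trivial relative canonical'', crepancy is immediate and no chart-by-chart or cone-by-cone bookkeeping is needed. The obstacle you correctly flag in your Step~3 --- that the naive blow-up discrepancy formula does not apply because the centre sits in the singular locus --- is exactly what the paper sidesteps by observing that the resolution at each stage is \emph{small}, so no discrepancy formula is required at all.
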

\begin{proof}
By construction (cf.~e.g. \cite[Section 1]{GHH-2019}), $g$ is a composition of maps $ g_i \colon X[i] \to X[i-1] $, where $g_i$ factors as
$$ X[i] \to X[i-1] \times_{\mathbb{A}^1} \mathbb{A}^2 \to X[i-1] $$
with the first map a small resolution and the second map projection to the first factor. This implies that $ g_i^* K_{X[i-1]} \cong K_{X[i]} $, so our claim follows by induction.
\end{proof}

\begin{lemma}\label{lemma:non-vanish1}
Let $ P_1, \ldots, P_r $ be a finite collection of points in $ X[n]$. Then we can find a $G[n]$-invariant section of $(K_{X[n]})^{\otimes m} $ not vanishing at any $P_i$. 
\end{lemma}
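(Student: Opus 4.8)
The strategy is to reduce the statement to a fact about $G[n]$-invariant sections of $(K_X)^{\otimes m}$ pulled back along $g\colon X[n]\to X$, using Lemma \ref{lemma:non-vanish0}. By that lemma, $(K_{X[n]})^{\otimes m}\cong g^*(K_X)^{\otimes m}$ as $G[n]$-equivariant sheaves, so it suffices to produce a $G[n]$-invariant section of $g^*(K_X)^{\otimes m}$ nonvanishing at each of finitely many points $P_1,\dots,P_r\in X[n]$. The key geometric input is that $g$ is $G[n]$-equivariant where $G[n]$ acts \emph{trivially} on the target $X$; concretely, each contracted fibre of $g$ lies inside a single $G[n]$-orbit closure over a point of $C[n]$, and the $G[n]$-action on $X[n]$ covers the identity on $X$. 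Therefore the pullback $g^*s$ of \emph{any} section $s$ of $(K_X)^{\otimes m}$ is automatically $G[n]$-invariant, because $G[n]$ fixes $X$ pointwise and $g^*$ is $G[n]$-equivariant. This means the only remaining task is to arrange non-vanishing.

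The non-vanishing step goes as follows. Let $Q_i = g(P_i)\in X$, for $i=1,\dots,r$; these are finitely many points of $X$. By the hypothesis in Subsection \ref{subsec:minimalmodels} that $K_{X/C}$ (equivalently $K_X$, after shrinking $C$) is relatively semi-ample, the linear system $|(K_X)^{\otimes m}|$ is basepoint-free for $m\gg 0$; in particular, for each $i$ there is a section not vanishing at $Q_i$. Since the ground field $\kk$ is infinite (it is algebraically closed of characteristic $0$), a general $\kk$-linear combination of such sections is a single global section $s\in\Gamma(X,(K_X)^{\otimes m})$ with $s(Q_i)\neq 0$ for all $i$ simultaneously — this is the standard prime-avoidance / general-position argument, using that the locus where a section vanishes at a fixed point $Q_i$ is a proper linear subspace of the (finite-dimensional) space of sections, and a finite union of proper subspaces over an infinite field cannot be everything. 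Then $g^*s$ is a $G[n]$-invariant section of $g^*(K_X)^{\otimes m}\cong (K_{X[n]})^{\otimes m}$, and $(g^*s)(P_i) = s(Q_i)$ (up to the identification of fibres given by $g$ being an isomorphism on cotangent spaces away from the contracted loci, and more care is needed at points $P_i$ lying on the inserted components) — so $(g^*s)(P_i)\neq 0$ for all $i$, as desired.

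\textbf{Main obstacle.} The subtle point is the claim ``$g^*s$ does not vanish at $P_i$'' when $P_i$ lies on one of the inserted ruled components $\Delta^{\gamma,i_l}_I$, i.e.\ on the exceptional locus of the small resolution $g$. There $g$ contracts positive-dimensional fibres, so one must check that the pullback of the canonical form does not pick up zeros along the contracted directions. This is exactly controlled by the \emph{crepancy} of $g$: since each $g_i$ factors as a small resolution followed by a smooth projection $X[i-1]\times_{\mathbb{A}^1}\mathbb{A}^2\to X[i-1]$, and a small birational morphism is crepant (discrepancy zero along the exceptional locus, which has codimension $\geq 2$), the isomorphism $K_{X[n]}\cong g^*K_X$ of Lemma \ref{lemma:non-vanish0} is an isomorphism of line bundles in a neighbourhood of every point, not merely a generic one. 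Hence $g^*s$ vanishes at $P_i$ if and only if $s$ vanishes at $g(P_i)=Q_i$, and the argument closes. I expect verifying this local non-vanishing across the inserted components — and being careful that Lemma \ref{lemma:non-vanish0} is genuinely an isomorphism of invertible sheaves globally on $X[n]$ — to be the only genuinely delicate part; the $G[n]$-invariance is automatic from equivariance of $g$ over a trivial $G[n]$-action on $X$, and the general-position step is routine.
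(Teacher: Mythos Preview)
Your proof is correct and follows essentially the same approach as the paper's: pull back sections of $(K_X)^{\otimes m}$ along the $G[n]$-equivariant map $g$ (so pullbacks are automatically invariant since $G[n]$ acts trivially on $X$), use semi-ampleness to avoid the finitely many image points $g(P_i)$, and take a generic $\kk$-linear combination. The ``main obstacle'' you flag is a non-issue: Lemma \ref{lemma:non-vanish0} already gives a global isomorphism of invertible sheaves, so $(g^*s)(P)$ vanishes if and only if $s(g(P))$ does, by the very definition of pullback of a section of a line bundle --- no separate crepancy analysis along the inserted components is needed beyond what that lemma encodes.
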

\begin{proof}
By assumption, we can for each $i$ find a section $\tilde{s}_i$ of $(K_{X})^{\otimes m} $ that does not vanish at $ p_i = g(P_i) $. Then $ s_i = g^* \tilde{s}_i $ is a $G[n]$-invariant section of $(K_{X[n]})^{\otimes m}$, which does not vanish at $P_i$ by Lemma \ref{lemma:non-vanish0}. Taking a $\kk$-linear combination $ s = \sum_i \lambda_i s_i $, for sufficiently generic $ \lambda_i \in \kk $, yields a section with the desired properties.
\end{proof}

We consider next the $n$-fold fibred product
$$ X[n]^n = X[n] \times_{C[n]} \ldots \times_{C[n]} X[n]. $$
As an immediate consequence of Corollary \ref{cor:prod}, we note that $ G[n] $ acts freely on the GIT stable locus $ X[n]^{n,ss} $. Thus, the GIT quotient map
$$ \alpha \colon X[n]^{n,ss} \to P^n_{X/C} $$
forms a $G[n]$-torsor. This implies that 
$$ K_{X[n]^{n,ss}} \cong \alpha^* K_{P^n_{X/C}}, $$
and (by descent for torsors) that pullback along $\alpha$ identifies sections of $K_{P^n_{X/C}}$ with $G[n]$-invariant sections of $ K_{X[n]^{n,ss}} $ (and likewise for any power of these sheaves).

\begin{lemma}\label{lemma:non-vanish2}
Let $ Z = (P_1, \ldots, P_n) \in X[n]^{n,ss} $. Then we can find a $G[n]$-invariant and $\mathfrak{S}_n$-invariant section of $(K_{X[n]^{n,ss}})^{\otimes m}$ that does not vanish at $Z$.

\end{lemma}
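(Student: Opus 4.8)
The plan is to reduce the claim to Lemma \ref{lemma:non-vanish1} by exploiting the product structure, and then symmetrize. First I would use the $n$-fold fibred product $X[n]^n = X[n]\times_{C[n]}\cdots\times_{C[n]}X[n]$, with its projections $p_a\colon X[n]^n \to X[n]$ onto the $a$-th factor for $a = 1,\dots,n$. Since $X[n]^n$ is smooth over $C[n]$ and the map $X[n]^n \to C[n]$ has the same base as each factor, the relative dualizing sheaf decomposes as a tensor product of pullbacks: $K_{X[n]^n/C[n]} \cong \bigotimes_{a=1}^n p_a^* K_{X[n]/C[n]}$, and after the reduction in the first bullet of Paragraph immediately preceding (shrinking $C$ so $K_C$ is trivial), the same holds with absolute canonical sheaves, i.e. $K_{X[n]^n} \cong \bigotimes_a p_a^* K_{X[n]}$. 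Restricting to the GIT stable locus preserves this since it is an open subscheme.

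Given $Z = (P_1,\dots,P_n)\in X[n]^{n,ss}$, I would apply Lemma \ref{lemma:non-vanish1} to the finite collection $P_1,\dots,P_n$ (allowing repetitions, which causes no trouble — the lemma only needs finitely many points) to obtain a single $G[n]$-invariant section $s\in \Gamma(X[n], (K_{X[n]})^{\otimes m})$ with $s(P_a)\neq 0$ for all $a$. Then the external-type product section $\sigma = \bigotimes_{a=1}^n p_a^* s$ is a section of $(K_{X[n]^n})^{\otimes m}$, is $G[n]$-invariant because each factor is and the $G[n]$-action is diagonal, and does not vanish at $Z$ since $\sigma(Z) = \bigotimes_a s(P_a) \neq 0$. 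Because $s$ is the \emph{same} section in every slot, $\sigma$ is automatically $\mathfrak{S}_n$-invariant: permuting the factors permutes the identical tensor slots, acting by the sign-free permutation of a symmetric power, which one checks fixes $\sigma$ (in characteristic zero there is no sign subtlety since the permutation acts on $K_{X[n]^n} \cong \bigotimes p_a^* K_{X[n]}$ by permuting the factors of a tensor product of copies of the \emph{same} line bundle pulled back along symmetric projections, and the $m$-th power is likewise permuted trivially). Restricting $\sigma$ to $X[n]^{n,ss}$ gives the desired section.

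The main obstacle I anticipate is making precise the $\mathfrak{S}_n$-equivariance of the canonical sheaf identification — that is, checking that the isomorphism $K_{X[n]^n}\cong\bigotimes_a p_a^*K_{X[n]}$ is $\mathfrak{S}_n$-equivariant for the natural permutation action on both sides, so that the manifestly invariant section $\sigma$ is genuinely invariant as a section of $(K_{X[n]^{n,ss}})^{\otimes m}$ with its intrinsic $\mathfrak{S}_n$-linearization (the one relevant to later descent along $\tau^{ss}$). This is a standard but slightly fussy compatibility: it amounts to the naturality of the Künneth-type decomposition of $\omega$ for a fibre product under the symmetry automorphism, together with the fact that pulling back $s$ along $p_a$ versus along $p_{\rho(a)}$ and then applying the flip gives the same local generator. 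Everything else — the reduction to absolute canonical sheaves, the application of Lemma \ref{lemma:non-vanish1}, nonvanishing at $Z$ — is immediate. I would therefore spend the bulk of the written proof on stating the equivariant decomposition of $K_{X[n]^n}$ cleanly (perhaps as a one-line remark that $\omega$ of a product over a common smooth base is the external tensor product, naturally in all inputs including symmetries), and then the construction of $\sigma$ and its two invariances follow in a couple of sentences.
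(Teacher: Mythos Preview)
Your proposal is correct and follows essentially the same route as the paper: apply Lemma~\ref{lemma:non-vanish1} to obtain a single $G[n]$-invariant section $s$ not vanishing at any $P_a$, form the external tensor product $s\otimes\cdots\otimes s$ via the decomposition $K_{X[n]^n}\cong\bigotimes_a \mathrm{pr}_a^*K_{X[n]}$, and restrict to the stable locus. The paper treats the $\mathfrak{S}_n$-equivariance of this decomposition as evident and does not elaborate on it, so your extra care there is harmless but unnecessary.
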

\begin{proof}
Lemma \ref{lemma:non-vanish1} asserts that there exists a $G[n]$-invariant section $s$ of $(K_{X[n]})^{\otimes m} $ which does not vanish at any point $P_i$. As 
$$ K_{X[n]^n} \cong \mathrm{pr}_1^*( K_{X[n]} ) \otimes \cdots \otimes \mathrm{pr}_n^*( K_{X[n]} ), $$ 
pulling back $s$ from each factor yields an $\mathfrak{S}_n$-invariant section $ s \otimes \ldots \otimes s $. Its restriction to $ X[n]^{n,ss} $ yields a section of $ (K_{X[n]^{n,ss}})^{\otimes m} $ which is both $G[n]$-invariant and $\mathfrak{S}_n$-invariant, and which does not vanish at $Z$. 
\end{proof}

\subsubsection{}

Let $ W $ denote the smooth locus of $ X \to C $. For brevity write $W^{n}$, $W^{(n)} $ and $W^{[n]} $ for the relative product, symmetric product and Hilbert scheme of $ W \to C $. Furthermore write $W^n_*$, $W^{(n)}_*$ and $W^{[n]}_*$ for the respective open loci defined by the condition that the underlying zero cycle has at most one double point, and let $\Delta'$ denote the complement of $W^n_*$ in $W^{n}$. By a slight abuse of notation, we also denote by $\Delta'$ the corresponding loci in $W^{(n)} $ and $W^{[n]} $. In each case the complement $\Delta'$ has codimension $2$.

By Proposition \ref{prop:GIT-open}, $W^{[n]}$ is an open subscheme of $I^n_{X/C}$ whose complement is of  codimension $2$. Thus, $W^{[n]}_*$ is open in $I^n_{X/C}$, and its complement $ \mathcal{D} $ has codimension $2$ as well. Consequently, every section of $ K_{ W^{[n]}_*} $ can be extended uniquely to $ W^{[n]} $, and every section of (a suitable power of) $ K_{ W^{[n]}} $ can be extended uniquely to $I^n_{X/C}$.

We shall make use of the following relative version of a construction by Beauville \cite{beauville-1983} (generalizing Beauville's arguments to the smooth quasi-projective morphism $ W \to C $ is straightforward). There is a commutative
diagram (over $C$)

$$
\xymatrix
{ \widetilde{W^n_*}  \ar[r]^p
\ar[d]^{\pi_1}
& W^{[n]}_* \ar[d]^{\pi_2}\\
 W^n_*  \ar[r]^q & W^{(n)}_*
}
$$
where the vertical arrows are blowups along the (relative) diagonal and the horizontal arrows are quotients by the symmetric group
$\mathfrak{S}_n$. The latter acts with stabilizer groups at most $\ZZ/2\ZZ$. Let $E$ be the exceptional divisor for the blowup $\pi_1$. It is also precisely the ramification locus of $p$, so we have both
\begin{align*}
K_{\widetilde{W^n_*}} &\iso p^*(K_{W^{[n]}_*})(E), \quad\text{and}\\
K_{\widetilde{W^n_*}} &\iso \pi_1^*(K_{W^n_*})(E).
\end{align*}
Thus 
$$ p^*K_{W^{[n]}_*} \cong \pi_1^*K_{W^n_*}. $$

\begin{proposition}\label{prop:semi-ample}
Assume that $K_{X}$ is semi-ample over $C$. Then also $K_{I^n}$ is semi-ample over $C$.
\end{proposition}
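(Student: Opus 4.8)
The plan is to reduce semi-ampleness of $K_{I^n}$ to the already-established non-vanishing statements, using the codimension-$2$ open locus $W^{[n]}_* \subset I^n$ together with Beauville's blow-up diagram. Since $m$ is chosen so that $(K_{I^n})^{\otimes m}$ is Cartier, it suffices by relative basepoint-freeness to produce, for every point $y \in I^n$, a global section of $(K_{I^n})^{\otimes m}$ not vanishing at $y$ (finitely many such sections, one for each of the finitely many components/strata that arise, then take a generic linear combination — but in fact it is cleanest to argue pointwise and invoke quasi-compactness). Because $\mathcal{D} = I^n \setminus W^{[n]}_*$ has codimension $2$ and $I^n$ is normal, a section of $(K_{W^{[n]}_*})^{\otimes m}$ extends uniquely to all of $I^n$; and a section of $(K_{W^{[n]}_*})^{\otimes m}$ that is nowhere zero on a given point stays nowhere zero there after extension. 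So it is enough to handle points $y \in W^{[n]}_*$.

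First I would treat a point $y \in W^{[n]}_*$. Pull back along $q \circ \pi_1 \colon \widetilde{W^n_*} \to W^{(n)}_*$ — or rather, use the identification $p^* K_{W^{[n]}_*} \cong \pi_1^* K_{W^n_*}$ just established. A section of $(K_{W^{[n]}_*})^{\otimes m}$ is the same as an $\mathfrak{S}_n$-invariant section of $(K_{\widetilde{W^n_*}})^{\otimes m}(-mE)$, equivalently (since $W^n_* \setminus \pi_1(E)$ has codimension $\geqslant 2$ is false — $E$ is a divisor, so one argues on $W^n_*$ directly): an $\mathfrak{S}_n$-invariant section of $(K_{W^n_*})^{\otimes m}$ pulls back via $p^*$ (after the canonical identification, and using that $W^n_* \to W^{[n]}_*$ is the quotient) to an $\mathfrak{S}_n$-invariant section of $p^* (K_{W^{[n]}_*})^{\otimes m}$, which descends to $W^{[n]}_*$. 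Now $W^n_*$ is an open subscheme of $X[n]^{n,ss}$ (its image under $\alpha$ meets $P^n$ in the locus of tuples of smooth points), and Lemma \ref{lemma:non-vanish2} produces a $G[n]$-invariant and $\mathfrak{S}_n$-invariant section of $(K_{X[n]^{n,ss}})^{\otimes m}$ non-vanishing at any chosen point $Z \in X[n]^{n,ss}$. Restricting this section to $W^n_*$, using that $\alpha$ is a $G[n]$-torsor so $G[n]$-invariant sections descend to $P^n$ and hence restrict to $W^n_*$, and then pushing through the blow-up/quotient identification, yields an honest section of $(K_{W^{[n]}_*})^{\otimes m}$, and finally of $(K_{I^n})^{\otimes m}$, non-vanishing at $y$. (One picks $Z$ lying over $y$.)

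Second I would cover the remaining points $y \in \mathcal{D}$: here there is nothing more to do, because the section constructed for a nearby point of $W^{[n]}_*$ is already a global section of $(K_{I^n})^{\otimes m}$, but it need not be non-vanishing at $y$. Instead, one observes that a point $y \in \mathcal{D}$ still lifts to some $Z \in X[n]^{n,ss}$ (via the chain $I^n \leftarrow \mathcal H$, or more directly via $P^n \to J^n \to$ and the Hilbert–Chow map, or simply: $X[n]^{[n],ss} \to I^n$ is surjective so lift $y$ to $\mathcal H$ then further to $X[n]^{n,ss}$), and the section $s \otimes \cdots \otimes s$ of Lemma \ref{lemma:non-vanish2}, being $G[n]$- and $\mathfrak{S}_n$-invariant and built from $g^*\tilde s$ with $\tilde s$ non-vanishing on the images in $X$ of all points in the support, descends all the way down to a section of $(K_{I^n})^{\otimes m}$ non-vanishing at $y$. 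The key point is that the descent and extension arguments are compatible: the section produced on $X[n]^{n,ss}$ is genuinely pulled back from $X$-level data via $g$, so its zero locus on $I^n$ is controlled by a divisor on $X$ that one can avoid. Collecting all these sections (one per point, finitely many suffice by quasi-compactness of $I^n/C$ — or, cleanly, for each of the finitely many strata pick one section, then a generic linear combination works simultaneously by the usual Bertini-type argument since base-point freeness is an open condition stratum by stratum), we conclude that $(K_{I^n})^{\otimes m}$ is relatively basepoint-free, i.e. $K_{I^n}$ is semi-ample over $C$.

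The main obstacle I anticipate is bookkeeping the descent through the three-step tower $X[n]^{n,ss} \to P^n \to J^n \leftarrow I^n$ together with Beauville's blow-up square, and in particular making sure that a section non-vanishing at a chosen point of $X[n]^{n,ss}$ really does descend to a section of $(K_{I^n})^{\otimes m}$ non-vanishing at the image point — this is where one must invoke, in the right order, that $\alpha$ is a torsor (so $G[n]$-invariant sections of powers of $K$ descend and the canonical bundle pulls back isomorphically), that the quotient $p$ is ramified exactly along $E$ with the stated discrepancy formula so that $\mathfrak{S}_n$-invariant $m$-canonical sections match up across the square, and that the codimension-$2$ complement $\mathcal D$ together with normality of $I^n$ allows the final extension over the non-smooth locus. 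None of these steps is deep in isolation, but threading the invariance (both $G[n]$ and $\mathfrak{S}_n$) consistently through all of them is the delicate part.
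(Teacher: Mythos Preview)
Your approach is essentially the same as the paper's: reduce to producing, for each point $y\in I^n$, a global section of $(K_{I^n})^{\otimes m}$ not vanishing at $y$; manufacture such sections from Lemma~\ref{lemma:non-vanish2} via the Beauville square on $W^{[n]}_*$; and extend over the codimension-$2$ complement $\mathcal{D}$ by normality. The treatment of points $y\in W^{[n]}_*$ matches the paper exactly.

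The one place where your argument is vaguer than the paper's is the case $y\in\mathcal{D}$. You assert that the section ``descends all the way down \ldots\ non-vanishing at $y$'' because ``its zero locus on $I^n$ is controlled by a divisor on $X$ that one can avoid'', and you flag this as the delicate step. The paper makes this precise by a neighbourhood argument that you do not quite articulate: choose $Z\in X[n]^{n,ss}$ whose image in $J^n$ agrees with $\eta(y)$; the $\mathfrak{S}_n$-invariant section $\sigma'$ on $P^n$ produced by Lemma~\ref{lemma:non-vanish2} has $\mathfrak{S}_n$-invariant open non-vanishing locus, which therefore descends to an open set in $J^n$ containing $\eta(y)$. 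Pulling this back along $\eta\colon I^n\to J^n$ gives an open neighbourhood $U\ni y$ in $I^n$ with the property that the extended section $\sigma$ is nonzero on $U\setminus(U\cap\mathcal{D})$. Since $(K_{I^n})^{\otimes m}$ is invertible and $U\cap\mathcal{D}$ has codimension at least $2$ in $U$, the vanishing locus of $\sigma$ cannot contain $y$. This is the missing sentence that turns your sketch into a proof; your ``controlled by a divisor on $X$'' heuristic is morally the same observation, but as stated it does not by itself guarantee non-vanishing \emph{after} the extension step.
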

\begin{proof}
We want to show the following: there is a fixed integer $m>0$ such that for every point $\xi \in I^n_{X/C} $ there exists a section 
$$ \sigma \in H^0(I^n, (K_{I^n})^{\otimes m}) $$
with $\sigma(\xi) \neq 0$. 

Assume first that $\xi \in W^{[n]}_* $. Since $p^*K_{W^{[n]}_*} \cong \pi_1^*K_{W^n_*}$, it is enough, for a fixed $m$, to produce invariant sections of $\pi_1^*(K_{W^n_*})^{\otimes m}$ not vanishing at a given point in $\widetilde{W^n_*}$. For this in turn, it is enough to produce such a section in $(K_{W^n_*})^{\otimes m}$. Any point in $W^n_*$ is the image under $ \alpha $ of some tuple $ Z = (P_1, \ldots, P_n) \in X[n]^{n,ss} $, and any chosen section with the properties in Lemma \ref{lemma:non-vanish2} descends to an invariant section not vanishing at $ \alpha(Z) $.

Now assume that $\xi \in I^n_{X/C} $ is arbitrary. We select a tuple $ Z = (P_1, \ldots, P_n) \in X[n]^{n,ss} $ such that the image of $\alpha(Z)$ under $ P^n_{X/C} \to J^n_{X/C} $ equals the image of $ \xi $ under $ I^n_{X/C} \to J^n_{X/C} $. By Lemma \ref{lemma:non-vanish2}, we can find an $\mathfrak{S}_n$-invariant section $\sigma'$ of $(K_{P^n})^{\otimes m}$ not vanishing at $\alpha(Z)$. By restriction, we obtain an $\mathfrak{S}_n$-invariant section which descends to a section $\sigma \in H^0(W^{[n]}_*, (K_{W^{[n]}_*})^{\otimes m}) $ which can be extended to a section on $W^{[n]}$, and thus to $ I^n_{X/C} $. This section has the property that there exists a neighbourhood $U$ of $\xi$ (induced by the non-vanishing locus of $\sigma'$) such that $\sigma$ has no zeroes on $U \setminus U \cap \mathcal{D}$. But then, since $\mathcal{D}$ has codimension $2$ in $ I^n_{X/C} $, it follows that also $\sigma(\xi) \neq 0$.
\end{proof}

\subsubsection{Good minimal models}
Recall \cite[2.3.2]{NX-2016} that a dlt model $Y\to C$ is a \emph{good minimal}
model if $Y$ is $\QQ$-factorial and the $\QQ$-Cartier divisor
\begin{equation*}
K_Y + (Y_0)_{\mathrm{red}}
\end{equation*}
is semi-ample over $C$.

\begin{corollary}\label{cor:goodminimal}
Assume that $ X \to C $ is a good minimal model. Then also $ I^n_{X/C} \to C $ is a good minimal model. 
\end{corollary}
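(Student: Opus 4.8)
The plan is to deduce the corollary from Proposition \ref{prop:semi-ample} together with Theorem \ref{thm:dltmodel} and Proposition \ref{prop:Qfactorial}: almost all of the substance has already been proved in those results, and what remains is a short identification of the relevant divisors.

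First I would unwind what it means for the source $X \to C$ to be a good minimal model. As $X$ is smooth it is $\QQ$-factorial, and as $(X,X_0)$ is snc the family $X \to C$ is automatically a dlt model. Moreover $X_0 = f^*(0)$ is reduced, so $(X_0)_{\mathrm{red}} = X_0 = f^*(0)$ is relatively trivial over $C$ (it is pulled back from the closed point $0$ of the curve $C$). Since relative semi-ampleness over $C$ is unchanged under twisting by the pullback of a line bundle from $C$, the divisor $K_X + (X_0)_{\mathrm{red}}$ is semi-ample over $C$ if and only if $K_X$ is. Hence the hypothesis that $X \to C$ is a good minimal model is exactly the hypothesis of Proposition \ref{prop:semi-ample}, and I would invoke that proposition to conclude that $K_{I^n_{X/C}}$ is semi-ample over $C$.

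It then remains to assemble the pieces on the Hilbert scheme side. By Theorem \ref{thm:dltmodel}, $I^n_{X/C} \to C$ is a dlt model, and by Proposition \ref{prop:Qfactorial} the variety $I^n_{X/C}$ is $\QQ$-factorial (and it is projective over $C$, since $X[n] \to C[n]$ is projective because a bipartite orientation has no directed cycles). To finish, I need $K_{I^n_{X/C}} + ((I^n_{X/C})_0)_{\mathrm{red}}$ to be semi-ample over $C$, and for this I would show, just as for $X$, that $(I^n_{X/C})_0 = \psi_h^*(0)$ is reduced, hence relatively trivial over $C$: indeed $\mathcal{H}_0$ is reduced by Lemma \ref{lem:snc}, and $\pi \colon \mathcal{H} \to I^n_{X/C}$ is a geometric quotient which is free, hence a $G[n]$-torsor, away from the singular locus of $\mathcal{H}_0$, a closed subset of codimension $\geqslant 2$; consequently the effective $\QQ$-Cartier divisor $\psi_h^*(0)$ on the normal variety $I^n_{X/C}$ has multiplicity one along each of its components, so it is reduced. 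Then $((I^n_{X/C})_0)_{\mathrm{red}} = (I^n_{X/C})_0 = \psi_h^*(0)$, so $K_{I^n_{X/C}} + ((I^n_{X/C})_0)_{\mathrm{red}}$ is semi-ample over $C$ if and only if $K_{I^n_{X/C}}$ is, and we are done.

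I do not expect a serious obstacle here: the one delicate point in this circle of ideas, namely the extension of pluricanonical sections across the codimension-two complement of $W^{[n]}_*$ in $I^n_{X/C}$, is already handled in the proof of Proposition \ref{prop:semi-ample}. The only step in the present corollary that requires a little care is the bookkeeping that, in both the source and the target, the log canonical divisor $K + (\cdot_0)_{\mathrm{red}}$ appearing in the definition of a good minimal model agrees, up to relative linear equivalence, with the plain canonical divisor — which, as indicated above, comes down to the central fibres being reduced and hence pulled back from the base.
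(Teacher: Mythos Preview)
Your argument is correct and follows essentially the same route as the paper: invoke Proposition \ref{prop:Qfactorial} and Theorem \ref{thm:dltmodel} for $\QQ$-factoriality and the dlt property, then use that the central fibre $(I^n_{X/C})_0$ is a reduced principal divisor so that semi-ampleness of $K_{I^n_{X/C}} + (I^n_{X/C})_0$ over $C$ reduces to Proposition \ref{prop:semi-ample}. You supply a bit more detail than the paper does (the translation of the hypothesis on $X\to C$ into the hypothesis of Proposition \ref{prop:semi-ample}, and the reducedness of $(I^n_{X/C})_0$), but the logic is the same.
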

\begin{proof}
We established in Proposition \ref{prop:Qfactorial} that $ I^n_{X/C} $ is $\mathbb{Q}$-factorial and in Theorem \ref{thm:dltmodel} that $ I^n_{X/C} \to C $ is a dlt model. As moreover $I^n_{X/C,0}$ is a reduced principal divisor, it follows directly from Proposition \ref{prop:semi-ample} that $ K_{I^n_{X/C}} + I^n_{X/C,0}$ is semi-ample over $C$. 
\end{proof}

\begin{remark}
Assume that $ X/C $ has relative dimension $2$. By similar (easier) arguments as above, one shows that if $K_{X/C}$ is trivial (resp.~torsion), then the same property holds for the relative canonical sheaf of $I^n_{X/C}$ over $C$.
\end{remark}


\section{Computing Dual Complexes}\label{sec:dual-computation}

In \eqref{eqn:smcomm} we constructed three families $P^n_{X/C}$, $J^n_{X/C}$
and $I^n_{X/C}$ of GIT quotients over $C$. In this section we relate the dual
complexes of the degenerate fibres of the three families, in order to get a
more intuitive understanding of their combinatorics. We still assume the
original family $f \colon X \to C$ to be a strict simple degeneration of
relative dimension $d \leqslant 2$. We will first prove that dual complexes
exist for the degenerate fibres of these families, then compute them
explicitly. In particular, we show in Theorem \ref{thm:dualcomplex-hilb} that
the dual complex of $I^n_{X/C}$ is the $n$'th symmetric product of the dual
graph of $X/C$, and in Subsection \ref{sec:symgraph} we utilize this result to
compute the dual complex concretely.

We remark that dual complexes of products of degenerations have also, independently, been studied in \cite{BM17}. In the particular case of K3 surfaces, these results in fact suffice in order to describe the essential skeleton (the skeleton of any good minimal dlt model) of the $n$-th Hilbert scheme, without producing an explicit degeneration. Our primary interest, on the other hand, is to compute the dual complex $\mathcal{D}((I^n_{X/C})_0) $ attached to a Hilbert scheme degeneration $I^n_{X/C}$. For this, only the assumption that $ X \to C $ is a projective strict simple degeneration is necessary. Since the property of being a good minimal dlt model is preserved by $I^n_{X/C}$, Theorem \ref{thm:Hilb-skeleton-main} below describes the essential skeleton also for Hilbert schemes of points on more general classes of surfaces.

\subsection{Existence of dual complexes}

In this section we discuss the existence of dual complexes for the degenerate fibres. We will show that the central fibres of all three families in \eqref{eqn:smcomm} satisfy the requirements in Definition \ref{def:2nd-dual}, hence admit dual complexes.

\begin{lemma}
\label{lem:PRdual}
The family $\psi_p \colon P^n_{X/C} \to C$ in \eqref{eqn:smcomm} is an snc model. In particular, the dual complex $\Gamma_p$ of its degenerate fibre $\psi_p^{-1}(0)$ is well-defined.
\end{lemma}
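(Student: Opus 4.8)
The plan is to show that $P^n_{X/C} \to C$ is an snc model in essentially the same way as was done for $\mathcal{H} \to C$ in Lemma \ref{lem:snc}. The key point is that, by Corollary \ref{cor:prod}, $G[n]$ acts \emph{freely} on the GIT stable locus $\mathcal{P} = (X[n]\times_{C[n]}\cdots\times_{C[n]}X[n])^{ss}$, since a stable tuple has smooth support and hence trivial stabilizer. Consequently the quotient map $\alpha \colon \mathcal{P} \to P^n_{X/C}$ is a $G[n]$-torsor, in particular étale-locally trivial, so $P^n_{X/C}$ is smooth over $\kk$ and the morphism $\psi_p \colon P^n_{X/C} \to C$ is obtained by descent from the composite $\mathcal{P} \to C[n] \to C$. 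First I would record that $\mathcal{P} \to C[n]$ is smooth (Corollary \ref{cor:prod}), and that $C[n] \to C$ is the pullback of the $(n+1)$-fold multiplication map $\mathbb{A}^{n+1}\to\mathbb{A}^1$ along an étale coordinate $C\to\mathbb{A}^1$; thus $\mathcal{P}_0$ is the preimage of the union of coordinate hyperplanes in $C[n]$, which is a strict normal crossings divisor, and hence $(\mathcal{P},\mathcal{P}_0)$ is an snc pair.

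Next I would transfer this to the quotient. Since $\alpha$ is a $G[n]$-torsor, it is smooth and surjective, so being snc descends: $(P^n_{X/C}, (P^n_{X/C})_0)$ is an snc pair because $\alpha^*(P^n_{X/C})_0 = \mathcal{P}_0$ and snc-ness can be checked after a smooth surjective base change. (Alternatively, one can invoke a Luna slice argument as in Lemma \ref{lemma:luna}, but since all stabilizers are trivial the torsor description is cleaner.) In particular $(P^n_{X/C})_0$ is a reduced strict normal crossings divisor on the smooth variety $P^n_{X/C}$, so $\psi_p \colon P^n_{X/C} \to C$ is an snc model. The dual complex $\Gamma_p$ of $(P^n_{X/C})_0$ is then well-defined in the sense of Definition \ref{def:1st-dual}.

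The only mild subtlety — and the step I expect to require a bit of care — is the descent of the snc property along $\alpha$: one needs that $\mathcal{P}_0$ is precisely the scheme-theoretic preimage of $(P^n_{X/C})_0$ (rather than acquiring multiplicities), which follows from the fact that $C[n]/G[n] = C$ scheme-theoretically and that the coordinate hyperplane arrangement is $G[n]$-invariant with the torus acting freely away from it being irrelevant here since we only need the divisor downstairs to pull back to the divisor upstairs. Granting this, smoothness and transversality of the branches pass through a smooth surjective morphism, so the conclusion is immediate. No serious obstacle is anticipated; this lemma is the analogue for $P^n$ of what Lemma \ref{lem:snc} and Proposition \ref{prop:Qfactorial} gave for $\mathcal{H}$ and $I^n$, only easier because the action is free.
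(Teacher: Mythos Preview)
Your argument is correct, and it is a genuinely different route from the paper's. You exploit the fact, recorded explicitly in Subsection \ref{subsec:minimalmodels}, that $G[n]$ acts freely on $\mathcal{P}=(X[n]\times_{C[n]}\cdots\times_{C[n]}X[n])^{ss}$, so that $\alpha\colon\mathcal{P}\to P^n_{X/C}$ is a $G[n]$-torsor and hence smooth and surjective; you then descend the snc property of $(\mathcal{P},\mathcal{P}_0)$ along $\alpha$. This is clean and entirely valid: smoothness of the total space is fpqc-local on the base, the scheme-theoretic pullback $\alpha^{-1}((P^n_{X/C})_0)=\mathcal{P}_0$ holds because $C[n]/G[n]=C$, and strictness (smoothness of the individual components) descends since $G[n]$ is connected and therefore cannot permute the components of $\mathcal{P}_0$.

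By contrast, the paper proves the lemma more explicitly. After observing smoothness of $P^n_{X/C}$ from the free action, it introduces the stratification $P^n_{X/C}=\bigcup_I Z_I$ with $Z_I=\varphi_p^{-1}(U_I)/G[n]$, invokes Theorem \ref{thm:limit-scheme-prod} to identify the closures $\bar{Z_I}=\varphi_p^{-1}(\mathbb{A}_I)/G[n]$, checks directly that each $\bar{Z_I}$ is smooth (as a free quotient of a smooth scheme), and then computes $\dim\bar{Z_I}=dn+1-\lvert I\rvert$ to conclude that all intersections have the expected codimension. Your torsor-descent argument is shorter and more conceptual; the paper's argument, while heavier, has the advantage of setting up the stratification by the $Z_I$ that is immediately reused in the proof of Lemma \ref{lem:exdual} for $J^n_{X/C}$ and in the computation of the dual complex in Proposition \ref{prop:dualp}.
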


\begin{proof}
By Corollary \ref{cor:prod}, we see that $(\PrXC)^{ss}$ is smooth, on which the $G[n]$-action is free. It follows that the quotient $P^n_{X/C}$ is smooth. To show that $\psi_p^{-1}(0)$ is an snc divisor, it suffices to show that the intersection of any subset of irreducible components of $\psi_p^{-1}(0)$ is smooth of expected dimension.

Without loss of generality we still assume $C=\mathbb{A}^1$. As a $G[n]$-quotient, $P^n_{X/C}$ has a natural stratification
$$ P^n_{X/C} = \bigcup_{I \subseteq [n+1]} Z_I $$
into locally closed subschemes, in which each stratum is given by
\begin{equation}
\label{eqn:open-dim}
Z_I = \varphi_p^{-1}(U_I) / G[n],
\end{equation}
where $\varphi_p$ is defined as in \eqref{eqn:bigcomm} and $U_I$ as in Paragraph \ref{subsubsec:notation}.

It is clear that $Z_I \subseteq \psi_p^{-1}(0)$ if and only if $I \neq \varnothing$. Theorem \ref{thm:limit-scheme-prod} moreover shows that $I \subseteq J$ if and only if $\bar{Z_I} \supseteq Z_J$. Therefore we have
$$\bar{Z_I} = \bigcup_{I \subseteq J} Z_J = \varphi_p^{-1}(\mathbb{A}_I) / G[n],$$
where $\mathbb{A}_I$ is defined as in Paragraph \ref{subsubsec:notation}. Since $\varphi_p$ is a smooth morphism, $\varphi_p^{-1}(\mathbb{A}_I)$ is smooth, hence its quotient by the free $G[n]$-action is also smooth.

It remains to show that each $\bar{Z_I}$ is of expected dimension $dn +1 - \lvert I \rvert$, where $d = 1 \text{ or } 2$ is the relative dimension of the original family. The dimension can be computed on the open subset $Z_I$ using \eqref{eqn:open-dim}. Since $\varphi_p$ is of relative dimension $dn$, we have
$$ \dim \varphi_p^{-1}(U_I) = dn + \dim U_I = dn + (n+1) - \lvert I \rvert. $$
It follows immediately that
$$ \dim Z_I = \dim \varphi_p^{-1}(U_I) - \dim G[n] = dn + 1 - \lvert I \rvert $$
as desired. This finishes the proof.
\end{proof}

\begin{lemma}
\label{lem:exdual}
The dual complex $\Gamma_s$ of the degenerate fibre $\psi_s^{-1}(0)$ of the family $\psi_s \colon J^n_{X/C} \to C$ in \eqref{eqn:smcomm} is well-defined.
\end{lemma}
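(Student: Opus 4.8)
The plan is to verify that the central fibre $\psi_s^{-1}(0)$ of $J^n_{X/C} \to C$ satisfies the hypotheses of Definition \ref{def:2nd-dual}, namely that its irreducible components are normal and that every connected component of every intersection of components is irreducible of the expected codimension. Unlike the situation for $P^n_{X/C}$ in Lemma \ref{lem:PRdual}, the space $J^n_{X/C}$ need not be smooth (it acquires quotient singularities along the image of the diagonal), so we cannot simply invoke the snc property; instead the idea is to transfer the relevant geometric facts from $P^n_{X/C}$ to $J^n_{X/C}$ via the finite $\mathfrak{S}_n$-quotient map, using Remark \ref{rmk:PtoJ}.

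First I would set up the analogous stratification $J^n_{X/C} = \bigcup_{I \subseteq [n+1]} Z_I'$ with $Z_I' = \varphi_s^{-1}(U_I)/G[n]$, exactly as in the proof of Lemma \ref{lem:PRdual}, and note that $Z_I' \subseteq \psi_s^{-1}(0)$ precisely when $I \neq \varnothing$. The symmetric product analogue of Theorem \ref{thm:limit-scheme}, which the text asserts transfers word for word (see Paragraph \ref{subsubsec:prod-comp}), gives the closure relations $\bar{Z_I'} \supseteq Z_J'$ iff $I \subseteq J$, so that $\bar{Z_I'} = \varphi_s^{-1}(\mathbb{A}_I)/G[n]$; since $\mathcal{S}_I = \varphi_s^{-1}(\mathbb{A}_I)$ is the $\mathfrak{S}_n$-quotient of the smooth variety $\mathcal{P}_I = \varphi_p^{-1}(\mathbb{A}_I)$, it is normal, and the free $G[n]$-action keeps the quotient normal; this handles normality of the components (the closures $\bar{Z_I'}$ for $I$ a singleton) and more generally identifies $\bar{Z_I'}$ as a normal variety. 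For the codimension/irreducibility condition, the key observation is that the irreducible components of $\psi_s^{-1}(0)$ are indexed by the pairs $(\mathbf{b},\mathbf{s})$ stable with respect to singleton sets $I = \{i\}$, and that an intersection of a collection $\{D_a\}$ of such components corresponds, via the closure relations above, to those $Z_J'$ with $J$ containing all the $i$'s; each such locus is irreducible because $\mathcal{S}_J^{\circ}$ decomposes into irreducible pieces $(\mathcal{S}_J)^\circ_{(\mathbf{b},\mathbf{s})}$ (the symmetric analogue of Proposition \ref{prop:strat-comp}), and the dimension count from Lemma \ref{lem:PRdual} — $\dim \bar{Z_J'} = dn + 1 - |J|$ — shows the codimension in $J^n_{X/C}$ equals $|J| - 1$, which is exactly one less than the number of components being intersected when the combinatorics line up.

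The main obstacle I anticipate is the bookkeeping needed to show that a genuine intersection $\bigcap_a D_a$ of components of $\psi_s^{-1}(0)$ decomposes into pieces each of which is one of the $\bar{Z_J'}$ (rather than a union of several, or a piece of unexpected dimension) — that is, matching up the component-labelling by stable pairs $(\mathbf{b},\mathbf{s})$ relative to various $I$ with the closure-order combinatorics coming from Theorem \ref{thm:limit-scheme} in the symmetric product setting. This is essentially the same analysis carried out for $\mathcal{H}$ in Section \ref{sec:strata} and for $P^n$ in Lemma \ref{lem:PRdual}; the cleanest route is probably to push everything down from $P^n_{X/C}$, where $\mathcal{D}(\psi_p^{-1}(0))$ is already known to be well-defined, observing that the $\mathfrak{S}_n$-quotient map $P^n_{X/C} \to J^n_{X/C}$ sends components to components and intersections to intersections, and that irreducibility is preserved under the (finite, hence closed) quotient map while codimension is preserved because the map is finite. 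Then normality of each component follows from Definition \ref{def:2nd-dual}(i), the irreducibility and codimension conditions follow from Definition \ref{def:2nd-dual}(ii), and Definition \ref{def:2nd-dual} applies to give the existence of $\Gamma_s$.

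\begin{proof}
Arguing as in the proof of Lemma \ref{lem:PRdual}, we may assume $C = \mathbb{A}^1$ and we stratify
$$ J^n_{X/C} = \bigcup_{I \subseteq [n+1]} Z_I', \qquad Z_I' = \varphi_s^{-1}(U_I)/G[n], $$
so that $Z_I' \subseteq \psi_s^{-1}(0)$ if and only if $I \neq \varnothing$. By the symmetric-product analogue of Theorem \ref{thm:limit-scheme} (see Paragraph \ref{subsubsec:prod-comp}), $\bar{Z_I'} \supseteq Z_J'$ if and only if $I \subseteq J$, whence
$$ \bar{Z_I'} = \bigcup_{I \subseteq J} Z_J' = \varphi_s^{-1}(\mathbb{A}_I)/G[n] = \mathcal{S}_I/G[n]. $$
Here $\mathcal{S}_I = \tau^{ss}(\mathcal{P}_I)$ is the $\mathfrak{S}_n$-quotient of the smooth variety $\mathcal{P}_I = \varphi_p^{-1}(\mathbb{A}_I)$, hence is normal; as $G[n]$ acts freely on $\mathcal{S}$ by Corollary \ref{cor:prod} and Remark \ref{rmk:PtoJ}, the quotient $\bar{Z_I'}$ is normal as well. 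In particular every irreducible component of $\psi_s^{-1}(0)$, being of the form $\bar{Z_{\{i\}}'}$ for a singleton $\{i\} \subseteq [n+1]$ together with a choice of component of $\mathcal{S}_{\{i\}}$, is normal, so condition (i) of Definition \ref{def:2nd-dual} holds.

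For condition (ii), let $\{D_a\}$ be a collection of components of $\psi_s^{-1}(0)$ and consider a connected component $Y$ of $\bigcap_a D_a$. Each $D_a$ is the closure of a component $(\mathcal{S}_{I_a})^\circ_{(\mathbf{b}_a,\mathbf{s}_a)}$ for a singleton $I_a$, and by the closure relations above, $Y$ is the closure of one of the loci $(\mathcal{S}_J)^\circ_{(\mathbf{b},\mathbf{s})}$ for some $J$ with $I_a \subseteq J$ for all $a$, these being irreducible by the symmetric-product analogue of Proposition \ref{prop:strat-comp}. Thus $Y = \bar{(\mathcal{S}_J)^\circ_{(\mathbf{b},\mathbf{s})}}/G[n]$ is irreducible. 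Finally, the dimension computation in the proof of Lemma \ref{lem:PRdual} applies verbatim to $J^n_{X/C}$, since it only used that $\varphi_s$ has relative dimension $dn$ and that $G[n]$ acts freely; it yields
$$ \dim Y = \dim \bar{Z_J'} = dn + 1 - \lvert J \rvert, $$
so $Y$ has codimension $\lvert J \rvert - 1$ in $J^n_{X/C}$, which equals one less than the number of components whose intersection has $Y$ as a connected component. Both conditions of Definition \ref{def:2nd-dual} are therefore satisfied, and $\Gamma_s = \Delta(\psi_s^{-1}(0))$ is well-defined.
\end{proof}
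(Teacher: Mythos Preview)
Your proof follows essentially the same route as the paper's: stratify $J^n_{X/C}$ by the loci $Z'_I = \varphi_s^{-1}(U_I)/G[n]$, identify $\bar{Z'_I}$ with $\varphi_s^{-1}(\mathbb{A}_I)/G[n]$, deduce normality from the fact that $\varphi_s^{-1}(\mathbb{A}_I)$ is an $\mathfrak{S}_n$-quotient of the smooth $\varphi_p^{-1}(\mathbb{A}_I)$, and then read off the codimension from the dimension count. The paper's verification of condition~(ii) is slightly cleaner than yours: rather than invoking the $(\mathbf{b},\mathbf{s})$-labelling, it simply notes that if the chosen components lie in $\bar{Z'}_{\{t_1\}},\dots,\bar{Z'}_{\{t_k\}}$ with the $t_i$ pairwise distinct, then their intersection is open and closed in the normal equi-dimensional scheme $\bar{Z'}_{\{t_1,\dots,t_k\}}$, so connected components are automatically irreducible of the right codimension; and if two $t_i$ coincide the intersection is empty. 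You should make this last case distinction explicit, since your final sentence tacitly assumes $|J|$ equals the number of components being intersected.

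One genuine slip: your assertion that ``$G[n]$ acts freely on $\mathcal{S}$'' is not correct in general. Freeness holds on $\mathcal{P}$ because a group element must fix each point of an ordered tuple individually, but on the symmetric product a group element may permute the points of the underlying cycle, producing non-trivial finite stabilizers (compare the analogous discussion for $\mathcal{H}$ in Section~\ref{sec:dlt}). Fortunately nothing in your argument actually needs freeness: normality of $\bar{Z'_I}$ follows because a geometric quotient of a normal scheme is normal, and the dimension count only uses that the stabilizers are finite. So replace ``as $G[n]$ acts freely'' by ``as a geometric quotient of a normal scheme'' and the argument stands.
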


\begin{proof}
It suffices to verify that the two conditions in Definition \ref{def:2nd-dual} are satisfied by the degenerate fibre $\psi_s^{-1}(0)$. Following the same reasoning as in Lemma \ref{lem:PRdual}, we have a stratification
$$ J^n_{X/C} = \bigcup_{I \subseteq [n+1]} Z'_I $$
such that
\begin{itemize}
\item[($\ast$)] $Z'_I \subseteq \psi_s^{-1}(0)$ iff $I \neq \varnothing$, and $\bar{Z'_I} \supseteq Z'_J$ iff $I \subseteq J$, 
\item[($\ast\ast$)] $\bar{Z'_I} = \varphi_s^{-1}(\mathbb{A}_I)/G[n]$, and  of expected dimension $dn + 1 - \lvert I \rvert$. 
\end{itemize}
We claim that $\bar{Z'_I}$ is normal for each $I \subseteq [n+1]$. Indeed, by Corollary \ref{cor:prod} we have
$$ \varphi_s^{-1}(\mathbb{A}_I) = \varphi_p^{-1}(\mathbb{A}_I) / \mathfrak{S}_n, $$
which is a finite group quotient of a smooth scheme, hence is normal. It follows that $\bar{Z'_I}$ is also normal as a geometric quotient of a normal scheme.

Since $\bar{Z'_I}$ is normal and equi-dimensional, it follows that every connected component of $\bar{Z'_I}$ must be an irreducible component. Indeed, ($R_1$) fails if two irreducible components meet along a divisor, and ($S_2$) fails if they meet along a locus of higher codimension.

Now we verify the conditions required in Definition \ref{def:2nd-dual}. By ($\ast$), ($\ast\ast$) and the above observation, the irreducible components of $\psi_s^{-1}(0)$ are given by the connected components of every $\bar{Z_I}$ with $|I|=1$, which are all normal of dimension $dn$, hence the first condition in Definition \ref{def:2nd-dual} holds. 

To verify the second condition, take $k$ distinct irreducible components of $\psi_s^{-1}(0)$, say, $C_i \subseteq \bar{Z'}_{\{t_i\}}$ for $1 \leqslant i \leqslant k$. If $t_i = t_{i'}$ for some $i \neq i'$, then their intersection is empty; otherwise, their intersection is either empty, or an open and closed subscheme of $\bar{Z'_I}$ with $I = \{ t_1, \ldots, t_k \}$, which is of expected dimension and satisfies the property that every connected component is irreducible. This concludes theproof of the existence of the dual complex for $\psi_s^{-1}(0)$.
\end{proof}

\begin{remark}
By Theorem \ref{thm:dltmodel}, the family $\psi_h \colon I^n_{X/C} \to C$ is a dlt model. As such, its central fibre satisfies the requirements in Definition \ref{def:2nd-dual}, hence its dual complex $\Gamma_h$ is well-defined.
\end{remark}

\subsection{Dual complex for the degenerate fibre of $P^n_{X/C}$}\label{subsec:dualcomplex-product}

From now on we assume that the dual complex $\Gamma$ for the degenerate fibre of the original family $ f: X \to C $ is an ordered graph (i.e., an ordered $\Delta$-complex with $1$-dimensional geometric realization) with a fixed bipartite orientation. Our next goal is to compute the dual complex $\Gamma_p$ for the degenerate fibre of the family
$$ \psi_p \colon P^n_{X/C} \lra C $$
as in \eqref{eqn:smcomm}. We will show that $\Gamma_p$ is the $n$-fold product $\Gamma^n$ equipped with the canonical structure of a $\Delta$-complex.

\subsubsection{}\label{subsubsec:step12}
We recall some generalities of products of ordered $\Delta$-complexes. In general, one can compute their products by subdividing the product of their geometric realizations via the `shuffling operation'; see \cite[\S 3.B, p.278]{Hat-2002}. Here we give a more detailed account for products of ordered graphs, which is the case we need.

For each $1 \leqslant i \leqslant n$, let $\Gamma_i$ be an ordered graph with geometric realization $\lvert \Gamma_i \rvert$. The product $\Gamma_1 \times \Gamma_2 \times \cdots \times \Gamma_n$ can, 
as a $\Delta$-complex, be given as follows:

\textsc{Step 1.} For each $1 \leqslant i \leqslant n$, let $S_i$ be a cell in $\Gamma_i$ (either $0$-cell or $1$-cell). Then $|\Gamma_1| \times |\Gamma_2| \times \cdots \times |\Gamma_n|$ is naturally divided into `cubes' of the form $S_1 \times S_2 \times \cdots \times S_n$. Among the factors, assume that $S_{i_1}, \ldots, S_{i_k}$ are ordered $1$-cells while the others are $0$-cells, then the product is a $k$-cube, which can be viewed as a subset of $\RR^k$ defined by $0 \leqslant x_{i_t} \leqslant 1$ for each $1 \leqslant t \leqslant k$ (with arrows of the $1$-cells pointing towards the increasing direction). For convenience, we call this step a `cube decomposition' of $|\Gamma_1| \times |\Gamma_2| \times \cdots \times |\Gamma_n|$.

\textsc{Step 2.} We further decompose each cube into ordered simplices, labelled by all possible orders on the set of variables $x_{i_1}, \ldots, x_{i_k}$. More precisely, suppose that $j_1, \ldots, j_k$ is a `shuffling' of the indices $i_1, \ldots, i_k$, then an ordered $k$-simplex corresponding to this shuffling is given by
\begin{equation}
\label{eqn:cell}
0 \leqslant x_{j_1} \leqslant \cdots \leqslant x_{j_k} \leqslant 1.
\end{equation}
It is clear that there are $k!$ possible shufflings and the union of the corresponding simplices is the entire cube. Therefore these $k!$ simplices give a subdivision of the $k$-cube. The faces of the maximal dimensional simplices, i.e. the simplices of smaller dimensions, are defined by similar chains of inequalities with some `$\leqslant$' replaced by `$=$'. The number of equalities in the chain is the codimension of the cell; or equivalently, the number of inequalities is $1$ larger than the dimension of the cell. Such decomposition of cubes is compatible with specialization; that is, the decomposition of a face of a cube is the restriction of the decomposition of the cube to the face. Hence the cube decomposition in \textsc{Step 1} is further decomposed into a $\Delta$-complex.

\begin{definition}
\label{def:delta}
By the product of the ordered graphs $\Gamma_i$, $1 \leqslant i \leqslant n$, we mean the unique $\Delta$-complex
$$\Gamma_1 \times \Gamma_2 \times \cdots \times \Gamma_n$$  
constructed in \textsc{Step 1} and \textsc{Step 2} above.
\end{definition}

\subsubsection{}
We are now ready to compute the dual complex $\Gamma_p$ for $\psi_p^{-1}(0)$.

\begin{proposition}
\label{prop:dualp}
Suppose that the dual complex for $f^{-1}(0)$ is an ordered graph $\Gamma$ with a fixed bipartite orientation. Then the dual complex $\Gamma_p$ for $\psi_p^{-1}(0)$ is $\Gamma^n$.
\end{proposition}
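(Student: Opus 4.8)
The plan is to identify the irreducible components of $\psi_p^{-1}(0)$, together with their multiple intersections, directly in terms of the combinatorial indexing developed in Paragraph \ref{subsubsec:prod-comp}, and then match this indexing with the cube-and-shuffle description of $\Gamma^n$ from Definition \ref{def:delta}. First I would recall from Lemma \ref{lem:PRdual} (and its proof) that $\psi_p^{-1}(0)$ is a reduced snc divisor whose strata $Z_I = \varphi_p^{-1}(U_I)/G[n]$, for $\varnothing \neq I \subseteq [n+1]$, are precisely the locally closed pieces of the dual complex; by Proposition \ref{prop:strat-comp} and the subsequent analogue for $\mathcal{P}$, the connected (equivalently, irreducible) components of $Z_I$ are indexed by the $G[n]$-orbits of stable $n$-tuples $\mathbf{z} \in V(\Gamma_I)^n$ — and since $G[n]$ acts on $U_I$ merely by rescaling the ruled factors, the quotient map does not merge distinct components, so these components are literally indexed by stable tuples in $V(\Gamma_I)^n$.

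\textbf{The combinatorial dictionary.}
The key observation is that a stable tuple $\mathbf{z} \in V(\Gamma_I)^n$, where $\Gamma_I$ is the chain-expansion of $\Gamma$ with $|I| = r$, is the same datum as an $n$-tuple of (cell, position) pairs for $\Gamma$: each entry of $\mathbf{z}$ is either a black vertex $v_I$ (i.e.\ a $0$-cell of $\Gamma$) or a white vertex $(I,\gamma,i_\ell)$ with $1 \leqslant \ell \leqslant r-1$ (i.e.\ a point in the interior of the $1$-cell $\gamma$, refined by which of the $r-1$ interior subdivision points it sits at). Comparing with \textsc{Step 1} and \textsc{Step 2}: choosing a cell $S_i$ of $\Gamma$ in each factor picks out a cube $S_1 \times \cdots \times S_n$ of $|\Gamma|^n$; if $S_{i_1},\dots,S_{i_k}$ are the $1$-cells among them, then a subdivision simplex of that cube is a chain $0 \leqslant x_{j_1} \leqslant \cdots \leqslant x_{j_k} \leqslant 1$, and the strict part of such a chain partitions $\{i_1,\dots,i_k\}$ into $\leqslant r$ ordered blocks separated by the $r-1$ ``cut values'' $\ell/r$; this partition is exactly the assignment of white-vertex labels $(I,\gamma,i_\ell)$ to the $1$-cell entries, while the $0$-cell entries receive their black labels. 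Thus stable tuples in $V(\Gamma_I)^n$, as $I$ ranges over subsets of cardinality $r$, biject with the $(r-1)$-dimensional simplices of $\Gamma^n$, in a way compatible with the ordering data. I would state and check this bijection carefully, the only slightly fiddly point being the bookkeeping between the index $i_\ell \in I$ and the cut position $\ell \in \{1,\dots,r-1\}$.

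\textbf{Compatibility of the face maps.}
It remains to verify that the bijection respects incidence, i.e.\ that $\mathrm{Simp}$ of a component of $Z_J$ is a face of $\mathrm{Simp}$ of a component of $Z_I$ exactly when the corresponding simplices of $\Gamma^n$ are related the same way. On the geometry side, incidence is governed by Theorem \ref{thm:limit-scheme-prod}: for $I = I(k) = J \setminus \{j_k\}$, the component $(\mathcal{P}_J)^\circ_{\mathbf{z}'}$ lies in the closure of $(\mathcal{P}_{I(k)})^\circ_{\mathbf{z}}$ where $\mathbf{z}$ is obtained from $\mathbf{z}'$ by the explicit substitution rules (i)--(iv) — which, read through the dictionary above, say precisely: delete the hyperplane $t_{j_k}=0$, equivalently \emph{merge} two adjacent blocks of the ordered partition on each $1$-cell (for $1<k<r+1$), or \emph{absorb} the extreme block into the adjacent endpoint vertex (for $k=1$ or $k=r+1$). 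On the $\Delta$-complex side, passing to a codimension-one face of a simplex in $\Gamma^n$ means replacing one ``$\leqslant$'' by ``$=$'' in the chain \eqref{eqn:cell}, which either merges two strict blocks or identifies the extreme block with the endpoint $x=0$ or $x=1$ — matching the geometric rules exactly, including the bipartite-orientation convention that the source block flows to $V^-$ and the target block to $V^+$. Since both sides are determined by their simplices and face relations, and the bijection is order-compatible, this yields an isomorphism of ordered $\Delta$-complexes $\Gamma_p \cong \Gamma^n$.

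\textbf{Main obstacle.}
I expect the genuinely delicate part to be \emph{not} any single step but the careful synchronization of three conventions: the ordering of $I \subset [n+1]$ versus the ordering of interior subdivision points $\ell \in \{1,\dots,r-1\}$; the shuffle ordering $x_{j_1} \leqslant \cdots \leqslant x_{j_k}$ on the cube; and the bipartite orientation of $\Gamma$ (which fixes whether a ``degenerate'' extreme block lands on the $+$ or $-$ endpoint). A sign/direction error in any one of these would break the claimed compatibility with the face maps of Theorem \ref{thm:limit-scheme-prod}, so the proof should pin these down explicitly — most economically by treating first the case of a single edge $\Gamma = (\,\bullet \to \bullet\,)$, where $\Gamma^n$ is literally the standard cube subdivided into $n!$ simplices and the stable tuples in $V(\Gamma_I)^n$ are transparently the shuffles, and then noting that the general case is obtained edge-by-edge and vertex-by-vertex with no new phenomena.
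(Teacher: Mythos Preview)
Your proposal is correct and takes essentially the same approach as the paper: match the strata indexed by stable tuples $\mathbf{z} \in V(\Gamma_I)^n$ (Paragraph \ref{subsubsec:prod-comp}) to the simplices of $\Gamma^n$ via the cube-and-shuffle description, and verify the face relations using Theorem \ref{thm:limit-scheme-prod}. The only organizational difference is that the paper frames the two-step construction geometrically via the forgetful map $\tau_p \colon P^n_{X/C} \to X \times_C \cdots \times_C X$, reading the cube decomposition off the central fibre of the unexpanded product and the simplicial refinement off the fibres of $\tau_p$, whereas you set up the combinatorial bijection directly.
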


\begin{proof}
First of all, we notice that the vertical maps in the commutative diagram
\begin{equation*}
\xymatrix{
X[n] \times_{C[n]} \cdots \times_{C[n]} X[n] \ar[rr] \ar[d] && C[n] \ar[d] \\
X \times_C \cdots \times_C X \ar[rr] && C
}
\end{equation*}
are $G[n]$-invariant, hence we have a commutative diagram
\begin{equation}
\label{eqn:withtau}
\xymatrix{
P^n_{X/C} \ar[rr]^-{\psi_p} \ar[d]_{\tau_p} && C \ar@{=}[d] \\
X \times_C \cdots \times_C X \ar[rr]^-{g_p} && C
}
\end{equation}
where the vertical map $\tau_p$ sends an $n$-tuple to its support in the original family $f \colon X \to C$. It is an isomorphism except on the central fibres, where it is only surjective. 

We prove the statement in two steps, based on the two steps to construct $\Gamma^n$ in Paragraph \ref{subsubsec:step12}. We will first show that the combinatorial structure of $g_p^{-1}(0)$ can be encoded in the cube decomposition of $|\Gamma|^n$ as described in \textsc{Step 1}; then we prove that the further triangulation via shuffling as described in \textsc{Step 2} is precisely realized by the map $\tau_p$.

\textsc{Step 1.} We explain why the combinatorial structure of $g_p^{-1}(0)$ can be encoded in the cube decomposition of $|\Gamma|^n$.

Any $k$-cube in the cube decomposition of $|\Gamma|^n$ has the form 
$$ S=S_1 \times \cdots \times S_n. $$
Among the factors, $k$ of them are edges, representing components of the double locus of $f^{-1}(0)$, while the other $(n-k)$ are vertices, representing components of $f^{-1}(0)$. We assume that
\begin{itemize}
\item the edge factors are $S_{i_1}, \ldots, S_{i_k}$; 
\item among the vertex factors, $h$ of them belong to $V^+$ while the other $(n-k-h)$ belong to $V^-$.
\end{itemize}
Here we have $k \geqslant 0$, $h \geqslant 0$ and $k+h \leqslant n$.

This cube represents the locus in $g_p^{-1}(0)$ consisting of $n$-tuples
$$ Z = (z_1, \ldots, z_n) \in g_p^{-1}(0), $$
such that $z_i$ lies in a unique component of $f^{-1}(0)$ if $S_i$ is a vertex, or in a component of the double locus of $f^{-1}(0)$ if $S_i$ is an edge. 

\textsc{Step 2.} We study the preimage of the locus represented by the cube $S$ and explain why the map $\tau_p$ in \eqref{eqn:withtau} induces a subdivision of $S$ as described in Paragraph \ref{subsubsec:step12}. 

The finer combinatorial structure in $\psi_p^{-1}(0)$ is encoded by an extra parameter $I$ such that
$$ \varnothing \neq I \subseteq \{ 1, \ldots, n+1 \}. $$
For any such $I$, suppose $|I|=l+1$ for some $0 \leqslant l \leqslant n$. Then one has to replace each component of the double locus of $f^{-1}(0)$ by $l$ ordered $\PP^1$-fibrations, and distribute the $n$ points across the components such that the resulting $n$-tuple is stable (recall the description of stability in Paragraph \ref{subsubsec:prod-comp}). More precisely, using the above notation, $I$ can be written as
$$ I = \{ h+1, h+t_1+1, h+t_1+t_2+1, \ldots, h+t_1+\cdots+t_l+1 \} $$
where $t_1, t_2, \ldots, t_l$ are positive integers such that $ t_1 + \cdots + t_l = k $. The configuration of a smoothly supported $n$-tuple 
$$ Z = (z_1, \ldots, z_n) \in \psi_p^{-1}(0) $$
is encoded, in the notation of Paragraph \ref{subsubsec:prod-comp}, by a tuple 
$$ \mathbf{z} \in V(\Gamma_I) \times \cdots \times V(\Gamma_I) $$
and stability of $ \mathbf{z} $ boils down to the following condition: among the $k$ points $z_{i_1}, \ldots, z_{i_k}$ whose images under $\tau_p$ lie on the double loci in $g_p^{-1}(0)$, $t_r$ of them should lie on the $r$-th inserted $\PP^1$-fibration (among those that replace the corresponding double locus) for every $1 \leqslant r \leqslant l$. If $ \mathbf{z} $ is stable, the point $Z$ belongs to the stratum $ (\mathcal{P}_I)^{\circ}_{\textbf{z}} $ over $U_I$. We observe that there are $ k!$ ways to distribute the points $z_{i_1}, \ldots, z_{i_k}$ on the various $\PP^1$-fibrations so that $Z$ is stable, each arising in the following fashion: let $j_1, \ldots, j_k$ be an arbitrary shuffling of the indices $i_1, \ldots, i_k$. Then we require, for each $1 \leq r \leq l$, that the $t_r$ points indexed by $ j_{t_1 + \ldots + t_{r-1} +1 }, \ldots, j_{t_1 + \ldots + t_r} $ are placed on the $r$-th $\PP^1$-fibration.

Now suppose that $j_1, \ldots, j_k$ is a shuffling of $i_1, \ldots, i_k$, such that for any $p$ with $t_1 + \cdots + t_{r-1} < p \leqslant t_1 + \cdots + t_r$, the point $z_{j_p}$ lies in the $r$-th inserted $\PP^1$-fibration. The locus parametrizing such $n$-tuples is represented by an $l$-cell defined by the chain
\begin{align}
0 &\leqslant x_{j_1} = \cdots = x_{j_{t_1}} \nonumber\\
&\leqslant x_{j_{t_1+1}} = \cdots = x_{j_{t_1+t_2}} \nonumber\\
&\leqslant \cdots \label{eqn:simplex} \nonumber\\
&\leqslant x_{j_{t_1+ \cdots + t_{l-1} +1}} = \cdots = x_{j_{t_1+\cdots+t_{l-1}+t_l}} \nonumber\\
&\leqslant 1. \nonumber
\end{align}

Using Theorem \ref{thm:limit-scheme-prod} we conclude that if $I \subset I'$ with $|I|=l+1$ and $|I'|=l'+1$, then the stratum labelled by $I'$ is contained in the closure of the stratum labelled by $I$. This is consistent with the fact that the $l$-cell corresponding to $I$ is a face of the $l'$-cell corresponding to $I'$, because the latter is defined by replacing some equalities from the former by inequalities (one such replacement for each extra element in $I' \backslash I$) without re-shuffling the variables. This verifies that the $\Delta$-complex $\Gamma^n$ we get following the recipe in Paragraph \ref{subsubsec:step12} reflects precisely the combinatorial structure of $\psi_p^{-1}(0)$, which finishes the proof.
\end{proof}

\subsection{Dual complex for the degenerate fibre of $J^n_{X/C}$}\label{subsec:dualcomplex-symprod}

Now we turn to the dual complex $\Gamma_s$ for the degenerate fibre of the second family
$$ \psi_s \colon J^n_{X/C} \longrightarrow C $$
as in \eqref{eqn:smcomm}. Via a comparison to $\Gamma_p$, we will show that $\Gamma_s$ is the symmetric product $\Gamma^n/\mathfrak{S}_n$.

\begin{proposition}\label{prop:duals}
The map $\xi$ in \eqref{eqn:smcomm} induces a map of dual complexes
$$ \xi_\Gamma \colon \Gamma_p \longrightarrow \Gamma_s, $$
which is the quotient map $\Gamma^n \to \Gamma^n/\mathfrak{S}_n$. 
\end{proposition}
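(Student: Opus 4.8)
The plan is to transfer the $\mathfrak{S}_n$-action from the scheme level to the level of dual complexes, using the already established identifications of the dual complexes of $P^n_{X/C}$ and $J^n_{X/C}$ with $\Gamma^n$ and with the symmetric product $\Gamma^n/\mathfrak{S}_n$, respectively. Recall from Remark \ref{rmk:PtoJ} that $\xi \colon P^n_{X/C} \to J^n_{X/C}$ is precisely the $\mathfrak{S}_n$-quotient map, and that the $\mathfrak{S}_n$-action and the $G[n]$-action on $\PrXC$ commute; hence $\mathfrak{S}_n$ acts on $P^n_{X/C}$ over $C$, compatibly with the stratification $P^n_{X/C} = \bigcup_{I} Z_I$ appearing in the proof of Lemma \ref{lem:PRdual}, since the strata $Z_I = \varphi_p^{-1}(U_I)/G[n]$ depend only on $I \subseteq [n+1]$ and not on the ordering of the $n$ points.

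First I would observe that $\xi$ maps the central fibre $\psi_p^{-1}(0)$ onto $\psi_s^{-1}(0)$ and, more precisely, sends each stratum $Z_I$ of $P^n_{X/C}$ onto the corresponding stratum $Z'_I$ of $J^n_{X/C}$ (in the notation of Lemma \ref{lem:exdual}), with $Z'_I = Z_I/\mathfrak{S}_n$. Since a morphism of schemes inducing a stratum-preserving map on central fibres induces a morphism of dual complexes (sending the simplex of a connected component of an intersection $\bigcap_{i\in B} (\cdot)_i$ to the simplex of its image), $\xi$ indeed induces a simplicial map $\xi_\Gamma \colon \Gamma_p \to \Gamma_s$. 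The key point is then to identify $\xi_\Gamma$ with the combinatorial quotient map $\Gamma^n \to \Gamma^n/\mathfrak{S}_n$ under the isomorphisms $\Gamma_p \cong \Gamma^n$ (Proposition \ref{prop:dualp}) and $\Gamma_s \cong \Gamma^n/\mathfrak{S}_n$ (to be justified as part of this very statement, or taken as the definition of $\Gamma_s$'s combinatorial model). Concretely, a cell of $\Gamma^n$ is described, via \textsc{Step 1} and \textsc{Step 2} of Paragraph \ref{subsubsec:step12}, by a choice of factors $S_1,\dots,S_n$ (vertices or edges of $\Gamma$) together with a shuffling $j_1,\dots,j_k$ of the edge-indices, and the corresponding stratum of $\psi_p^{-1}(0)$ is the one over the $U_I$ determined by that data, as spelled out in the proof of Proposition \ref{prop:dualp}. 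The $\mathfrak{S}_n$-action permutes the factor-indices $1,\dots,n$, hence permutes these cells exactly as it permutes $n$-tuples of cells of $\Gamma$ together with their shuffling data — i.e.\ exactly as in the standard combinatorial $\mathfrak{S}_n$-action on the $\Delta$-complex $\Gamma^n$. Therefore $\xi_\Gamma$ is $\mathfrak{S}_n$-invariant, surjective, and two cells of $\Gamma^n$ have the same image if and only if they lie in the same $\mathfrak{S}_n$-orbit, since this is precisely the statement that the strata $Z_I$ and $w\cdot Z_I$ (for $w \in \mathfrak{S}_n$) have the same image $Z'_I$ in $J^n_{X/C}$; this identifies $\xi_\Gamma$ with $\Gamma^n \to \Gamma^n/\mathfrak{S}_n$.

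The main obstacle is bookkeeping rather than conceptual: one must check that the simplicial map induced by $\xi$ is genuinely compatible, cell by cell, with the shuffling description of $\Gamma^n$ — in particular that $\xi$ does not collapse a cell of positive dimension onto one of lower dimension. This follows because $\xi$ is a finite quotient map with stabilizers at most $\mathbb{Z}/2\mathbb{Z}$ (the relevant stabilizers being those of the diagonal loci, as in Subsection \ref{subsec:minimalmodels}), so it is quasi-finite and preserves dimension of strata; hence the dimension of each $Z'_I$ equals that of $Z_I$, which by $(\ast\ast)$ in Lemma \ref{lem:exdual} matches the expected dimension, and the induced map on dual complexes is dimension-preserving on cells. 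Combining this with the orbit description above yields that $\xi_\Gamma$ is exactly the quotient $\Gamma^n \to \Gamma^n/\mathfrak{S}_n$, and in particular that $\Gamma_s \cong \mathrm{Sym}^n(\Gamma)$.
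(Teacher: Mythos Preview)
Your approach is essentially the same as the paper's: both arguments verify that the geometric $\mathfrak{S}_n$-action on the strata of $\psi_p^{-1}(0)$ matches the combinatorial $\mathfrak{S}_n$-action on the cells of $\Gamma^n$ (permuting the factors of each cube $S_1\times\cdots\times S_n$ and hence the shuffling data), and conclude that the dual complex of the quotient is the quotient of the dual complex. The paper carries this out more explicitly, writing each simplex as a chain of inequalities and checking directly that $\sigma\in\mathfrak{S}_n$ sends the chain $0\leqslant x_{j_1}=\cdots\leqslant 1$ to $0\leqslant x_{\sigma(j_1)}=\cdots\leqslant 1$, matching the action on the corresponding loci of $n$-tuples; your version states this compatibility more summarily but the content is the same.

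One factual slip: your claim that $\xi$ has ``stabilizers at most $\mathbb{Z}/2\mathbb{Z}$'' is incorrect --- the $\mathfrak{S}_n$-action on $P^n_{X/C}$ can have stabilizers as large as $\mathfrak{S}_n$ (for instance on the small diagonal). The reference to Subsection~\ref{subsec:minimalmodels} is misplaced: there the $\mathbb{Z}/2\mathbb{Z}$ bound holds only on the restricted locus $W^n_*$ where at most one double point is allowed. Fortunately this claim is not actually needed in your argument: all you use is that $\xi$ is a finite group quotient, hence quasi-finite and dimension-preserving on strata, and that follows simply from $|\mathfrak{S}_n|<\infty$ together with $(\ast\ast)$ in Lemma~\ref{lem:exdual}, which you already cite. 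Just delete the stabilizer remark.
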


\begin{proof}
Notice that the left column in \eqref{eqn:withtau} is equivariant under the action of the symmetric group $\mathfrak{S}_n$, and that both horizontal maps are invariant under the action of $\mathfrak{S}_n$. Hence we can take the $\mathfrak{S}_n$-quotient of the left column and obtain an expansion of \eqref{eqn:withtau} as follows:
\begin{equation}
\label{eqn:longtau}
\xymatrix{
P^n_{X/C} \ar[rr]_{\xi} \ar@/^2pc/[rrrr]^-{\psi_p} \ar[d]_{\tau_p} && J^n_{X/C} \ar[d]_{\tau_s} \ar[rr]_-{\psi_s} && C \ar@{=}[d] \\
X \times_C \cdots \times_C X \ar[rr] \ar@/_2pc/[rrrr]_-{g_p} && \mathrm{Sym}^n(X/C) \ar[rr]^-{g_s} && C
}
\end{equation}
where the quotient map $\xi$ was explained in Remark \ref{rmk:PtoJ}. We have seen in the proof of Proposition \ref{prop:dualp} that the combinatorics of $g_p^{-1}(0)$ is given by a cube decomposition of $\lvert \Gamma \rvert^n$, and $\tau_p$ induces the standard subdivision of the cubes. Our proof is still divided into two steps.

\textsc{Step 1.} We look at the action of $\mathfrak{S}_n$ on the cube decomposition as described in \textsc{Step 1} of the proof of Proposition \ref{prop:dualp}.

The symmetric group $\mathfrak{S}_n$ acts on $X \times_C \cdots \times_C X$, and hence on $g_p^{-1}(0)$, by permuting the factors. Therefore it also acts on $\Gamma^n$ by permuting the factors and induces an $\mathfrak{S}_n$-action on the cube decomposition of $\lvert \Gamma \rvert^n$. More precisely, an element $\sigma \in \mathfrak{S}_n$ sends a cube $S_1 \times \cdots \times S_n$ to $S_{\sigma(1)} \times \cdots \times S_{\sigma(n)}$, compatible with its parametrization as $[0,1]^k$. Hence $\sigma$ also identifies the standard subdivisions of the source and image cubes. In other words, every $\sigma \in \mathfrak{S}_n$ gives an automorphism of $\Gamma^n$ as an ordered $\Delta$-complex. Therefore the quotient $\Gamma^n/\mathfrak{S}_n$ is again an ordered $\Delta$-complex.

\textsc{Step 2.} We need to show that the $\mathfrak{S}_n$-action on the ordered $\Delta$-complex $\Gamma^n$ is compatible with its action on $\psi_p^{-1}(0)$.

Indeed, this can be checked for an arbitrary $\sigma \in \mathfrak{S}_n$ and an arbitrary cube $S_1 \times \cdots \times S_n$. We follow the notation in \textsc{Step 2} of the proof of Proposition \ref{prop:dualp}. A simplex in such a cube is defined by the chain of inequalities
\begin{align*}
0 &\leqslant x_{j_1} = \cdots = x_{j_{t_1}} \\
&\leqslant x_{j_{t_1+1}} = \cdots = x_{j_{t_1+t_2}} \\
&\leqslant \cdots \\
&\leqslant x_{j_{t_1+ \cdots + t_{l-1} +1}} = \cdots = x_{j_{t_1+\cdots+t_{l-1}+t_l}} \\
&\leqslant 1,
\end{align*}
which represents the closure of the locus of $n$-tuples $(z_1, \ldots, z_n)$, such that $z_{j_p}$ lies in the $r$-th inserted component if $t_1 + \cdots + t_{r-1} < p \leqslant t_1 + \cdots + t_r$, for each value of $p$. The image of this locus under the action of $\sigma$ is the closure of $n$-tuples $(z_1, \ldots, z_n)$, such that $z_{\sigma(j_p)}$ lies in the $r$-th inserted component if $t_1 + \cdots + t_{r-1} < p \leqslant t_1 + \cdots + t_r$, for each value of $p$. This locus is in turn represented by the simplex defined by the chain of inequalities
\begin{align*}
0 &\leqslant x_{\sigma(j_1)} = \cdots = x_{\sigma(j_{t_1})} \\
&\leqslant x_{\sigma(j_{t_1+1})} = \cdots = x_{\sigma(j_{t_1+t_2})} \\
&\leqslant \cdots \\
&\leqslant x_{\sigma(j_{t_1+ \cdots + t_{l-1} +1})} = \cdots = x_{\sigma(j_{t_1+\cdots+t_{l-1}+t_l})} \\
&\leqslant 1,
\end{align*}
which is precisely the image of the previous simplex under the action of $\sigma$. This justifies that the $\mathfrak{S}_n$-action on $\psi_p^{-1}(0)$ is compatible with  its action on the dual complex $\Gamma^n$. Therefore the dual complex of the $\mathfrak{S}_n$-quotient of the former is the $\mathfrak{S}_n$-quotient of the latter, which finishes the proof.
\end{proof}

\subsection{Dual complex for the degenerate fibre of $I^n_{X/C}$}\label{subsec:dualcomplex-hilb}

Finally we analyze the relation between the dual complexes for the central fibres of $I^n_{X/C}$ and $J^n_{X/C}$. The tool we need is the following general observation:

\begin{lemma}
\label{lem:same-dual}
Suppose that there is a morphism of schemes of the same pure dimension
$$ \eta \colon Y \lra Z $$
where both $Y$ and $Z$ satisfy the requirements in Definition \ref{def:2nd-dual}. We write the irreducible components of $Z$ as $Z_s$ for $s \in S$ with $S$ being the index set. Then the morphism $\eta$ induces an isomorphism between the dual complexes of $Y$ and $Z$ if the following two conditions are met:
\begin{itemize}
\item[($\dagger$)] For each $s \in S$, $\eta^{-1}(Z_s)$ is an irreducible component $Y_s$ of $Y$;
\item[($\dagger\dagger$)] For each subset $T \subseteq S$ and each irreducible component $Z'$ of $\cap_{s \in T}Z_s$, $\eta^{-1}(Z')$ is an irreducible component $Y'$ of $\cap_{s \in T}Y_s$.
\end{itemize}
\end{lemma}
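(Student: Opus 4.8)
The plan is to show that $\eta$ sets up a bijection between the simplices of $\Delta(Z)$ and those of $\Delta(Y)$ which both preserves and reflects the face relation. By Definition \ref{def:1st-dual} (in the normalization used there and in Definition \ref{def:2nd-dual}) a $\Delta$-complex is \emph{uniquely} determined by its set of simplices together with the face relation, so this is exactly what is needed to conclude that $\Delta(Z)$ and $\Delta(Y)$ are isomorphic. Since the dual complex only depends on the underlying reduced structure, I would argue set-theoretically throughout.

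\textbf{Book-keeping of strata.} For a nonempty $T \subseteq S$ write $Z_T = \cap_{s \in T} Z_s$ and $Y_T = \cap_{s \in T} Y_s$. Since preimage commutes with intersection, condition ($\dagger$) gives $\eta^{-1}(Z_T) = \cap_{s\in T}\eta^{-1}(Z_s) = Y_T$. Decomposing $Z_T = \cup_j Z'_j$ into its finitely many irreducible components, we get $Y_T = \cup_j \eta^{-1}(Z'_j)$, where by ($\dagger\dagger$) each $\eta^{-1}(Z'_j)$ is an irreducible component of $Y_T$. Conversely, any component $Y'$ of $Y_T$ lies in this finite union of irreducible closed subsets, hence in some $\eta^{-1}(Z'_j)$, and then $Y' = \eta^{-1}(Z'_j)$ by maximality of components. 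Thus $Z' \mapsto \eta^{-1}(Z')$ is a \emph{surjection} from the components of $Z_T$ onto the components of $Y_T$, and the codimensions match automatically, both being $|T|-1$ by Definition \ref{def:2nd-dual} together with the hypothesis $\dim Y = \dim Z$.

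\textbf{Injectivity — the main obstacle.} The step I expect to be the crux is injectivity of $Z' \mapsto \eta^{-1}(Z')$, i.e.\ ruling out that $\eta$ collapses two distinct strata. Suppose $\eta^{-1}(Z'_1) = \eta^{-1}(Z'_2) =: W$ for components $Z'_1 \neq Z'_2$ of $Z_T$; then $\eta(W) \subseteq Z'_1 \cap Z'_2$. If $Z'_1 \cap Z'_2 = \varnothing$ then $W = \varnothing$, contradicting that $W$ is a component. Otherwise I would invoke that $\eta$ is surjective onto $Z$ — which holds in the intended applications, in particular for the Hilbert--Chow morphism $I^n_{X/C}\to J^n_{X/C}$ — so that the fibre of $\eta$ over each point $z \in Z'_1$ is nonempty and contained in $\eta^{-1}(Z'_1) = W$, whence $z \in \eta(W)$. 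Then $Z'_1 \subseteq \eta(W) \subseteq Z'_1 \cap Z'_2$, so $Z'_1 \subseteq Z'_2$ and hence $Z'_1 = Z'_2$ (both being components), a contradiction. This yields a bijection on simplices at every level; taking $T = \{s\}$ identifies the vertex set of $\Delta(Z)$ with that of $\Delta(Y)$ via $s \leftrightarrow s$, so that the labellings of simplices by subsets of $S$ agree on both sides.

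\textbf{Face relations and conclusion.} Finally I would check compatibility with the face relation of Definition \ref{def:1st-dual}: for components $Z'$ of $Z_T$ and $Z''$ of $Z_{T''}$ with $T \subseteq T''$, one must show that $Z'' \subseteq Z'$ if and only if $\eta^{-1}(Z'') \subseteq \eta^{-1}(Z')$. The forward implication is immediate; for the reverse, if $\eta^{-1}(Z'') \subseteq \eta^{-1}(Z')$ then each point of $Z''$ has nonempty $\eta$-fibre (surjectivity) contained in $\eta^{-1}(Z'') \subseteq \eta^{-1}(Z')$, so its image lies in $Z'$, giving $Z'' \subseteq Z'$. Combining, $\mathrm{Simp}(Z') \mapsto \mathrm{Simp}(\eta^{-1}(Z'))$ is a face-relation-preserving bijection, hence an isomorphism $\Delta(Z) \xrightarrow{\sim} \Delta(Y)$, as desired. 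The only genuinely delicate input is the use of surjectivity of $\eta$ in the injectivity and face-reflection arguments; everything else is the formal manipulation of components and codimensions afforded by Definition \ref{def:2nd-dual} and conditions ($\dagger$), ($\dagger\dagger$).
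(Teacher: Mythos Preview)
Your argument is correct and follows the same conceptual route as the paper's---matching cells of $\Delta(Z)$ with cells of $\Delta(Y)$ via $Z'\mapsto\eta^{-1}(Z')$---but the paper's own proof is a one-line remark (``the dual complexes are built up by attaching cells, and $\eta$ leads to the identical way of attaching cells at every stage''), so you have essentially written out the details the paper omits.

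One point worth recording: you correctly isolate surjectivity of $\eta$ as the extra input needed for injectivity of the simplex map and for reflecting the face relation, and you observe that this holds in the intended application (the Hilbert--Chow map $I^n_{X/C}\to J^n_{X/C}$ is surjective). The paper does not state surjectivity as a hypothesis and glosses over this step; your analysis is sharper here. Conditions $(\dagger)$ and $(\dagger\dagger)$ alone do not force the assignment $Z'\mapsto\eta^{-1}(Z')$ to be injective---one could in principle have $\eta^{-1}(Z'_1)=\eta^{-1}(Z'_2)$ with $\eta(Y')\subseteq Z'_1\cap Z'_2$ if $\eta$ contracts $Y'$---so surjectivity (or an equivalent fibral condition) is genuinely needed and is available in the case at hand.
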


\begin{proof}
It is clear that such a morphism $\eta$ induces an isomorphism between the two dual complexes, because their dual complexes are built up by attaching cells (see \cite{dFKX-2017}), and $\eta$ leads to the identical way of attaching cells at every stage.
\end{proof}

Equipped with this observation, we are ready to prove the following:

\begin{theorem}\label{thm:dualcomplex-hilb}
The map $\eta$ in \eqref{eqn:smcomm} induces an isomorphism
$$ \eta_\Gamma \colon \Gamma_h \stackrel{\sim}{\longrightarrow} \Gamma_s. $$
In particular, the dual complex $\Gamma_h$ for the fibre $\psi_h^{-1}(0)$ is $\Gamma^n/\mathfrak{S}_n$.
\end{theorem}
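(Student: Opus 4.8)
The plan is to verify the hypotheses of Lemma \ref{lem:same-dual} for the Hilbert--Chow morphism $\eta \colon I^n_{X/C} \to J^n_{X/C}$ in \eqref{eqn:smcomm}, which will immediately yield $\eta_\Gamma$. Combined with Proposition \ref{prop:duals}, this identifies $\Gamma_h$ with $\Gamma_s = \Gamma^n/\mathfrak{S}_n$. First I would note that both $I^n_{X/C}$ and $J^n_{X/C}$ have pure dimension $dn+1$, and that their central fibres admit dual complexes (Theorem \ref{thm:dltmodel} together with the Remark following Lemma \ref{lem:exdual}, and Lemma \ref{lem:exdual} itself), so Lemma \ref{lem:same-dual} applies in principle.

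The key point is to produce, on both sides, the stratifications $I^n_{X/C} = \bigcup_I Z''_I$ and $J^n_{X/C} = \bigcup_I Z'_I$ by the locally closed subschemes $\varphi_h^{-1}(U_I)/G[n]$ and $\varphi_s^{-1}(U_I)/G[n]$, exactly as in the proofs of Lemma \ref{lem:PRdual} and Lemma \ref{lem:exdual}. The crucial observation is that $\eta$ is compatible with these stratifications: since the Hilbert--Chow morphism $\pi^{ss}$ in \eqref{eqn:bigcomm} commutes with the maps to $C[n]$ and is $G[n]$-equivariant, we have $\eta(Z''_I) = Z'_I$, and in fact $\eta^{-1}(\overline{Z'_I}) = \overline{Z''_I}$ because $\eta^{-1}(\varphi_s^{-1}(\mathbb{A}_I)) = \varphi_h^{-1}(\mathbb{A}_I)$ and $\eta$ is proper. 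Next I would use the explicit descriptions of the strata over $U_I$: by Proposition \ref{prop:strat-comp} and its symmetric-product analogue (Paragraph \ref{subsubsec:prod-comp}), the components of $\mathcal{H}_I^\circ$ and of $\mathcal{S}_I^\circ$ are both indexed by the pairs $(\mathbf{b},\mathbf{s})$ stable with respect to $I$, and the relative Hilbert--Chow morphism on each component is the product, over $U_I$, of the maps $\mathrm{Hilb}^{b_v}((Y_v)_I^*/U_I) \to \mathrm{Sym}^{b_v}((Y_v)_I^*/U_I)$ and similarly for the $\mathbf{\Delta}_I^{\gamma,i_l}$-factors. Each such map is surjective with connected (indeed irreducible) fibres onto an irreducible base, hence $\eta$ maps the component of $\mathcal{H}_I^\circ$ indexed by $(\mathbf{b},\mathbf{s})$ \emph{onto} the component of $\mathcal{S}_I^\circ$ indexed by the same pair, and the preimage of that component is exactly that one component. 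Passing to $G[n]$-quotients and using Proposition \ref{prop:bijection} (which matches components of intersections on $\mathcal{H}_0$ with those on $(I^n_{X/C})_0$) together with the analogous bookkeeping on $J^n_{X/C}$, this gives conditions ($\dagger$) and ($\dagger\dagger$) of Lemma \ref{lem:same-dual}: the preimage under $\eta$ of an irreducible component of $\psi_s^{-1}(0)$, resp.\ of an irreducible component of an intersection $\bigcap_{s\in T} Z'_s$, is a single irreducible component of $\psi_h^{-1}(0)$, resp.\ of $\bigcap_{s\in T} Z''_s$.

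I expect the main obstacle to be the careful matching of \emph{connected components} of the various intersections $\bigcap_{s\in T}Z'_s$ and $\bigcap_{s\in T}Z''_s$ — not merely the irreducible components over a single stratum $U_I$, but how these glue as $I$ varies. The subtlety is that a single component $D\subset(I^n_{X/C})_0$ can meet the stratum $Z''_I$ for many $I$, and one must check that $\eta$ respects the incidence relations recorded in Theorem \ref{thm:limit-scheme} and Theorem \ref{thm:limit-scheme-prod}; concretely, one needs the limit-scheme computations for the Hilbert scheme and for the symmetric product to be \emph{the same} combinatorial rule on the pairs $(\mathbf{b},\mathbf{s})$, which is indeed the case since those theorems (and their symmetric-product analogues) are phrased identically in terms of $(\mathbf{b},\mathbf{s})$. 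Once this compatibility is in hand, Lemma \ref{lem:same-dual} finishes the proof of the isomorphism $\eta_\Gamma \colon \Gamma_h \xrightarrow{\sim} \Gamma_s$, and then Proposition \ref{prop:duals} gives $\Gamma_h \cong \Gamma^n/\mathfrak{S}_n = \mathrm{Sym}^n(\Gamma)$ as a $\Delta$-complex.
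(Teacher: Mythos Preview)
Your proposal is correct and follows essentially the same approach as the paper: apply Lemma \ref{lem:same-dual} to $\eta$ restricted to the central fibres, using the stratification by subsets $I\subset[n+1]$ and the description of the components over each $U_I$ from Proposition \ref{prop:strat-comp} (and its symmetric-product analogue) to verify conditions ($\dagger$) and ($\dagger\dagger$). The only minor difference is that the paper checks the key point by observing that the Hilbert--Chow map is \emph{birational} on each matched pair of components (since the factors are Hilbert schemes of smooth surfaces or curves), whereas you phrase it as ``surjective with irreducible fibres''; both arguments immediately give that the preimage of each component is a single irreducible component, and the paper does not need the extra limit-compatibility discussion you flag as a potential obstacle.
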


\begin{proof}
We will apply Lemma \ref{lem:same-dual} to the morphism
$$\eta \colon \psi_h^{-1}(0) \lra \psi_s^{-1}(0).$$
Following the discussion in Section \ref{sec:strata}, there is a stratification of $\psi_s^{-1}(0)$ (resp.~$\psi_h^{-1}(0)$) into locally closed subschemes $Z_I$'s (resp.~$W_I$'s) indexed by non-empty subsets $I \subseteq [n+1]$, where 
\begin{align*}
Z_I &= \varphi_s^{-1}(U_I)/G[n], \\
\text{resp.}\quad W_I &= \varphi_h^{-1}(U_I)/G[n].
\end{align*}
Then $\bar{Z_I}$ (resp.~$\bar{W_I}$) is the disjoint union of certain irreducible components of $\lvert I \rvert$-fold intersection loci in $\psi_s^{-1}(0)$ (resp.~$\psi_h^{-1}(0)$).

By Proposition \ref{prop:strat-comp}, the irreducible components of $\bar{Z_I}$ (resp.~$\bar{W_I}$) are labelled by pairs $(\mathbf{b}, \mathbf{s})$ which are stable with respect to $I$ in the sense of Definition \ref{def:num-stable}. Let moreover $I=\{i_1, \ldots, i_r\}$. Then we can write
\begin{align*}
(Z_I)_{(\mathbf{b}, \mathbf{s})} &=  \prod_{v \in V} \mathrm{Sym}^{b_v}(Y_v^\ast) \times \prod_{l=1}^{r-1} \big( \big( \prod_{\gamma \in E}\mathrm{Sym}^{s_{r,l}}(\Delta_I^{\gamma,i_l})^\ast \big) /\mathbb{G}_m \big), \\
\text{resp.} \ (W_I)_{(\mathbf{b}, \mathbf{s})} &=  \prod_{v \in V} \mathrm{Hilb}^{b_v}(Y_v^\ast) \times \prod_{l=1}^{r-1} \big( \big( \prod_{\gamma \in E}\mathrm{Hilb}^{s_{r,l}}(\Delta_I^{\gamma,i_l})^\ast \big) /\mathbb{G}_m \big),
\end{align*}
where $Y_v$ is the irreducible component of $f^{-1}(0)$ labelled by the vertex $v \in V$ and $\Delta_I^{\gamma, i_l}$ is a $\mathbb{P}^1$-fibred inserted component. For every fixed $l$, the multiplicative group $\mathbb{G}_m$ acts on the inserted components $\Delta_I^{\gamma, i_l}$ for all $\gamma \in E$ simultaneously. With this notation we have
$$ \eta^{-1}((Z_I)_{(\mathbf{b}, \mathbf{s})}) = (W_I)_{(\mathbf{b}, \mathbf{s})}. $$
Since each $Y_v$ and each $\Delta_I^{\gamma, i_l}$ is a smooth surface (or curve), the Hilbert-Chow morphism on each of them is birational. It follows that
$$ \eta|_{(W_I)_{(\mathbf{b}, \mathbf{s})}}: (W_I)_{(\mathbf{b}, \mathbf{s})} \to (Z_I)_{(\mathbf{b}, \mathbf{s})} $$
is a birational morphism, hence so is the morphism
$$ \eta|_{(\bar{W}_I)_{(\mathbf{b}, \mathbf{s})}}: (\bar{W}_I)_{(\mathbf{b}, \mathbf{s})} \to (\bar{Z}_I)_{(\mathbf{b}, \mathbf{s})}. $$
Therefore we have verified the condition $(\dagger)$ when $\lvert I \rvert=1$ and $(\dagger\dagger)$ when $\lvert I \rvert >1$. It follows by Lemma \ref{lem:same-dual} that $\psi_s^{-1}(0)$ and $\psi_h^{-1}(0)$ have identical dual complexes.
\end{proof}

\subsection{Symmetric products of graphs}\label{sec:symgraph}

We next demonstrate how to concretely do computations with the dual complex of
the Hilbert scheme degeneration $I^n_{X/C}$. In particular we determine when
this $\Delta$-complex is a simplicial complex.

By Theorem \ref{thm:dualcomplex-hilb} the dual complex $\Gamma_h$ of
$I^n_{X/C}$ is the symmetric product $\Gamma^n/\mathfrak{S}_n$ of the dual
graph $\Gamma$ of $X\to C$, equipped with a bipartite orientation. A first
consequence is that the geometric realization of $\Gamma_h$ (as a topological
space) depends only on the geometric realization of $\Gamma$. We harvest 
some further facts on symmetric products of oriented graphs with the
application to $I^n_{X/C}$ in mind.

\begin{example}
If $\Gamma$ is a cycle (without regard to the orientation), then its geometric
realization is homoeomorphic to a circle, and so the geometric realization of
$\Gamma^n/\mathfrak{S}_n$ is homeomorphic to the $n$'th symmetric product of a circle.
Morton \cite{morton-1967} has shown that the latter is a disk bundle over a circle,
more precisely the trivial disk bundle when $n$ is odd and the non orientable
disk bundle when $n$ is even.
\end{example}

\begin{example}
If $\Gamma$ is a tree then $|\Gamma|$ is contractible and $|\Gamma^n|$ is
contractible in an $\mathfrak{S}_n$-invariant way. Thus the symmetric product
$|\Gamma^n/\mathfrak{S}_n|$ is contractible as well.
\end{example}

\begin{example}
More generally, the homotopy type of $|\Gamma^n/\mathfrak{S}_n|$ depends only
on the homotopy type of $|\Gamma|$ by \cite{liao-1954}. Any graph $\Gamma$ has the
homotopy type of a bouquet of circles. The homotopy type of the $n$'th
symmetric product of a bouquet of $m$ cycles has been determined by Ong
\cite{ong-2003}: if $m\le n$ it has the homotopy type of an $m$-torus, and if
$m>n$, it has the homotopy type of the union of the $\binom{m}{n}$
$n$-dimensional subtori in an $m$-torus obtained by fixing $m-n$ out of the
$m$ circle coordinates.
\end{example}

Theorem \ref{thm:limit-scheme} gives a computationally convenient description
of the dual complex of $I^n_{X/C}$ and hence for the symmetric product
$\Gamma^n/\mathfrak{S}_n$ at least when $\Gamma$ is bipartite. In the following
Lemma we give a direct combinatorial argument showing that the same description
works for symmetric products of arbitrary oriented graphs. Recall that for an
oriented graph $\Gamma=(V,E)$ with vertex set $V$ and arrow set $E$ we write
$E^+(v)$ for the set of incoming arrows at the vertex $v$, and $E^-(v)$ for the
set of outgoing edges.

\begin{lemma}\label{lemma:graphsym}
Let $\Gamma = (V, E)$ be an oriented graph. For each integer $n$, the symmetric product
$\Gamma^n/\mathfrak{S}_n$ is isomorphic to the following $\Delta$-complex:
\begin{itemize}
\item[(I)]
A $k$-cell is a pair $(a,r)$, where $a=\{a_v\}$ is a tuple of nonnegative
integers indexed by the vertices $v$ in $\Gamma$ and $r=\{r_{\gamma,i}\}$ is a
tuple of nonnegative integers indexed by the edges $\gamma$ in $\Gamma$ and the
natural numbers $i$ in the range $1\leqslant i \leqslant k$. These are subject to the
conditions
\begin{equation*}
\sum_v a_v + \sum_{\gamma,i} r_{\gamma,i} = n,
\end{equation*}
and
\begin{equation*}
\sum_\gamma r_{\gamma,i} > 0\quad\text{for all $i$.}
\end{equation*}
\item[(II)]
The $k+1$ facets $(b,s) = d_i(a,r)$, for $0\leqslant i \leqslant k$, are as follows:
\begin{align*}
i=0\colon 
&\begin{cases}
b_v = a_v + \sum_{\gamma\in E(v)^-} r_{\gamma,1}\\
s_{\gamma} = (r_{\gamma,2},\dots,r_{\gamma,k})
\end{cases}\\
0<i<k\colon
&\begin{cases}
b = a\\
s_\gamma = (r_{\gamma,1},\dots, r_{\gamma,i}+r_{\gamma,i+1}\dots, r_{\gamma,k})
\end{cases}\\
i=k\colon
&\begin{cases}
b_v = a_v + \sum_{\gamma\in E(v)^+} r_{\gamma,k} \\
s_\gamma = (r_{\gamma,1},\dots,r_{\gamma,k-1}).
\end{cases}
\end{align*}
\end{itemize}
\end{lemma}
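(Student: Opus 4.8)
The plan is to unwind the explicit cell structure of the ordered $\Delta$-complex $\Gamma^n$ supplied by Definition \ref{def:delta} and then pass to the $\mathfrak{S}_n$-quotient, matching the bookkeeping with the pairs $(a,r)$ and the face maps $d_i$ in the statement. Recall from Paragraph \ref{subsubsec:step12} that a cell of $\Gamma^n$ is produced in two stages: first one chooses a cube $S_1\times\cdots\times S_n$, where each $S_j$ is a vertex or an oriented edge of $\Gamma$; writing $F=\{\,j : S_j\text{ is an edge}\,\}$, the cube is $[0,1]^F$, with the coordinate $0$ on an edge factor taken (consistently with the expansion $\Gamma_I$ of Section \ref{sec:GIT} and with Theorem \ref{thm:limit-scheme}) to be its source and $1$ its target. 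Second, one chooses a simplex in the shuffle subdivision of $[0,1]^F$; these simplices are indexed by ordered set partitions $B_1,\dots,B_k$ of $F$ into nonempty blocks, the associated simplex being cut out by the chain of relations that identifies the coordinates inside each $B_i$, orders the blocks $B_1\le B_2\le\cdots\le B_k$, and bounds them below by $0$ and above by $1$; its dimension is $k$, the number of blocks, and $k=0$ exactly when $F=\varnothing$. Thus a $k$-cell of $\Gamma^n$ is precisely a datum $(c,B_1,\dots,B_k)$ with $c\colon\{1,\dots,n\}\to V\sqcup E$ and $B_1,\dots,B_k$ an ordered partition of $c^{-1}(E)$ into nonempty blocks, no cell being counted twice.

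Next I would read off the $\mathfrak{S}_n$-orbits of cells. As noted in Step 1 of the proof of Proposition \ref{prop:duals}, $\mathfrak{S}_n$ acts on $\Gamma^n$ by automorphisms of ordered $\Delta$-complexes, permuting the factors, so that $\Gamma^n/\mathfrak{S}_n$ is again a $\Delta$-complex; on cells it sends $(c,B_\bullet)$ to $(c\circ\sigma^{-1},\sigma(B_\bullet))$. Hence the orbit of a $k$-cell is pinned down by the counts $a_v=\#\,c^{-1}(v)$ for $v\in V$ and $r_{\gamma,i}=\#\{\,j\in B_i : c(j)=\gamma\,\}$ for $\gamma\in E$ and $1\le i\le k$; these are nonnegative integers with $\sum_v a_v+\sum_{\gamma,i}r_{\gamma,i}=n$ and, since each $B_i$ is nonempty, $\sum_\gamma r_{\gamma,i}>0$ for every $i$, and conversely every such $(a,r)$ is realized. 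This is exactly the bijection with the $k$-cells described in part (I).

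Finally I would compute the face maps by reading off the facets of the chain-of-relations simplex and translating into counts. The $k+1$ facets of the simplex attached to $(B_1,\dots,B_k)$ are obtained by turning one of the $k+1$ inequality signs into an equality: the $0$-th sign collapses $B_1$ onto the source end, so each $j\in B_1$ with $c(j)=\gamma\in E^-(v)$ becomes an index mapping to the source $v$; the $k$-th sign collapses $B_k$ onto the target end, converting each $j\in B_k$ with $c(j)=\gamma\in E^+(v)$ into an index mapping to $v$; and the $i$-th sign for $0<i<k$ merges the adjacent blocks $B_i$ and $B_{i+1}$. Passing to $(a,r)$-counts turns these three operations into the formulas for $d_0$, $d_k$, and $d_i$ respectively. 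The simplicial identities require no separate check, since $\Gamma^n/\mathfrak{S}_n$ is a bona fide $\Delta$-complex. I expect the only real friction to be in the bookkeeping: fixing the source/target ($0$/$1$) convention consistently throughout and handling the degenerate case $F=\varnothing$, which is where the (mild) care is needed.
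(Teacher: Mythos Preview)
Your argument is correct and follows essentially the same route as the paper: both unwind the shuffle decomposition of the cubes in $\Gamma^n$, identify the $\mathfrak{S}_n$-orbits of $k$-cells with the count data $(a,r)$, and read off the facets by collapsing one inequality in the defining chain. The only cosmetic difference is that the paper parametrizes a simplex by its sequence of cube vertices $p_0,\dots,p_k$ rather than by the ordered block partition $B_1,\dots,B_k$, which is the same information.
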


\begin{proof}
Given a $k$-cell $(a,r)$, form the cube $S_1\times S_2\times\cdots\times S_n$ in
$\Gamma^n$, where each $S_i$ is either a vertex $v$ or an edge $\gamma$, such
that each vertex $v$ occurs $a_v$ times, each edge $\gamma$ occurs $\sum_j
r_{\gamma,j}$ times and the ordering is arbitrary. Construct a $k$-simplex in
this cube as follows: let $p_0 = (v_{0,1},v_{0,2},\dots,v_{0,n})$ be the
initial vertex in the cube, i.e.
\begin{equation*}
v_{0,i} =
\begin{cases}
v & \text{if $S_i=v$ is a vertex}\\
\src(\gamma) & \text{if $S_i = \gamma$ is an edge}.
\end{cases}
\end{equation*}
Let $p_1 = (v_{1,1},v_{1,2},\dots,v_{1,n})$ be the vertex obtained from $p_0$
by the following procedure: run through all the edges $\gamma$ and arbitrarily
choose $r_{\gamma,1}$ among the indices $i$ satisfying $S_i = \gamma$. For each
chosen index $i$ replace $v_{0,i} = \src(\gamma)$ by $v_{1,i} = \tgt(\gamma)$;
let $v_{1,j} = v_{0,j}$ for the remaining indices $j$. Similarly define $p_2$
by replacing $r_{\gamma_2}$ among the vertices $v_{1,i} = \src(\gamma)$ by
$v_{2,i} = \tgt(\gamma)$ and so on; eventually $p_k$ is the final vertex in the
cube with
\begin{equation*}
v_{k,i} =
\begin{cases}
v & \text{if $S_i=v$ is a vertex}\\
\tgt(\gamma) & \text{if $S_i = \gamma$ is an edge}.
\end{cases}
\end{equation*}
The $k$-simplex $\langle p_0,\dots,p_k\rangle$ in the cube is a $k$-cell in
$\Gamma^n$, and the $k$-cells obtained from $(a,r)$ in this way for all choices
of orderings along the way form an $\mathfrak{S}_n$-orbit and hence a $k$-cell
in $\Gamma^n/\mathfrak{S}_n$. It is then straight\-forward to verify that the
procedure defines a bijection between the sets of $k$-cells compatible with
face maps.
\end{proof}

\begin{proposition}
Let $\Gamma$ be an oriented graph. Then $\Gamma^n/\mathfrak{S}_n$ is a
simplicial complex for all $n$ if and only if $\Gamma$, without the
orientation, is a tree.
\end{proposition}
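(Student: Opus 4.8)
The plan is to argue entirely with the explicit combinatorial model of $\Gamma^n/\mathfrak{S}_n$ from Lemma \ref{lemma:graphsym}, together with the standard criterion (see \cite{Hat-2002}) that a $\Delta$-complex underlies a simplicial complex precisely when (a) the vertices of every simplex are pairwise distinct and (b) no two distinct simplices share the same set of vertices. Recall from Lemma \ref{lemma:graphsym} and its proof that the vertex set of a $k$-cell $(a,r)$ is $\{q_0,\dots,q_k\}\subset\Sym^n(V)$ with
\[
q_j \;=\; \sum_{v} a_v[v] + \sum_{\gamma}\Bigl(\bigl(\tsum_{i\le j} r_{\gamma,i}\bigr)[\tgt\gamma] + \bigl(\tsum_{i>j} r_{\gamma,i}\bigr)[\src\gamma]\Bigr),
\]
so that $q_0$ (resp.\ $q_k$) places every edge-point at its source (resp.\ target) and, writing $\partial\gamma=[\tgt\gamma]-[\src\gamma]$, one has $q_{j+1}-q_j=\partial\bigl(\sum_\gamma r_{\gamma,j+1}\gamma\bigr)$ in $C_0(\Gamma;\ZZ)$, where $\sum_\gamma r_{\gamma,j+1}>0$. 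These two facts are all I will use.

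For the implication ``$\Gamma$ a tree $\Rightarrow$ simplicial for all $n$'', the key point is that a tree (indeed any forest) carries an acyclic orientation, hence admits a potential $\psi\colon V\to\ZZ$ with $\psi(\tgt\gamma)>\psi(\src\gamma)$ for every edge $\gamma$ (for instance $\psi(v)=$ the length of a longest directed path ending at $v$). Extending $\psi$ additively to $\Phi\colon\Sym^n(V)\to\ZZ$ gives $\Phi(q_{j+1})-\Phi(q_j)=\sum_\gamma r_{\gamma,j+1}\bigl(\psi(\tgt\gamma)-\psi(\src\gamma)\bigr)>0$, so $\Phi$ is strictly increasing along $q_0,\dots,q_k$; this immediately yields (a) and shows that the vertex \emph{set} of a cell determines the \emph{ordered} list $q_0,\dots,q_k$. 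It then remains to reconstruct $(a,r)$ from that list. Here I would use that for a forest the group $Z_1(\Gamma)$ of $1$-cycles vanishes, so $\partial$ is injective on $1$-chains; hence $\sum_\gamma r_{\gamma,j+1}\gamma$ is the \emph{unique} $1$-chain with boundary $q_{j+1}-q_j$, which recovers all $r_{\gamma,j+1}$, and then $\sum_v a_v[v]=q_0-\sum_\gamma\bigl(\sum_i r_{\gamma,i}\bigr)[\src\gamma]$ recovers $a$. This gives (b).

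For the converse I would show: if $\Gamma$ contains a cycle then $\Gamma^n/\mathfrak{S}_n$ fails to be simplicial for a suitable $n$. Fix a cycle $v_0,\dots,v_{m-1},v_m=v_0$ with edges $\gamma_1,\dots,\gamma_m$, let $\varepsilon_i=\pm1$ according to whether $\gamma_i$ points along or against the traversal, and set $z=\sum_i\varepsilon_i\gamma_i$; a telescoping computation gives $\partial z=0$, so $z\in Z_1(\Gamma)$. Write $z=z^+-z^-$ with $z^+,z^-$ the $\{0,1\}$-valued $1$-chains on the forward, resp.\ backward, edges, chosen (reversing the traversal if needed) so that $z^+\neq0$. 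If $z^-=0$ the cycle is directed, $\partial z^+=0$, and the $1$-cell with $a=0$, $k=1$, $r_{\gamma,1}=(z^+)_\gamma$ over $n=m$ points has $q_0=\src(z^+)=\tgt(z^+)=q_1$, a repeated vertex. If $z^-\neq0$, put $c^\pm=\src(z^\pm)$, $A_1=\max(c^--c^+,0)$, $A_2=\max(c^+-c^-,0)$ (so $A_1+c^+=A_2+c^-$), and $n=|A_1|+|z^+|=|A_2|+|z^-|$; then the two $1$-cells $(a=A_1,\ r_{\gamma,1}=(z^+)_\gamma)$ and $(a=A_2,\ r_{\gamma,1}=(z^-)_\gamma)$ are distinct (their $r$-data $z^+,z^-$ have disjoint supports and $z^+\neq0$) and share the same vertex set, since $q_0=A_1+c^+=A_2+c^-=q_0'$ and, using $\tgt z^+-\tgt z^-=\src z^+-\src z^-$ (from $\partial z=0$) together with $A_1-A_2=c^--c^+$, also $q_1=q_1'$. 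In either case (a) or (b) fails. Combined with the first part this yields the equivalence for connected $\Gamma$, since a connected graph is a tree precisely when it has no cycle (a disconnected forest is covered by the first part, so connectedness is implicitly needed for the literal statement).

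I expect the main obstacle to be the bookkeeping in the converse: selecting the correct $n$ and the correct pair of colliding cells, and verifying that the collision is genuine, i.e.\ that the two cells are really distinct and, when $q_0\neq q_1$, that one truly obtains two different simplices on the same $2$-element vertex set rather than a single degenerate simplex. The positive direction should be essentially formal once the potential $\psi$ is in hand (this is exactly where acyclicity of the tree enters) and once one observes that reconstructing $(a,r)$ from the vertex set uses nothing beyond injectivity of $\partial$ on $1$-chains, i.e.\ $Z_1(\Gamma)=0$; the only delicate point there is to invoke the $\Delta$-complex/simplicial-complex criterion in the correct form.
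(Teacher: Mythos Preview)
Your argument is correct and takes a genuinely different, more homological route than the paper. For the direction ``tree $\Rightarrow$ simplicial'', the paper proceeds by a hands-on induction: it first argues that a $1$-cell with coinciding vertices would force a directed cycle, and then reconstructs $(a,r)$ from a facet together with the opposite vertex by peeling off leaves of the tree one at a time. Your approach replaces both steps by structural facts: the existence of a strictly monotone potential $\psi$ on an acyclically oriented graph gives distinctness of vertices and the correct ordering in one stroke, and the vanishing of $Z_1(\Gamma)$ (injectivity of $\partial$) recovers each column $r_{\cdot,j+1}$ from the consecutive difference $q_{j+1}-q_j$. This is cleaner and makes transparent exactly which graph-theoretic invariants are being used. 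One phrasing issue: you write that a tree ``carries an acyclic orientation'', which could be read as choosing one; the point is rather that \emph{every} orientation of a tree is acyclic, so the given orientation already admits $\psi$.

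For the converse, the two proofs are close in spirit: both exhibit either a degenerate $1$-cell (directed cycle) or two distinct $1$-cells with the same vertex pair (undirected cycle). The paper writes the two colliding cells by splitting the cycle into its maximal directed subpaths $\lambda_i,\lambda'_i$ and distributing inner vertices as $a$-data; your version packages the same idea algebraically via $z=z^+-z^-\in Z_1(\Gamma)$ and $A_1,A_2=\max(\pm(c^--c^+),0)$, then checks $q_0=q_0'$ and $q_1=q_1'$ using $\partial z=0$. Your observation that connectedness of $\Gamma$ is needed for the literal ``iff'' (a disconnected forest is not a tree yet still gives a simplicial complex) is a valid point that the paper leaves implicit.
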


\begin{proof}
If $\Gamma$ is not a tree, then it contains a cycle. First suppose it contains an oriented cycle.
Let $n$ be the length of that cycle, and define a $1$-simplex $(a,r)$ in $\Gamma^n/\mathfrak{S}_n$ by $a = 0$ and
\begin{equation*}
r_\gamma =
\begin{cases}
1 & \text{if $\gamma$ is in the cycle}\\
0 & \text{otherwise.}
\end{cases}
\end{equation*}
Then the two vertices of $(a,r)$ coincide, so $\Gamma^n/\mathfrak{S}_n$ is not a simplicial complex.

Next suppose $\Gamma$ contains a non directed cycle.
The cycle thus consists of an even number $2N$ of directed paths with alternating orientation,
\begin{equation*}
\lambda_i\colon \overset{v_i}{\bullet} \to \bullet \to \cdots \to \overset{w_i}{\bullet}
\end{equation*}
and
\begin{equation*}
\lambda'_i\colon \overset{w_i}{\bullet} \leftarrow \bullet \leftarrow \cdots \leftarrow \overset{v_{i+1}}{\bullet}
\end{equation*}
where the subscripts are read modulo $N$. Let $n$ be the length of the cycle
minus $N$.
Define two $1$-cells $(a,r)\ne (a',r')$ in $\Gamma^n/\mathfrak{S}_n$ by
\begin{align*}
a_v &=
\begin{cases}
1 & \text{if $v$ is an inner vertex of $\lambda_i$ for some $i$}\\
0 & \text{otherwise}
\end{cases}\\
r_\gamma &=
\begin{cases}
1 & \text{if $\gamma$ is an edge of $\lambda'_i$ for some $i$}\\
0 & \text{otherwise}
\end{cases}
\end{align*}
and $(a',r')$ similarly with $\lambda_i$ and $\lambda'_i$ interchanged. Then
$(a,r)$ and $(a',r')$ have the same pair of vertices, so
$\Gamma^n/\mathfrak{S}_n$ is not a simplicial complex.

Now suppose $\Gamma$ is a tree. We first show that the $k+1$ vertices of any
$k$-simplex of $\Gamma^n/\mathfrak{S}_n$ are pairwise distinct. If not, there is a
$1$-simplex $(a,r)$ in $\Gamma^n/\mathfrak{S}_n$ whose two vertices coincide. Let $\gamma_1$
be such that $r_{\gamma_1} \ne 0$. For the two vertices of $(a,r)$ to coincide,
there must be an arrow $\gamma_2$ with $\src(\gamma_2) = \tgt(\gamma_1)$ and $r_{\gamma_2}\ne 0$.
Then there must be an arrow $\gamma_3$ with $\src(\gamma_3) = \tgt(\gamma_2)$ and
$r_{\gamma_3}\ne 0$ and so on.  Since $\Gamma$ is finite, the process must
produce a directed cycle in $\Gamma$, which is a contradiction.

It remains to show that, if $\Gamma$ is a tree, then any $k$-cell $(a,r)$ in
$\Gamma^n/\mathfrak{S}_n$ can be reconstructed from its set of (pairwise distinct) $k+1$
vertices. Note that a vertex in $\Gamma^n/\mathfrak{S}_n$ is a $0$-cell
$(c,t)$: here $t$ is the empty tuple and hence we shall write simply $c$. By
iterating the face maps in Lemma \ref{lemma:graphsym} one finds that the vertex
$c = (a,r)_i$ opposite to the facet $d_i(a,r)$ is given by
\begin{equation*}
c_v = a_v + \sum_{j\leqslant i} \sum_{\substack{\gamma\\ \src(\gamma)=v}} r_{\gamma,j}
+ \sum_{j>i} \sum_{\substack{\gamma\\ \tgt(\gamma)=v}} r_{\gamma,j}.
\end{equation*}
It is enough to prove that each simplex $(a,r)$ is uniquely determined by one
of its facets $(b,s)=d_i(a,r)$ together with the remaining vertex $c=(a,r)_i$.
So we consider $(b,s)$ and $c$ as known, and the task is to reconstruct
$(a,r)$.

Consider the case $0<i<k$. Then $a=b$, and from $s$ we know all $r_{\gamma,j}$ except for
$r_{\gamma,i}$ and $r_{\gamma,i+1}$, and also the sum
\begin{equation}\label{eq:r-sum1}
r_{\gamma,i}+r_{\gamma,i+1}
\end{equation}
for all edges $\gamma$. Using $c$, we also know,
for each vertex $v$,
\begin{equation}\label{eq:r-sum2}
\sum_{\substack{\gamma\\ \src(\gamma)=v}} r_{\gamma,i}
+ \sum_{\substack{\gamma\\ \tgt(\gamma)=v}} r_{\gamma,i+1}.
\end{equation}
If $v$ is a leaf, there is only one edge with $v$ as vertex so \eqref{eq:r-sum2}
determines either $r_{\gamma,i}$ or $r_{\gamma,i+1}$, and then
\eqref{eq:r-sum1} determines the other.
Fix a root vertex in $\Gamma$, and complete the proof by descending induction on the distance
from a vertex to the root: for each vertex $v$ (except the root itself), exactly one edge $\gamma$ connects
$v$ to a vertex closer to the root. Thus, by induction, all terms except one in the sum \eqref{eq:r-sum2}
are known, enabling us to reconstruct either $r_{\gamma,i}$ or $r_{\gamma,i+1}$, and thus, with
the aid of \eqref{eq:r-sum1}, both.

Consider the case $i=0$.
Then $s$ determines all $r_{\gamma,j}$ except $r_{\gamma,1}$, and then $b_v$ and $c_v$ determines
\begin{equation*}
a_v + \sum_{\substack{\gamma\\ \src(\gamma)=v}} r_{\gamma,1}
\quad\text{and}\quad
a_v + \sum_{\substack{\gamma\\ \tgt(\gamma)=v}} r_{\gamma,1}
\end{equation*}
If $v$ is a leaf, there is only one edge $\gamma$ with $v$ as vertex so the two sums above
determine $a_v$ and $r_{\gamma,1}$. The proof is completed by descending induction on the distance from $v$
to a chosen root.

The case $i=k$ is similar to $i=0$.
\end{proof}

\subsection{The essential skeleton of a Hilbert scheme degeneration}

\subsubsection{}

Let $ C $ denote a smooth connected curve over $\kk$, and let $ 0 \in C $ be a base point. We fix a local parameter $ t $ at $ 0 $, and put $ C^* = C \setminus \{0\} $. Let $ Z^* \to C^* $ be a smooth and projective family of varieties, with $ K_{Z^*}$ semi-ample over $C^*$. We denote by $ (Z^*)^{an} $ the Berkovich analytification of the base change of $Z^*$ along the obvious map $ \mathrm{Spec}~\kk((t)) \to C^* $.

In what follows, we shall recall some terminology and results from \cite{NX-2016}. Given a non-zero regular pluricanonical form $ \omega $ on $Z^* \times_{C^*} \kk((t))$, one can define a weight function $ wt_{\omega} \colon (Z^*)^{an} \to \mathbb{R} \cup \{\infty\} $. The \emph{Kontsevich-Soibelman skeleton} $ \mathrm{Sk}(Z^*,\omega) $ is by definition the minimality locus of the function $ wt_{\omega} $. 

\begin{definition}
The essential skeleton of $ Z^*$ is the union 
$$ \mathrm{Sk}(Z^*) = \bigcup_{\omega} \mathrm{Sk}(Z^*,\omega) $$
in $ (Z^*)^{an} $, where $\omega$ runs over the set of non-zero regular pluricanonical forms. 
\end{definition}

The essential skeleton can also be described in terms of suitable models of $Z^*$ over $C$. Let $ Z \to C $ be a (not necessarily proper) snc model of $Z^*$. Then the dual complex $ \mathcal{D}((Z_0)_{red}) $ of $Z_0$ can be embedded into $ (Z^*)^{an} $ in a natural way; its image is called the \emph{skeleton} $\mathrm{Sk}(Z)$ associated to $Z$. If $Z$ is proper over $C$, the weight function $ wt_{\omega} $ is piecewise affine when restricted to $\mathrm{Sk}(Z)$, and the union of the closed faces where $ wt_{\omega} $ is minimal is precisely equal to $ \mathrm{Sk}(Z^*,\omega) $. This approach can be extended to the case where $Z \to C$ is only dlt, by setting $ \mathrm{Sk}(Z) = \mathrm{Sk}(Z^{snc}) $. Moreover, by \cite[Thm.~3.3.3]{NX-2016} one has $ \mathrm{Sk}(Z^*) = \mathrm{Sk}(Z^{min}) $ for any good minimal dlt model $ Z^{min} \to C $ of $Z^*$. ($Z^*$ admits a good minimal model by our assumption that $ K_{Z^*} $ is semi-ample over $C^*$.)

\subsubsection{}
Let $X \to C $ be a strict simple degeneration of relative dimension at most two. We assume that the dual graph $ \Gamma = \Gamma(X_0) $ admits a bipartite orientation, and that $K_{X^*}$ is semi-ample over $C^*$.
By Theorem \ref{thm:dltmodel}, $ I^n_{X/C} $ is again a dlt model, and, if we set $ \mathcal{D} = \mathcal{D}((I^n_{X/C})_0)$, Theorem \ref{thm:dualcomplex-hilb} asserts that $ \mathcal{D} = \mathrm{Sym}^n(\Gamma)$. If $X/C$ is moreover a good minimal model, the same holds for $ I^n_{X/C} $, by Corollary \ref{cor:goodminimal}. This immediately gives the following result:

\begin{theorem}\label{thm:Hilb-skeleton-main}
Assume that $X^*/C^*$ admits a strict simple degeneration $X/C$, which is also a good minimal model. Then 
$$ \mathrm{Sk}((X^*)^{[n]}) = \mathrm{Sym}^n(\mathrm{Sk}(X^*)). $$
\end{theorem}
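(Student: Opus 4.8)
The plan is to recognize $I^n_{X/C}\to C$ as a good minimal dlt model of the smooth projective family $(X^*)^{[n]}=\mathrm{Hilb}^n(X^*/C^*)\to C^*$, and then to extract both essential skeletons from the dual complexes of the respective central fibres, which have already been identified in the preceding sections. Thus the genuinely new geometric input for this theorem will all have been carried out earlier, and what remains is to assemble it.

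First I would set up the comparison over $C$. Since $X/C$ is assumed to be a good minimal model, Corollary \ref{cor:goodminimal} gives that $I^n_{X/C}\to C$ is again a good minimal dlt model; here one uses that $X/C$ is projective and that its dual graph $\Gamma$ admits a bipartite orientation, so that $X[n]\to C[n]$, hence $I^n_{X/C}\to C$, is projective (a bipartite orientation admits no directed cycles). Moreover, by construction $(I^n_{X/C})^*\cong\mathrm{Hilb}^n(X^*/C^*)=(X^*)^{[n]}$, and restricting the semi-ample divisor $K_{I^n_{X/C}}+(I^n_{X/C})_{0,\mathrm{red}}$ to $C^*$, where the boundary is empty, shows that $K_{(X^*)^{[n]}}$ is semi-ample over $C^*$; in particular the essential skeleton $\mathrm{Sk}((X^*)^{[n]})$ is defined. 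By \cite[Thm.~3.3.3]{NX-2016} it equals the skeleton $\mathrm{Sk}(I^n_{X/C})$ of the good minimal dlt model $I^n_{X/C}\to C$.

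Next I would unwind the two skeletons in terms of dual complexes. By the dlt-case definition recalled above, $\mathrm{Sk}(I^n_{X/C})$ is the image, inside the Berkovich analytification of $(X^*)^{[n]}$, of the dual complex $\mathcal{D}((I^n_{X/C})_0)$ with its canonical piecewise-linear structure; by Theorem \ref{thm:dualcomplex-hilb} this dual complex is $\mathrm{Sym}^n(\Gamma)$ as a $\Delta$-complex. Symmetrically, $X/C$ is itself a good minimal dlt model of $X^*/C^*$, so $\mathrm{Sk}(X^*)=\mathrm{Sk}(X)$ is the realization of $\mathcal{D}((X_0)_{\mathrm{red}})=\Gamma$. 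Finally, since passing to the geometric realization commutes with forming symmetric products of $\Delta$-complexes (as recorded in Subsection \ref{sec:symgraph}, the realization of $\Gamma^n/\mathfrak{S}_n$ is $|\Gamma|^n/\mathfrak{S}_n$), one concludes
$\mathrm{Sk}((X^*)^{[n]})=|\mathrm{Sym}^n(\Gamma)|=\mathrm{Sym}^n(|\Gamma|)=\mathrm{Sym}^n(\mathrm{Sk}(X^*))$.

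I expect the only point requiring genuine care to be the compatibility of the various identifications with piecewise-linear (rather than merely topological) structures, so that the asserted equality is an equality of PL spaces; concretely, one must check that the embedding of $\mathcal{D}((I^n_{X/C})_0)$ into the analytification and the isomorphism of Theorem \ref{thm:dualcomplex-hilb} are PL, and that the formation of geometric realizations intertwines the $\mathfrak{S}_n$-actions on $\Gamma^n$ and on $|\Gamma|^n$. Beyond this bookkeeping, the remaining verifications — properness, $\mathbb{Q}$-factoriality, and semi-ampleness of $K+(Z_0)_{\mathrm{red}}$, as required to invoke \cite[Thm.~3.3.3]{NX-2016} — are all supplied directly by Theorem \ref{thm:dltmodel} and Corollary \ref{cor:goodminimal}.
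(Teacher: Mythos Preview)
Your proposal is correct and follows essentially the same route as the paper: invoke Corollary~\ref{cor:goodminimal} to see that $I^n_{X/C}\to C$ is a good minimal dlt model of $(X^*)^{[n]}$, apply \cite[Thm.~3.3.3]{NX-2016} to identify the essential skeleton with the dual complex of the central fibre, and then read off the answer from Theorem~\ref{thm:dualcomplex-hilb}. The paper states this argument more tersely in the paragraph immediately preceding the theorem, but your added remarks on projectivity and the PL compatibility are reasonable points of care.
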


\subsubsection{}

In the remainder of this subsection, we would like to explain that, at least in the case where $K_{X^*}$ is trivial, the assumption that the (projective) strict simple degeneration $X/C$ is also a good minimal model can be dropped in Theorem \ref{thm:Hilb-skeleton-main}. For this purpose, we need to introduce some terminology. Let $ Z \to C $ be a dlt model with reduced special fibre $ Z_0 = \sum_{i \in I} E_i $. Let $ \xi_i $ denote the generic point of $E_i$, and let $\omega$ denote a non-zero regular $m$-canonical form on $Z^*$. We denote by $ \mathrm{ord}_{\xi_i}(\omega) $ the order of vanishing of $\omega$ at $\xi_i$, and we define $\mathrm{min}(\omega) = \mathrm{min}_{i \in I} \{\mathrm{ord}_{\xi_i}(\omega) \} $. We shall say that $E_i$ is \emph{$\omega$-minimal} if $\mathrm{ord}_{\xi_i}(\omega) = \mathrm{min}(\omega)$.

\subsubsection{}
We now return to our strict simple degeneration $X/C$. We write $ X_0 = \sum_{i \in I} E_i $, and fix a pluricanonical form $ \omega $ on $X^*$. 

\begin{definition}
We denote by $ \Gamma(\omega) $ the subcomplex of $ \Gamma $ spanned by the vertices $ v_i $ corresponding to components $ E_i $ that are $\omega$-minimal.
\end{definition}

Pulling back $\omega$ from every projection of the fibred product $ (X^*)^n $, and taking the tensor product, yields an $\mathfrak{S}_n$-invariant form $ \Omega $. By similar arguments as in Subsection \ref{subsec:minimalmodels}, $\Omega$ descends to a pluricanonical form on $ (X^*)^{[n]} = \mathrm{Hilb}^n(X^*/C^*) $, which we continue to call $\Omega$. We first prove a lemma that will allow us to do computations on $(X^*)^n $ rather than on $(X^*)^{[n]}$. 
Let $W$ denote the smooth locus of $ X \to C $; to simplify notation, we still write $ W_0 = \sum_{i \in I} E_i $. The components of $(W^n)_0$ will be denoted 
$$ E_A = E_{a_1} \times \ldots \times E_{a_n}, $$ 
where $A = \{ a_1, \ldots, a_n \}$ runs over the multisets of elements in $I$ of cardinality $n$.

\begin{lemma}\label{lemma:omega-minimal}
Let $ E_A $ be an irreducible component of $ (W^n)_0$. Then 
$$ \mathrm{ord}_{E_A}(\Omega) = \sum_{j=1}^n \mathrm{ord}_{E_{a_j}}(\omega). $$
Thus, $E_A$ is $\Omega$-minimal if and only if $E_{a_j}$ is $ \omega$-minimal for each $ a_j \in A $.
\end{lemma}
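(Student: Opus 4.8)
The plan is to reduce the statement to a local computation at the generic point $\xi_A$ of $E_A$, carried out with the canonical sheaves taken \emph{relative to} $C$. Throughout I would fix the local parameter $t$ at $0\in C$ and shrink $C$ so that $K_C$ is trivialized by $dt$; this fixes isomorphisms $K_{W/C}\cong K_W$ and $K_{W^n/C}\cong K_{W^n}$ and makes $\mathrm{ord}_{E_{a_j}}(\omega)$ and $\mathrm{ord}_{E_A}(\Omega)$ well defined, as the orders of the rational sections $\omega$ of $(K_{W/C})^{\otimes m}$ and $\Omega$ of $(K_{W^n/C})^{\otimes m}$ along the (reduced, since $X_0$ is reduced) prime divisors $E_{a_j}\subset W_0$ and $E_A\subset (W^n)_0$.

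First I would record the structural input. Since $W=X^{sm}$ is the complement of the double locus in $X$, the special fibre $W_0$ is the disjoint union of the smooth components $E_i$, and since $W\to C$ is smooth so is the iterated fibre product $W^n\to C$; hence $(W^n)_0$ is the disjoint union of the products $E_A=E_{a_1}\times_C\cdots\times_C E_{a_n}$, which are its irreducible components. Near $\xi_A$ I can choose \'etale coordinates $\bigl(t,(y_j^{(k)})_{1\le j\le n,\,1\le k\le d}\bigr)$ on $W^n$ such that on the $j$-th factor $f^{*}t$ is a local equation for $E_{a_j}$ and the $y_j^{(k)}$ restrict to coordinates on $E_{a_j}$; in these coordinates $\mathrm{pr}_j\colon W^n\to W$ is the projection onto the block $(t,y_j)$ and $E_A=\{t=0\}$ with multiplicity one. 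The identification I need is the K\"unneth isomorphism $K_{W^n/C}\cong\bigotimes_{j=1}^n \mathrm{pr}_j^{*}K_{W/C}$, valid for fibre products of smooth $C$-schemes, under which $dy_1^{(1)}\wedge\cdots\wedge dy_n^{(d)}$ is, up to sign, the tensor product of the $\mathrm{pr}_j^{*}$ of the local generators $dy_j^{(1)}\wedge\cdots\wedge dy_j^{(d)}$ of $K_{W/C}$ at $\xi_{a_j}$.

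The computation is then short. Writing $\omega=h_j\cdot(dy_j^{(1)}\wedge\cdots\wedge dy_j^{(d)})^{\otimes m}$ near $\xi_{a_j}$, with $h_j=(f^{*}t)^{e_j}u_j$ where $e_j=\mathrm{ord}_{E_{a_j}}(\omega)$ and $u_j$ a unit at $\xi_{a_j}$, the construction $\Omega=\bigotimes_j\mathrm{pr}_j^{*}\omega$ together with the identification above gives, near $\xi_A$,
\[
\Omega=\left(\prod_{j=1}^n\mathrm{pr}_j^{*}h_j\right)\cdot\left(dy_1^{(1)}\wedge\cdots\wedge dy_n^{(d)}\right)^{\otimes m}.
\]
Since $\mathrm{pr}_j$ maps $\xi_A$ to $\xi_{a_j}$, one has $\mathrm{pr}_j^{*}h_j=t^{e_j}\cdot\mathrm{pr}_j^{*}u_j$ with $\mathrm{pr}_j^{*}u_j$ a unit in $\mathcal{O}_{W^n,\xi_A}$, so $\mathrm{ord}_{E_A}(\mathrm{pr}_j^{*}h_j)=e_j$ and hence $\mathrm{ord}_{E_A}(\Omega)=\sum_{j=1}^n e_j$, which is the first assertion. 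For the ``Thus'', note that as $A$ runs over the components of $(W^n)_0$ the sum $\sum_j\mathrm{ord}_{E_{a_j}}(\omega)$ is minimal exactly when each term is, so $\mathrm{min}(\Omega)=n\cdot\mathrm{min}(\omega)$, and therefore $\mathrm{ord}_{E_A}(\Omega)=\mathrm{min}(\Omega)$ iff $\mathrm{ord}_{E_{a_j}}(\omega)=\mathrm{min}(\omega)$ for every $j$, i.e.\ iff each $E_{a_j}$ is $\omega$-minimal.

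I expect the only point needing genuine care to be the bookkeeping in the second step: one must pass consistently to the canonical sheaves \emph{relative to} $C$ rather than the absolute ones, since in $W^n$ all components of $(W^n)_0$ share the single defining function $t=f^{*}t$; correspondingly one must check that the K\"unneth generator of $K_{W^n/C}$ at $\xi_A$ really is the external tensor product of the factorwise relative generators, so that $\mathrm{ord}_{E_A}(\Omega)$ reads off as the plain sum $\sum_j\mathrm{ord}_{E_{a_j}}(\omega)$ with no correction term. The remaining steps are a routine order-of-vanishing count.
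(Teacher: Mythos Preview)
Your proof is correct and follows essentially the same approach as the paper: both use the K\"unneth decomposition $K_{W^n/C}\cong\bigotimes_j\mathrm{pr}_j^{*}K_{W/C}$, localize at the generic point $\xi_A$, and read off the order of $\Omega$ as the sum of the factorwise orders via the uniformizer $t$. Your write-up is somewhat more explicit with coordinates and also spells out the ``Thus'' clause, but the underlying argument is identical.
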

\begin{proof}
We shall use the standard formula
$$ K_{W^n/C} = \bigotimes_{j = 1}^n \mathrm{pr}_j^* (K_{W/C}), $$
where $ \mathrm{pr}_j \colon W^n \to W $ denotes the $j$-th projection. Let $ \xi_A $ be the generic point of $E_A$. Then $ \mathrm{pr}_j(\xi_A) = \xi_{a_j} $, and taking the stalk at $ \xi_A $ yields 
$$ (K_{W^n/C})_{\xi_A} = \bigotimes_{j=1}^n ((K_{W/C})_{\xi_{a_j}} \otimes_{\mathcal{O}_{W, \xi_{a_j}}} \mathcal{O}_{W^n, \xi_A}). $$
For every $ i \in I $, $(K_{W/C})_{\xi_i}$ is trivial as $\mathcal{O}_{W, \xi_i}$-module, and we fix a generator $ \omega_i $. Consequently, $(K_{W^n/C})_{\xi_A}$ is generated by the element $ \otimes_{j=1}^n \omega_{a_j} $.

As all $E_i$ appear with multiplicity one in the divisor $ W_0$, our chosen uniformizer $t$ at $ 0 \in C $ also yields a uniformizer in the local ring $\mathcal{O}_{W, \xi_i}$. This means that in $(K_{W/C})_{\xi_i}$, $\omega$ can be written (up to a unit) as $ \omega = t^{m_i} \cdot \omega_i $, for some $ m_i \in \mathbb{Z}$. Then $ \mathrm{ord}_{\xi_i}(\omega) = m_i $, and the lemma follows easily from this description. 
\end{proof}

\begin{proposition}\label{proposition:minskeleton}
Let $ \mathcal{D}(\Omega) $ denote the subcomplex of $\mathcal{D}$ spanned by the vertices corresponding to $\Omega$-minimal components of $(I^n_{X/C})_0$. Then $ \mathcal{D}(\Omega) = \mathrm{Sym}^n(\Gamma(\omega)) $.
\end{proposition}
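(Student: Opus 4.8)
The plan is to match the two subcomplexes of $\mathcal{D}$ first on $0$-cells, using Lemma \ref{lemma:omega-minimal} to compute the relevant orders of vanishing, and then on higher cells via the explicit combinatorial model of $\mathrm{Sym}^n$ of an oriented graph from Lemma \ref{lemma:graphsym}. First I recall, from Proposition \ref{prop:strat-comp} and Theorem \ref{thm:dualcomplex-hilb}, the dictionary between the vertices of $\mathcal{D} = \mathrm{Sym}^n(\Gamma)$ and the irreducible components of $(I^n_{X/C})_0$: a vertex is a tuple $\mathbf{b} = \{b_v\}_{v \in V}$ of non-negative integers with $\sum_v b_v = n$, and the corresponding component $D_{\mathbf{b}}$ is the closure in $I^n_{X/C}$ of a stratum of the form $\prod_v \mathrm{Hilb}^{b_v}(E_v^{*})$ (the relevant index set has one element, so there are no inserted components and $E_v^{*}$ is the smooth locus of the central-fibre component $E_v$). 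Hence the generic point $\xi_{\mathbf{b}}$ of $D_{\mathbf{b}}$ parametrises a reduced length-$n$ cycle with $b_v$ distinct points on each $E_v^{*}$; in particular it lies in the locus $W^{[n]}_*$, away from the blow-up centre and from the exceptional locus of the morphisms in the Beauville diagram of Subsection \ref{subsec:minimalmodels}.

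The key step is to compute $\mathrm{ord}_{\xi_{\mathbf{b}}}(\Omega)$. I would lift $\xi_{\mathbf{b}}$ to a point $\tilde{\xi} \in \widetilde{W^n_*}$ along $p$; since the cycle is reduced, $\tilde{\xi}$ avoids the exceptional divisor $E$, so both $p$ and $\pi_1$ are local isomorphisms at $\tilde{\xi}$, while $\pi_1(\tilde{\xi})$ is the generic point of the component $E_A = E_{a_1} \times \cdots \times E_{a_n}$ of $(W^n)_0$, where $A$ is the multiset in which each $v$ appears $b_v$ times. Using the isomorphism $p^* K_{W^{[n]}_*} \cong \pi_1^* K_{W^n_*}$ that underlies the very construction of $\Omega$ by descent, together with the fact that $E$ does not pass through $\tilde{\xi}$, I obtain $\mathrm{ord}_{\xi_{\mathbf{b}}}(\Omega) = \mathrm{ord}_{E_A}(\Omega)$, which by Lemma \ref{lemma:omega-minimal} equals $\sum_{j=1}^n \mathrm{ord}_{E_{a_j}}(\omega) = \sum_{v} b_v\,\mathrm{ord}_{\xi_v}(\omega)$. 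Minimising over all $\mathbf{b}$ with $\sum_v b_v = n$, and using that $\mathrm{ord}_{\xi_v}(\omega) \geqslant \mathrm{min}(\omega)$ with equality exactly when $v$ is $\omega$-minimal — and that at least one $\omega$-minimal vertex exists — I get $\mathrm{min}(\Omega) = n\cdot\mathrm{min}(\omega)$, and $D_{\mathbf{b}}$ is $\Omega$-minimal if and only if $b_v = 0$ for every non-$\omega$-minimal $v$. Equivalently, the $\Omega$-minimal vertices of $\mathcal{D}$ are exactly the vertices of the subcomplex $\mathrm{Sym}^n(\Gamma(\omega)) \subseteq \mathrm{Sym}^n(\Gamma) = \mathcal{D}$.

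Finally I would upgrade this bijection of vertex sets to an equality of subcomplexes. For a $k$-cell $(a,r)$ of $\mathcal{D}$, iterating the face maps of Lemma \ref{lemma:graphsym} shows that the vertex opposite the $i$-th facet has $v$-component $a_v + \sum_{j \leqslant i}\sum_{\src(\gamma)=v} r_{\gamma,j} + \sum_{j > i}\sum_{\tgt(\gamma)=v} r_{\gamma,j}$; hence the union of the supports of the $k+1$ vertices of $(a,r)$ is $\{v : a_v > 0\}$ together with all endpoints of edges $\gamma$ for which $r_{\gamma,j} \neq 0$ for some $j$. Therefore $(a,r)$ has all its vertices $\Omega$-minimal — i.e.\ lies in the spanned subcomplex $\mathcal{D}(\Omega)$ — if and only if $a$ is supported on $\omega$-minimal vertices and every edge $\gamma$ with some $r_{\gamma,j} \neq 0$ has both endpoints $\omega$-minimal, i.e.\ lies in $\Gamma(\omega)$; by Lemma \ref{lemma:graphsym} applied to $\Gamma(\omega)$ this is precisely the condition that $(a,r)$ be a cell of $\mathrm{Sym}^n(\Gamma(\omega))$. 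This yields $\mathcal{D}(\Omega) = \mathrm{Sym}^n(\Gamma(\omega))$. The main obstacle I anticipate is the order computation of the second paragraph: the form $\Omega$ exists on $I^n_{X/C}$ only after descent along the Beauville diagram and extension across a codimension-$2$ locus, so one must locate $\xi_{\mathbf{b}}$ precisely — off both the blow-up centre and the ramification divisor — in order to read $\mathrm{ord}_{\xi_{\mathbf{b}}}(\Omega)$ off the product form $\bigotimes_j \mathrm{pr}_j^*\omega$ on $W^n$; the concluding combinatorics is then routine bookkeeping with Lemma \ref{lemma:graphsym}.
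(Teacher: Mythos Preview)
Your proposal is correct and follows essentially the same strategy as the paper: reduce the order computation for $\Omega$ at each component to the product $W^n$ and invoke Lemma \ref{lemma:omega-minimal}, then identify the span with $\mathrm{Sym}^n(\Gamma(\omega))$. The only differences are cosmetic: the paper passes to $W^n$ via the \'etale cover $W^n_\circ \to W^{(n)}_\circ \cong W^{[n]}_\circ$ off the big diagonal, whereas you use the Beauville diagram from Subsection \ref{subsec:minimalmodels}; and where the paper declares the span identification ``immediate from our constructions'', you spell it out explicitly using the vertex formula from Lemma \ref{lemma:graphsym}, which is a nice addition.
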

\begin{proof}
Let $ \Delta \subset W^n $ denote the `big' diagonal. Removing $\Delta$ yields an open subset denoted $ W^n_{\circ} = W^n \setminus \Delta $. We can likewise define open subsets $ W^{(n)}_{\circ} $, resp.~$  W^{[n]}_{\circ} $, of the symmetric product, resp.~the Hilbert scheme. Restricting to these loci, the quotient map
$$ q \colon W^n_{\circ} \to W^{(n)}_{\circ} $$
is \'etale, and the Hilbert-Chow morphism
$$ \pi_2 \colon W^{[n]}_{\circ} \to  W^{(n)}_{\circ} $$
is an isomorphism.

In order to compute the $\Omega$-minimal components of $(I^n_{X/C})_0$, it suffices to work on the open subscheme 
$$W^{[n]}_{\circ} \subset I^n_{X/C}, $$
as it contains the generic point of every component of $(I^n_{X/C})_0$. Moreover, as this is an \'etale local computation, we can replace $W^{[n]}_{\circ}$ by its \'etale cover $W^{n}_{\circ}$, so that Lemma \ref{lemma:omega-minimal} applies. This shows that the $\Omega$-minimal vertices of $\mathcal{D}$ arise precisely by making an unordered selection of $n$ vertices in $\Gamma(\omega)$ (allowing repetition). The fact that the span $ \mathcal{D}(\Omega) $ of these vertices inside $\mathcal{D}$ equals $ \mathrm{Sym}^n(\Gamma(\omega)) $ is immediate from our constructions in Subsections \ref{subsec:dualcomplex-product}, \ref{subsec:dualcomplex-symprod} and \ref{subsec:dualcomplex-hilb}.
\end{proof}

\subsubsection{}

To conclude, we apply the above results to a family $ X^*/C^* $ of surfaces, where we assume that $K_{X^*}$ is trivial.

\begin{corollary}\label{cor:KS-skel}
Let $ X^*$ be as above, and assume moreover that it extends over $C$ to a projective strict simple degeneration $X/C$. Then 
$$ \mathrm{Sk}((X^*)^{[n]}) = \mathrm{Sym}^n(\mathrm{Sk}(X^*)). $$
\end{corollary}
\begin{proof}
Since $K_{X^*}$ is trivial, $\mathrm{Sk}(X^*) = \mathrm{Sk}(X^*, \omega)$ for any choice of generator $\omega$ for the canonical bundle (the skeleton does not change by scaling such a form). As $ K_{(X^*)^{[n]}} $ is trivial as well, we likewise find that $ \mathrm{Sk}((X^*)^{[n]}) = \mathrm{Sk}((X^*)^{[n]}, \Omega) $, where $\Omega$ is the form induced by $ \omega $. 

We claim that $ \mathrm{Sk}((X^*)^{[n]}, \Omega) $ equals the span of the $\Omega$-minimal vertices in $ \mathcal{D} = \mathcal{D}(I^n_{X/C,0})$. Indeed, by \cite[Proposition 3.3.2]{NX-2016}, we have $\mathrm{Sk}((X^*)^{[n]}, \Omega) \subset \mathrm{Sk}(I^n_{X/C})$. Furthermore, we can find a projective birational morphism $ h \colon \mathcal{Z} \to I^n_{X/C} $ such that $ (\mathcal{Z}, \mathcal{Z}_0) $ is snc, and such that $h$ restricts to an isomorphism over the snc locus of $I^n_{X/C}$. Then $ \mathrm{Sk}(I^n_{X/C}) \subset \mathrm{Sk}(\mathcal{Z}) $, and the claim follows from \cite[Theorem 4.7.5]{MN-2015}. Thus $\mathrm{Sk}((X^*)^{[n]}, \Omega) = \mathcal{D}(\Omega)$. We likewise find that $\mathrm{Sk}(X^*, \omega) = \Gamma(\omega)$, so the assertion follows from Proposition \ref{proposition:minskeleton}.
\end{proof}


\section{The symplectic form}\label{sec:symplectic}
Assume that $X \to C$ is a projective strict simple degeneration of relative dimension two such that the dual graph has no odd cycles. In Section \ref{sec:dlt}, we proved that $ I^n_{X/C} \to C $ forms a dlt model (Theorem \ref{thm:dltmodel}) which is moreover minimal if $ X \to C $ is minimal (Corollary \ref{cor:goodminimal}). If $ K_{X/C} $ is in addition trivial (e.g. if $X \to C$ is a type II degeneration of K3 surfaces), then also $ K_{I^n_{X/C}} $ is trivial. 

In this section, we consider instead the GIT stack $ \mathcal{I}^n_{X/C} \to C $. In this case, we can improve the above mentioned results. We first make the (easy) observation that $ \mathcal{I}^n_{X/C} \to C $ is semi-stable (as a DM-stack). This implies that $ \mathcal{I}^n_{X/C} \to C $ is log smooth with respect to the natural divisorial log structures in source and target, hence the sheaves of relative log differentials are locally free. In the situation where $ K_{X/C} $ is trivial, we then explain that this has the interesting consequence that $ \mathcal{I}^n_{X/C} \to C $ carries, in a natural way, a symplectic structure. In other words, working on the level of stacks allows us to describe how the symplectic structure on $ \mathrm{Hilb}^n(X_c) $ degenerates as $ c \in C $ tends to $ 0 \in C $.

\begin{remark} 
A natural question is whether one could have worked directly on the GIT quotient, in order to describe the degeneration of the symplectic structure. However, one can show that $ I^n_{X/C} \to C $ fails in general to be log smooth, due to the presence of (non-sym\-plec\-tic) transversal quotient singularities.
\end{remark}

\subsection{Semi-stable DM-stacks}

\subsubsection{}
Let $S$ be a smooth connected curve of finite type over $\kk$. We fix a closed point $ 0 \in S $. Let $ f \colon Z \to S $ be a flat morphism, locally of finite type. We say that $f$ is \emph{semi-stable} if $Z$ is smooth over $\kk$, $Z_0$ is a reduced divisor with normal crossings and the fibres $Z_s$ are otherwise smooth. Note that, as we do not require the irreducible components of $Z_0$ to be smooth, being semi-stable is an \'etale local notion.

\begin{definition} Let $\mathscr{Z}$ be a Deligne-Mumford stack which is flat and locally of finite type over $S$. We say that $\mathscr{Z} \to S$ is semi-stable if there exists an \'etale atlas $ \mathcal{E} \to \mathscr{Z} $ such that the composition $ \mathcal{E} \to S $ is semi-stable.
\end{definition}

\begin{lemma}\label{lemma:semi-stable-stack}
The stack $ \mathcal{I}^n_{X/C} $ is semi-stable over $C$.
\end{lemma}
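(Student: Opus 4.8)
I want to show that the stack $\mathcal{I}^n_{X/C} = [\Hilb^n(X[n]/C[n])^{ss}/G[n]]$ is semi-stable over $C$, i.e.\ that it admits an étale atlas $\mathcal{E} \to \mathcal{I}^n_{X/C}$ whose composition to $C$ is semi-stable in the scheme-theoretic sense of the preceding definition. The natural candidate for $\mathcal{E}$ is built out of Luna slices: for each point $P \in \mathcal{H} := \Hilb^n(X[n]/C[n])^{ss}$ with stabilizer $G_P$, Lemma \ref{lemma:luna} produces a locally closed nonsingular $G_P$-invariant slice $W_P \subset \mathcal{H}$ through $P$ such that $[W_P/G_P] \to \mathcal{I}^n_{X/C}$ is étale. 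So I would first recall that, since all points of $\mathcal{H}$ are GIT stable with finite stabilizers, the stack quotient $[\mathcal{H}/G[n]]$ is Deligne--Mumford, and the étale morphisms $[W_P/G_P] \to \mathcal{I}^n_{X/C}$ jointly cover it. Taking $\mathcal{E}$ to be the disjoint union of étale presentations of the various $[W_P/G_P]$ (or, more directly, $\mathcal{E} = \bigsqcup_P W_P$ composed with $W_P \to [W_P/G_P] \to \mathcal{I}^n_{X/C}$, which is étale since $G_P$ is finite and acts freely on $W_P$ up to shrinking — actually $W_P \to [W_P/G_P]$ is étale for the stack quotient regardless), it suffices to check that each $W_P \to C$ is semi-stable.

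**The key point.** That last verification is essentially already contained in Lemma \ref{lemma:luna}: part (ii) says that $W_P$ is nonsingular with $(W_P)_0 = \cup_i F_i$ a strict normal crossings divisor, and the fibres of $W_P \to \mathcal{H} \to C[n] \to C$ away from $t=0$ are smooth because $\mathcal{H} \to C[n]$ is smooth and $C[n] \to C$ has smooth fibres over $C^*$. The only thing to add is that $(W_P)_0$ is \emph{reduced}. This follows because $\mathcal{H}_0 \subset \mathcal{H}$ is the preimage of the reduced (strict normal crossings) divisor of coordinate hyperplanes in $C[n] = C \times_{\AA^1} \AA^{n+1}$ under the smooth map $\mathcal{H} \to C[n]$ — see Lemma \ref{lem:snc} — hence is itself reduced, and restriction to the locally closed $W_P$ preserves reducedness. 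So each $W_P \to C$ is semi-stable, and therefore $\mathcal{E} \to C$ is semi-stable, which is exactly what the definition requires.

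**Putting it together.** Concretely I would write: choose for each $P \in \mathcal{H}$ a Luna slice $W_P$ as in Lemma \ref{lemma:luna}; the induced maps $W_P \to \mathcal{I}^n_{X/C}$ (through $[W_P/G_P]$) are étale and their images cover $\mathcal{I}^n_{X/C}$ since the $W_P$ cover $\mathcal{H}$; thus $\mathcal{E} := \bigsqcup_P W_P \to \mathcal{I}^n_{X/C}$ is an étale atlas. Each $W_P$ is smooth over $\kk$ (Lemma \ref{lemma:luna}(ii)), the composite $W_P \to C$ has smooth fibres over $C \setminus \{0\}$ (smoothness of $\mathcal{H} \to C[n]$ plus smoothness of $C[n] \to C$ over $C^*$), and $(W_P)_0 = \cup_i F_i$ is a reduced strict normal crossings divisor by Lemma \ref{lemma:luna}(ii) together with the fact that $\mathcal{H}_0$ is reduced (being the preimage of the reduced coordinate-hyperplane divisor in $C[n]$ under a smooth morphism). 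Hence $\mathcal{E} \to C$ is semi-stable, so $\mathcal{I}^n_{X/C} \to C$ is semi-stable as a DM-stack.

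**Expected obstacle.** There is no serious obstacle; the proof is a bookkeeping exercise assembling facts already established in Section \ref{sec:dlt}. The one point deserving a sentence of care is making sure the atlas is genuinely étale \emph{over the stack} — i.e.\ that $W_P \to [W_P/G_P]$ is étale (true for the quotient \emph{stack} since $G_P$ is finite, even with fixed points) and that $[W_P/G_P] \to \mathcal{I}^n_{X/C}$ is étale (this is the content of Lemma \ref{lemma:luna}(i), which is Luna's slice theorem at the level of stacks) — and that these slices really do cover. Everything else is immediate.
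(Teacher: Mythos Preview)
Your proposal is correct and follows exactly the same approach as the paper: take the disjoint union of the Luna slices $W_P$ from Lemma~\ref{lemma:luna} as an \'etale atlas, and observe that each $W_P \to C$ is semi-stable. The paper's proof is two sentences and leaves the verifications (reducedness of $(W_P)_0$, smoothness over $C^*$, \'etaleness over the stack) implicit, whereas you have spelled them out; the only small remark is that Lemma~\ref{lemma:luna}(i) is literally stated for the coarse quotients, but the stack-level \'etaleness you need is the standard strengthening given by Luna's theorem (namely that $G[n]\times^{G_P} W_P \to \mathcal{H}$ is \'etale), so this is not a gap.
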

\begin{proof}
In Lemma \ref{lemma:luna}, we produced for each GIT stable point $ P \in \mathcal{H} $ a slice $ W_P $ which, in particular, is semi-stable over $ C $. Taking the disjoint union of the $W_P$'s, as $P$ varies over the points in $\mathcal{H}$, we obtain a semi-stable \'etale atlas $ \mathcal{E} \to \mathcal{I}^n_{X/C} $. 
\end{proof}

\subsubsection{}
We will make use of some standard results from log geometry; a short summary of the (easy) facts we need is given below. For proofs and further details, the reader will find a detailed treatment in \cite[Chapter 9]{GR-2015}. Our log structures are defined with respect to the \'etale topology.

Let $ f \colon Z \to S $ be a semi-stable morphism. We denote by $S^{+}$ the scheme $S$ equipped with the divisorial log structure induced by $ \{0\} \subset S $. Likewise, $ Z^{+}$ denotes the scheme $Z$ equipped with the divisorial log structure associated to $Z_0 \subset Z$. Then $f$ induces a morphism $ f^{+} \colon Z^{+} \to S^{+} $ of log schemes. To  $ f^{+} $, one can define a sheaf of relative log differentials, denoted $ \Omega^1_{Z^{+}/S^{+}} $. It is locally free, since $ f^{+} $ is \emph{log smooth} (as $f$ is semi-stable).

Let $ g \colon Z' \to Z $ be an \'etale morphism, and equip also $Z'$ with the log structure associated to $ Z'_0 \subset Z' $. Then $g^{+}$ is log \'etale, and the fundamental short exact sequence of log differentials reduces to an isomorphism
\begin{equation}\label{equation-logdiff}
(g^{+})^* \Omega^1_{Z^{+}/S^{+}} \to \Omega^1_{(Z')^{+}/S^{+}}.
\end{equation}
In particular, let $ Z^{sm} \subset Z $ be the smooth locus of the morphism $f$. Then $\Omega^1_{Z^{+}/S^{+}}$ restricts to $\Omega^1_{(Z^{sm})^{+}/S^{+}}$, and it is straightforward to verify that this sheaf coincides with $\Omega^1_{(Z^{sm})/S}$.

Let $ E \to \mathcal{I}^n_{X/C} $ be an \'etale map, with $E$ a scheme. By Lemma \ref{lemma:semi-stable-stack} $ f_E \colon E \to C $ is then semi-stable, thus the sheaf $ \Omega^1_{E^{+}/C^{+}} $ is locally free of rank $2n$, the relative dimension of $E$ over $C$. Now let $ E_i \to \mathcal{I}^n_{X/C} $, $i=1,2$ be two \'etale maps, and $ g \colon E_1 \to E_2 $ a morphism commuting with the respective maps to $ \mathcal{I}^n_{X/C} $. Then $g$ induces a morphism $g^{+} \colon E_1^+ \to E_2^+ $ of log schemes such that $ f_{E_1}^{+} = f_{E_2}^{+} \circ g^{+} $ and, by (\ref{equation-logdiff}), an isomorphism
$$ (g^{+})^* \Omega^1_{E_2^{+}/C^{+}} \to \Omega^1_{E_1^{+}/C^{+}}. $$
One also checks that the cocycle condition \cite{vistoli-1989} holds for triples of such \'etale open sets. In conclusion, the data 
$$ \{E \to \mathcal{I}^n_{X/C}, \Omega^1_{E^{+}/C^{+}} \} $$
forms a locally free sheaf, denoted $ \Omega^1_{(\mathcal{I}^n_{X/C})^{+}/C^{+}} $, on $ (\mathcal{I}^n_{X/C})_{et} $.

\subsection{Symplectic structure of $\mathcal{I}^n_{X/C} \to C$}\label{subsub:logarithmic}

We now make the additional assumption that $ K_{X/C} $ is trivial. Let us also remark that, as $ X \to C $ is semi-stable we can identify $ K_{X/C} $ with the sheaf of relative logarithmic $2$-forms $ \Omega^2_{X^+/C^+} $. Let us fix a nowhere vanishing section $ \omega $ of $ K_{X/C} $; by a slight abuse of notation, we denote also by $\omega $ the restriction of this form to $X^{sm}$. The same argument as in \cite[6]{beauville-1983}, adapted to the relative setting $ X^{sm} \to C $, shows that $\omega $ induces a $2$-form $ \theta $ in $\Omega^2_{\mathrm{Hilb}^n(X^{sm}/C)/C} $ such that $ \theta^n $ is nowhere vanishing.

Now we consider $ \mathrm{Hilb}^n(X^{sm}/C) $ as an open representable substack of $\mathcal{I}^n_{X/C}$. For any \'etale $ E \to \mathcal{I}^n_{X/C} $ with E a scheme, we let
$$ E^{\circ} \to \mathrm{Hilb}^n(X^{sm}/C) $$
denote the restriction to $ \mathrm{Hilb}^n(X^{sm}/C) $. Then $ E^{\circ} $ is open in $E$, with complement $ E \setminus E^{\circ} $ of codimension $2$.

\begin{lemma}
Let $ \theta_{E^{\circ}} $ denote the pullback of $ \theta$ to $ E^{\circ} $. Then $ \theta_{E^{\circ}} $ extends uniquely to a form $ \theta_{E} $ in $ \Omega^2_{E^{+}/C^{+}} $, and $ \theta_{E}^n $ is nowhere vanishing.
\end{lemma}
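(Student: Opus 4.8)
The plan is to combine the codimension bound with the local freeness of the relative log differentials. First I would recall the setup: $E \to \mathcal{I}^n_{X/C}$ is an étale map with $E$ a scheme, so by Lemma~\ref{lemma:semi-stable-stack} the composition $f_E \colon E \to C$ is semi-stable, and by the preceding discussion the sheaf $\Omega^2_{E^+/C^+} = \bigwedge^2 \Omega^1_{E^+/C^+}$ is locally free of rank $\binom{2n}{2}$. The open subscheme $E^{\circ} \subset E$ restricting to $\mathrm{Hilb}^n(X^{sm}/C)$ has complement of codimension $2$, by Proposition~\ref{prop:GIT-open} applied étale-locally (the étale map $E \to \mathcal{I}^n_{X/C}$ pulls back the codimension-$2$ complement to a codimension-$2$ complement). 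Since $E$ is smooth over $\kk$ (semi-stability), it is in particular normal and $S_2$, so any section of a locally free sheaf on $E^{\circ}$ extends uniquely across a closed subset of codimension $\geqslant 2$; this is the standard Hartogs-type extension. Applying this to $\theta_{E^{\circ}} \in \Gamma(E^{\circ}, \Omega^2_{E^+/C^+}|_{E^{\circ}})$ — note that on $E^{\circ}$ the log differentials agree with the ordinary relative differentials $\Omega^2_{E^{\circ}/C}$, as remarked after \eqref{equation-logdiff} since $E^{\circ}$ lies in the smooth locus of $f_E$ — yields the unique extension $\theta_E \in \Gamma(E, \Omega^2_{E^+/C^+})$.

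For the non-vanishing of $\theta_E^n$, the point is that $\theta_E^n$ is a section of the line bundle $\Omega^{2n}_{E^+/C^+} = K_{E^+/C^+}$, which (again using semi-stability and the log structure) is invertible on all of $E$. On $E^{\circ}$ we know $\theta_{E^{\circ}}^n = \theta_E^n|_{E^{\circ}}$ is nowhere vanishing, i.e. it generates $K_{E^+/C^+}|_{E^{\circ}}$. I would argue that its vanishing locus $V(\theta_E^n) \subset E$ is a divisor (or empty), being the zero scheme of a section of a line bundle on the smooth variety $E$; since it is disjoint from $E^{\circ}$ it is contained in the codimension-$2$ set $E \setminus E^{\circ}$, forcing it to be empty. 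Hence $\theta_E^n$ is nowhere vanishing. Uniqueness of $\theta_E$ is automatic from the extension statement (two extensions agree on the dense open $E^{\circ}$, hence everywhere on the separated scheme $E$, since $\Omega^2_{E^+/C^+}$ is torsion-free).

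The main obstacle, such as it is, is purely bookkeeping: one must make sure that the codimension-$2$ bound from Proposition~\ref{prop:GIT-open} genuinely pulls back along the étale map $E \to \mathcal{I}^n_{X/C}$ (it does, since étale morphisms preserve codimension and the preimage of $\mathrm{Hilb}^n(X^{sm}/C)$ is exactly $E^{\circ}$), and that the identification of $\Omega^2_{E^+/C^+}$ with the ordinary relative $2$-forms over the smooth locus is the one already noted in the excerpt. No genuinely hard input is needed beyond these; the content is that the log structure makes $K_{E^+/C^+}$ a line bundle globally on $E$ — without it, the naive relative canonical sheaf would only be reflexive and the extension argument would still go through, but the cleanest formulation uses the log differentials that semi-stability provides. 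I expect the whole proof to be two short paragraphs once written out in full.
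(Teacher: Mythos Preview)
Your proposal is correct and follows essentially the same approach as the paper: a Hartogs-type extension of a section of a locally free sheaf across a codimension-$2$ subset of a regular scheme, followed by the observation that the vanishing locus of $\theta_E^n$ would be a divisor contained in that codimension-$2$ set and is therefore empty. The paper carries this out on an \'etale trivializing cover $\{E_\alpha\to E\}$ and then glues, whereas you work directly on $E$; this is a cosmetic difference, and your version is if anything slightly cleaner.
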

\begin{proof}
Let $\{E_{\alpha} \to E \}_{\alpha}$ be an \'etale cover over which $ \Omega^2_{E^{+}/C^{+}} $ trivializes. Since each $E_{\alpha}$ is regular, the pullback of the rational section $ \theta_{E^{\circ}} $ extends uniquely to a global section $ \theta_{E_{\alpha}} $ of $ \Omega^2_{E_{\alpha}^{+}/C^{+}} $. It is straightforward to check that the elements $ \theta_{E_{\alpha}} $ glue to a section $ \theta_{E} $ on $E$, and that $ \theta_{E}^n $ is nowhere vanishing.
\end{proof}

\begin{proposition}
The stack $\mathcal{I}^n_{X/C}$ is proper and semi-stable over $C$. It is moreover symplectic, in the sense that it carries a nowhere degenerate logarithmic $2$-form.
\end{proposition}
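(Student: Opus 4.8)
The plan is to obtain all three assertions from results already in place. Semi-stability of $\mathcal{I}^n_{X/C}$ over $C$ is precisely Lemma \ref{lemma:semi-stable-stack}, so nothing further is needed there. For properness, I would use that the coarse moduli space of the Deligne--Mumford stack $\mathcal{I}^n_{X/C}$ is the GIT quotient $I^n_{X/C}$, which is projective over $C$ by the construction in \cite{GHH-2016} (it is the GIT quotient of the projective $C[n]$-scheme $\mathrm{Hilb}^n(X[n]/C[n])$ with respect to a relatively ample linearization, and $C[n]//G[n]=C$). Since the canonical map from a Deligne--Mumford stack to its coarse space is proper, the composite $\mathcal{I}^n_{X/C}\to I^n_{X/C}\to C$ is proper. (Alternatively: $\mathcal{I}^n_{X/C}\to C$ is of finite type, separated since stable equals semi-stable by Theorem \ref{thm:GIT-main}, and universally closed since $\mathcal{H}\to C$ is proper and surjects onto $\mathcal{I}^n_{X/C}$.)

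For the logarithmic symplectic form, I would start from the relative $2$-form $\theta\in\Omega^2_{\mathrm{Hilb}^n(X^{sm}/C)/C}$ coming from the relative version of Beauville's construction recalled in Subsection \ref{subsub:logarithmic}, which has the property that $\theta^n$ is nowhere vanishing. View $\mathrm{Hilb}^n(X^{sm}/C)$ as the open substack of $\mathcal{I}^n_{X/C}$ whose complement has codimension $2$. For every étale morphism $E\to\mathcal{I}^n_{X/C}$ with $E$ a scheme, the preceding Lemma produces a \emph{unique} section $\theta_E\in\Omega^2_{E^{+}/C^{+}}$ extending the pullback of $\theta$ to $E^{\circ}$, and with $\theta_E^n$ nowhere vanishing.

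The one point requiring an argument is that the sections $\theta_E$ glue to a global section of the locally free sheaf $\Omega^2_{(\mathcal{I}^n_{X/C})^{+}/C^{+}}$ on the étale site. Given a morphism $g\colon E_1\to E_2$ of such étale charts over $\mathcal{I}^n_{X/C}$, the sections $(g^{+})^*\theta_{E_2}$ and $\theta_{E_1}$ of $\Omega^2_{E_1^{+}/C^{+}}$ both restrict, on the regular open subscheme $E_1^{\circ}$, to the pullback of $\theta$: here one uses that $g$ carries $E_1^{\circ}$ into $E_2^{\circ}$ over $\mathrm{Hilb}^n(X^{sm}/C)$, where $\theta$ is a single globally defined form, together with the functoriality isomorphism \eqref{equation-logdiff} for log differentials under étale maps. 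Since $E_1$ is regular and $E_1\setminus E_1^{\circ}$ has codimension $2$, the uniqueness clause of the preceding Lemma forces $(g^{+})^*\theta_{E_2}=\theta_{E_1}$, which is the required cocycle/compatibility condition. Hence the data $\{\theta_E\}$ defines a section $\theta\in H^0\bigl(\mathcal{I}^n_{X/C},\Omega^2_{(\mathcal{I}^n_{X/C})^{+}/C^{+}}\bigr)$. Its $n$-th power $\theta^n$ is a global section of $\Omega^{2n}_{(\mathcal{I}^n_{X/C})^{+}/C^{+}}$, which is a line bundle since $\mathcal{I}^n_{X/C}\to C$ is log smooth of relative dimension $2n$; as $\theta_E^n$ is nowhere vanishing on each chart $E$, the section $\theta^n$ trivializes this line bundle, i.e.\ $\theta$ is everywhere non-degenerate. (If one also wants $\theta$ closed, note that $d\theta$ vanishes on the dense open substack $\mathrm{Hilb}^n(X^{sm}/C)$, where $\theta$ restricts to Beauville's closed form, hence $d\theta=0$ identically.)

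I do not anticipate a serious obstacle: the substantive input is entirely in the earlier lemmas — semi-stability of the Luna slices assembled into an étale atlas, the local uniqueness of the extension across a codimension-$2$ locus, and Beauville's relative construction of $\theta$. The only delicate point is the étale-descent bookkeeping in the previous paragraph, i.e.\ verifying that the local extensions are mutually compatible; as indicated, this reduces cleanly to the uniqueness of the extension plus the functoriality isomorphism \eqref{equation-logdiff}, and involves no real computation.
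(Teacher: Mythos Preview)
Your proposal is correct and follows essentially the same approach as the paper: the paper's proof simply asserts that the sections $\theta_E$ from the preceding lemma ``are easily seen to be compatible'' and hence glue to a global section, without further comment on properness or closedness. Your version supplies the details the paper omits --- the properness argument via the coarse space, the explicit compatibility check using uniqueness of the codimension-$2$ extension together with \eqref{equation-logdiff}, and the remark on closedness --- all of which are sound and in the intended spirit.
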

\begin{proof}
The sections $ \theta_{E} $ produced in the above lemma are easily seen to be compatible when $ E \to \mathcal{I}^n_{X/C}$ runs over the \'etale open subsets of $\mathcal{I}^n_{X/C}$, and thus define a global section of $\Omega^2_{(\mathcal{I}^n_{X/C})^{+}/C^{+}}$.
\end{proof}


\section{Comparison to Nagai's work}\label{sec:nagai}

\subsection{The comparison}

In this section we compare our construction to Nagai's original degeneration  \cite{nagai-2008}. We start with a strict simple degeneration of surfaces
$$ f: X \to C. $$
Nagai's work in   \cite{nagai-2008} is concerned with degree $2$ Hilbert schemes. He constructed a strict simple degeneration of fourfolds
as a resolution of the relative Hilbert scheme $\Hilb^2(X/C)$. 
His work does not use that the dual graph has 
a bipartite orientation, but as we want to compare it with the GIT construction we now assume that $\Gamma(X_0)$ has a bipratite orientation.  

For simplicity, we denote Nagai's family by
$$ H^2_{X/C} \longrightarrow C. $$
On the other hand, we also have the family
$$ I^2_{X/C} \longrightarrow C, $$
constructed via expanded degenerations. Obviously, over any closed point $t \in C \backslash \{0\}$, the fibres of $I^2_{X/C}$ and $H^2_{X/C}$ are both the Hilbert scheme $\Hilb^2(f^{-1}(t))$, hence the total spaces of both families are birational.

The  two families $I^2_{X/C}$ and $H^2_{X/C}$ can be related by explicit birational maps and the situation can be  summarized in the following diagram
\begin{equation}
\label{eqn:big}
\xymatrix{
\mathrm{Bl}_\Delta(X[2] \times_{C[2]} X[2])^{ss}/G[2] \ar[d] \ar[r]^-{\ZZ_2} & \Hilb^2(X[2]/C[2])^{ss}/G[2] \ar[d] & I^2_{X/C} \ar@{=}[l] \\
(X[2] \times_{C[2]} X[2])^{ss}/G[2] \ar[d]_-\beta \ar[r]^-{\ZZ_2} & \mathrm{Sym}^2(X[2]/C[2])^{ss}/G[2] \ar[d] & \\
X \times_C X \ar[r]^-{\ZZ_2} & \mathrm{Sym}^2(X/C) & \\
\mathrm{Bl}_\Delta(X \times_C X) \ar[u] \ar[r]^-{\ZZ_2} & \Hilb^2(X/C) \ar[u] & H^2_{X/C}. \ar[l]
}
\end{equation}
We will explain the diagram in the following discussion.

\subsection{The lower square}

The lower two rows of \eqref{eqn:big} reflect how the family $H^2_{X/C}$ in \cite{nagai-2008} is constructed. The relative Hilbert scheme $\Hilb^2(X/C)$ is constructed as the $\ZZ_2$-quotient of the blowup of $X \times_C X$ along the diagonal $\Delta$. Nagai observed that the total space has $A_1$-singularities. He resolved these singularities by blowing up $\Hilb^2(X/C)$ along some carefully chosen irreducible components of the central fibre. He also proved that the space $H^2_{X/C}$ obtained in this way is a semistable degeneration.

\begin{remark}
We remark that a related construction can be found in \cite{CK-1999}, in the context of the symmetric square of family of smooth curves degenerating to a nodal curve.
\end{remark}

\subsection{The upper square}

The upper two rows of \eqref{eqn:big} reflect how the family $I^2_{X/C}$ in \cite{GHH-2019} is constructed. This construction is based  on the expanded degeneration 
$$ f[2]: X[2] \to C[2]. $$
On this family one can perform a similar construction of the relative Hilbert scheme as follows
\begin{equation*}
\xymatrix{
\mathrm{Bl}_\Delta(X[2] \times_{C[2]} X[2])^{ss} \ar[d] \ar[r]^-{\ZZ_2} & \Hilb^2(X[2]/C[2])^{ss} \ar[d] \\
(X[2] \times_{C[2]} X[2])^{ss} \ar[r]^-{\ZZ_2} & \mathrm{Sym}^2(X[2]/C[2])^{ss}.
}
\end{equation*}
Here we only take semi-stable pairs of points (or closed subschemes of length $2$)
into consideration. 
In particular, they must be supported on the smooth locus of $f[2]$ by \cite[Theorem 2.10]{GHH-2019}. Therefore the product $(X[2] \times_{C[2]} X[2])^{ss}$ and its diagonal are both smooth, as well as the blowup $\mathrm{Bl}_\Delta(X[2] \times_{C[2]} X[2])^{ss}$. It is also easy to see that the  $G[2]$-action commutes with the $\ZZ_2$-action. By taking the $G[2]$-quotient we get the upper square of  diagram \eqref{eqn:big}.

\subsection{The middle square}

The middle square, in particular the map $\beta$, is the key that links the two constructions. The existence of such a map follows easily from the construction of the expanded degeneration. Indeed, the commutative diagram
\begin{equation}
\label{eqn:deg}
\xymatrix{
X[2] \ar[d] \ar[r] & X \ar[d] \\
C[2] \ar[r] & C
}
\end{equation}
defines a morphism
\begin{equation}
\label{eqn:alpha}
\alpha: (X[2] \times_{C[2]} X[2])^{ss} \to X \times_C X.
\end{equation}
Also notice that the diagram \eqref{eqn:deg} respects the $G[2]$-action on the left column. Therefore $\alpha$ descends to a morphism
$$ \beta: (X[2] \times_{C[2]} X[2])^{ss}/G[2] \to X \times_C X. $$
In the next result we will show that $\beta$ is a resolution of singularities, obtained by blowing up (a disjoint union of) Weil divisors in $X \times_C X$.

\begin{proposition}
The morphism
$$ \beta: (X[2] \times_{C[2]} X[2])^{ss}/G[2] \longrightarrow X \times_C X $$
is a small resolution of singularities given by blowups along Weil divisors.
\end{proposition}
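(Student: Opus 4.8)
The plan is to verify the statement by reducing, via an \'etale-local computation, to the standard small resolution of the threefold ordinary double point.

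\emph{Preliminaries.} By Corollary \ref{cor:prod} the group $G[2]$ acts freely on $(X[2]\times_{C[2]}X[2])^{ss}$, so $P^2_{X/C}$ is smooth. As in the proof of Lemma \ref{lem:PRdual} it carries a stratification by the locally closed sets $Z_I=\varphi_p^{-1}(U_I)/G[2]$ with $\overline{Z_I}\supseteq Z_J$ iff $I\subseteq J$; in particular the stratum $Z_\varnothing$ lying over $C^*$ is dense, so $P^2_{X/C}$ is irreducible, and $\beta$ — which restricts to the identity of $X^*\times_{C^*}X^*$ over $C^*$ — is proper and birational. On the target side, $X\times_C X$ is a hypersurface in the smooth variety $X\times X$, hence Cohen--Macaulay; using the local form $(x,y,z)\mapsto xy$ of $X\to C$ near the double locus $D\subset X_0$, one computes that near a point of $D\times_C D$ the scheme $X\times_C X$ is isomorphic to $V\times\AA^2$, where $V=\{x_1y_1=x_2y_2\}\subset\AA^4$ is the conifold, and that $X\times_C X$ is smooth elsewhere. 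Thus $X\times_C X$ is normal, $\mathrm{Sing}(X\times_C X)=\bigsqcup_{\gamma,\gamma'}D_\gamma\times_C D_{\gamma'}$ has codimension $3$, and it is enough to describe $\beta$ over an \'etale neighbourhood of one such conifold locus.

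\emph{Main step.} Over this local model I would identify $\beta$ with $\mathrm{id}_{\AA^2}\times(\widetilde V\to V)$, where $\widetilde V\to V$ is a small resolution of the conifold. Here one uses the description of the central fibre of $P^2_{X/C}$ from Section \ref{sec:strata} (the product analogue of Proposition \ref{prop:strat-comp} together with Theorem \ref{thm:limit-scheme-prod}). Pulling the pair of points into the expanded degeneration, the strata of $\beta^{-1}(D_\gamma\times_C D_{\gamma'})$ are: the stratum over $U_{\{1,3\}}$, where both points lie on the single inserted $\PP^1$-bundle and which, after the residual $\Gm$-quotient, is a $\Gm$-bundle over $D_\gamma\times_C D_{\gamma'}$; and the \emph{two} ordered strata over $U_{\{1,2,3\}}$, where the two points lie one on each of the two inserted $\PP^1$-bundles, each of which maps isomorphically onto $D_\gamma\times_C D_{\gamma'}$. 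By Theorem \ref{thm:limit-scheme-prod} the latter two occur in the closure of the former, as its zero and infinity sections; hence $\beta^{-1}(D_\gamma\times_C D_{\gamma'})$ is a $\PP^1$-bundle over $D_\gamma\times_C D_{\gamma'}$, contracted fibrewise by $\beta$. Matching coordinates identifies $\beta$ locally with $\mathrm{id}_{\AA^2}$ times the small resolution of $V$ obtained by blowing up the Weil divisor $\{x_1=x_2=0\}$, the precise divisor being the one singled out by the chosen bipartite orientation, exactly as in Li's small resolutions defining $X[n]$ (cf.\ \cite[Section 1]{GHH-2016}). Since the surfaces $D_\gamma\times_C D_{\gamma'}$ are pairwise disjoint, these local Weil divisors glue to a Weil divisor $\mathcal W\subset X\times_C X$ (a disjoint union indexed by pairs of components of $D$), and $\beta$ is globally the blowup of $\mathcal W$. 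Finally $\beta$ is an isomorphism over $X\times_C X\setminus(D\times_C D)\supseteq (X\times_C X)^{\mathrm{sm}}$, and its exceptional locus has codimension $5-3=2$ in $P^2_{X/C}$, so $\beta$ is a small resolution.

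\emph{Main obstacle.} The delicate part is the bookkeeping in the main step: tracking which inserted $\PP^1$-bundle is scaled by which subtorus of $G[2]$, how the stabiliser of the deepest stratum $U_{\{1,2,3\}}$ acts, why the \emph{ordered} product produces exactly the two boundary strata that become the two sections of the exceptional $\PP^1$-bundle, and checking that the resulting choice of small resolution is consistent across overlapping \'etale charts — which is precisely where the bipartite orientation of $\Gamma(X_0)$ is used. The remaining assertions are formal.
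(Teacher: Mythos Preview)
Your approach is correct but genuinely different from the paper's. The paper proceeds by first \emph{naming} the global Weil divisor: using the bipartite partition $V=V_0\sqcup V_1$, it sets $T=\bigcup_{i,j\in V_0}Y_i\times Y_j$, a disjoint union of irreducible components of the central fibre of $X\times_C X$. It then writes down explicit local coordinates for $\mathrm{Bl}_T(X\times_C X)$ near a conifold point, explicit local coordinates for $(X[2]\times_{C[2]}X[2])^{ss}$ coming from the description of the expansion in \cite[Proposition 1.7]{GHH-2016}, and an explicit coordinate formula for a $G[2]$-invariant morphism $h$ lifting $\alpha$ to the blowup; finally it checks in the same coordinates that $h$ is a trivial $G[2]$-torsor over an affine chart of the exceptional locus, so descends to an isomorphism $g\colon P^2_{X/C}\to\mathrm{Bl}_T(X\times_C X)$. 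No appeal is made to the stratification results of Section~\ref{sec:strata}.

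Your argument instead \emph{deduces} the structure of the exceptional locus from Theorem~\ref{thm:limit-scheme-prod} and the description of the strata $(\mathcal P_I)^\circ_{\mathbf z}$, and only afterwards matches it with the standard small conifold resolution. This is more conceptual and reuses machinery already developed in the paper, at the cost of the bookkeeping you flag as the main obstacle. The paper's coordinate approach buys an unambiguous global identification of the blown-up divisor and sidesteps any gluing argument; your approach buys a cleaner explanation of \emph{why} the resolution is small and where the $\PP^1$-fibres come from. One small correction: your $\mathcal W$ is not ``a disjoint union indexed by pairs of components of $D$''; rather, as in the paper, it is a union of entire components $Y_i\times Y_j$ of $(X\times_C X)_0$ with $i,j$ ranging over one colour class $V_0$ of the bipartition. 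The local divisor $\{x_1=x_2=0\}$ you see near a single conifold locus $D_\gamma\times_C D_{\gamma'}$ is only the germ of one such $Y_i\times Y_j$, and distinct conifold loci may lie on the same component.
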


\begin{proof}
To prove the statement, we will construct a blowup $\mathrm{Bl}_T(X \times_C X)$ and show that there is an isomorphism
\begin{equation}
\label{eqn:g}
g: (X[2] \times_{C[2]} X[2])^{ss}/G[2] \to \mathrm{Bl}_T(X \times_C X).
\end{equation}
The proof will be given in several steps.

\textsc{Step 1.} We first construct $\mathrm{Bl}_T(X \times_C X)$.

We denote the irreducible components of $X_0$ by $Y_i$ for  $i \in V$, where $V$ is the vertex set of $\Gamma(X_0)$. Then it is clear that the irreducible components of the central fibre of $X \times_C X$ are given by
$$ Y_{ij}=Y_i \times Y_j, $$
for any $i,j \in V$, each of which is a Weil divisor in the total space $X \times_C X$. Since we assume that $\Gamma(X_0)$ has a bipartite orientation, we write $V_0$ for the set of vertices with no incoming arrows. We write
$$ T = \bigcup_{i,j \in V_0} Y_{ij}. $$
Because of the bipartite orientation, $T$ is a disjoint union of irreducible Weil divisors. Following the idea in \cite[Theorem 4.3]{nagai-2008}, we write the blowup of $X \times_C X$ along $T$ by
\begin{equation}
\label{eqn:pi}
\pi: \textrm{Bl}_T(X \times_C X) \to X \times_C X.
\end{equation}

Take any point $(p,q) \in X \times_C X$. If either $p \in X_0^{sm}$ or $q \in X_0^{sm}$, it is clear that in a neighborhood of $(p,q)$, the total space $X \times_C X$ is smooth, and the central fibre is locally an snc divisor. Therefore the blowup $\pi$ is an isomorphism in such a neighborhood. 

Now we assume $p$ and $q$ are both in the singular locus of $X_0$. Then an open neighborhood of $(p,q)$, say $U \subset X \times_C X$, is parameterized by local coordinates
$$ (x_+, y_+, z_+, x_-, y_-, z_-) $$
subject to the relation
$$x_+y_+=t=x_-y_-.$$
Without loss of generality, we may assume that $x_+=0$ and $x_-=0$ cut out the irreducible components of $X_0$ labelled by vertices in $V_0$ in the neighborhoods of $p$ and $q$ respectively. Then the open neighborhood $\pi^{-1}(U) \subset \textrm{Bl}_T(X \times_C X)$ is obtained by blowing up the ideal $(x_+, x_-)$, hence is given by local coordinates
\begin{equation}
\label{eqn:co1}
(x_+, y_+, z_+, x_-, y_-, z_-, [u:v])
\end{equation}
subject to the relation
$$ [u:v] = [x_+:x_-] = [y_-:y_+]. $$
It is clear from the construction that the total space $\mathrm{Bl}_T(X \times_C X)$ is smooth, and $\pi$ is a small resolution.

\textsc{Step 2.} We claim that the morphism $\alpha$ can be lifted to  a morphism $h$ making the following diagram commutative
\begin{equation}
\label{eqn:pro}
\xymatrix{
(X[2] \times_{C[2]} X[2])^{ss} \ar[rd]_-\alpha \ar[r]^-h & \textrm{Bl}_T(X \times_C X) \ar[d]^\pi \\
 & X \times_C X.
}
\end{equation}

The only potential issue for the existence of the lifting is the exceptional locus of $\pi$. For this purpose, we need to write down local coordinates near a point $(p',q') \in (X[2] \times_{C[2]} X[2])^{ss}$ with $\alpha(p',q') \in X_0^\mathrm{sing} \times X_0^\mathrm{sing}$. By \cite[Section 4.2]{wu-2007} or \cite[Proposition 1.7]{GHH-2019}, a relevant open neighborhood in $X[2]$ is parametrized by coordinates
$$ (x,y,z, t_1, t_2, t_3, [u_1:v_1], [u_2:v_2]) $$
with relations
\begin{align*}
[u_1:v_1]&=[t_1:x]=[y:t_2t_3], \\
[u_2:v_2]&=[t_1t_2:x]=[y:t_3].
\end{align*}
We write $r = [v_1:u_1] \in \PP^1$ and $s = [u_2:v_2] \in \PP^1$. Then a point in the same neighborhood of $X[2]$ is given by coordinates
$$ (t_1, t_2, t_3, r, s, z) $$
with the only relation $rs=t_2$. Therefore a point in $(X[2] \times_{C[2]} X[2])^{ss}$ is given by coordinates
$$ (t_1, t_2, t_3, r_+, s_+, z_+, r_-, s_-, z_-) $$
subject to the relations
$$ r_+s_+=t_2=r_-s_-. $$
Using the above coordinates, we define the morphism locally by
\begin{align}
h: \ (X[2] \times_{C[2]} X[2])^{ss} &\to \mathrm{Bl}_T(X \times_C X) \label{eqn:coor}\\
(t_1, t_2, t_3, r_+, s_+, z_+, r_-, s_-, z_-) &\mapsto (x_+, y_+, z_+, x_-, y_-, z_-, [u:v]) \notag
\end{align}
by requiring
\begin{align}
x_\pm &= r_\pm t_1; \notag\\
y_\pm &= s_\pm t_3; \label{eqn:rel}\\
[u:v] &= [r_+:r_-] = [s_-:s_+]. \notag
\end{align}
It is easy to see that the relations are preserved, hence the morphism is well-defined. 

\textsc{Step 3.} We show that $h$ descends to a morphism $g$ making the following diagram commutative
\begin{equation*}
\xymatrix{
(X[2] \times_{C[2]} X[2])^{ss} \ar[r]^-h \ar[dr]_-{G[2]} & \textrm{Bl}_T(X \times_C X) \\
 & ( X[2] \times_{C[2]} X[2] )^{ss} /G[2]. \ar[u]_g &
}
\end{equation*}

For this purpose we need to check that $h$ is $G[2]$-invariant, which only needs to be cheked on the open neighborhood described  above. Take any element $\sigma = (\sigma_1, \sigma_2, \sigma_3) \in G[2]$ (with $\sigma_1\sigma_2\sigma_3=1$). Its action on a point
$$(t_1, t_2, t_3, r_+, s_+, z_+, r_-, s_-, z_-)$$
is given by
$$(\sigma_1 t_1, \sigma_2 t_2, \sigma_3 t_3, \sigma_1^{-1}r_+, \sigma_3^{-1}s_+, z_+, \sigma_1^{-1}r_-, \sigma_3^{-1}s_-, z_-).$$
By the relations \eqref{eqn:rel}, it is easy to observe that
\begin{align*}
&\ \ \ \ h(\sigma_1 t_1, \sigma_2 t_2, \sigma_3 t_3, \sigma_1^{-1}r_+, \sigma_3^{-1}s_+, z_+, \sigma_1^{-1}r_-, \sigma_3^{-1}s_-, z_-) \\
&= h(t_1, t_2, t_3, r_+, s_+, z_+, r_-, s_-, z_-).
\end{align*}
Therefore $h$ is $G[2]$-invariant hence $g$ is well-defined.

We also recall that $(X[2] \times_{C[2]} X[2])^{ss}$ is smooth because  semi-stability requires the pair of two points to have smooth support. Furthermore, since the two points are ordered, the $G[2]$-action is free, hence $(X[2] \times_{C[2]} X[2])^{ss}/G[2]$ is also smooth. Therefore, $g$ is a morphism between smooth varieties.

\textsc{Step 4.} Finally we need to show that $g$ is an isomorphism. By the $G[2]$-equivariance of both $h$ and $\alpha$ in  diagram \eqref{eqn:pro}, we get a commutative diagram
\begin{equation*}
\xymatrix{
(X[2] \times_{C[2]} X[2])^{ss}/G[2] \ar[rd]_-\beta \ar[r]^-g & \textrm{Bl}_T(X \times_C X) \ar[d]^\pi \\
 & X \times_C X.
}
\end{equation*}

It is clear from the construction in Step 1 that the restriction of the morphism $\pi$ is an isomorphism over the open subset
$$ W=(X \times_C X) \backslash (X_0^{sing} \times X_0^{sing}).$$
By the description of the expanded degeneration in \cite[Section 1.3]{GHH-2019}, it is also easy to see that the restriction of $\beta$ over the same open subset $W \subset X \times_C X$ is an isomorphism. Therefore the restriction 
$$ g|_{\beta^{-1}(W)}: \beta^{-1}(W) \to \pi^{-1}(W) $$
is an isomorphism.

To show that the entire $g$ is an isomorphism, we only need to construct a morphism $g^{-1}$ in an open neighborhood of the exceptional locus $\mathrm{Bl}_T(X \times_C X) \backslash \pi^{-1}(W)$. 
Without loss of generality, we only consider the affine chart given by $v=1$ in the neighborhood parameterized by coordinates \eqref{eqn:co1}.  By \eqref{eqn:coor} we obtain
\begin{align*}
&\ \ \ \ h^{-1}(x_+, y_+, z_+, x_-, y_-, z_-, [u:1]) \\
&= \left\{ \left(\frac{x_-}{\lambda}, u\lambda\mu, \frac{y_+}{\mu}, u\lambda, \mu, z_+, \lambda, u\mu, z_- \right) \middle| \lambda, \mu \in \Gm \right\}.
\end{align*}
This shows that $h$ is a trivial $G[2]$-fibration over this affine chart. Therefore $g^{-1}$ is an isomorphism on this affine chart after passing to the $G[2]$-quotient. This concludes the proof that $g$ is an isomorphism.

\textsc{Step 5.} To summarize, we have exhibited the morphism $\beta$ as the composition of $g$ and $\pi$ in \eqref{eqn:g} and \eqref{eqn:pi}. We proved in Step 1 that $\pi$ is a small resolution of singularities by blowups along Weil divisors in $T$. We also constructed $g$ in Steps 2 and 3, and proved that $g$ is an isomorphism in Step 4. This finishes the proof of the proposition.
\end{proof}

From the proof of the above proposition, it is easy to see that $\beta$ is $\ZZ_2$-equivariant. Therefore we have justified the middle square of the diagram \eqref{eqn:big}.

\subsection{Degenerations via symmetric products} 

Recently, Nagai has independently obtained similar results in \cite{nagai-2018}, \cite{nagai-2017}. Indeed, in \cite{nagai-2018}, Nagai studied the symmetric product of a strict simple degeneration. Based on this and using methods from toric geometry, he constructed a relative minimal model $Y^{(n)}$ for the degeneration family of Hilbert schemes. In \cite[Section 4.9]{nagai-2018}, he compared the minimal model $Y^{(2)}$ with the family $H^2_{X/C}$ (which is denoted by $H^{(2)}$ in his papers) and concluded that they differ by a flop. Then, in \cite{nagai-2017}, he compared $Y^{(n)}$ and the family $I^n_{X/C}$ that we constructed using GIT in \cite{GHH-2019} and concluded that they are isomorphic. The fact that $H^2_{X/C}$ and $I^2_{X/C}$ differ by a flop follows from the above two results.

We point out that both approaches have their own advantages. Nagai's approach gives a toric interpretation of a local model of $I^n_{X/C}$ for all $n$. In the meanwhile, our argument in this section reveals the relation between $H^2_{X/C}$ and $I^2_{X/C}$ from a more global perspective.


\bibliography{hilbdeg}{}
\bibliographystyle{alpha}

\end{document}